\newtheorem{lemma}{Lemma}[section]
\newtheorem{prop}[lemma]{Proposition}
\newtheorem{theorem}{Theorem}
\newtheorem{cor}[lemma]{Corollary}
\theoremstyle{definition}
\newtheorem{definition}[lemma]{Definition}
\newtheorem{rmrk}[lemma]{Remark}
\newtheorem{example}[lemma]{Example}
\newcommand{\Mob}{\mathrm{Mob}}
\newcommand{\PSLtc}{\mathrm{PSL}_2(\C)}
\newcommand{\PSLt}{\mathrm{PSL}_2(\R)}
\renewcommand{\r}{\mathbf{r}}
\renewcommand{\v}{\mathbf{v}}
\newcommand{\nn}{\mathbf{n}}
\newcommand{\RR}{\mathbf{R}}
\newcommand{\CP}{\mathbb{CP}}
\newcommand{\eps}{{\varepsilon}}
\newcommand{\BOM}{\mathbf{\Omega}}
\newcommand{\II}{\mathrm I}
\newcommand{\tG}{\tilde \Gamma}
\newcommand{\bode}{\eqref{eq:bode}}
\renewcommand{\>}{\rangle}
\newcommand{\<}{\langle}
\newcommand{\x}{{\mathbf x}}
\newcommand{\G}{\Gamma}
\renewcommand{\b}{{\bf b}}
\renewcommand{\d}{{\bf d}}
\renewcommand{\Im}{{\rm Im}}
\renewcommand{\Re}{{\rm Re}}
 \newcommand{\CC}{\mathcal{C}}
\newcommand{\RP}{\mathbb{RP}}
\newcommand{\st}{\, | \,}
\newcommand{\R}{\mathbb{R}}
\newcommand{\C}{\mathbb{C}} 
\newcommand{\Z}{\mathbb{Z}}
\renewcommand{\H}{\mathbb{H}}
\newcommand{\DS}{\mathscr{D}}
\newcommand{\SL}{\mathrm{SL}}
\newcommand{\GL}{\mathrm{GL}}
\newcommand{\SLt}{{\SL_2(\R)}}
\newcommand{\SO}{\mathrm{SO}}
\newcommand{\slt}{\mathfrak{sl}_2(\R)} 
\renewcommand{\sl}{\mathfrak{sl}}
\newcommand{\so}{\mathfrak{so}}
\renewcommand{\mod}{{\rm mod\; }}
\newcommand{\n}{\noindent}
\newcommand{\sn}{\smallskip\n} 
\newcommand{\mn}{\medskip\noindent}
\newcommand{\be}{\begin{equation}}
\newcommand{\ee}{\end{equation}}
\renewcommand{\d}{\mathrm{d}}
\newcommand{\PSL}{\mathrm{ PSL}}
\newcommand{\SU}{\mathrm{SU}}
\newcommand{\beq}{\eqref{eq:bode}}
\newcommand{\db}{\boldsymbol{\delta}}
\newcommand{\EE}{{\bf E}}
\newcommand{\xb}{{\bf x}}
\newcommand{\xib}{\boldsymbol{\xi}}
\newcommand{\txib}{{\tilde\xib}}
\newcommand{\Hy}{H^n_\ell}
\newcommand{\Iso}{{\rm Iso}}
\newcommand{\Sp}{S^n_\ell}
\renewcommand{\tG}{\widetilde\Gamma}
\newcommand{\phio}{\phi}
\newcommand{\sutwo}{{\mathfrak{su}}_2}
\title {Tire tracks and integrable curve evolution
}
\author{Gil Bor\footnote{
CIMAT, A.P.~402,
Guanjuato, Gto. 36000,
Mexico;
gil@cimat.mx} 
\and
Mark Levi\footnote{
Department of Mathematics,
Penn State,
University Park, PA 16802, USA;
levi@math.psu.edu}
\and
Ron Perline\footnote{
Department of Mathematics, 
Drexel University, 
3141 Chestnut Street, 
Philadelphia, PA 19104, USA;
rperline@math.drexel.edu} 
\and
Sergei Tabachnikov\footnote{
Department of Mathematics,
Penn  State,
University Park, PA 16802, USA;
tabachni@math.psu.edu}} 
\date{\today}
\begin{document}

\maketitle

\begin{abstract}

We study a simple model of bicycle motion: a segment of fixed length in multi-dimensional Euclidean space, moving so that the velocity of the rear end is always aligned with the segment. If the front track is prescribed, the trajectory of the rear wheel is uniquely determined via a certain first order differential equation -- the bicycle equation. The same model, in dimension two, describes another mechanical device, the hatchet planimeter. 

Here is a sampler of our results. We express the linearized flow of the bicycle equation in terms of the geometry of the rear track; in dimension three, for closed front and rear tracks, this is a version of the Berry phase formula. We show that in all dimensions a sufficiently long bicycle also serves as a planimeter: it measures, approximately, the area bivector defined by the closed front track. We prove that the bicycle equation also describes rolling, without slipping and twisting, of hyperbolic space along Euclidean space. We relate the bicycle problem with two completely integrable systems: the AKNS (Ablowitz, Kaup, Newell and Segur) system and the vortex filament equation. We show that ``bicycle correspondence" of space curves (front tracks sharing a common back track) is a special case of a Darboux transformation associated with the AKNS system. We show that the filament hierarchy, encoded as a single generating equation, describes a 3-dimensional bike of imaginary length. We show that a series of examples of ``ambiguous" closed bicycle curves (front tracks admitting self bicycle correspondence), found recently F. Wegner, are buckled rings, or solitons of the planar filament equation. As a case study, we give a detailed analysis of such curves, arising from bicycle correspondence with multiply traversed circles.

\end{abstract}

\newpage

\tableofcontents

\section{Introduction}

This paper concerns a simple model for  bicycle motion. 
An idealized bike is an oriented segment of fixed length that  moves in such a way that the velocity of the rear end is aligned with the segment: the rear bicycle wheel is fixed on its frame, whereas the front wheel can steer. 
The same ``no skid" non-holonomic constraint describes the bicycle motion in $\R^n$ (and, more generally, in any Riemannian manifold; for example,  hyperbolic and elliptic spaces). 

\paragraph {The bicycle model.} The bicycle model has  attracted much attention in recent years, due in part to its unexpected relations with other mathematical problems, old and new. We start with a brief description of these relations and  recent work on this bicycle model. 
\begin{figure}[h!]
\centerline{  \includegraphics[width=.45\textwidth]{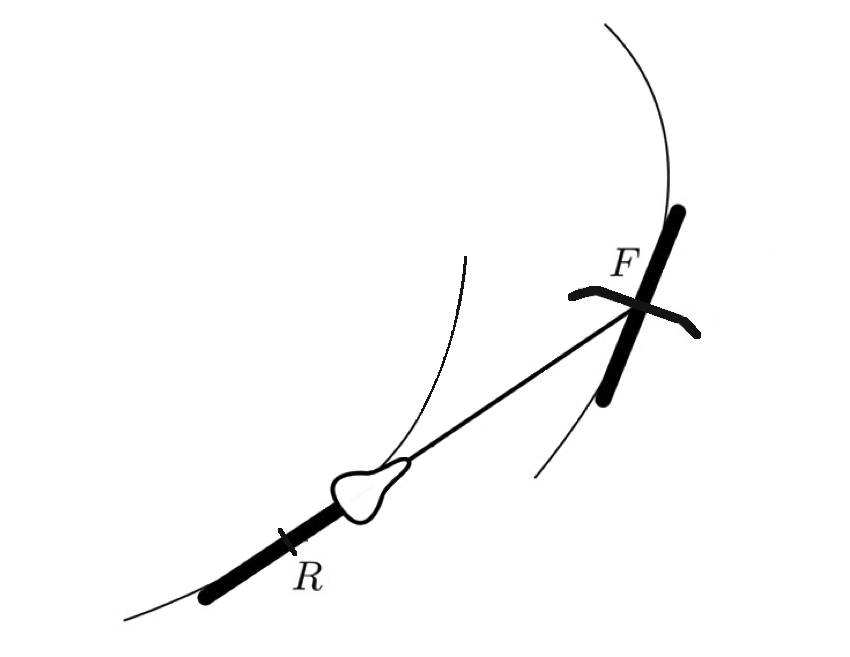}}
\caption{The bicycle front and rear tracks}
\end{figure}

If the front track is prescribed,  the trajectory of the rear wheel is uniquely determined, once the initial orientation of the bicycle is chosen, via a certain first order differential equation, the {\em bicycle equation} (equation \bode\ of Section  \ref{sectmono}). 
In dimension two,  this equation is equivalent to the much studied stationary Schr\"odinger, or Hill, equation $\ddot x + p(t) x =0$, whose potential $p(t)$ depends on the geometry of the front track and the length of the bicycle \cite{Le1,Le2}. 

\paragraph {The bicycle monodromy.} Associated with any given  front track  (closed or not), one   defines the  {\it  bicycle monodromy}, i.e., the map $S^{n-1}\to S^{n-1}$ which assigns to each initial orientation of the bike its final orientation once the front wheel completes its travel.  In dimension two, Foote  \cite{F} observed that this map is a M\"obius transformation; this observation was extended to $\R^n$ in \cite{LT}; we give a new proof  in Theorem \ref{thmf}. 
 
 \paragraph {The hatchet planimeter and Menzin's conjecture.} The bicycle model  in dimension two describes also   a device,  known as the hatchet (or Prytz) planimeter,  for measuring areas of planar domains. The hatchet planimeter consists of  a rod with a hatchet blade fixed at one end and   a pointed pin at the other,  as shown in  Figure \ref{fig:hatchet}. To measure the area  of a planar region, one traces its boundary with the pin; the hatchet slides on the  paper without sideslip, behaving like the rear wheel of a bike.
\begin{figure}[h!]
\centerline{\includegraphics[height=.14\textheight]{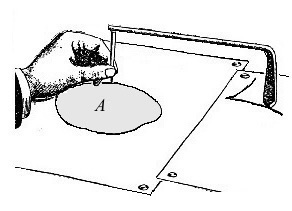} 
\qquad \includegraphics[height=.14\textheight]{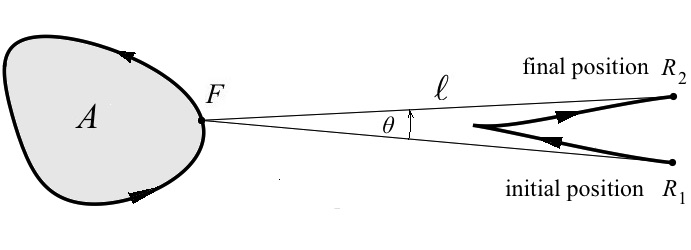}   }
\caption{The hatchet planimeter}\label{fig:hatchet}
\end{figure}

The angle $\theta $ between  the hatchet's initial and final orientations gives an  approximation of the  area $A$  of the region, with an error of order $O(1/\ell)$, 

\be \label{eq:prytzformula}
	A= \ell^2\theta  + O( 1/\ell), 
\ee  
where $ \ell $ is the hatchet's length, see \cite{F,FLT,Hi}. 
 A natural question  is whether this formula is an approximation to some exact result. In Section \ref{sec:berry} we show that indeed $\theta$ is an approximation to the solid angle of a certain cone in $ {\mathbb R}  ^3$.     
 
Planimeters were   popular objects of mathematical study some 100 years ago. In particular,  Menzin (1906) conjectured that if $A > \pi \ell^2$ then the monodromy has a fixed point (that is, for a particular initial orientation of the planimeter   the trajectory of the blade is closed). In other words, the monodromy is a hyperbolic element of the M\"obius group $\PSLt$. 
This conjecture was proved in \cite{LT}; see \cite{FLT,Mac} for expository accounts and \cite{HPZ} for a version of this theorem in spherical and hyperbolic geometries.

\paragraph {Bicycle correspondence.}  A closed rear track   determines {\it  two}  front tracks (one riding forward and the other backward relative to some chosen direction of the rear track). These two front tracks are said to be in the {\it bicycle correspondence}. 
\begin{figure}[h!]
\centerline{\includegraphics[width=.5\textwidth]{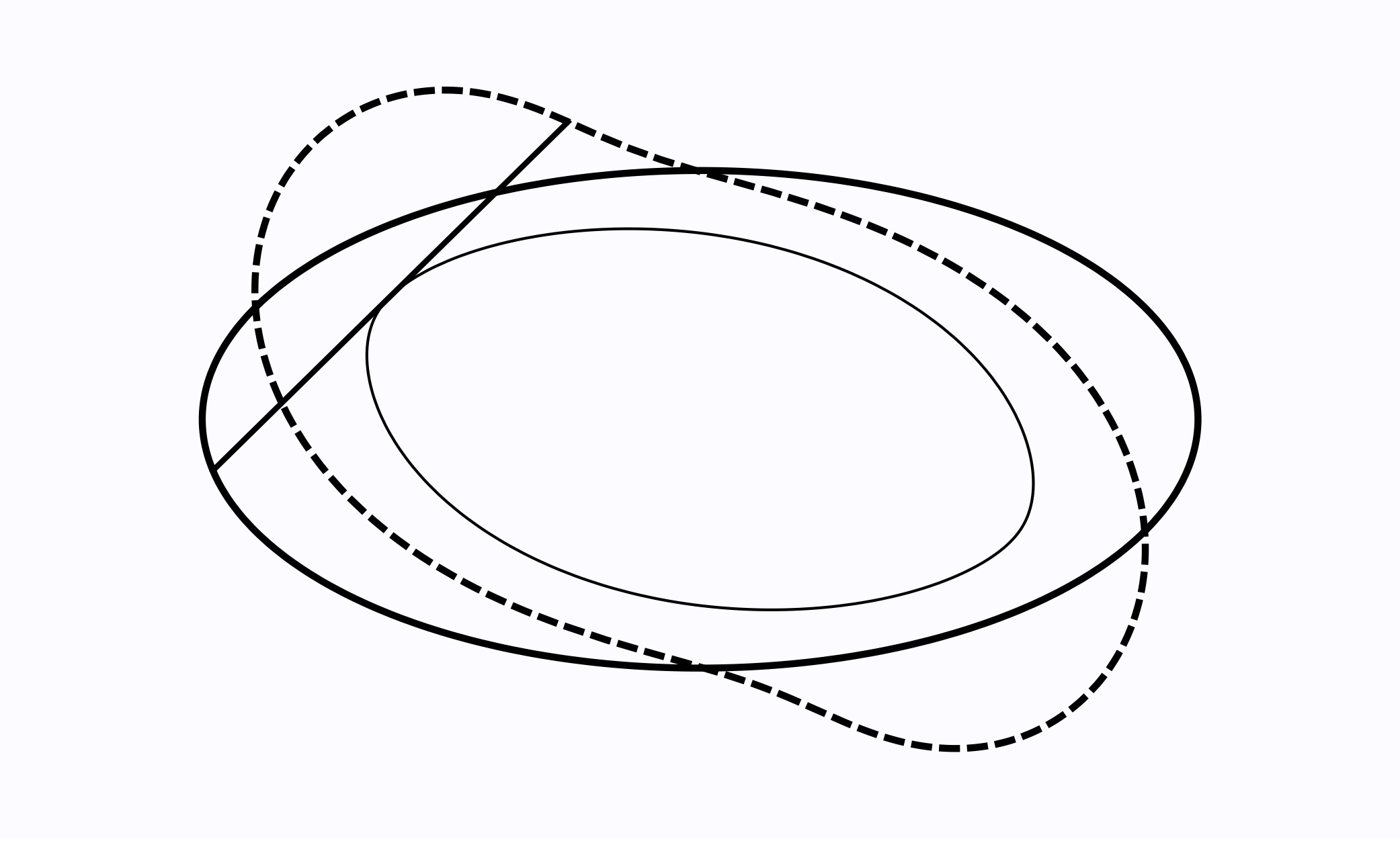}}
\caption{The heavy and dotted curves are in bicycle correspondence; the thin curve is their common back track. }\label{fig:bci}
\end{figure}

Bicycle correspondence of curves has a number of remarkable properties: it satisfies the so-called Bianchi permutability  and it preserves the conjugacy class of the bicycle monodromy (with an arbitrary length of the bicycle, not only the one that defines the bicycle correspondence), see \cite{TT,T2} and Section \ref{ss:bc} below. As a result, bicycle correspondence has infinitely many conserved quantities, starting with the perimeter.

In dimension three, bicycle correspondence is intimately related to the well-studied {\it filament (a.k.a.~binormal, smoke ring, localized induction) equation}, a completely integrable dynamical system on the space of smooth closed curves in $\R^3$, equivalent to the nonlinear Schr\"odinger equation via the Hashimoto transformation \cite{Ha}.
Bicycle correspondence is the Darboux-B\"acklund transformation of the filament equation; it
commutes with the flow of the filament equation and shares with it its integrals and an invariant symplectic structure \cite{T2}. 

\paragraph {Zindler curves.} An interesting problem is whether one can determine the direction of motion given closed rear and front tracks of a bicycle. Usually, this is possible, but sometimes it is not (for example, if the tracks are concentric circles), see \cite{Fi}. The front track in such an ambiguous pair of curves is in  bicycle correspondence with itself; in other words,  two points, $x$ and $y$,  can traverse this curve in such a way that the distance $|xy|$ remains constant  and the velocity of the midpoint of the segment $xy$ is aligned with this segment. Let us call the curves with this property {\it Zindler curves} (see \cite{Zi}). 

Incidentally, Zindler curves provide solutions to another problem, Ulam's problem in  flotation theory (\cite{Sc}, problem 19): which bodies float in equilibrium in all positions?  In the two-dimensional case, the boundary of such a body is a Zindler curve (see \cite{Au,Ru,SK} for  early work\footnote{See \cite{Gu3} for historical information, in particular, about Herman Auerbach (1901--1942).}). Recently, a wealth of results concerning this problem was obtained in \cite{BMO1,BMO2,T1} and in a series of papers by F. Wegner \cite{We1}--\cite{We6}.
\begin{figure}[h!]
\centerline{\includegraphics[width=1\textwidth]{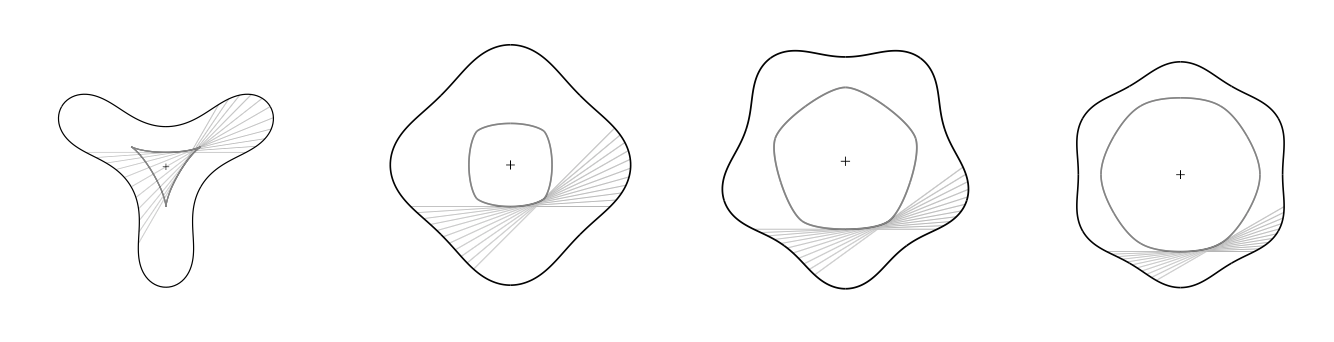}}
\caption{Examples of Zindler curves  from \cite{We6}}
\end{figure}
Wegner constructed a family of non-trivial Zindler curves\footnote{He did not use this terminology.} described explicitly in terms of elliptic functions. He was motivated by a study of the motion of an electron in a magnetic field whose strength  depends quadratically on the distance to the origin. The ``three problems" in \cite{We6} are the ambiguous tire track problem, Ulam's flotation problem, and the motion of an electron. 

A full description of planar Zindler curves, let alone their higher-dimensional version, is still unknown. Let us also mention a discrete version of the bicycle correspondence and, in particular, a polygonal version of Zindler curves \cite{T1,TT}. 

\medskip

\paragraph {Plan of the paper.} 
In Section \ref{sectmono} we discuss various forms of the bicycle differential equation (most of them appeared previously  in the literature), 
paying  special attention to the most interesting two- and three-dimensional cases, and give a new proof that the bicycle monodromy is a M\"obius transformation (Theorems \ref{thm:foote2d}-\ref{thmf}). Our goal here is to present a unified, group-theoretic, approach to these foundational matters. 

The geometry of bike tracks in $\R^2 $ is greatly clarified by extension of the problem to $ {\mathbb R}  ^3 $; without such extension some phenomena remain hidden. Theorem \ref{thm:monodromy} (stated for any dimension) is a new result: it describes the derivative of the bicycle monodromy at a fixed point in terms of the geometry of the corresponding closed rear track. In dimension three, one has  the Berry phase formula (Corollary \ref{Berry}): the derivative in question is a complex number whose modulus depends on the signed length of the rear track and whose argument is the Hannay angle, that is, the area on the unit sphere bounded by the tangent Gauss image of the rear track. This  fact is then used to explain geometrically, via Berry's phase, why the planimeter works. A two-dimensional version of the formula for the derivative of the monodromy at the fixed point was obtained in \cite{LT}.

As we mentioned earlier, in the planar case, a sufficiently long bicycle serves as a planimeter. In Theorem  \ref{3Dplan}, we show that a similar fact
holds in higher dimensions: the bicycle measures, approximately, the area {\it  bivector}, determined by the  front track. 

Theorem \ref{thm:roll} of Section \ref{sectmono} gives yet another interpretation of the bicycle equation: 
this equation describes rolling, without slipping and twisting, of the hyperbolic space along Euclidean space, with the front track being the trajectory of the contact point. This interpretation fits naturally with the fact that the bicycle monodromy is a M\"obius transformation,  an isometry of the hyperbolic space.

Section \ref{FilCor} is concerned with the relation of the bicycle problem with the {\em  filament equation}. The  equation defines a flow on the space of smooth closed curves in $\R^3$, a completely integrable Hamiltonian system,  part of an infinite hierarchy of pairwise commuting Hamiltonian vector fields. We start with a detailed description of the notion of bicycle correspondence between curves and give a new proof that this correspondence preserves the conjugacy class of the bicycle monodromy 
(Theorem \ref{thm:bc}). The filament equation shares with the bicycle equation its invariance under  bicycle correspondence, known as the  Darboux, or B\"acklund, transformation, in the context of the filament equation. 

In Section \ref{bikefil}, we encode  the  filament hierarchy   in a single equation with a formal parameter and  show (Corollary \ref{same}) that this equation coincides with the equation of a 3-dimensional bike of {\it  imaginary  length}.

Given a closed front bicycle track, it is intuitively clear that  if the length of the bicycle is infinitesimal, then there exist two closed trajectories of the bicycle, corresponding to the bicycle near-tangent to the front track, pointing either forward or backward. Proposition \ref{intint} provides a rigorous analysis of this phenomenon in dimension 3. As a result, in Theorem \ref{compI}, we obtain an infinite collection of integrals of the bicycle correspondence that, conjecturally, coincide with the known integrals of the filament equation (the Hamiltonians of the commuting hierarchy of vector fileds). 

The classical Bernoulli elastica are extrema of the total squared curvature functional among  curves with  fixed length. 
 {\it Buckled rings} (or {\it pressurized elastica}) are plane curves that are extrema of the total squared curvature functional, subject to  length  and area constraints. In Section \ref{elastica}, we prove that the curves, constructed by Wegner, are buckled rings (Theorem \ref{WegEl}). This provides a connection with the planar filament equation, another completely integrable system, a close relative of the (3-dimensional) filament equation:  buckled rings are solitons of the planar filament equation, that is, evolve under its flow by isometries.

Section \ref{sec:mc} provides a detailed study of a  family of Zindler curves, the ones in   bicycle correspondence with multiply-traversed circles (Theorem \ref{thm:rot}). 
\medskip

The paper is concluded with two  appendices: in appendix  A we describe a relation of  the bicycle equation with yet another integrable system: the AKNS (Ablowitz,  Kaup, Newell, and Segur) system.
We  show (Theorem \ref{thm:new}) that the bicycle correspondence in dimension three can be thought of as a special case of a Darboux transformation associated with  the AKNS system. In appendix B we provide a proof of the main analytical  tool (Proposition \ref{intint}) needed to establish the existence of the integrals of the bicycle  correspondence of Theorem \ref{compI}.

\paragraph{Acknowledgments.} We thank R. Montgomery, J. Langer, L. Hern\'andez,  and F. Wegner for inspiring discussions. GB and RP are grateful to the Department of Mathematics of Penn State for its hospitality. GB was supported by Conacyt grant 222870. RP was supported by the Shapiro Visitor Program. ML and ST were supported by NSF grants DMS-1412542  and DMS-1510055, respectively.

\section{The bicycle equation and  its monodromy} \label{sectmono}
\subsection{The bicycle equation}We consider a smoothly parametrized curve $\Gamma(t)$ in $\R^n$ (the ``front track"), and 
 a real number $\ell>0$ (the ``bicycle length"); a rear track  $\gamma$ is, by the definition, any parametrized curve $\gamma(t)$ in $\R^n$ that satisfies
\begin{align}\label{eq:rig}
&\|\Gamma(t)-\gamma(t)\|=\ell,\\
&\gamma(t)-\Gamma(t) \mbox{ is tangent to $\gamma$ at $\gamma(t)$.} 
\label{eq:ns}\end{align}

\begin{figure}[h!]\centerline{\includegraphics[width=0.4\textwidth]{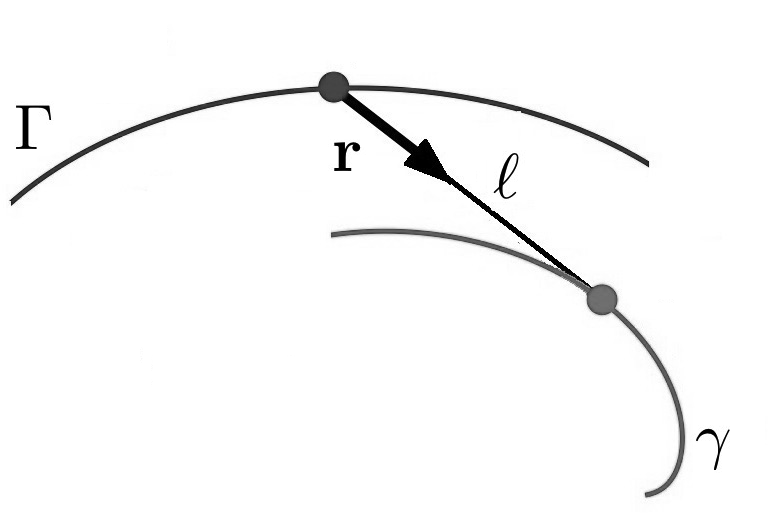}}
\caption{The bicycling ``no skid"  condition   }\label{fig:beq}
\end{figure}

    To keep track of the direction of the rear wheel relative to the front wheel, we introduce the unit direction vector 
$\r(t)\in S^{n-1}$ (see Figure \ref{fig:beq}), thus rewriting condition \eqref{eq:rig}, expressing the bicycle ``rigidity" condition,   as  $\gamma(t)=\Gamma(t)+\ell\r(t)$. Condition \eqref{eq:ns}, expressing the rear wheel  ``no-skid" condition,  is then equivalent to  an ordinary differential equation  for $\r(t)$ which we now state.

\begin{prop}\label{prop:bode}
%
%
Let $\Gamma(t), \r(t)$ be parameterized curves in $\R^n, S^{n-1}$, respectively, $\ell>0$, and   $\gamma(t)=\Gamma(t)+\ell\r(t)$. Then  the ``no-skid" condition \eqref{eq:ns}  is equivalent to 
\be\label{eq:bode}
\ell\dot\r=-\v+(\v\cdot\r)\r, 
\ee
where $ \v=\dot\Gamma  $ and where $ \cdot $ denotes the scalar product. 
\end{prop}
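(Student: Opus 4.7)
The plan is to compute $\dot\gamma$ directly and observe that the no-skid condition \eqref{eq:ns} is precisely the statement that $\dot\gamma$ is proportional to $\r$. Differentiating $\gamma(t) = \Gamma(t) + \ell \r(t)$ gives
\[
\dot\gamma = \v + \ell \dot\r.
\]
The condition \eqref{eq:ns} asserts that $\dot\gamma(t)$ is parallel to $\gamma(t) - \Gamma(t) = \ell \r(t)$, i.e.\ that $\dot\gamma = \lambda \r$ for some scalar function $\lambda(t)$. Equating the two expressions yields $\ell \dot\r = -\v + \lambda \r$, so the whole content of the problem is to identify $\lambda$.

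To pin down $\lambda$, I would take the inner product of this relation with $\r$ and use the fact that $\r(t) \in S^{n-1}$, so $\r \cdot \r = 1$ and therefore $\r \cdot \dot\r = 0$. Dotting $\ell \dot\r = -\v + \lambda \r$ with $\r$ gives $0 = -\v \cdot \r + \lambda$, i.e.\ $\lambda = \v \cdot \r$, which inserted back produces exactly \bode.

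For the converse direction, I would start from the ODE \bode\ and simply compute $\dot\gamma = \v + \ell \dot\r = \v + (-\v + (\v\cdot\r)\r) = (\v\cdot\r)\r$, which is manifestly parallel to $\r$, hence parallel to $\gamma - \Gamma$; this is condition \eqref{eq:ns}. One should also note that \bode\ preserves the unit-length constraint $\|\r\|=1$, since $\tfrac{d}{dt}\|\r\|^2 = 2\r \cdot \dot\r = \tfrac{2}{\ell}(-\v\cdot\r + (\v\cdot\r)\|\r\|^2)$ vanishes on $\{\|\r\|=1\}$, so the sphere is invariant.

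There is essentially no obstacle here: the argument is a short linear-algebra computation, and the only subtlety worth flagging is the mild point that \bode\ is an ODE on $\R^n$ whose restriction to $S^{n-1}$ makes geometric sense precisely because $\|\r\|=1$ is preserved by the flow. The geometric content is the orthogonal decomposition $\v = (\v \cdot \r)\r + \v_\perp$, where $(\v\cdot\r)\r$ is the component of the front-wheel velocity absorbed by the rolling motion of the rear wheel along the frame, and $-\v_\perp/\ell$ is the angular velocity with which the frame must turn to compensate for the transverse component of $\v$.
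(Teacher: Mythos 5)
Your proof is correct and follows essentially the same route as the paper: identifying the no-skid condition with $\dot\gamma$ being the component of $\v$ along $\r$, the paper via the explicit decomposition $\v=\v^\|+\v^\perp$, you via pinning down the proportionality factor $\lambda=\v\cdot\r$ using $\r\cdot\dot\r=0$. The extra remark that \eqref{eq:bode} preserves $\|\r\|=1$ matches a remark the paper makes separately and is a harmless addition.
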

Equation \eqref{eq:bode} is the {\em $\ell$-bicycle equation} in $\R^n$, defined for every  parametrized front track  $\Gamma(t)$ and bicycle length $\ell$.

  \begin{proof}  
Let us decompose $\v=\dot\Gamma$ as  $\v=\v^\|+\v^\perp$, 
 where $\v^\|,\v^\perp$ are the orthogonal projections of $\v$ onto 
 $\R\r$, $\r^\perp$, respectively, as in Figure \ref{fig:bepf}. Then conditions \eqref{eq:rig}-\eqref{eq:ns} are equivalent to $\dot\gamma=\v^\|.$ From $\gamma=\Gamma+\ell\r$ follows 
 $\dot\gamma=\v+\ell\dot \r$, hence $\dot\gamma=\v^\|$ is equivalent to  $0=\v^\perp+\ell\dot\r.$ 
  Now $\v^\perp=\v-\v^\|=\v-(\v\cdot\r)\r$,  from which equation \bode\ follows. 
  \end{proof}
  
\begin{figure}[h!]
\centerline{\includegraphics[width=0.5\textwidth]{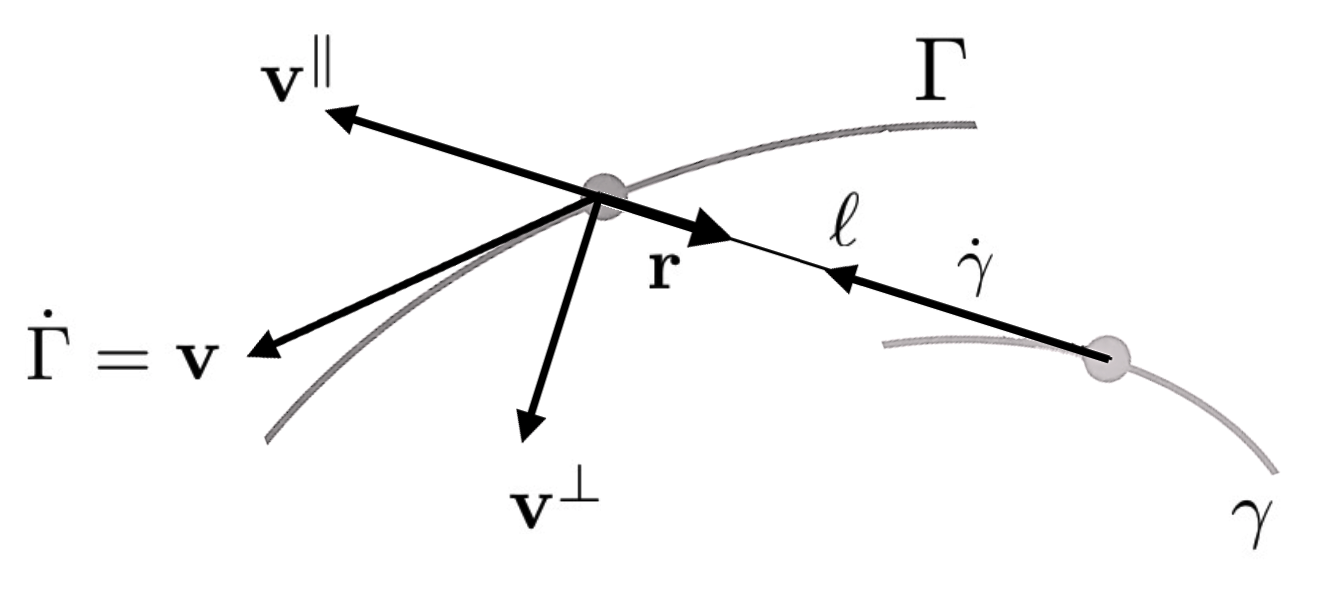}}
\caption{The proof of Proposition \ref{prop:bode}}\label{fig:bepf}
\end{figure}

\begin{rmrk} 
 Equation \eqref{eq:bode} was derived above for $\r(t)\in S^{n-1}$ and indeed it leaves invariant the condition $\|\r\|=1$, as can be easily checked. But it makes sense also for arbitrary $\r(t)\in \R^n$, for which it has also an interesting mechanical interpretation, at least in the $\|\r\|<1$ case  (see Section \ref{sec:rolling} below).
\end{rmrk}

\begin{rmrk} 
 Even if  $\Gamma$ is a  regularly immersed curve, i.e., $\dot\Gamma$ does not vanish, $\dot \gamma$ may vanish. 
From equation \beq, we see that $\dot\gamma=\v+\ell\dot\r$ vanishes  precisely when $\v\cdot \r=0,$ that is, when the bicycle is perpendicular to the front wheel track $\Gamma$. 

In the planar case,  the resulting singularities of $\gamma$ are generically semi-cubical cusps (see \cite{LT}, Section 2, for more information). 

The conceptual explanation of the singularities is as follows.\footnote{This explanation can be safely skipped at first reading.} 
The configuration space of oriented segments of length $\ell$ in $\R^n$ is the spherization of the tangent bundle $ST\R^n$, and the non-holonomic ``no-skid" constraint  defines a completely non-integrable $n$-dimensional distribution ${\DS}$ therein. The motion of the bicycle is a smooth curve in $ST\R^n$  tangent to the distribution $\DS$ (i.e., a horizontal curve relative to the distribution). 

\begin{figure}[h!]
\centerline{\includegraphics[width=0.3\textwidth]{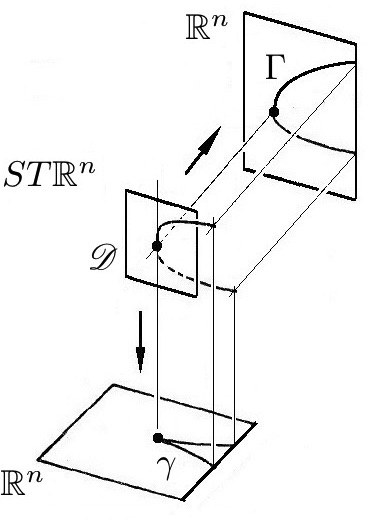}}
\caption{The two projections $ST\R^n\to\R^n$}\label{fig:cusp}
\end{figure}
The two projections $ST\R^n \to \R^n$, to the front and  rear ends of the segment, yield the front and rear bicycle tracks, see Figure \ref{fig:cusp}. The former projection is transverse to $\DS$, therefore the front track is a smooth curve, but the kernel of the latter projection is contained in $\DS$, and hence the rear track may have singularities; this happens when the horizontal curve is tangent to this kernel.  
\end{rmrk}  
\subsection{The bicycle monodromy}
Given a parameterized curve $\Gamma(t)$ in $\R^n$, consider the family of unit spheres centered at points of $\Gamma$, and identify these spheres with each other by parallel translation.\footnote{Such an identification is assumed throughout the paper.} Fix  a point $\Gamma(t_0)$ on the curve $\Gamma$. Then, according to Proposition \ref{prop:bode}, conditions \eqref{eq:rig} and \eqref{eq:ns} define, for each $t$ (for which $\Gamma(t)$ is defined) and $\ell>0$, a  diffeomorphism 
$$
M_\ell^t:S^{n-1}\to S^{n-1},
$$
called {\em the bicycle monodromy}, that maps 
$\r_0$ to $\r(t)$,  where $\r(t)$ is the solution to equation \bode\ satisfying the initial condition $\r(t_0)=\r_0$. In other words, $
M_\ell^t$ is the flow of the differential equation (\ref{eq:bode}).


\begin{example}
 Let $\Gamma$ be the $x$-axis in $\R^2$, parameterized by  $\Gamma(t)=(t,0)$. Substitute $\r=(\cos\theta, \sin\theta)$ in equation \eqref{eq:bode}, where $\theta=\theta(t)$,  and obtain $\ell\dot\theta=\sin\theta$. Another substitution $p=\tan(\theta/2)$  linearizes this equation, yielding  $\ell \dot p= p,$
with solution $p(t)=p_0e^{ t/\ell}.$ 
The resulting  rear track $\gamma$ is the classical {\em tractrix}, and we can use the solution $p(t)$ to give it  an explicit  parametrization (see, e.g.,  \cite{F} for details).
\begin{figure}[h!]
\centerline{\includegraphics[width=.8\textwidth]{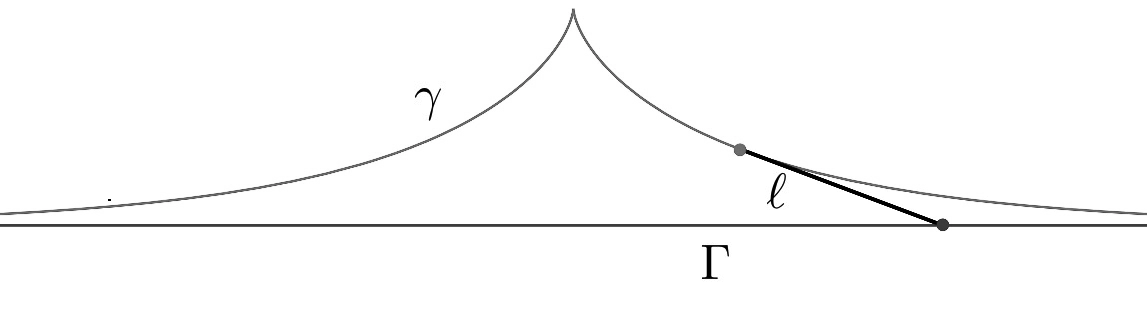}}
\caption{The tractrix}
\end{figure}
\end{example}

\begin{example}
 Let $\Gamma$ be the unit circle in $\R^2$, parameterized by  $\Gamma(t)=(\cos t,\sin t)$. As in the previous example, substitute $\r=(\cos\theta, \sin\theta)$ in equation \eqref{eq:bode},  giving
$\ell\dot\theta=-\cos(\theta-t).$ Changing to $\phi:=\theta-t$ gives 
$\dot\phi=-1-(\cos\phi)/\ell.
$ Changing again to  $p:= \tan(\phi/ 2),$ gives 
$$ \dot p=-{1\over 2\ell}\left[p^2(\ell-1)+\ell+1\right].$$
 This is a constant coefficient Riccati equation that can be solved explicitly in elementary functions (see Section \ref{sec:mc}  below for details). 

\begin{figure}[h!]
 \centerline{\includegraphics[width=.4\textwidth]{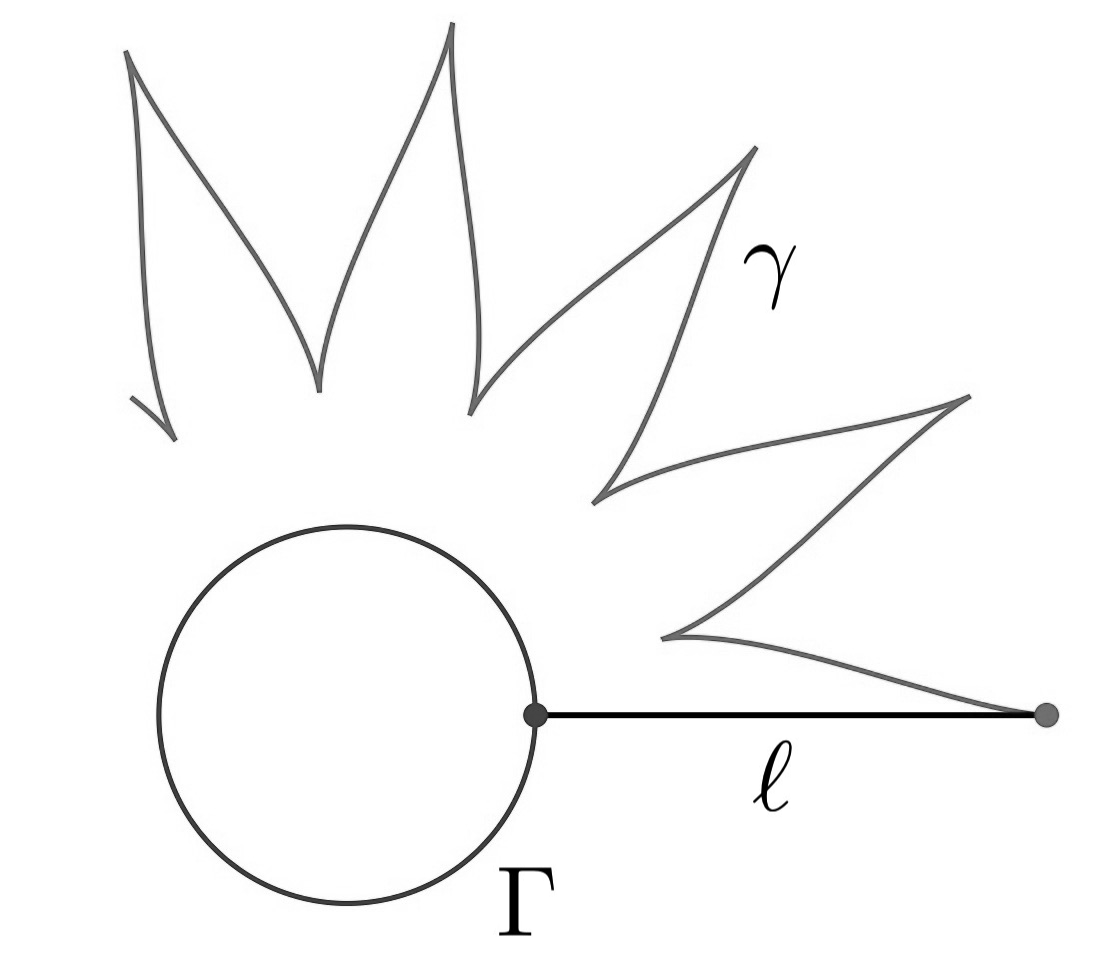}}
 \caption{The circular tractrix}
 \end{figure}
 \end{example}

\subsection{Bicycling in $\R^2$}
There are a number of reformulations of  equation \eqref{eq:bode} for $n=2$ found in the literature \cite{Finn,F,FLT,LT,T1}. We  collect them   in this subsection. 

First, we use  an angle coordinate $\theta$ on $S^1$, i.e., substitute $\r=(\cos\theta, \sin\theta)$ in   equation \eqref{eq:bode}, obtaining, 
\be\label{eq:trigbike}
\ell\dot\theta=v_1\sin\theta-v_2\cos\theta, \qquad \dot\G=(v_1, v_2) .
\ee
Now the 
  {\em projective coordinate} $p=\tan(\theta/2)$, i.e., the slope of a vector with  the argument $ \theta /2 $, satisfies the
   {\em Riccati equation}
\be\label{eq:ric1}\dot p={1\over 2\ell}\left(-v_2+2v_1p+v_2p^2\right), \qquad \dot\G=(v_1, v_2).
\ee

A  consequence of equation \eqref{eq:ric1} is the following theorem of Foote \cite{F}.

 \begin{theorem}\label{thm:foote2d}
The flow of equation \eqref{eq:trigbike}  is the projection  to $S^1$ of the flow of the linear system 
 \be\label{eq:2d}
{\dot x\choose \dot y}=-{1\over 2\ell}\left(\begin{array}{lr}v_1&v_2\\ v_2&-v_1\end{array}\right){ x\choose  y}
\ee
 via the double covering map (using complex notation)  $z=x+iy\mapsto \r= z^2/|z|^2$
 or, more explicitly, 
 $$(x, y)\mapsto\r= \left({x^2-y^2\over x^2+y^2}, {2xy\over x^2+y^2}\right).$$
Thus the bicycle monodromy for $n=2$ is given by elements of the M\"obius group $\PSLt$  of fractional linear transformations  $p\mapsto (ap+b)/(cp+d)$. 
\end{theorem}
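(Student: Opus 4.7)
The plan is to prove the theorem by a direct verification that the flow of the linear system \eqref{eq:2d} on $\R^2\setminus\{0\}$, pushed forward along $z\mapsto z^2/|z|^2$, coincides with the flow of \eqref{eq:trigbike} on $S^1$; the $\PSLt$ conclusion is then automatic from the trace-free structure of the defining matrix.

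For the first step, I would recast \eqref{eq:2d} in complex notation. Setting $z=x+iy$ and $V=v_1+iv_2$, a direct check gives
\begin{equation*}
-\frac{1}{2\ell}\bigl[(v_1x+v_2y)+i(v_2x-v_1y)\bigr]=-\frac{V}{2\ell}\,\bar z,
\end{equation*}
so the linear system becomes the single complex ODE $\dot z=-V\bar z/(2\ell)$. Writing $z=re^{i\alpha}$ and using $\bar z/z=e^{-2i\alpha}$, the imaginary part of $\dot z/z=\dot r/r+i\dot\alpha$ yields
\begin{equation*}
\dot\alpha=-\frac{1}{2\ell}\Im\!\bigl(Ve^{-2i\alpha}\bigr)=\frac{1}{2\ell}\bigl(v_1\sin 2\alpha-v_2\cos 2\alpha\bigr).
\end{equation*}
Setting $\theta=2\alpha$ gives $\ell\dot\theta=v_1\sin\theta-v_2\cos\theta$, which is exactly \eqref{eq:trigbike}, while $z^2/|z|^2=e^{i\theta}=(\cos\theta,\sin\theta)=\r$ matches the stated covering map. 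Equivalently, in the affine chart $p=y/x=\tan(\theta/2)$, a short computation $\dot p=(\dot y-p\dot x)/x$ applied to \eqref{eq:2d} reproduces the Riccati equation \eqref{eq:ric1}, which is already known to be equivalent to \eqref{eq:trigbike}.

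For the second step, the coefficient matrix in \eqref{eq:2d} is trace-free, so by Liouville's formula the fundamental solution $\Phi(t)$ has constant determinant, equal to $\det\Phi(t_0)=1$; hence $\Phi(t)\in\SLt$ for all $t$. Any element of $\SLt$ acts on the affine chart $p=y/x$ of $\RP^1$ by a fractional linear transformation, and $\pm\Phi(t)$ induce the same such map, so the monodromy on $S^1$ descends from $\SLt$ to $\PSLt=\SLt/\{\pm I\}$, exactly as the double cover $(x,y)\mapsto\r$ would predict. The only step demanding any real care is the projection calculation itself, and I expect no serious obstacle: the choice of matrix in \eqref{eq:2d} is essentially forced by the general principle that any Riccati equation $\dot p=a+bp+cp^2$ lifts through $p=y/x$ to a unique trace-free linear system on $\R^2$.
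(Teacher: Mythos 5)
Your proposal is correct, and it verifies the statement along a somewhat different path than the paper. The paper's proof never touches the angle variable directly: it invokes the standard fact that a linear system ${\dot x\choose \dot y}=\left(\begin{smallmatrix}a&b\\ c&-a\end{smallmatrix}\right){x\choose y}$ projects under $p=y/x$ to the Riccati equation $\dot p=c-2ap-bp^2$, applies this to \eqref{eq:2d} to recover \eqref{eq:ric1}, and then leans on the previously established substitution $p=\tan(\theta/2)$ to get back to \eqref{eq:trigbike}; the M\"obius conclusion is absorbed into the cited "flow of a Riccati equation consists of M\"obius transformations." Your main route instead works with the complex form $\dot z=-V\bar z/(2\ell)$, $V=v_1+iv_2$, and polar coordinates, deriving $\ell\dot\theta=v_1\sin\theta-v_2\cos\theta$ for $\theta=2\alpha$ directly; this has the advantage of verifying the explicit double-covering map $z\mapsto z^2/|z|^2$ head-on rather than implicitly through the chart $p=y/x=\tan(\theta/2)$ (your parenthetical "equivalently" remark is precisely the paper's argument). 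Your second step is also slightly more self-contained: the trace-free coefficient matrix plus Liouville's formula puts the fundamental solution in $\SLt$, and the action on $p=y/x$ then lands in $\PSLt$, whereas the paper simply cites the Riccati--M\"obius fact. All computations check out ($V\bar z=(v_1x+v_2y)+i(v_2x-v_1y)$, $\Im(Ve^{-2i\alpha})=v_2\cos2\alpha-v_1\sin2\alpha$), and matching initial conditions with uniqueness of ODE solutions completes the identification of the projected flow, so the argument is sound at the same level of rigor as the paper's.
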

 \begin{proof}
  It is well-known that the flow of a Riccati equation consists of  M\"obius transformations (see, e.g., \cite{I}, p.~24). Let us review the argument. Consider the linear  system  
\be\label{eq:lin}
{\dot x\choose \dot y}=\left(\begin{array}{lr}a & b\\ c& -a\end{array}\right){x\choose y}.
\ee
One can check easily that a solution $(x(t), y(t))$ of this system projects to a solution $p(t)=y(t)/x(t)$ of the equation
\be\label{eq:ricgen}
\dot p=c-2ap-bp^2.
\ee
Thus the flow of the Riccati equation \eqref{eq:ricgen} is the projectivization of the flow  of the  linear system \eqref{eq:lin}.
Applying this procedure to equation \eqref{eq:2d}, we obtain equation \eqref{eq:ric1}, and thus   (\ref{eq:trigbike}). 
\end{proof}

The next reformulation of equation \eqref{eq:bode} is obtained by switching to a {\em moving frame} along $\Gamma$  (the Frenet-Serret frame). 
To this end, assume first that $\Gamma$ is parameterized by arclength, so that $\v=\dot\Gamma$ is a unit tangent vector along $\Gamma$. Complete $\v$ to  a positively oriented  orthonormal frame  $\{\v, \nn\}$ along $\Gamma$. Then $\dot\v=\kappa\nn$, where $\kappa$ is the curvature function along  $\Gamma$. Now we use an angle coordinate $\Theta$ for $\r$ in the moving frame $\{\v, \nn\}$, i.e.,  let
$\r=e^{i\Theta}\v= (\cos\Theta)\v + (\sin\Theta)\nn.$ 
(Note: the angle $\Theta$  is $\pi$ minus the ``steering angle" $\alpha$ of \cite{LT}.)

\begin{figure}[h!]
\centerline{\includegraphics[width=.36\textwidth]{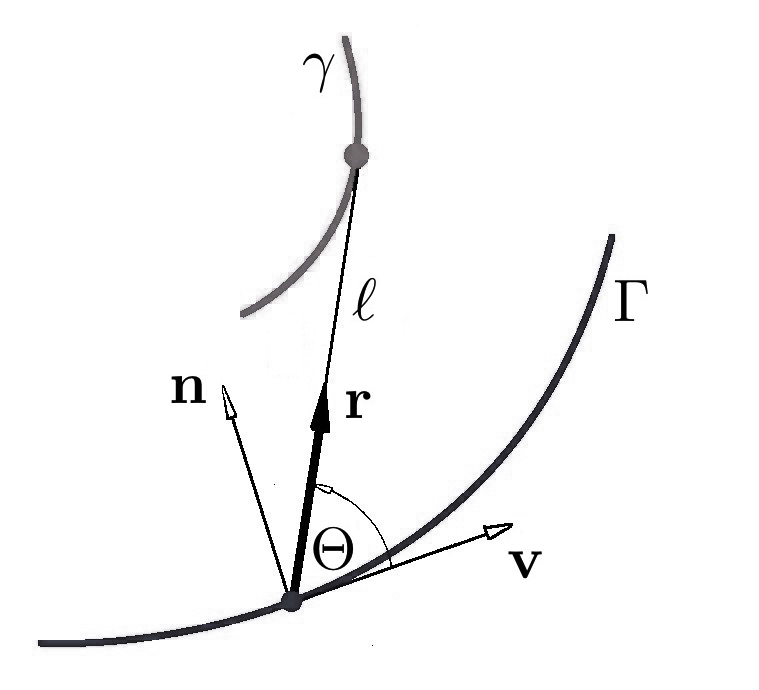}}
\caption{The Frenet-Serret frame along $\Gamma$}
\end{figure}

\begin{prop} $\r= e^{i\Theta}\v$ satisfies equation \eqref{eq:bode} for $n=2$ if and only if $\Theta(t)$ satisfies
$$\dot\Theta={\sin\Theta\over \ell}-\kappa. 
$$
Using the projective coordinate $P=\tan(\Theta/2)$, the last equation is equivalent to 
\be \label{eq:ric2}\dot P={P\over \ell}-{\kappa\over 2}(1+P^2),
\ee
which   is the projectivization $P=Y/X$ of the linear  system 
$$
{\dot X\choose \dot Y}={1\over 2}\left(\begin{matrix}-1/\ell&\kappa \\ -\kappa&1/\ell \end{matrix}\right){ X\choose  Y}.
$$
\end{prop}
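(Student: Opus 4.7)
The proposition is essentially a direct computation in the moving frame, so the plan is to substitute and verify. Let me outline the steps.

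First, I would write $\r = \cos\Theta\,\v + \sin\Theta\,\nn$ and differentiate, using the Frenet--Serret formulas $\dot\v = \kappa\nn$ and $\dot\nn = -\kappa\v$ (which follow from $\{\v,\nn\}$ being orthonormal with fixed orientation). This gives
\[
\dot\r \;=\; \bigl(-\sin\Theta\,\dot\Theta - \kappa\sin\Theta\bigr)\v \;+\; \bigl(\cos\Theta\,\dot\Theta + \kappa\cos\Theta\bigr)\nn \;=\; (\dot\Theta + \kappa)\bigl(-\sin\Theta\,\v + \cos\Theta\,\nn\bigr).
\]
This is exactly the infinitesimal rotation of $\r$ by rate $\dot\Theta+\kappa$, as expected since $\v$ itself spins at rate $\kappa$ while $\r$ moves at rate $\dot\Theta$ inside the frame.

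Next, I would evaluate the right-hand side of \eqref{eq:bode}. Since $\v\cdot\r = \cos\Theta$,
\[
-\v + (\v\cdot\r)\r \;=\; -\v + \cos\Theta\bigl(\cos\Theta\,\v + \sin\Theta\,\nn\bigr) \;=\; \sin\Theta\bigl(-\sin\Theta\,\v + \cos\Theta\,\nn\bigr).
\]
Comparing with the expression for $\ell\dot\r$ and dividing out the common vector $-\sin\Theta\,\v + \cos\Theta\,\nn$ (nonzero for almost every $\Theta$, and the identity extends by continuity), equation \eqref{eq:bode} is equivalent to $\ell(\dot\Theta + \kappa) = \sin\Theta$, i.e.\ $\dot\Theta = \sin\Theta/\ell - \kappa$.

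For the projective form, I would use the half-angle identities $\dot P = \tfrac12(1+P^2)\dot\Theta$ and $\sin\Theta = 2P/(1+P^2)$ to rewrite $\dot\Theta = \sin\Theta/\ell - \kappa$ as
\[
\dot P \;=\; \tfrac{1}{2}(1+P^2)\!\left(\tfrac{2P}{\ell(1+P^2)} - \kappa\right) \;=\; \tfrac{P}{\ell} - \tfrac{\kappa}{2}(1+P^2),
\]
which is \eqref{eq:ric2}. Finally, setting $P = Y/X$ where $(X,Y)$ solves the stated linear system, the standard identity $\dot P = (\dot Y X - Y\dot X)/X^2$ together with the matrix entries gives
\[
\dot P \;=\; \tfrac{1}{2}\!\left(\tfrac{-\kappa X^2 + 2XY/\ell - \kappa Y^2}{X^2}\right) \;=\; \tfrac{P}{\ell} - \tfrac{\kappa}{2}(1+P^2),
\]
confirming that \eqref{eq:ric2} is the projectivization of the linear system.

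There is no real obstacle here; the only thing to watch is the sign convention in $\dot\nn = -\kappa\v$ and the orientation choice implicit in writing $\r = e^{i\Theta}\v$ (with $i\v = \nn$). Once those are fixed consistently, the computation is mechanical.
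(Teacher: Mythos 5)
Your computation is correct and is precisely the direct calculation the paper itself omits (``The proof is a direct calculation, and we omit it''), carried out with the same decomposition $\r=\cos\Theta\,\v+\sin\Theta\,\nn$, the Frenet formulas, the half-angle substitution, and the standard projectivization of the linear system. One small simplification: the common vector $-\sin\Theta\,\v+\cos\Theta\,\nn$ is a unit vector, hence never zero, so your ``almost every $\Theta$, extend by continuity'' caveat is unnecessary.
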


\mn The proof is a direct calculation, and we omit it. 

\subsection{Bicycling in $\R^3$} \label{B3D}

Similar to the $n=2$ case, equation \eqref{eq:bode}  for $n=3$ can be reformulated in a variety of ways. To begin with, we rewrite  equation \eqref{eq:bode}  for $n=3$  using the vector product in $\R^3$.
\begin{lemma}Equation \beq, for $n=3$, is equivalent to 
\be\label{eq:bode3d} 
\dot\r={1\over \ell}(\v\times \r)\times\r, \quad \v=\dot\Gamma.
\ee
\end{lemma}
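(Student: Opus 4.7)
The plan is to verify the stated identity by a direct application of the vector triple product (``BAC-CAB'') identity, exploiting the fact that $\r$ is a unit vector. Specifically, for any $\a,\b,\bfc\in\R^3$ one has
$$\a\times(\b\times\bfc)=\b(\a\cdot\bfc)-\bfc(\a\cdot\b).$$
Applying this with $\a=\r$, $\b=\v$, $\bfc=\r$, and using $\|\r\|^2=1$, I would obtain
$$(\v\times\r)\times\r=-\r\times(\v\times\r)=-\bigl[\v(\r\cdot\r)-\r(\r\cdot\v)\bigr]=-\v+(\v\cdot\r)\r.$$
Thus the right-hand side of \eqref{eq:bode3d}, multiplied by $\ell$, coincides with the right-hand side of \eqref{eq:bode}, establishing the equivalence of the two equations in dimension three.

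The only subtlety is that the equivalence as stated requires $\|\r\|=1$, which is part of the setup (the rear direction lives on the unit sphere, as noted in Proposition \ref{prop:bode} and the subsequent remark). If one wished to state the lemma for arbitrary $\r\in\R^3$, one would have to replace $-\v+(\v\cdot\r)\r$ with $-\|\r\|^2\v+(\v\cdot\r)\r$; but since the bicycle equation is being considered on the sphere $S^{2}$, no such modification is needed.

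There is essentially no obstacle here: the proof is a single line of vector algebra, and the authors presumably include this reformulation because the cross-product form $\dot\r=\ell^{-1}(\v\times\r)\times\r$ is the natural starting point for the subsequent three-dimensional analysis (Berry phase, relation to the filament equation, etc.), where the Lie-algebraic structure of $\R^3\cong\mathfrak{so}(3)$ via the cross product plays a central role.
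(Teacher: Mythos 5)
Your proof is correct: the one-line application of the vector triple product identity together with $\|\r\|=1$ is exactly the ``simple verification'' that the paper omits, and your remark about the $\|\r\|^2$ factor for non-unit $\r$ is a sound (if optional) aside. Nothing further is needed.
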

We omit the simple verification. 

\mn 

Next we rewrite  equation \eqref{eq:bode3d}   as a {\em complex}  Riccati equation, i.e., as the projectivization of a 2-dimensional complex linear system. 
 \begin{theorem}\label{thm:foote3d}
The flow of equation \eqref{eq:bode3d}  is the projection  to $S^2$ of the flow of the complex linear system 
\be\label{eq:beq1}{\dot z_1\choose \dot z_2}=-{1\over 2\ell}
\left(
\begin{array}{cc}
 v_1 & v_2-i v_3 \\
v_2+i v_3 & -v_1
\end{array}\right)
{z_1\choose z_2}, \quad \dot\Gamma=(v_1 ,v_2 ,v_3),
\ee
 via the complex  Hopf fibration  $\C^2\setminus  0 \to S^2$, 
$${z_1\choose z_2}\mapsto \r=\left({|z_1|^2-|z_2|^2\over |z_1|^2+|z_2|^2}, {2\bar z_1 z_2\over  |z_1|^2+|z_2|^2}\right)\in\R\oplus\C=\R^3.$$
Using the complex coordinate $z=z_2/z_1=(r_2+ir_3)/(1+r_1)$ on $S^2\simeq\CP^1$, the linear system \eqref{eq:beq1} 
projects to the complex Riccati equation 
\be\label{eq:cric}\dot z={1\over 2\ell}\left(-q+2v_1z+\bar q z^2\right), \quad \dot \Gamma=\v=(v_1, v_2, v_3), \quad q=v_2+iv_3.\ee
\end{theorem}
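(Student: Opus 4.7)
The plan is to decompose the Hopf map $\C^2\setminus 0\to S^2$ as the tautological projection $\C^2\setminus 0\to\CP^1$, $(z_1,z_2)\mapsto z_2/z_1$, followed by the stereographic identification $\CP^1\to S^2$. A short algebraic check using the Hopf formula in the theorem shows $z_2/z_1=(r_2+ir_3)/(1+r_1)$, so the two descriptions of the complex coordinate $z$ agree. It therefore suffices to prove separately that (i) the linear system \eqref{eq:beq1} projectivizes in $z=z_2/z_1$ to the Riccati equation \eqref{eq:cric}, and (ii) in the stereographic coordinate $z=(r_2+ir_3)/(1+r_1)$, the bicycle equation \eqref{eq:bode3d} also becomes \eqref{eq:cric}.

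Claim (i) is the standard projectivization argument already invoked in the proof of Theorem \ref{thm:foote2d}: a traceless complex linear system $\dot z_1=\alpha z_1+\beta z_2$, $\dot z_2=\gamma z_1-\alpha z_2$ descends, on setting $z=z_2/z_1$, to $\dot z=\gamma-2\alpha z-\beta z^2$. Substituting the coefficients of \eqref{eq:beq1}, namely $\alpha=-v_1/(2\ell)$, $\beta=-\bar q/(2\ell)$, $\gamma=-q/(2\ell)$, yields \eqref{eq:cric} immediately.

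Claim (ii) is an explicit computation. First I would rewrite \eqref{eq:bode3d} as $\dot\r=\ell^{-1}[(\v\cdot\r)\r-\v]$ using the Lagrange identity $(A\times B)\times C=(A\cdot C)B-(B\cdot C)A$ together with $\|\r\|=1$, which also recovers equation \eqref{eq:bode} for $n=3$. Introducing $p=r_2+ir_3$ and noting that $\v\cdot\r=v_1 r_1+\tfrac12(\bar q p+q\bar p)$, one differentiates $z=p/(1+r_1)$ by the chain rule and substitutes the component equations $\dot r_j=\ell^{-1}[(\v\cdot\r)r_j-v_j]$. The main obstacle is the bookkeeping at this point: after splitting $\v\cdot\r$ into its $v_1 r_1$ piece and its complex piece, one has to recognize that an apparent leftover of $q(1-r_1)/\bigl(2(1+r_1)\bigr)$ combines with a $-q/(1+r_1)$ term to produce the clean constant $-q/2$, via the factorization $|p|^2=1-r_1^2=(1-r_1)(1+r_1)$. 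Once this cancellation is spotted, the coefficients of $1$, $z$, $z^2$ assemble into exactly $(2\ell)^{-1}(-q+2v_1 z+\bar q z^2)$, matching \eqref{eq:cric}.
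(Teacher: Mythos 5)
Your proposal is correct. The identification $z_2/z_1=(r_2+ir_3)/(1+r_1)$ under the stated Hopf map checks out (since $1+r_1=2|z_1|^2/(|z_1|^2+|z_2|^2)$), the projectivization of \eqref{eq:beq1} with $\alpha=-v_1/(2\ell)$, $\beta=-\bar q/(2\ell)$, $\gamma=-q/(2\ell)$ gives \eqref{eq:cric} by the same argument as in Theorem \ref{thm:foote2d}, and your stereographic computation of \eqref{eq:bode3d} — in particular the cancellation $\frac{q(1-r_1)}{2(1+r_1)}-\frac{q}{1+r_1}=-\frac{q}{2}$ coming from $|p|^2=1-r_1^2=(1-r_1)(1+r_1)$ — lands exactly on \eqref{eq:cric}. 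This is precisely the ``direct calculation'' the paper declines to write out; the paper's own justification is instead the group-theoretic one given in the subsequent subsection: Lemma \ref{lemma:A} and Theorem \ref{thmf} exhibit the bicycle flow in every dimension as the projection of the linear $\so_{n,1}$-system \eqref{eq:mono}, and the special isomorphism $\sl_2(\C)\simeq\so_{3,1}$ of equation \eqref{eq:iso3} (realized by the $\SL_2(\C)$-action on Hermitian matrices) carries the coefficient matrix of \eqref{eq:mono} for $n=3$ to the matrix in \eqref{eq:beq1}. Your route is more elementary and self-contained, at the cost of coordinate bookkeeping; the paper's route treats $n=2$ and $n=3$ uniformly, explains why $2\times 2$ real and complex systems appear at all, and directly identifies the monodromy group as $\PSLtc$. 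One small point worth adding for completeness: the chart $z=z_2/z_1$ misses the single point $\r=(-1,0,0)$ (where $z_1=0$), so to conclude that the two flows on all of $S^2$ coincide you should either invoke continuity of both flows together with density of the chart, or repeat the computation in the complementary chart $z_1/z_2$.
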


It follows that the bicycle monodromy in $\R^3$   is given by elements of the complex M\"obius group $\PSLtc$.

The proof is by direct calculation which we omit.  In the next subsection we give a more conceptual (group theoretic) explanation  of  Theorems \ref{thm:foote2d} and  \ref{thm:foote3d}.

\begin{rmrk} 
Note that the Riccati equation \eqref{eq:cric} reduces to equation \eqref{eq:ric1} for $q=v_2$, that is, for a planar curve $\Gamma$ with  $v_3=0$. 
\end{rmrk}
 
We now derive a ``moving-frame" version of equation \eqref{eq:cric}.  
Assume $\Gamma$ is parameterized by arc length, so that  
$\v=\dot\Gamma$ is a unit vector, and  complete $\v$ to the Frenet-Serret  frame $(\v, \nn, \b)$ along 
$\Gamma$, satisfying the equations
 $$\dot\v=\kappa\nn, \quad \dot \nn=-\kappa \v+\tau \b,\quad \dot\b=-\tau \nn,$$
where  $\kappa, \tau$ are  the {curvature} and {torsion} of  $\Gamma$. 

\begin{prop} \label{comp3DR}
Let  $\r=R_1\v+R_2\nn+R_3\b$ be a unit vector field along an arc length parameterized  curve $\Gamma$ in $\R^3$.  Then $\r(t)$ satisfies equation \eqref{eq:bode3d} if and only if 
$\RR=(R_1, R_2, R_3)$ satisfies
\be\label{eq:R}\dot\RR=\left[{1\over \ell}\EE_1\times \RR - \BOM\right]\times \RR,
\ee
where $\BOM=\tau \EE_1+\kappa\EE_3$ (the Darboux vector of $\Gamma$ in the Frenet frame) and 
$$ \EE_1=\left(\begin{matrix}1\\ 0\\ 0\end{matrix}\right),\quad \EE_3=\left(\begin{matrix}0\\ 0\\ 1\end{matrix}\right).$$
Using the complex coordinate $Z=(R_2+iR_3)/(1+R_1)$ on the $\RR$-sphere
(stereographic projection from $-\EE_1$ onto  the $(R_2, R_3)$-plane),
we obtain the complex Riccati equation
\be\label{eq:ric3}
\dot Z= \left({1\over \ell}-i\tau\right)Z-{\kappa\over 2}(1+Z^2),
\ee
the projectivization $Z=Z_2/Z_1$ of the linear  system 
\be\label{eq:sys3}
{\dot Z_1\choose \dot Z_2}=
{1\over 2}\left(\begin{matrix}-1/\ell+i\tau&\kappa \\ -\kappa&1/\ell -i\tau\end{matrix}\right){ Z_1\choose Z_2}.
\ee
\end{prop}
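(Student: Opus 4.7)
The plan has three stages: (i) transport equation \eqref{eq:bode3d} into the Frenet--Serret frame to obtain \eqref{eq:R}; (ii) pass to the stereographic coordinate $Z$ to obtain the Riccati equation \eqref{eq:ric3}; (iii) recognize this equation as the projectivization of the linear system \eqref{eq:sys3}.

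For stage (i), I will package the Frenet--Serret formulas as $\dot e_i=\BOM\times e_i$ with Darboux vector $\BOM=\tau\v+\kappa\b$ (a quick check: $\BOM\times\v=\kappa(\b\times\v)=\kappa\nn$, and similarly for $\nn,\b$). Differentiating $\r=R_i e_i$ then yields $\dot\r=\dot R_i\,e_i+\BOM\times\r$, so passing to Frenet coordinates, in which $\v\leftrightarrow\EE_1$ and $\BOM\leftrightarrow\tau\EE_1+\kappa\EE_3$, equation \eqref{eq:bode3d} becomes $\dot\RR+\BOM\times\RR=(1/\ell)(\EE_1\times\RR)\times\RR$, which is \eqref{eq:R}.

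For stage (ii), on the unit sphere $(\EE_1\times\RR)\times\RR=R_1\RR-\EE_1$, and reading off components of \eqref{eq:R} gives an ODE system for $R_1,R_2,R_3$. The equations for $R_2,R_3$ combine into a single complex ODE for $W:=R_2+iR_3$, namely $\dot W=((1/\ell)R_1-i\tau)W-\kappa R_1$. I would then differentiate $Z=W/(1+R_1)$ and substitute; the factor $R_1^2-1$ appearing in $\dot R_1=(1/\ell)(R_1^2-1)+\kappa R_2$ combines with $((1/\ell)R_1-i\tau)(1+R_1)$ to yield an overall factor $(1+R_1)$ that cancels one power in the denominator. The two identities $R_1/(1+R_1)=(1-|Z|^2)/2$ and $R_2/(1+R_1)=(Z+\bar Z)/2$ (both immediate from $|\RR|^2=1$) express the remaining terms in $Z$ alone, and the identity $Z\bar Z=|Z|^2$ then converts $-\tfrac{\kappa}{2}\bigl[(Z+\bar Z)Z+(1-|Z|^2)\bigr]$ into $-\tfrac{\kappa}{2}(1+Z^2)$, producing \eqref{eq:ric3}.

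Stage (iii) is the standard observation that $\dot Z=c-2aZ-bZ^2$ is the projectivization $Z=Z_2/Z_1$ of the linear system with matrix $\begin{pmatrix}a&b\\ c&-a\end{pmatrix}$; matching coefficients in \eqref{eq:ric3} gives $a=\tfrac12(-1/\ell+i\tau)$, $b=\kappa/2$, $c=-\kappa/2$, which is exactly the matrix of \eqref{eq:sys3}. The main obstacle is the bookkeeping in stage (ii): identifying the two cancellations coming from $|\RR|^2=1$. A more conceptual alternative to (ii)--(iii) would be to conjugate the linear system \eqref{eq:beq1} of Theorem \ref{thm:foote3d} by the $\SU(2)$-lift of the Frenet rotation, producing \eqref{eq:sys3} directly, with the $\tau,\kappa$ entries arising as a gauge-connection term $-\tilde F^{-1}\dot{\tilde F}$ encoding $\BOM$; but the direct computation above is shorter once \eqref{eq:R} is in hand.
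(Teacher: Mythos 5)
Your proposal is correct: stage (i) gives \eqref{eq:R} exactly as stated, the component equations and the two identities $R_1/(1+R_1)=(1-|Z|^2)/2$, $R_2/(1+R_1)=(Z+\bar Z)/2$ do yield \eqref{eq:ric3}, and the coefficient matching reproduces \eqref{eq:sys3}. The paper omits this proof as ``a direct calculation,'' and your argument is essentially that same calculation (with the projectivization step already supplied by the paper's equations \eqref{eq:lin}--\eqref{eq:ricgen}), so there is nothing further to compare.
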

\n The proof is again a direct calculation that we omit. 

Note that the  bracketed term in equation   (\ref{eq:R})  is  the angular velocity of the bike expressed in the Frenet frame.  
Note also that equation \eqref{eq:ric3} reduces to equation \eqref{eq:ric2} for a planar curve ($\tau=0$). 

\subsection{Reformulation for general $n$ using the M\"obius group}\label{sec:reform}
 
 In this section we present another way to interpret the bicycle flow   (\ref{eq:bode}).  To illustrate the  idea for  $ n = 2 $ (the higher dimensional case works almost verbatim), the circle $ \| {\bf r} \| = 1 $  is embedded in Minkowski's 3-space $\R^3 $ as shown in Figure \ref{fig:cone}; namely, as the intersection of the cone $ x_1 ^2 + x_2 ^2 - x_3 ^2 = 0 $ and the horizontal plane $ x_3=1 $. Each generating ray of the cone is uniquely determined by a unit  vector $ {\bf r} $, as shown in Figure \ref{fig:cone}. We then consider  {\it  linear } flows in $\R^3$ preserving the Lorentz quadratic form  $ x_1 ^2 + x_2 ^2 - x_3 ^2,$ so that   the cone is invariant under any such Lorentz--orthogonal flow.  
 %
 %
    We show that the bicycle flow on ${\bf r}$ at time $t$ corresponds to a particular linear Lorentz--orthogonal flow, namely, to a flow with two eigendirections lying in the vertical plane through the origin containing $\pm\v$, where $\v=\dot\Gamma(t)$.

\begin{figure}[h!]
\centerline{\includegraphics[width=0.4\textwidth]{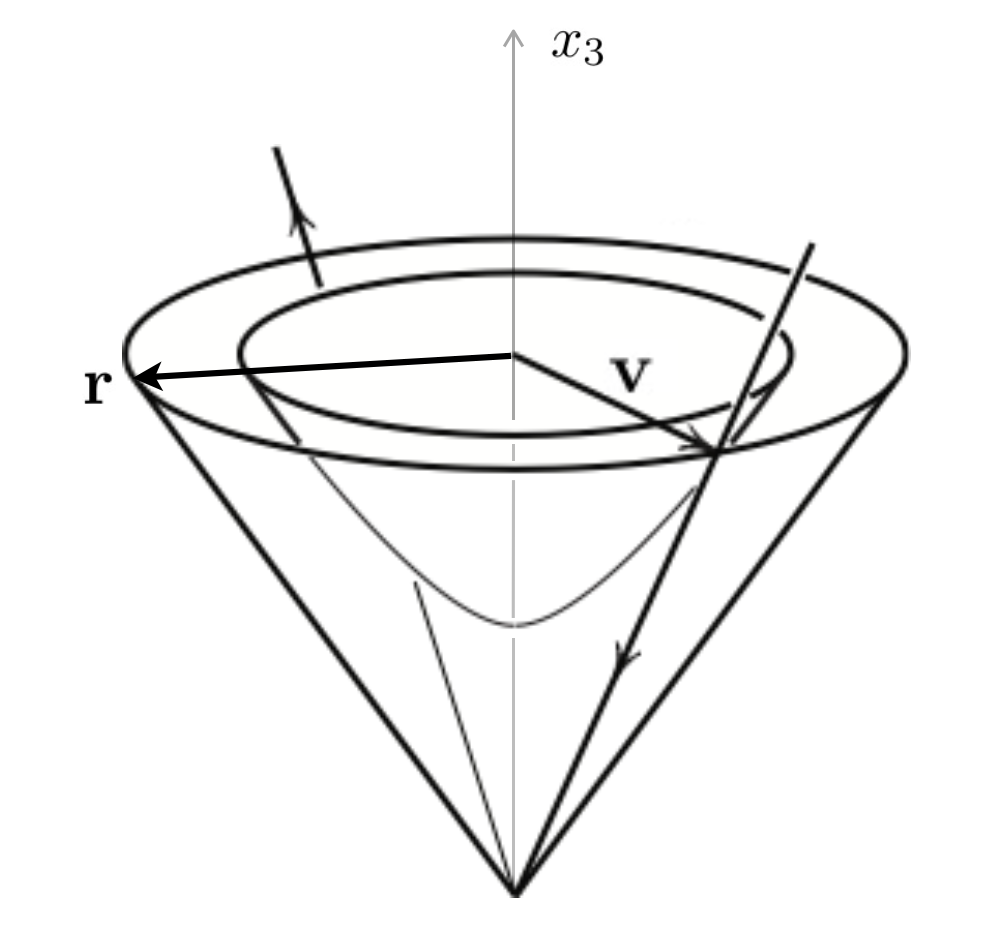}} 
\caption{The null cone in $\R^{2,1}$. The arrows along the two cone generators show the direction of the flow along the eigendirections of $A$ in Lemma \ref{lemma:A} below.}\label{fig:cone}
\end{figure}

   The same  construction shows how the bicycle flow extends from the circle $\|\r\| =1 $ to a flow of the disk $\|\r\|< 1 $ by hyperbolic isometries.

 We now proceed with the formal discussion for general $n$.

 \mn

 Let $\R^{n,1}$ be ${n+1}$-dimensional space equipped with the  quadratic form 
 $$\<\xb,\xb\>:=(x_1)^2+\ldots+(x_n)^2-(x_{n+1})^2, \quad \xb=(x_1,\ldots, x_{n+1}).$$ 
 Let $\SO^+_{n,1}\subset\GL(\R^{n,1})$ be the orientation and time-orientation preserving linear isometries of $\R^{n,1}$ (the identity component of the Lorentz-orthogonal  group $\mathrm{O}_{n,1}$). Its Lie algebra $\so_{n,1}$ consists of  $(n+1)\times (n+1)$ matrices, written in block form as 
$$
\left(\begin{matrix}
B&\v\\
\v^t&0
\end{matrix}\right),\quad \v=(v_1, \ldots , v_n)^t\in\R^n, \quad B\in \so_n \,\,(\hbox{i.e., }  B^t=-B) .$$
 Let 
 $$\R^{n,1}_+=\{\x\in\R^{n,1}\st x_{n+1}>0\}$$  and 
\be\label{proj}\pi:\R^{n,1}_+\to \R^n, \quad 
\xb=(x, x_{n+1})\mapsto {x\over x_{n+1}}, \quad x=(x_1, \ldots, x_n).
\ee
For each $\ell>0$, let 
\be\label{hyp}H^n_\ell=\{\x\in \R^{n,1}_+\st \<\x, \x\>=-\ell^2\}\ee
and 
$$\CC=\{\x\in \R^{n,1}_+\st \<\x, \x\>=0\}.$$
Equip $\R^{n,1}$ with the  flat pseudo-Riemannian  metric induced by $\<\cdot, \cdot \>$, 
$$g=\<\d\x, \d\x\>=(\d x_1)^2+\ldots+(\d x_n)^2-(\d x_{n+1})^2.$$

Using this notation, we collect in the next proposition  some standard facts about the geometry of the  $\SO^+_{n,1}$-action on $\R^{n,1}$, see, e.g., \cite{BP}. 
\begin{prop}\label{prop:action} For all $n\geq 2$,  
\begin{enumerate}
\item $H^n_\ell$ and $\CC$  are  the $\SO^+_{n,1}$-orbits  of $(\ell,0, \ldots, 0)^t$ and 
$(1,1,0, \ldots, 0)^t$, respectively.

\item The flat pseudo-Riemannian metric  $g=\<\d\x, \d\x\>$  on $\R^{n,1}$ restricts on $H^n_\ell$ to a Riemannian metric of constant negative sectional curvature $-1/\ell^2$, on which  $\SO^+_{n,1}$ acts transitively as its group of orientation preserving isometries. ($H^n_1$ is  the ``hyperboloid model" of the hyperbolic $n$-space.)

\item For each $\ell>0$, the restriction of $\pi$ to  $H^n_\ell\subset \R^{n,1}_+$ is a diffeomorphism onto the unit ball $B^n=\{\r\in \R^n\st \|\r\|<1\}$. The induced metric on $B^n$ is
\be \label{met}\d s^2_\ell={\ell^2\over 1-\|\r\|^2}\left({(\r\cdot \d\r)^2\over 1-\|\r\|^2}+\|\d\r\|^2\right).
\ee
 ($B^n$, equipped with this metric  for  $\ell=1$, is   the Klein-Belrami or projective model of hyperbolic $n$-space.) 
\item The restriction of $g$ to $\CC $ is degenerate (for all $\x\in\CC $ the line $\R\x\subset T_\x\CC $ is orthogonal to   $T_\x \CC $), descending to a conformal Riemannian metric on its spherization  $S^{n-1}=\CC /\R^+$, isomorphic to the standard conformal structure on $S^{n-1}$ (see next item).

 \item \label{item:conf} The image of $\CC $ under $\pi$ is $S^{n-1}=\partial B^n.$ The metric $g$, restricted to $\CC $,  descends via $\pi$ to the standard (``round") conformal metric on $S^{n-1}$. The action of $\SO^+_{n,1}$ on $\CC $ descends to   $S^{n-1}$, preserving the conformal structure. The group $\SO^+_{n,1}$ acting in the described way  is  called the M\"obius group $\Mob(S^{n-1})$. 
 \item $\Mob(S^n)$, for $n\geq 2$,  is the full group of orientation preserving conformal transformations of $S^n$. $\Mob(S^1)$  is the projective group $\PSLt$. 

\end{enumerate}
 \end{prop}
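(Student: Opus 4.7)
The plan is to work through the seven items in the order given, leveraging standard identifications for the hyperboloid model of hyperbolic space and the M\"obius group. Items (1)--(3) will follow from direct computations with the quadratic form $\langle\cdot,\cdot\rangle$, items (4)--(5) will use the conformal geometry of the null cone, and items (6)--(7) will invoke classical results on conformal and M\"obius groups.

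First I would dispatch (1). Since $\SO^+_{n,1}$ preserves both $\langle\cdot,\cdot\rangle$ and the time orientation, it preserves each $H^n_\ell$ and $\CC$. To prove transitivity on $H^n_\ell$, I would use the $\SO(n)$-subgroup rotating the spatial coordinates to align $(x_1,\dots,x_n)$ with a fixed axis, then apply a boost in the $(x_1,x_{n+1})$-plane to bring an arbitrary point to $(\ell,0,\dots,0)^t$; the same argument, with an additional normalization of $x_{n+1}$, handles $\CC$. For (2), at $\x\in H^n_\ell$ the tangent space is $\x^\perp$ (relative to $\langle\cdot,\cdot\rangle$), and since $\langle\x,\x\rangle=-\ell^2<0$ the orthogonal complement is positive-definite under $g$, giving a Riemannian metric preserved by $\SO^+_{n,1}$. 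To identify the sectional curvature as $-1/\ell^2$, I would either compute the second fundamental form of $H^n_\ell$ as a hypersurface in the pseudo-Riemannian flat space $\R^{n,1}$, or reduce to the $\ell=1$ case by rescaling and invoke homogeneity plus $\SO(n)$-isotropy to conclude constant sectional curvature from a single directional computation at the basepoint.

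For (3), the diffeomorphism is explicit: writing $\x=(x,x_{n+1})$ with $x\in\R^n$, the defining equation $\|x\|^2 - x_{n+1}^2=-\ell^2$ forces $x_{n+1}=\sqrt{\|x\|^2+\ell^2}$, so $\r=\pi(\x)=x/x_{n+1}$ lies in $B^n$, and one inverts by $x=\ell\r/\sqrt{1-\|\r\|^2}$, $x_{n+1}=\ell/\sqrt{1-\|\r\|^2}$. Substituting into $g=\|\d x\|^2-(\d x_{n+1})^2$ and grouping the radial and tangential contributions yields formula \eqref{met}; this is routine but slightly tedious, and I would carry it out once and refer to it. For (4), on $\CC$ one has $\langle\x,\x\rangle=0$, so $\R\x\subset T_\x\CC=\x^\perp$ is a null line and $g|_{\CC}$ is degenerate. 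Under the rescaling $\x\mapsto\lambda\x$, $g|_{T\CC}$ scales by $\lambda^2$, so the degenerate metric descends only as a conformal structure on $\CC/\R^+\simeq S^{n-1}$.

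Item (5) follows by picking the section $\{x_{n+1}=1\}\cap\CC$, on which $\|x\|=1$, so $\pi$ identifies this section with $S^{n-1}\subset\R^n$ and the restriction of $g$ agrees with the round metric; since $\SO^+_{n,1}$ commutes with the $\R^+$-action and preserves $g|_{\CC}$, it acts conformally on $S^{n-1}$, yielding $\Mob(S^{n-1})$. For (6) I would cite Liouville's theorem (see e.g.\ \cite{BP}): for $n\geq 2$ every orientation-preserving conformal diffeomorphism of $S^n$ is the restriction of a M\"obius transformation. Finally, for (7), I would use the standard identification $\SO^+_{2,1}\simeq\PSLt$ via the adjoint action on $\sl_2(\R)\simeq\R^{2,1}$ with the Killing form. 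The main obstacle is purely bookkeeping: each item is standard in isolation, but the metric pullback in (3) and the curvature computation in (2) require the most care; the only nontrivial result being cited is Liouville's rigidity in (6).
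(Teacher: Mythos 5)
Your sketch is sound, but note that there is no proof in the paper to compare it against: Proposition \ref{prop:action} is explicitly presented as a collection of standard facts about the $\SO^+_{n,1}$-action, with the reader referred to \cite{BP}. Your reconstruction --- rotation-plus-boost transitivity for item 1, positive definiteness of $g$ on $\x^{\perp}$ together with a second-fundamental-form (or rescaling) argument for the curvature $-1/\ell^2$, the explicit inverse $x=\ell\r/\sqrt{1-\|\r\|^2}$, $x_{n+1}=\ell/\sqrt{1-\|\r\|^2}$ whose pullback does indeed give \eqref{met}, and the section $\{x_{n+1}=1\}$ of the null cone for the conformal structure --- is exactly the sort of verification the citation stands in for, and the computations you defer do go through. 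Three details to tighten. (i) The base points in item 1 must be read with the time coordinate listed first, so that $(\ell,0,\dots,0)^t$ means $x_{n+1}=\ell$ with vanishing spatial part (as literally written in the coordinates $(x_1,\dots,x_{n+1})$ it would not lie on $\Hy$); make this convention explicit when you set up the boost. (ii) Item 2 also asserts that $\SO^+_{n,1}$ is the \emph{full} group of orientation-preserving isometries of $\Hy$, which your outline does not address; the standard argument is that $\SO^+_{n,1}$ acts simply transitively on oriented orthonormal frames of $\Hy$, while an isometry of a connected Riemannian manifold is determined by its value and derivative at one point. (iii) Liouville's conformal rigidity theorem covers $S^n$ only for $n\geq 3$; the case $n=2$ of the last item is instead the classical fact that the orientation-preserving conformal automorphisms of $S^2\simeq\CP^1$ are exactly $\PSLc$, and the identification $\Mob(S^1)\simeq\PSLt$ via the adjoint action of $\SLt$ on $\slt\simeq\R^{2,1}$ is precisely the special isomorphism the paper itself spells out in the following subsection, to which you could simply point forward. (Also, the proposition has six items, not seven; your (6) and (7) are the two sentences of item 6.)
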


 The following lemma is  borrowed from \cite{LT}. We reproduce its proof here for the convenience of the reader. 
 
\begin{lemma}\label{lemma:A}
%
For each $\v\in\R^n$, consider the linear vector field $v$ on $\R^{n,1}$, $v(\x)=A\x$, where 
$$A=-\left(\begin{matrix}
0_n&\v\\
\v^t&0
\end{matrix}\right)\in \so_{n,1}.$$
Then,  under the projection $\pi:\R^{n,1}_+\to \R^n$ of formula \eqref{proj},  $v$ maps to the vector field on $\R^n$ defined  by the right hand side of equation \eqref{eq:bode}.

\end{lemma}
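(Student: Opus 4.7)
The plan is a direct block-matrix computation followed by an application of the quotient rule. Writing $\xb = (x, x_{n+1})^t$ with $x\in\R^n$, I will first read off the two components of $A\xb$ from the given block form, obtaining the trajectory equations $\dot x = -x_{n+1}\,\v$ and $\dot x_{n+1} = -\v \cdot x$ for the linear vector field $v(\xb)=A\xb$ on $\R^{n,1}$.

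Next, setting $\r = \pi(\xb) = x/x_{n+1}$ and differentiating, the quotient rule gives
$$\dot \r \;=\; \frac{\dot x}{x_{n+1}} \;-\; \frac{x\,\dot x_{n+1}}{x_{n+1}^2} \;=\; -\v \;+\; \frac{(\v\cdot x)}{x_{n+1}}\cdot\frac{x}{x_{n+1}} \;=\; -\v + (\v\cdot \r)\,\r,$$
which is precisely the expression on the right hand side of equation \bode. Hence the push-forward $\pi_* v$ agrees with the bicycle vector field on the open ball $B^n$, which is the content of the lemma.

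I do not expect a real obstacle here; the computation is routine. The only formalities worth noting are that $A\in\so_{n,1}$ (a direct check from the block form), so its flow preserves $\langle\cdot,\cdot\rangle$ and in particular stays inside $\R^{n,1}_+$ and tangent to each $H^n_\ell$ and to $\CC$, making $\pi$ well-defined along every orbit; and that $x_{n+1}>0$ on $\R^{n,1}_+$ legitimizes the differentiation above. The real content of the statement, which I anticipate will be exploited immediately to re-derive Theorems \ref{thm:foote2d} and \ref{thm:foote3d} in a uniform way, is conceptual: the nonlinear, $\r$-dependent bicycle field on $B^n$ is the central projection of a single linear element of $\so_{n,1}$, so the bicycle monodromy is automatically the restriction to $\partial B^n = S^{n-1}$ of a Lorentz-orthogonal linear map, i.e., an element of $\Mob(S^{n-1})$.
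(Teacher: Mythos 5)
Your computation is exactly the paper's: the paper also reads off $\delta x=-x_{n+1}\v$, $\delta x_{n+1}=-\v\cdot x$ from $A\x$ and applies the derivative of $\pi$, namely $\delta\x\mapsto (x_{n+1}\delta x-x\,\delta x_{n+1})/x_{n+1}^2$, which is precisely your quotient rule, arriving at $-\v+(\v\cdot\r)\r$. The proof is correct and takes essentially the same approach as the paper.
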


\begin{proof} 
%


Let $\x=(x,x_{n+1})\in\R^{n,1}_+$ and $\delta\x=(\delta x,\delta x_{n+1} )$ a tangent vector at $\x,$ where $x=(x_1, \ldots, x_n)$.  Then, by formula \eqref{proj}, the derivative of $\pi$ at  $\x$ is     $\delta \x\mapsto (x_{n+1}\delta x-x\delta x_{n+1} )/(x_{n+1})^2.$  Next, let $\delta\x=v(\x)=A\x.$ Then $\delta\x=(\delta x,\delta x_{n+1} )$, where  $ \delta x=-x_{n+1}\v$ and $\delta x_{n+1}=-x\cdot \v.$  It folllows that  the  image of $v(\x)$ under $\d\pi$ is 
$$\frac{x_{n+1}(- x_{n+1}\v)-x (-x\cdot \v)}{x_{n+1}^2}=-\v+(\v\cdot\r)\r.$$
\end{proof}

As a consequence of the previous proposition and  lemma, we obtain the following  theorem, proved  for $n=2$ in \cite{F}, and for general $n$ in \cite{LT}.

\begin{theorem}\label{thmf} 
%
The flow of equation \eqref{eq:bode} (for all $\r$)  is the projection
via $\pi:\R^{n,1}_+\to \R^n$ (defined in  equation \eqref{proj})
 of the flow of the  linear system in $\R^{n,1}$ with $\so_{n,1}$ coefficient matrix 
\be\label{eq:mono}
\dot\x=-{1\over\ell}\left(\begin{matrix}
0_n&\v\\
\v^t&0
\end{matrix}\right)\x,  \quad \v=\dot\Gamma. 
\ee
It follows that 

\mn (1) The bicycle monodromy $M_\ell^t: S^{n-1}\to S^{n-1}$  is a M\"obius transformation, well-defined for all $t$ for which $\Gamma(t)$ is defined.  

\mn (2) The flow of equation \eqref{eq:bode} preserves the unit open ball $B^n=\{\|\r\|<1\}$, on which it acts by isometries of the hyperbolic metric of  equation \eqref{met}. 

\end{theorem}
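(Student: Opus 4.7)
The plan is to assemble Theorem \ref{thmf} from Lemma \ref{lemma:A} together with items (2), (3), (5) and (6) of Proposition \ref{prop:action}. The key observation is that the coefficient matrix $A$ in \eqref{eq:mono} lies in $\so_{n,1}$: a direct check against the Lorentz form yields $A^tJ+JA=0$ for $J=\mathrm{diag}(1,\ldots,1,-1)$. Hence the fundamental matrix solution $\Phi^t$ of \eqref{eq:mono} lies in the identity component $\SO^+_{n,1}$ of the Lorentz group and therefore preserves the entire $\SO^+_{n,1}$-invariant stratification of $\R^{n,1}_+$: the null cone $\CC$, each future hyperboloid $H^n_\ell$, and the interior regions between them.

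First I would invoke Lemma \ref{lemma:A}, with the factor $1/\ell$ inserted, to conclude that the linear vector field on $\R^{n,1}_+$ defined by the matrix in \eqref{eq:mono} is $\pi$-related to the vector field on $\R^n$ defined by the right-hand side of \eqref{eq:bode}. Since $\pi$ is a surjective submersion, $\pi$-relatedness forces $\pi$ to carry the integral curves of \eqref{eq:mono} to those of \eqref{eq:bode}. This is exactly the main assertion of the theorem, from which (1) and (2) will follow as corollaries.

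For (1), the flow $\Phi^t$ preserves $\CC$ together with its ray structure, so it descends to a diffeomorphism of the quotient $S^{n-1}\cong \CC/\R^+$. By items (5) and (6) of Proposition \ref{prop:action}, this descended action is through elements of the M\"obius group $\Mob(S^{n-1})$. Under the identification $\CC/\R^+\xrightarrow{\sim} S^{n-1}$ induced by $\pi|_{\CC}$, the descended map is precisely the bicycle monodromy $M^t_\ell$, so $M^t_\ell$ is a M\"obius transformation, defined wherever $\Gamma(t)$ is defined.

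For (2), the same flow $\Phi^t$ preserves each hyperboloid $H^n_\ell$, where item (2) of Proposition \ref{prop:action} identifies $\SO^+_{n,1}$ with the orientation-preserving isometry group of the constant-curvature metric induced by $g$. Item (3) identifies $\pi|_{H^n_\ell}$ with an isometry onto $B^n$ equipped with the metric \eqref{met}, so the projected flow preserves $B^n$ and acts there by hyperbolic isometries. The one point that requires care, and which I would regard as the main obstacle, is confirming that $\Phi^t$ actually stays inside the future component $\R^{n,1}_+$ rather than crossing to the past cone; this follows from $\Phi^0=\mathrm{id}$, continuity of $\Phi^t$ in $t$, membership of $\Phi^t$ in $\mathrm{O}_{n,1}$, and the fact that $\SO^+_{n,1}$ is defined as the time-orientation-preserving identity component.
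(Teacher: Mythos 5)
Your proposal is correct and follows essentially the same route as the paper, which obtains Theorem \ref{thmf} precisely as a consequence of Proposition \ref{prop:action} and Lemma \ref{lemma:A} (the paper leaves the assembly implicit, while you spell out the $\pi$-relatedness, the invariance of the stratification under $\SO^+_{n,1}$, and the time-orientation point). No gaps.
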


\begin{rmrk} \label{rmrk:roll}
In Section \ref{sec:rolling} below we interpret the hyperbolic isometries of item (2) of  Theorem \ref{thmf}  above as ``rolling without slipping and twisting" of $\Hy$ along $\Gamma\subset \R^n$. The flow of  equation \eqref{eq:bode}  also preserves the  complement of the closed unit ball $\{\|\r\|>1\}$ on which it acts  by isometries of a (curved) Lorenzian metric.  We do not pursue here this aspect of the bicycle monodromy, but it would be interesting to find  a  mechanical-geometric interpretation of this flow. 
\end{rmrk}

\begin{rmrk}
The bicycle equation \eqref{eq:bode} can be also reformulated in the language of bundles and connections, 
which some readers might find useful (a similar interpretation for $n=2$ appeared in \cite{F}). This formulation leads  to a straighforward generalization of the bicycling equation on any Riemannian manifold. We sketch here this formulation. 

Consider  the $\so_{n,1}$-valued 1-form on $\R^n$ 
\be \label{eq:con}
\theta={1\over\ell}\left(\begin{matrix}

0&\cdots&0&\d x_1\\
\vdots&\vdots&\vdots&\vdots\\
0&\cdots&0&\d x_n\\
\d x_1&\cdots&\d x_n&0
\end{matrix}\right).
\ee
We view $\theta$ as the 1-form of an $\SO^+_{n,1}$-connection on  the trivial principal bundle 
$\R^n\times\SO_{n,1}^+\to\R^n$. 
For any  space $F$  on which $\SO_{n,1}^+$ acts,  $\theta$ defines the covariant derivative $\mathrm D=\d+\theta$ 
of sections $f:\R^n\to F$  of the associated bundle $\R^n\times F\to \R^n$. Namely, $\mathrm Df=\d f+\theta\cdot f$, and 
a section is parallel if $\mathrm Df=0$. 

Equation \eqref{eq:mono} is then the equation for paralell transport in the vector bundle associated to the standard represetation of $\SO_{n,1}^+$  on $\R^{n,1}$. The projectivization of this representation 
has 3 orbits: the projectivized null cone $S^{n-1}$, its interior $B^n$, and the exterior $\RP^n\setminus\overline{B^n}$. On each of these orbits $\SO^+_{n,1}$ acts as the automorphism group of a different structure: isometries of a hyperbolic metric on $B^n$, M\"obius  transformation of $S^{n-1}$, and isometries of a Lorentzian  metric on $\RP^n\setminus\overline{B^n}$. The parallel transport in the associated bundle $\R^n\times \RP^n\to \R^n$ is given by the bicycle equation \eqref{eq:bode}, where $\r$ is used as an affine coordinate. 

For an arbitrary Riemannian manifold $M$ the $\ell$-bicycling equation defines  an $\SO^+_{n,1}$-connection on  its unit tangent sphere bundle.  In general, this connection is non-flat, unless $M$ has a metric of constant curvature $-1/\ell^2$, i.e., is hyperbolic, in which case the connection defines an interesting foliation of the unit tangent bundle of $M$. See Example 3.6.17 on p.~165 of \cite{LTh}.  
\end{rmrk}
 
\subsection{The special isomorphisms $\so_{2,1}\simeq\slt $,  $\so_{3,1}\simeq \sl_2(\C)$}
For $n=2,3$ there are  ``special isomorphisms" which enable us to  replace equation \eqref{eq:mono} with a more compact   linear system  with $2\times 2$ real or complex  matrices instead of $3\times 3$ or $4\times 4$  real  matrices (respectively). 

\subsubsection{$n=2$}  
Let $\slt$ be the Lie algebra of $\SLt$, i.e., the set of traceless 
real $ 2 \times 2 $ matrices $A$.  We equip $\slt$ with the quadratic form $-\det(A)$; this   form has signature $(2,1)$,  suggesting a relation 
with $\R^{2,1} $; indeed, one has the following.

\begin{prop} The map $\R^{2,1}\to\slt$, $$\x=(x_1,x_2,x_3)\mapsto A=\left(\begin{matrix}
-x_2&x_1+x_3\\
x_1-x_3&x_2
\end{matrix}\right),$$
is an isometry, mapping the quadratic form $\<\x, \x\>=(x_1)^2+(x_2)^2-(x_3)^2$ to $-\det(A)$.    The  conjugation action of  $\SLt$  on $\slt$, $A\mapsto gAg^{-1}$, preserves the quadratic form $-\det(A)$. The resulting homomorphism  $\SLt\to \SO^+_{2,1}$ is surjective  with kernel  $\{I, -I\}$. The corresponding isomorphism of Lie algebras $\slt\simeq \so_{2,1}$ is 
\be\label{eq:2d_iso} \left(\begin{matrix}
v_1 &v_2\\
v_3&-v_1
\end{matrix}\right)\mapsto 
\left(
\begin{array}{ccc}
 0 & v_3-v_2 & 2v_1 \\
  v_2-v_3 & 0 & v_2+v_3 \\
2 v_1 & v_2+v_3 & 0 \\
\end{array}
\right).\ee

\end{prop}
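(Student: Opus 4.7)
The proof has four separate assertions, each reducing to a direct computation; my plan is to dispatch them in turn, with the explicit Lie algebra formula being the one place where genuine bookkeeping is required.

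First, for the isometry claim: I would simply expand $-\det(A)$ for the given parametrization, obtaining
\[
-\det A \;=\; -\bigl[(-x_2)(x_2) - (x_1+x_3)(x_1-x_3)\bigr] \;=\; x_1^2 + x_2^2 - x_3^2,
\]
which is $\langle \x,\x\rangle$. Linearity and invertibility of the map are immediate from the formula, so it is an isometry of quadratic spaces. For the second assertion, I would note that $\det(gAg^{-1}) = \det A$ identically, so the conjugation action of $\SLt$ on $\slt$ lands in the orthogonal group of the form $-\det$. Transporting through the isometry of the first step gives a Lie group homomorphism $\rho: \SLt \to \mathrm{O}_{2,1}$, and since $\SLt$ is connected its image lies in the identity component $\SO^+_{2,1}$.

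For kernel and surjectivity, I would argue as follows. The kernel consists of $g$ commuting with every traceless $2\times 2$ matrix; since the traceless matrices together with $I$ span $M_2(\R)$, such a $g$ must be a scalar matrix, and the determinant-one condition then forces $g=\pm I$. Because the kernel is discrete, $d\rho: \slt \to \so_{2,1}$ has trivial kernel, and since both Lie algebras have dimension $3$ it is an isomorphism. Consequently $\rho$ is a local diffeomorphism, its image is an open subgroup of the connected group $\SO^+_{2,1}$, and any open subgroup of a connected topological group is the whole group. Hence $\rho$ is surjective with kernel $\{\pm I\}$, as claimed.

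Finally, for the explicit isomorphism \eqref{eq:2d_iso}, the key point is that $d\rho$ sends $B \in \slt$ to the adjoint operator $\mathrm{ad}_B: A \mapsto [B,A]$ on $\slt$, which under the identification $\slt \simeq \R^{2,1}$ of the first step becomes a linear endomorphism of $\R^{2,1}$ preserving $\langle\cdot,\cdot\rangle$, i.e., an element of $\so_{2,1}$. Concretely, I would pick the basis of $\slt$ corresponding to the standard coordinate vectors $(1,0,0), (0,1,0), (0,0,1)$ of $\R^{2,1}$ (namely the matrices
\(\bigl(\begin{smallmatrix}0&1\\1&0\end{smallmatrix}\bigr)\),
\(\bigl(\begin{smallmatrix}-1&0\\0&1\end{smallmatrix}\bigr)\),
\(\bigl(\begin{smallmatrix}0&1\\-1&0\end{smallmatrix}\bigr)\)),
compute the three brackets $[B, \cdot]$ with $B=\bigl(\begin{smallmatrix}v_1&v_2\\v_3&-v_1\end{smallmatrix}\bigr)$, and read off the coordinates of the results in the chosen basis. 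Assembling these as the columns of a $3\times 3$ matrix yields the right-hand side of \eqref{eq:2d_iso}. The only real obstacle is keeping the sign conventions and the basis correspondence straight; once this is set up correctly, the formula follows by three explicit bracket calculations.
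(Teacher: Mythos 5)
The paper offers no proof of this proposition beyond the sentence ``a direct calculation which we omit,'' so there is nothing to match step by step; your outline is the natural way to supply the omitted calculation, and its structural parts are all sound: the expansion $-\det A=x_1^2+x_2^2-x_3^2$, invariance of $\det$ under conjugation together with connectedness of $\SLt$ to conclude the image lies in $\SO^+_{2,1}$, the kernel argument via scalar matrices, surjectivity from injectivity of the differential plus openness of the image in the connected group $\SO^+_{2,1}$, and the identification of the differential with $B\mapsto\mathrm{ad}_B$ computed in the basis $E_1,E_2,E_3$ of $\slt$ corresponding to $e_1,e_2,e_3$.

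One concrete caution about the step you left as ``three explicit bracket calculations'': carrying them out with $B=\bigl(\begin{smallmatrix}v_1&v_2\\ v_3&-v_1\end{smallmatrix}\bigr)$ gives
$[B,E_1]=(v_3-v_2)E_2+2v_1E_3$, $[B,E_2]=(v_2-v_3)E_1+(v_2+v_3)E_3$, $[B,E_3]=2v_1E_1+(v_2+v_3)E_2$,
so the matrix whose \emph{columns} are these images is the \emph{transpose} of the right-hand side of \eqref{eq:2d_iso}: its $(1,2)$ and $(2,1)$ entries are $v_2-v_3$ and $v_3-v_2$, the opposite of what is printed. A quick sanity check: for $v_1=0$, $v_2=1$, $v_3=-1$ (i.e.\ $B=E_3$) one has $[E_3,E_1]=-2E_2$, whereas the printed matrix sends $e_1$ to $+2e_2$; moreover $M\mapsto M^{T}$ reverses brackets, so the printed assignment is an anti-homomorphism rather than a homomorphism. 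Thus your final claim that assembling the columns ``yields the right-hand side of \eqref{eq:2d_iso}'' will not come out literally true: you should either present the formula as the transpose of \eqref{eq:2d_iso} (equivalently, adopt the convention that the matrix acts on row vectors) or note the apparent sign typo in the displayed formula. Nothing downstream in the paper is affected, since in the subsequent application to the coefficient matrix of \eqref{eq:mono} the relevant $\slt$ matrix is symmetric ($a_2=a_3$), where the two conventions coincide.
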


\begin{proof} A direct calculation which we omit.
 \end{proof}

In particular, applying the inverse of the isomorphism \eqref{eq:2d_iso}  to the coefficient matrix  of the system \eqref{eq:mono} for $n=2$, 
$$-{1\over \ell}\left(
\begin{array}{ccc}
 0 & 0  & v_1 \\
0  & 0 & v_2  \\
v_1  &v_2   & 0 \\
\end{array}
\right)\mapsto 
 -{1\over 2\ell}\left(\begin{matrix}
v_1 &v_2\\
v_2&-v_1
\end{matrix}\right),$$
we obtain the system \eqref{eq:2d} (which we obtained previously by different means): 

$${\dot x\choose \dot y}=-{1\over 2\ell}\left(\begin{matrix}v_1&v_2\\v_2&-v_1\end{matrix}\right){ x\choose  y}. 
$$

 \newcommand{\cH}{\mathcal{H}}
 
\subsubsection{$n=3$ }

Let $\cH$ be the space of  $2\times 2$ complex Hermitian  matrices, $A=\bar A^t$, equipped with the (real) quadratic form $ -\det(A)$  of signature $(3,1)$. 
\begin{prop} 
The map $\R^{3,1}\to\cH$, 
$$\x=(x_1, x_2, x_3,x_4)\mapsto A=\left(\begin{array}{cc}
-x_1+x_4&-x_2+ix_3\\
-x_2-ix_3 &x_1+x_4
\end{array}\right), 
$$
is an isometry, 
mapping the quadratic form $\<\x,\x\>=(x_1)^2+(x_2)^2+(x_3)^2-(x_4)^2$ to $-\det(A)$.  The linear action of  $\SL_2(\C)$ on $\cH$, 
$A\mapsto gA\bar g^t$, preserves the quadratic form   $-\det(A)$. The resulting homomorphism  $\SL_2(\C)\to \SO^+_{3,1}$ is surjective  with kernel  $\{I,-I \}$.
The associated isomorphism of Lie algebras 
$\sl_2(\C)\simeq \so_{3,1}$ 
is
\be\label{eq:iso3} \left(\begin{array}{lr}
a&b\\
c&-a
\end{array}\right)\mapsto\left(
\begin{array}{cccc}
0 		&b_1-c_1		&-b_2-c_2		&-2a_1		\\
- b_1+c_1	&0 			& 2 a_2  		& - b_1- c_1	\\
b_2+ c_2&-2a_2 	 	& 0 			& b_2-c_2  \\
-2a_1&- b_1-c_1		&b_2- c_2	 	& 0
\end{array}
\right),\ee
where $ a=a_1+ia_2, b=b_1+ib_2, c=c_1+ic_2.$
\end{prop}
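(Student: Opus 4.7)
The proposition splits naturally into four assertions, and I would verify them in order, mirroring the structure of the analogous $n=2$ statement proved just above.

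First, for the isometry claim, I would simply compute
\[
-\det\!\begin{pmatrix}-x_1+x_4 & -x_2+ix_3\\ -x_2-ix_3 & x_1+x_4\end{pmatrix}
= -(x_4^2-x_1^2) + (x_2^2+x_3^2) = \langle\x,\x\rangle,
\]
and observe that the map is a real linear bijection onto $\cH$ (both sides are $4$-dimensional real vector spaces, and one reads off the inverse from the four real parameters $x_1,\ldots,x_4$).

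Next, for the $\SL_2(\C)$-action on $\cH$: Hermiticity is preserved because $(gA\bar g^t)^*=g\,\overline{A}^{\,t}\,\bar g^t = gA\bar g^t$, and the form is preserved because
\[
-\det(gA\bar g^t) = -\det(g)\det(A)\,\overline{\det(g)} = -|\det g|^2 \det(A) = -\det(A),
\]
since $\det g = 1$. This yields a homomorphism $\Phi:\SL_2(\C)\to \mathrm O(\cH,-\det)\simeq \mathrm O_{3,1}$. For the kernel: if $gA\bar g^t=A$ for all Hermitian $A$, take $A=I$ to get $g\bar g^t=I$, so $g\in\SU_2$; then the equation becomes $gAg^{-1}=A$ for all $A\in\cH$, hence $g\in Z(\SU_2)=\{\pm I\}$. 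For the image: $\SL_2(\C)$ is connected of real dimension $6$, equal to $\dim \SO^+_{3,1}$, so $\Phi$ has discrete kernel and its image is an open (hence closed, by homomorphism) subgroup of the connected group $\SO^+_{3,1}$, and therefore equals $\SO^+_{3,1}$. (As an immediate check that the image lies in the identity component, $\SL_2(\C)$ is connected and $\Phi(I)=I$.)

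For the explicit Lie algebra isomorphism (\ref{eq:iso3}), I would differentiate the conjugation action at the identity: for $X\in\sl_2(\C)$ and $A\in\cH$, the induced action is $A\mapsto XA+A\bar X^{t}$. Writing $X=\left(\begin{smallmatrix}a&b\\ c&-a\end{smallmatrix}\right)$ with $a=a_1+ia_2$, etc., and parametrizing $A$ by $(x_1,x_2,x_3,x_4)$ as in the proposition, one forms $XA+A\bar X^t$, reads off the four real entries of the resulting Hermitian matrix as real linear combinations of $x_1,\ldots,x_4$, and thereby extracts the $4\times 4$ matrix acting on $\R^{3,1}$. The result must be an element of $\so_{3,1}$ (i.e., skew with respect to the metric $\mathrm{diag}(1,1,1,-1)$), which serves as a consistency check.

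The only real obstacle is bookkeeping in the final step: one must carefully split real and imaginary parts of the six complex entries coming from $XA+A\bar X^t$ and match them against the columns of the displayed $4\times 4$ matrix. Since the analogous $n=2$ computation was already handled by ``direct calculation which we omit,'' I expect the authors to do the same here; the substantive content lies entirely in the constructions, and each verification is either a determinant identity, a dimension count, or a one-line computation with $2\times 2$ matrices.
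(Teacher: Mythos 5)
Your proposal is correct and matches the paper's approach: the paper's proof is simply "Another computation that we omit," i.e.\ exactly the direct verifications you outline (the determinant identity, preservation of Hermiticity and of $-\det$, the kernel and connectedness/dimension argument, and differentiating the action to $A\mapsto XA+A\bar X^{t}$ followed by real--imaginary bookkeeping). The only step you leave unexecuted, extracting the $4\times 4$ matrix entry by entry, is precisely the computation the authors also omit.
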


\begin{proof} Another computation that we omit. 
\end{proof}

Applying the inverse of the isomorphism \eqref{eq:iso3}  to the coefficient matrix  of the system \eqref{eq:mono} for $n=3$, 
$$
-{1\over \ell}\left(
\begin{array}{cccc}
 0 & 0 & 0&v_1 \\
 0& 0 &  0 &v_2 \\
 0& 0 &  0 &v_3 \\
v_1 &v_2 & v_3 &0 
\end{array}
\right)
\mapsto -{1\over 2 \ell}
\left(
\begin{array}{cc}
 v_1 & v_2-i v_3 \\
v_2+i v_3 & -v_1
\end{array}\right),$$
we obtain the 2-dimensional complex linear system 
$${\dot z_1\choose \dot z_2}=-{1\over 2\ell}
\left(
\begin{array}{cc}
 v_1 & v_2-i v_3 \\
v_2+i v_3 & -v_1
\end{array}\right)
{z_1\choose z_2}, \quad \dot\Gamma=(v_1 ,v_2 ,v_3),
$$
which we also obtained previously in equation \eqref{eq:beq1}  by different means. 

\begin{rmrk}
Although not used  in this article, two other  special isomorphisms are  $\so_{5,1}\simeq \sl_2(\H)$ and $\so_{9,1}\simeq \sl_2(\mathbb{O})$ (see, e.g., \cite{V}). Hence the bicycle equation in $\R^5$ and $\R^9$ can also be ``linearized" by   2-dimensional quaterionic and octonionic linear systems (respectively), with the corresponding Riccati equations. The quaternionic system is 
$${\dot h_1\choose \dot h_2}=-{1\over 2\ell}
\left(
\begin{array}{cc}
 v_1 & \bar q \\
 q & -v_1
\end{array}\right)
{h_1\choose h_2},$$
where
$$\dot\Gamma=(v_1 ,\ldots,v_5), \quad q=v_2+iv_3+jv_4+kv_5\in\H,$$
with the quaternionic  
Riccati equation for $h=h_2h_1^{-1}$ 
$$\dot h={1\over 2\ell}(-q+ 2v_1h+h\bar q h).$$ 
\end{rmrk}

\subsection{A Berry phase formula for  the bicycle monodromy }
\label{sec:berry}

Let $\Gamma$ be a parameterized front curve in  $\R^n$ and $M_\ell:S^{n-1}\to S^{n-1}$ the  associated $\ell$-bicycle monodromy  between two points $\Gamma(t_0),\Gamma(t_1)$ on $\Gamma$, with $t_0<t_1$. 

\begin{theorem}\label{thm:monodromy} For every $n\geq 2$, $\ell>0$ and $\r_0\in S^{n-1}$, the derivative 
$M'_\ell(\r_0):T_{\r_0}S^{n-1}\to T_{\r_1}S^{n-1}$ is given by 
$$M'_\ell(\r_0)=e^{-L_\gamma/\ell}P, 
$$
where 
\begin{itemize}
\item $\r_1=\r(t_1)$ and $\r(t)$ is the solution to equation \bode\ with $\r(t_0)=\r_0$,  
\item $\gamma(t)=\Gamma(t)+\ell \r(t)$ is the corresponding rear track, 
\item $L_\gamma=-\int_{t_0}^{t_1}\r\cdot\v\, \d t$, $\v=\dot\G$,  is the (signed) length of $\gamma$, 
\item $P\in \mathrm{Iso}(T_{\r_0}S^{n-1}, T_{\r_1}S^{n-1})$ is the parallel transport  in $TS^{n-1}$ (with respect to the Levi-Civita connection) along the curve  $\r(t)$, $t_0\leq t\leq t_1$.
\end{itemize}
\end{theorem}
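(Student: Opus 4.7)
The plan is to linearize the bicycle equation \eqref{eq:bode} along the given solution $\r(t)$ and to recognize the resulting flow on $T S^{n-1}$ as a scalar gauge times Levi-Civita parallel transport. Concretely, consider a one-parameter family of solutions $\r(t,s)$ of \eqref{eq:bode} with $\r(t,0)=\r(t)$, and set $\xi(t)=\partial_s\r(t,s)\bigr|_{s=0}$. Since $\r(t,s)\in S^{n-1}$ for all $(t,s)$, the variation satisfies $\xi(t)\in T_{\r(t)}S^{n-1}$, i.e., $\xi\cdot\r=0$. Differentiating \eqref{eq:bode} in $s$ at $s=0$ yields the Euclidean linearization
\begin{equation*}
\ell\,\dot\xi=(\v\cdot\xi)\,\r+(\v\cdot\r)\,\xi.
\end{equation*}

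Next I decompose this into normal and tangential parts relative to $S^{n-1}$. The first term $(\v\cdot\xi)\r$ is purely radial, and the second is purely tangential because $\xi\perp\r$. Projecting onto $T_\r S^{n-1}=\r^\perp$, so as to read off the Levi-Civita covariant derivative $D_t$ along the curve $\r(t)$ on the round sphere, one gets the clean equation
\begin{equation*}
\ell\,D_t\xi=(\v\cdot\r)\,\xi.
\end{equation*}
This is the crux: the linearized bicycle flow preserves direction and only rescales in the parallel frame.

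The remaining step is to identify the scalar factor with the exponential of $-L_\gamma/\ell$. Differentiating $\gamma=\Gamma+\ell\r$ and substituting \eqref{eq:bode} gives $\dot\gamma=\v+\ell\dot\r=(\v\cdot\r)\,\r$, so the rear-track speed is $|\v\cdot\r|$ and the signed arclength matches $\int(\v\cdot\r)\,dt$; by the convention $L_\gamma=-\int_{t_0}^{t_1}\r\cdot\v\,dt$ in the statement, we have $\int_{t_0}^{t_1}(\v\cdot\r)/\ell\,dt=-L_\gamma/\ell$. Writing $\xi(t)=f(t)\,Y(t)$ for a parallel unit vector field $Y$ along $\r$ on $S^{n-1}$ (so $D_tY=0$), the covariant equation reduces to the scalar ODE $\ell\dot f=(\v\cdot\r)f$, which integrates to $f(t_1)/f(t_0)=e^{-L_\gamma/\ell}$. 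Therefore $\xi(t_1)=e^{-L_\gamma/\ell}\,P\,\xi(t_0)$, which is exactly the claim $M'_\ell(\r_0)=e^{-L_\gamma/\ell}P$.

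I do not foresee a serious obstacle: the argument is essentially a variation-of-constants computation once the correct projection is used. The one point that requires care is the sign convention for $L_\gamma$, which is forced by the identity $\dot\gamma=(\v\cdot\r)\r$; everything else is bookkeeping on the sphere.
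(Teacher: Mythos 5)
Your proposal is correct and follows essentially the same route as the paper: linearize \eqref{eq:bode} along $\r(t)$, observe that the radial term drops under tangential projection so that $\ell\,\nabla_{\dot\r}\xi=(\v\cdot\r)\,\xi$, and integrate the scalar factor to get $e^{-L_\gamma/\ell}$ times parallel transport. The only cosmetic difference is that the paper verifies $\xi=f\hat\xi$ by uniqueness for first-order ODEs rather than positing the ansatz $\xi=fY$ in a parallel frame, which is the same computation.
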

 
\begin{rmrk}
The sign of the  length element $-\r\cdot\v\,\d t$ of the rear track  is adjusted to coincide with the geometric intuition of forward riding of $\gamma$ being counted as positive length and backward riding as negative. The sign is reversed at a cusp. 
\end{rmrk}

 \begin{proof}[{\it Proof of Theorem \ref{thm:monodromy}}] 
 %
 %
 Note first that  for every $\xib_0\in T_{\r_0}S^{n-1}$, one has $M'_\ell(\r_0)\xib_0=\xib(t_1)$, where $\xib(t)\in T_{\r(t)}S^{n-1}$ is the solution to the linearization of equation \bode\ along $\r(t)$, satisfying $\xib(t_0)=\xib_0$; namely, 
$$
\ell\dot\xib=(\v\cdot\xib)\r+ (\v\cdot\r)\xib, \quad\v=\dot\Gamma.
$$
It follows that $(\v\cdot \r)\xib/\ell$ is the orthogonal projection of $\dot\xib$ on $T_\r S^{n-1}$. But this is precisely the definition of the covariant derivative along a submanifold in $\R^n$. That is, 
\be\label{eq:cov}\nabla_{\dot \r}\xib={\r\cdot\v\over\ell} \xib,
\ee
where  $\nabla$ is the Levi-Civita connection on $S^{n-1}$.

Next, let $u:=(\r\cdot\v)/\ell$, 
$f(t):=\exp\left(\int_{t_0}^t u(s)\d s\right)$
and 
$\hat\xib(t)$ the  parallel transport of $\xib(t_0)$  along $\r(t)$; that is, $\hat\xib(t_0)=\xib(t_0)$, and $\nabla_{\dot \r}\hat\xib=0$.  The theorem then amounts to $\xib=f \hat\xib.$ 
To show this, it is enough to show that both $\xib, f\hat\xib$ have the same value at $t=t_0$ and satisfy the same first order differential equation. By equation \eqref{eq:cov}, $\xib$ satisfies $\nabla_{\dot \r}\xib=u\xib$. Now $f$, by its definition, satisfies $f(t_0)=1$ and $\dot f=uf$, hence  $(f\hat\xib)(t_0)=\xib(t_0)$ and  
$$\nabla_{\dot \r}(f\hat\xib)=\dot f\hat\xib+f\nabla_{\dot \r}\hat\xib=\dot f\hat\xib=u(f\hat\xib).$$ Therefore $\xib=f\hat\xib$.
\end{proof}

\begin{rmrk} 
For $n\geq 3$, Theorem \ref{thm:monodromy} implies   that $M_\ell$ is a conformal transformation (since parallel transport is an isometry), so it gives  an alternative proof of  Theorem \ref{thmf}, item 1. 
\end{rmrk}

The most  interesting case of Theorem \ref{thm:monodromy} is that of a closed curve  in $\R^3$, where $M_\ell\in\PSL_2(\C)$. Generically, $M_\ell$ has two fixed points in $S^2$ and is thus conjugate to the M\"obius transformation $z\mapsto \lambda z,$ whose fixed points are $0,\infty$, with $M'(0)=\lambda,M'(\infty)=1/\lambda.$ The {\em conjugacy class} of $M$ is thus given by the derivatives $\lambda^{\pm 1}$ at the fixed points.

\begin{cor}[Berry phase formula] \label{Berry}
Let $\Gamma$ be a closed  curve in $\R^3$, $\r_0\in S^2$  a fixed point of the $\ell$-bicycle monordomy of $\Gamma$ (with respect to some initial point $\Gamma(t_0)$)  and $\gamma$ the corresponding closed rear track. Then   
\begin{equation} \label{eq:berry}
M'_\ell(\r_0)=e^{-(L_\gamma/\ell)+i\Omega},
\end{equation}
where $L_\gamma$ is the signed length of $\gamma$ and $\Omega$ is the area in $S^2$ enclosed by the spherical curve $\r(t).$
\end{cor}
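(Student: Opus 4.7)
The plan is to derive the Berry phase formula as a direct consequence of Theorem \ref{thm:monodromy}, with the only nontrivial ingredient being the classical holonomy formula for the Levi-Civita connection on $S^2$.

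First I would apply Theorem \ref{thm:monodromy} verbatim: since $\r_0$ is a fixed point of $M_\ell$, the rear track direction $\r(t)$ returns to $\r_0$ at $t=t_1$, so $\r(t)$ is a closed loop on $S^2$ based at $\r_0$. Hence the operator $P:T_{\r_0}S^2\to T_{\r_0}S^2$ in Theorem \ref{thm:monodromy} is the holonomy of the Levi-Civita connection of $S^2$ around this closed loop, and the theorem gives $M'_\ell(\r_0)=e^{-L_\gamma/\ell}\,P$ as endomorphisms of $T_{\r_0}S^2$.

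Next I would identify $T_{\r_0}S^2$ with $\C$ using the orientation induced by the outward normal $\r_0$. Because $M_\ell\in\PSLc$ is an orientation-preserving conformal transformation of $S^2$, its derivative at the fixed point $\r_0$ is $\C$-linear, hence multiplication by a single complex number; the right-hand side $e^{-L_\gamma/\ell}P$ must therefore also be $\C$-linear. Since $P$ is an orientation-preserving isometry of $T_{\r_0}S^2$ (parallel transport preserves the metric and orientation), it is multiplication by some unit complex number $e^{i\Omega}$. The classical spherical holonomy formula (a special case of Gauss--Bonnet: the holonomy of the round metric on $S^2$ around a closed loop equals rotation by the spherical area enclosed) identifies $\Omega$ with the signed area on $S^2$ bounded by the curve $\r(t)$. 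Combining, $M'_\ell(\r_0)=e^{-L_\gamma/\ell}\cdot e^{i\Omega}=e^{-(L_\gamma/\ell)+i\Omega}$, which is \eqref{eq:berry}.

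The only delicate point, and the main obstacle I would want to address carefully, is the matching of orientation conventions so that the sign of $\Omega$ and the phase of $M'_\ell(\r_0)$ agree; this is handled by fixing once and for all that $T_{\r_0}S^2\cong\C$ is oriented by $\r_0$ as outward normal and that the loop $\r(t)$ is traversed in the direction dictated by increasing $t$. No further computation is needed beyond Theorem \ref{thm:monodromy} and the standard spherical holonomy identity; this short derivation also explains conceptually why the Prytz planimeter formula $\theta\approx A/\ell^2$ of \eqref{eq:prytzformula} is an approximation to the exact identity \eqref{eq:berry}, as announced in the introduction.
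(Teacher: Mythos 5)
Your argument is correct and is essentially the paper's own proof: the paper also derives \eqref{eq:berry} by specializing Theorem \ref{thm:monodromy} to a fixed point (so that $\r(t)$ is a closed loop and $P$ is the holonomy) and then invoking the Gauss--Bonnet identification of spherical holonomy with enclosed area. Your extra care with the identification $T_{\r_0}S^2\cong\C$ and with orientation conventions is a fleshed-out version of what the paper leaves implicit.
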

 \begin{proof}   By the Gauss-Bonnet theorem, parallel transport around  a  closed  curve  in $S^2$  is a rotation by an angle equal to the area  of the spherical region bounded by the curve.   
 \end{proof}
 
\begin{rmrk}

\mn 
\begin{enumerate}[(i)]
\item The last corollary and its proof still hold  when    the spherical curve $\r(t)$ is not simple,  provided  $\Omega$ is defined as the   algebraic (or signed) area of the spherical region bounded by $\r(t)$; see, e.g., \cite{Ar}.
\item  For $n=2$,  in the case of hyperbolic monodromy, with $\gamma$ being one of the two periodic rear tracks, the formula reduces to $M'_\ell(\r_0)=e^{-L_\gamma/\ell}$,  as in  Theorem 3.6 of  \cite{LT}. This formula determines the conjugacy class of the bicycle monodromy when it is a hyperbolic element of $\PSLt$. The elliptic case, on the other hand,  becomes clear only once we embed the bike in $ {\mathbb R}  ^3 $, as explained below in  Section \ref{bird}.  
 \item For a  generic $ \Gamma $ in $ {\mathbb R}  ^3$,  the $M_\ell$--iterates of all points on the sphere, except for the unstable fixed point, approach the stable fixed point. This means that all spatial motions  of the bike, save the unstable periodic one, approach the stable periodic motion.  
The case of planar $ \Gamma $ (in $ {\mathbb R}  ^3  $) is special: $M_\ell$ commutes with reflections in the plane, and in the elliptic case $M$ is conjugate to a rigid rotation of $S^2 $. All the bike motions in $ {\mathbb R}  ^3 $ are then periodic or quasiperiodic with two frequencies. 

\item In the planar elliptic case embedded in $ {\mathbb R}  ^3 $,  the length of each of the two  periodic rear tracks is zero, as follows from   equation (\ref{eq:berry}). 
\item Here is  a heuristic explanation  for the appearance of Berry  phase $\Omega$ in  formula \eqref{eq:berry}.   Figure \ref{fig:heur} shows two infinitesimally close bikes, with rear wheels at $R,R_1$, sharing the same front trajectory $ \Gamma$ at $F$, with $R$ tracing a closed back track  $\gamma$ and $R_1$ a nearby   (not  necessarily closed) back track. Consider the unit    vector $\hat\xib =RR_1/|R R_1| $. The key observation is that {\em  the angular velocity of $\hat \xib$ around the axis $ RF $ is zero.}   

\begin{figure}[h!]
 \centerline{\includegraphics[width=.7\textwidth]{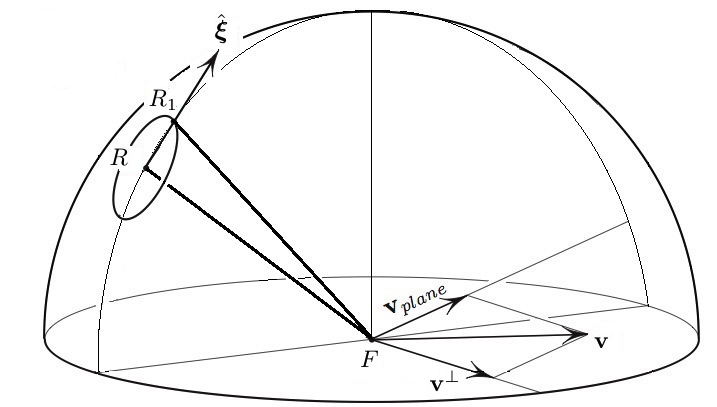}}
\caption{A heuristic explanation  for the appearance of Berry's phase}\label{fig:heur}
\end{figure}

To justify this, let us decompose $\v=\dot F$ as $\v=\v_{plane}+\v^\perp$, where $\v_{plane}$ is the orthogonal projection of $\v$ unto the $FRR_1$--plane and $\v^\perp$ the perpendicular component, as shown in Figure \ref{fig:heur}.  Let us consider separately the effects of $ {\bf v}_{plane} $
and ${\bf v} ^\perp $. 

First, the motion of $R$ and $R_1$ due to $ {\bf v}_{plane} $ occurs in the plane $ RFR_1 $, and thus $\hat\xib\parallel  RR_1 $ 
  does not rotate about any axis in that plane, let alone about $ FR $.  
Second,  the component $ {\bf v} ^\perp $  does not even contribute to the velocities  of  $R$ and $R_1$, and therefore $ {\bf v} ^\perp $ has no 
effect on the motion of $\hat \xib $.  

This zero angular velocity statement  is equivalent to saying that   {\em $\hat \xib $ undergoes parallel transport  on the sphere   centered at $F$ along the curve traced on it by $R$}. It follows, by the Gauss--Bonnet theorem, that the vector $ \hat\xib $ will end up rotated by an angle, equal to  the solid angle $\Omega$ (or spherical area) bounded by  the  closed path traced by $R$, as viewed by an observer moving with $F$. 
\end{enumerate}
 \end{rmrk}

 \subsection{Bicycle as a planimeter in $\R^n$}
 
 As we mentioned in the introduction, the planar bicycle can serve as a planimeter.  
 In this section, we examine the higher dimensional version of this phenomenon.

 Let $\Gamma$ be a closed curve in $\R^n$, the bicycle front track,  of length $L$.
Let the bicycle length be $\ell=1/\eps$; as before, $\v= \dot \Gamma$ and $\r$ is a unit vector along the bicycle segment. The  bicycle equation 
\bode\ with an initial condition is 
\begin{equation}  
   \left\{ \begin{array}{l} 
   \dot\r=\varepsilon ( -\v+(\v\cdot\r)\r) ,  
 \\[3pt] 
   \r(0, \varepsilon ) = \r_0. \end{array} \right.   
 \label{eq:cauchy}
\end{equation}  
 
 The following theorem generalizes the hatchet planimeter formula \eqref{eq:prytzformula}.
 The area bounded by a closed plane curve $\Gamma(t)$ is given by the integral 
 $$
 \frac{1}{2} \int \det(\Gamma,\dot\Gamma)\ \d t.
 $$
  
For a curve $\Gamma$  in $\R^n$, an analog of the area is the area bivector 
$$
 \frac{1}{2} \int \Gamma \wedge\dot\Gamma\ \d t,
 $$
 which contains the information about the areas bounded by the projections of the curve on all coordinate 2-planes, see Remark \ref{areas} below. This  bivector can be interpreted as a skew--symmetric linear operator.

\newcommand{\A}{{\mathcal A}}

\begin{theorem} \label{3Dplan}

The  bicycle vector $ \r $, i.e., the solution of   (\ref{eq:cauchy}), undergoes a net rigid rotation, up to an $ O( \varepsilon^3 ) $--error; more precisely,  

\begin{equation} 
	\r(L)=\r_0+\varepsilon ^2 {\A} \r_0+ O(\varepsilon^3), 
	\label{eq:3Dplan}
\end{equation} 
where $\A:\R^n\to\R^n$ is the skew--symmetric ``area operator" of $\Gamma$, given by 
\[
	\A \r_0= \int_{0}^{L}( \Gamma \cdot \r_0)\dot\Gamma \,\d t,  \  \  \hbox{for  }  \  \  \r_0\in {\mathbb R}  ^n.
\]
For $n = 3 $,  
\begin{equation} 
	\A \r_0=\int_{0}^{L}(\Gamma \cdot \r_0)\dot \Gamma\,\d t={\widehat \A}\times \r_0,  
	\label{eq:3D}
\end{equation}   
where 
$$ {\widehat \A} = {1\over 2}\int_0^L ( \Gamma\times\dot\Gamma)\ \d t$$ is the area vector of $ \Gamma $. Thus in $ {\mathbb R}  ^3 $, modulo an 
$ O ( \varepsilon ^3 ) $--error, the initial bike direction $ \r_0 $ is rotated around the direction $ {\widehat \A} $ through the angle $ \| {\widehat \A}\| $, equal to the signed area of the projection of $\Gamma$ on a plane perpendicular to $ {\widehat \A}$.
\end{theorem}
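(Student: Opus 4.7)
\medskip
\noindent\textbf{Proof proposal.} The plan is to apply regular perturbation theory in the small parameter $\varepsilon = 1/\ell$. Since the right-hand side of \eqref{eq:cauchy} is smooth and $[0,L]$ is compact, standard ODE theory (Gronwall) justifies a uniform expansion
$$\r(t,\varepsilon) = \r_0 + \varepsilon\r_1(t) + \varepsilon^2\r_2(t) + O(\varepsilon^3).$$
Translate $\Gamma$ so that $\Gamma(0) = 0 = \Gamma(L)$; this changes nothing in \eqref{eq:cauchy}, as only $\v=\dot\Gamma$ enters. Matching powers of $\varepsilon$ then yields
$$\dot\r_1 = -\v + (\v\cdot\r_0)\r_0, \qquad \dot\r_2 = (\v\cdot\r_1)\r_0 + (\v\cdot\r_0)\r_1,$$
subject to $\r_j(0)=0$ for $j=1,2$.

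The first equation integrates at once to $\r_1(t) = -\Gamma(t) + (\Gamma(t)\cdot\r_0)\r_0$, and closure of $\Gamma$ forces $\r_1(L)=0$; this accounts for the vanishing of the $\varepsilon$-coefficient in \eqref{eq:3Dplan}. Substituting into the second equation and expanding, the component of the integrand along $\r_0$ equals $\bigl[-\v\cdot\Gamma + 2(\Gamma\cdot\r_0)(\v\cdot\r_0)\bigr]\r_0$, whose two summands are total time-derivatives of $-\tfrac12|\Gamma|^2$ and $(\Gamma\cdot\r_0)^2$ respectively; both vanish after integration around the closed loop. The only surviving contribution is $-\int_0^L(\v\cdot\r_0)\Gamma\,\d t$, and writing $\v\cdot\r_0 = \tfrac{d}{dt}(\Gamma\cdot\r_0)$ and integrating by parts converts it into $\int_0^L(\Gamma\cdot\r_0)\v\,\d t = \mathcal{A}\r_0$. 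This establishes \eqref{eq:3Dplan}. Skew-symmetry of $\mathcal{A}$ follows from the same integration by parts, since for any $\r_0,\r_0'$ the sum $\mathcal{A}\r_0\cdot\r_0' + \mathcal{A}\r_0'\cdot\r_0$ is the integral of $\tfrac{d}{dt}\bigl[(\Gamma\cdot\r_0)(\Gamma\cdot\r_0')\bigr]$ around the closed loop.

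For the statement specific to $n=3$, the BAC--CAB identity $(\Gamma\times\v)\times\r_0 = (\Gamma\cdot\r_0)\v - (\v\cdot\r_0)\Gamma$ together with one more integration by parts on the second term gives $2\mathcal{A}\r_0 = \bigl(\int_0^L\Gamma\times\v\,\d t\bigr)\times\r_0 = 2\widehat{\mathcal{A}}\times\r_0$, which is \eqref{eq:3D}. Since $\mathcal{A}$ is skew, the map $\r_0\mapsto\r_0+\varepsilon^2\widehat{\mathcal{A}}\times\r_0$ is, to the stated order, a rigid rotation about $\widehat{\mathcal{A}}$, and the geometric description follows. I expect no real technical obstacle; the one conceptual point that deserves attention is the cancellation of all $\r_0$-direction terms at order $\varepsilon^2$ by closure of $\Gamma$, which is exactly what is needed for the leading correction to lie in $\r_0^\perp$ and thereby be compatible with $|\r|\equiv 1$.
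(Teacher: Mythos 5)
Your proposal is correct and follows essentially the same route as the paper: a regular perturbation expansion in $\varepsilon$, integration of the order-$\varepsilon$ and order-$\varepsilon^2$ variational equations, cancellation of the exact-derivative terms by closedness of $\Gamma$, an integration by parts to produce $\int_0^L(\Gamma\cdot\r_0)\dot\Gamma\,\d t$, and the same vector identity plus integration by parts for the $n=3$ statement. Your explicit verification of the skew-symmetry of $\mathcal{A}$ and the normalization $\Gamma(0)=0$ (in place of the paper's integration constant $\mathbf{c}$) are only cosmetic differences.
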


\begin{proof}
The  solution $ \r (t, \varepsilon ) $ of the Cauchy problem   (\ref{eq:cauchy})  is analytic in $\varepsilon$ since so is the right--hand side,  and thus can be expanded in a Taylor series in $ \varepsilon$, starting with   
\begin{equation} 
	\r(t, \varepsilon ) = \r(t,0)+ \r_1(t) \varepsilon +  \r_2 (t) \varepsilon ^2 + O( \varepsilon^3) ,
	\label{eq:incr}
\end{equation}   
where $\r_1(t)=\partial_ \varepsilon \r(t, 0) $,  $\r_2(t) =\frac{1}{2} \partial ^2_ \varepsilon \r(t, 0)$.   To find $\r(t,0), \r_1(t), \r_2(t)$, we first set   $\varepsilon = 0 $ in  (\ref{eq:cauchy})  to find $ \r(t,0)\equiv \r_0 $. 
Differentiating    (\ref{eq:cauchy})  by $\varepsilon$ two times and setting  $\varepsilon=0 $ after each differentiation,  we get  
\begin{equation} 
	\dot  \r_1 =- {\bf v} + ({\bf v} \cdot \r_0) \r_0 
	\label{eq:xi}
\end{equation}   
 and 
  \begin{equation} 
	 \dot\r_2=  ({\bf v} \cdot \r_1)\r_0 + ({\bf v} \cdot \r_0)\r_1.   
	\label{eq:eta}
\end{equation}   
  From   (\ref{eq:xi}),  
  \begin{equation} 
	  \r_1   = - \Gamma + (\Gamma\cdot \r_0)\r_0  + {\bf c},   
	    \label{eq:xi1}
\end{equation} 
and in particular $ \r_1(L)=\r_1(0) $. 
Substituting     (\ref{eq:xi1})   into   (\ref{eq:eta})  and integrating, we find, after simplification, the sole surviving term (other terms drop out as the   derivatives of periodic functions): 
\[
	 \r_2  \biggl|_{t=0}^L =
	- \int_{0}^{L}(\dot \Gamma \cdot \r_0)\Gamma \,\d t=  \int_{0}^{L}( \Gamma \cdot \r_0)\dot\Gamma\,\d t,        
\]    
the last step using integration by parts. 

For $n=3$, integrating  the  identity 
$$(\Gamma\times\dot\Gamma)\times \r_0=(\r_0\cdot\Gamma)\dot\Gamma-(\r_0\cdot\dot\Gamma)\Gamma$$
  over a period and using integration by parts, one obtains the stated formula.
 \end{proof}

\begin{rmrk}
Theorem  \ref{3Dplan}  reveals  an interesting behaviour of the bicycle equation in $\R^3$ for large bike length. On the one hand,  the bicycle vector field   on $ S^2 $ given by   equation (\ref{eq:cauchy})  is ``maximally hyperbolic", in the sense that the two instantaneous equilibria at $\r=\pm\v/\|\v\|$ are  antipodal nodes on the $\r$--sphere, one stable (at $-\v/\|\v\|$) the other unstable (at $\v/\|\v\|$). On the other hand, the 
 time $L$ map $ \r_0\mapsto \r(L) $ of this vector field, i.e., the monodromy map, is ``maximally elliptic" in the sense that it  is $ O( \varepsilon ^3 )$--close to a rigid rotation, with  antipodal pair of elliptic fixed points. 
\end{rmrk}

 \begin{rmrk} \label{areas}
The entries $ \A_{ij} = - \A_{ji} $ are the signed areas of the projections of $ \Gamma $  onto the $ ij$th planes, so $ \A$ is the area bivector of the curve $ \Gamma $. For  $ n=2$,  
\[
	\A= \left( \begin{array}{cc}0 & -A_\Gamma\\ A_\Gamma& 0 \end{array} \right) , 
\]  
where $ A_\Gamma $ is the signed area of $ \Gamma $, reproducing the  planimeter formula  \eqref{eq:prytzformula}. 
\end{rmrk}
\begin{rmrk}
The classical literature on the hatchet planimeter contains a series, in the negative powers of the length of the planimeter $\ell$, of its turning angle $\theta$, see, e.g., \cite{Hi}. This makes it possible to estimate the error in measuring the area effectively. It should be possible to obtain a similar power expansion in the multi-dimensional setting; we do not dwell on it here.
\end{rmrk}

\subsection{A bird's eye view  of the hatchet planimeter} \label{bird}
\newcommand{\diam}{\mathrm{diam}}
The results of the last two subsections lead to a new intuitive understanding of  the hatchet planimeter formula \eqref{eq:prytzformula}, which we describe  in this subsection. 
Loosely speaking, the angle  by which the planimeter rotates is approximated by  the solid angle of a certain cone, as explained next.

We consider a planar curve $\Gamma\subset \R^2 $ of  small diameter 
$ \diam (\Gamma)=O( \varepsilon )$, area $A=O(\eps^2)$,  and fixed bicycle length $\ell=1$ -- this assumption is equivalent to taking a long bike of size $\ell=1/\varepsilon$ for a fixed $\Gamma$.

With $\Gamma$ so scaled, the hatchet planimeter formula \eqref{eq:prytzformula}, expressing the area $A$ bounded by $\Gamma$ in terms of the rotation angle $\theta$ of the planimeter, becomes 
\be\label{eq:plani}
A=\theta+O(\eps^3).
\ee

We now  ``lift" our point of view above   $\Gamma$ by 
considering  the plane $\R^2$ containing  $\Gamma$ as the horizonal plane $z=0$ in $\R^3$. The monodromy of $\Gamma$ then becomes a M\"obius transformation $M:S^2\to S^2$, commuting with the reflection about the horizontal plane and   conjugate to a rigid rotation about a vertical axis, with a pair of fixed points, symmetrically situated on opposite sides of the horizontal equator  $S^1\subset S^2$. Consider the closed rear track $\gamma$ corresponding to one of those  fixed point; see  Figure~\ref{fig:berry}(a). 
\begin{figure} [h!]\centerline{\includegraphics[width=\textwidth]{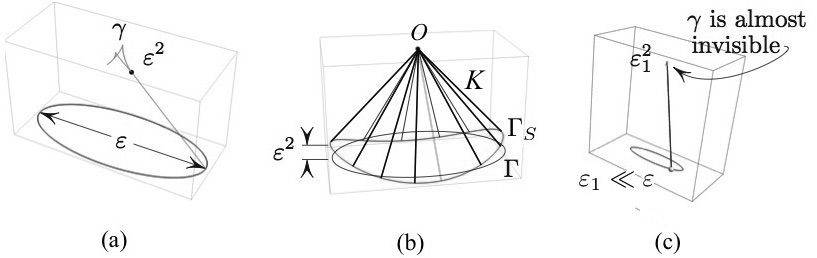}}
	\caption{(a) $\diam(\Gamma) = O ( \varepsilon)$, $\diam(\gamma) = O ( \varepsilon ^2)$. (b) $ \Gamma $ is $O ( \varepsilon^2 )$--close to  $ \Gamma_S $. (c)  As $\eps$ descreases to $\varepsilon_1\ll\eps$,    $\gamma $ becomes indistinguishable from a point, and thus $\Gamma_S $ looks  indistinguishable from $ \Gamma $, illustrating   \eqref{eq:OG}.}
	\label{fig:berry}
\end{figure}

Let us parallel transport every bike segment $ RF$  tangent to $\gamma$  by moving the tangency point $ R $ to a chosen point  $O\in \gamma$. The translated segments form a cone $K$ with vertex at $O$ and the translated endpoints $F$ form a curve $\Gamma_S$ on the unit sphere $S^2 $ centered at $O$, enclosing a spherical area $\Omega$, equal to the solid angle  subtended by $K$ at $O$; see  Figure~\ref{fig:berry}(b). 

The Berry's phase formula (Corollary \ref{Berry}) states   that $M$ (the monodromy  of $\Gamma$) is conjugate to a rigid rotation of $S^2$ by the angle $\Omega$ (note that $L_\gamma=0$ in equation \eqref{eq:berry} for a planar $\Gamma$ with an elliptic $M$). Furthermore, Theorem \ref{3Dplan} states that $M$ is $O(\eps^3)$-close to a rigid rotation. Combining the last two statements, we conclude that the rotation angle $\theta$ of the planimeter in formula \eqref{eq:plani} is $O(\eps^3)$-close to the solid angle $\Omega$. In other words, formula 
\eqref{eq:plani} is equivalent to the statement 
 \be\label{eq:OG} A=\Omega+O(\eps^3).
 \ee

Formula \eqref{eq:OG} suggests a   ``bird's eye view" interpretation of the planimater formula \eqref{eq:plani}: a bird standing at a point $O$,  at height 1 above  a planar curve $\Gamma$ of diameter  $O(\eps)$,  estimates its area $A$, with   $O(\eps^3)$-accuracy, by   the solid angle $\Omega$  subtended at $O$ by the $O(\eps^2)$--close curve $\Gamma_S$.

To justify the $O(\eps^2)$--closeness of $\Gamma$ and $\Gamma_S$, we argue a follows. First, we observe that 
$\diam (\gamma) = O( \varepsilon^2 )$
as Figure \ref{fig:berry}(a) illustrates; indeed,  the tangent segments $ RF$ to $\gamma$ form angles $ \pi /2 +O( \varepsilon ) $ with the plane of $\Gamma $,  and thus 
$$ 
	 \dot \gamma = ({\bf v} \cdot{\bf r} ) {\bf r}   = \|{\bf v} \| \cos ( \pi/2 + O(\varepsilon ) ) = O( \varepsilon ^2 ),
	 \label{eq:smallgamma}
$$
 since $\|{\bf v}\| =\|\dot\Gamma \| = O( \varepsilon ) $. 
When constructing  $ \Gamma_S $ we therefore moved each point $F\in \Gamma$    by $ O( \varepsilon ^2 )  $, which shows that 
$ \Gamma_S $ and $ \Gamma $ are $ O ( \varepsilon ^2 ) $--close to each other. 

This now implies   \eqref{eq:OG} as follows: first,  projecting $ \Gamma_S$ onto the horizontal plane, the area of the resulting planar region is   $O ( \varepsilon ^3 ) $--close to the spherical area $\Omega $ enclosed by $\Gamma_S$; since the projected curve is $ O( \varepsilon ^2) $--close to $ \Gamma $, and both have length $ O( \varepsilon ) $, their areas differ by  $O( \varepsilon ^3 ) $.  This explains  equation (\ref{eq:OG}).

\subsection{Bicycling  and hyperbolic  rolling}\label{sec:rolling}
The main purpose of this section is to elaborate on the equivalence mentioned before (Remark \ref{rmrk:roll}):  the bicycle equation 
 \beq, for $\|\r\|<1$, also describes the   rolling without sliding and twisting of the hyperbolic $n$--ball on the Euclidean   $n$-space $\R^n$.   A precise statement is given in Theorem \ref{thm:roll} below. We precede this statement by a discussion of rolling of the Euclidean sphere. 

 We feel that this material is not common knowledge and not easy to gather  from the literature, so we begin with an elementary exposition of rolling a ball on the plane, before moving on to  the rolling of hyperbolic $n$-space on  $\R^n$. For a more abstract ``intrinsic" treatment of rolling we recommend \cite{BH} (section 4.4), as well as \cite{B}.
For $n=2$, the material here is closely related to the ``stargazing" interpretation
of  the bicycling equation \beq\ as it appears in Section 3 of  \cite{FLT}.

\mn 

Consider a rubber ball  lying on top of the   flat rough surface  of a table, so that the ball can roll on the table, but not slide; that is, as the ball moves, at each moment the point of the ball in contact with the table has zero velocity. We paint a straight line segment  $\Gamma$ 
on the table, position the ball at one end of $\Gamma$ and  roll it  along   until it reaches the other end. As we do so,  the paint, which is still wet,   marks a  curve $\tG$ on the surface of the ball. 

Since we are rolling without sliding, $\tG$ and $\Gamma$  have  the same length. Furthermore, if we are careful not to spin the ball about  the vertical axis through its contact point with the table as  it rolls, $\tG$   is in fact a geodesic segment  (an arc of a great circle). Note also that as a result of the rolling, the ball is translated along $\Gamma$ and rotated about the  horizontal axis passing through the center of the ball and perpendicular to  the  direction of $\Gamma$. Note also that due to the no-spin condition,  a parallel field of vectors along $\Gamma$ leaves a ``track" of corresponding vectors on $ \tG $ which  form a parallel field with respect to parallel transport on the sphere. 


\begin{figure}[h!]
 \centerline{\includegraphics[width=.5\textwidth]{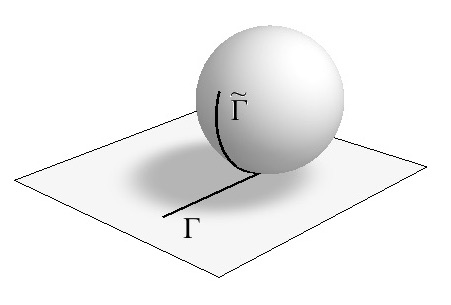}}
\caption{A ball rolling without slipping and twisting}
\end{figure}

We can of course roll the ball along a more general  curve 
$\Gamma$ drawn on the table,  in which case the no-slide and no-spin conditions  (also called  ``no-slip" and ``no-twist") imply that  the curve  $\tG$ traced on the ball has the same length  and the same  geodesic curvature as $\Gamma$ at the corresponding points. The change of orientation of the ball as a result of the rolling  is an element $g\in\SO_3$, called the {\em rolling monodromy} of $\Gamma$.

For example, if $\Gamma$ is a circle of radius $R$, then $\tG$ is an arc of a circle of latitude on the sphere, of length $2\pi R$ and geodesic curvature $1/R$, from which one can easily determine $\tG$,   as  well as $g$ (given the radius of the rolling ball). 

\mn 

Let us now formulate the above more precisely and generally. Consider the $n$-sphere of radius $\ell$, 
$$\Sp=\{(x_1, \ldots, x_{n+1})\in\R^{n+1}|(x_1)^2+\ldots+(x_{n+1})^2=\ell^2\},$$
rolling along a smoothly parametrized curve $\Gamma(t)$ in $\R^n=\{x_{n+1}=0\}\subset\R^{n+1}$.    
The rolling motion  is given by a time-dependent family of rigid motions 
$\varphi(t):\R^{n+1}\to\R^{n+1}$, so that $\varphi(t)(\Sp)$  
is positioned in the upper half space $\{x_{n+1}\geq 0\}$, tangent to $\R^n$ at 
$\Gamma(t)$. 
Then  $\varphi(t)$ can be written as 
$$\varphi(t)(\x)=g(t)\x+\Gamma(t)+\ell e_{n+1}, \quad g(0)=\II,$$
where  $g(t)\in\SO_{n+1}=\Iso^+(\Sp)$ is the ``rolling monodromy", describing the rotation of the moving  sphere at time $t$ with respect to its initial position at $t=0$. 
Let 
$$\tG(t)=\varphi(t)^{-1}(\Gamma(t))=-\ell[g(t)]^{-1} e_{n+1}\in\Sp,$$
 the ``body" 
curve of contact points.  The (space) derivative of $\varphi(t)$  is a linear isometry $T_{\tG(t)}\Sp\to T_{\Gamma(t)}\R^n$, given by $g(t)$, satisfying the following two rolling conditions:
\begin{enumerate}[(1)]

\item  No-slip: $g(t)\dot\tG(t)=\dot\Gamma(t)$;

\item  No-twist: If $\txib$  is a vector field  tangent to $\Sp$ and parallel along $\tG$, then $g(t)\txib(t)$ is parallel along $\Gamma.$ 

\end{enumerate}
\begin{rmrk}It can be easily shown that the no-slip condition  (1) is equivalent to the vanishing of  the Killing field $v(t):=\dot\varphi(t)[\varphi(t)]^{-1}$ at $\Gamma(t)$ (``the velocity of the contact point of the rolling  body with $\R^n$  is equal to zero"). Thus, for $n=2$,  $v(t)$ is the velocity vector field of rotations about an instantaneous axis passing through the contact  point $\Gamma(t)$ (the ``angular velocity" axis). Furthermore, for $n=2$, the no-twist condition (2) is equivalent  to 
the instantaneous rotation axis lying  in  $\R^2$
and perpendicular to $ \dot\Gamma $. It is also equivalent to the equality of the geodesic curvatures  of $\Gamma$ and $\tG$ at the corresponding points. 
\end{rmrk}

\begin{prop}\label{prop:rolls} The monodromy $g(t)\in\SO_{n+1}$ of rolling $\Sp$ along a parametrized curve $\Gamma(t)$ in $\R^n$ satisfies
\be\label{eq:sroll}
\dot g={1\over \ell}\left(\begin{matrix}
0_n&\v\\
-\v^t&0
\end{matrix}\right)g, \quad g(0)=\II, \quad \v=\dot\Gamma.
\ee
\end{prop}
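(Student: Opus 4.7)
The plan is to decompose $\Omega:=\dot g g^{-1}\in\so_{n+1}$ in block form, use the no-slip condition to read off the off-diagonal block, and use the no-twist condition to force the upper-left $n\times n$ block to vanish.

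Write
\[
\Omega=\dot g g^{-1}=\begin{pmatrix} B & w\\ -w^t & 0\end{pmatrix},\qquad B\in\so_n,\ w\in\R^n,
\]
so that $\dot g=\Omega g$. Proving the proposition amounts to showing $B=0$ and $w=\v/\ell$. Throughout I will view $\R^n\subset \R^{n+1}$ as $\{x_{n+1}=0\}$, so that vectors tangent to $\R^n$ or to $\Sp$ at $\tG$ are ordinary vectors in $\R^{n+1}$.

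First I would handle the no-slip condition. Differentiating $\tG=-\ell g^{-1}e_{n+1}$ and using $\tfrac{d}{dt}g^{-1}=-g^{-1}\dot g g^{-1}$ gives
\[
\dot\tG=\ell\, g^{-1}\Omega\,e_{n+1},\qquad\text{hence}\qquad g\dot\tG=\ell\,\Omega\,e_{n+1}=\ell\begin{pmatrix}w\\0\end{pmatrix}.
\]
The no-slip condition $g\dot\tG=\dot\Gamma=\v$ (with $\v\in\R^n\subset\R^{n+1}$ having zero last component) forces $w=\v/\ell$, as claimed.

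Next I would handle the no-twist condition to show $B=0$. Let $\tilde\xib(t)\in T_{\tG(t)}\Sp$ be any parallel vector field along $\tG$; parallelness on $\Sp$ means $\dot{\tilde\xib}$ is normal to $\Sp$, i.e.\ $\dot{\tilde\xib}=\lambda\tG$ for some scalar $\lambda(t)$. Set $\xib:=g\tilde\xib$. Since $g$ is orthogonal and $g\tG=-\ell e_{n+1}$, the vector $\xib$ is orthogonal to $e_{n+1}$, hence $\xib\in\R^n$ (zero last component). The no-twist condition says $\xib$ is parallel along $\Gamma$ in the flat space $\R^n$, i.e.\ $\dot\xib=0$. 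Compute
\[
0=\dot\xib=\dot g\,\tilde\xib+g\dot{\tilde\xib}=\Omega\xib+\lambda g\tG=\Omega\xib-\lambda\ell\,e_{n+1}.
\]
Inserting the block form of $\Omega$ and writing $\xib$ as $(\xib,0)^t$, the first $n$ components of this equation read $B\xib=0$; the last component determines $\lambda$ and gives no obstruction. As $\tilde\xib(t_0)$ can be chosen arbitrarily in $T_{\tG(t_0)}\Sp$ and $g(t)$ is a linear isomorphism, $\xib(t)$ ranges over all of $\R^n$, so $B=0$.

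Putting $w=\v/\ell$ and $B=0$ into $\dot g=\Omega g$ yields equation \eqref{eq:sroll}. The main obstacle, though a mild one, is keeping the block bookkeeping consistent: one must be careful to view tangent vectors to $\Sp$ and to $\R^n$ as elements of $\R^{n+1}$, and to remember that $g\tG=-\ell e_{n+1}$ is what makes $g$ send $T_{\tG}\Sp$ precisely onto $\R^n\subset\R^{n+1}$, so that the no-twist condition can be compared with ordinary differentiation in the ambient space.
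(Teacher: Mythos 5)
Your proof is correct and follows essentially the same route as the paper: decompose $\dot g g^{-1}\in\so_{n+1}$ into blocks, read off the off-diagonal block from the no-slip condition via $g\tG=-\ell e_{n+1}$, and kill the $\so_n$ block by transporting parallel fields with the no-twist condition (the paper additionally records the reverse implications, which the statement does not require). The only cosmetic point is the phrase ``$\xib(t)$ ranges over all of $\R^n$'': what you mean, and what the argument needs, is that at each fixed $t_0$ the value $\xib(t_0)$ can be made an arbitrary vector of $\R^n$ by varying the initial condition of the parallel field, giving $B(t_0)=0$ for every $t_0$.
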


\begin{proof} For the sake of brevity we omit  the  explicit $t$-dependence, writing $g=g(t)$,   etc.

 Let $A=\dot g g^{-1}\in\so_{n+1}$. The statement is then that the no-slip and no-twist conditions are equivalent to (1): $Ae_{n+1}=\dot\Gamma/\ell$ and (2):  if $\xib\perp e_{n+1}$ then $A\xib\equiv 0\; (\mod e_{n+1})$ (i.e., $A\xib$ is a multiple of $e_{n+1}$). We now prove (1) and (2). 

\sn (1) $g\tG=-\ell e_{n+1}$ implies $0=\dot g\tG+g\dot\tG=-\ell Ae_{n+1}+g\dot\tG.$ Hence the no-slip condition, $g\dot\tG=\dot\Gamma$,  is equivalent to $Ae_{n+1}=\dot\Gamma/\ell$.  

\sn (2) 
Suppose  $\txib$ is parallel along $\tG$. 
Then $\dot\txib\equiv 0\; (\mod \tG)$, hence $g\dot\txib\equiv 0\; (\mod e_{n+1})$.   Let $\xib=g\txib$. Then  
$$\dot \xib =\dot g\txib+g\dot\txib=A\xib+g\dot\txib\equiv A\xib \; (\mod e_{n+1}).$$
 But $\xib\perp e_{n+1}$, hence $\dot\xib\perp e_{n+1}$ and  $\dot \xib =A\xib$. It follows that if $A$ has the form given in formula \eqref{eq:sroll}, then $A\xib=0$, hence $\dot\xib=0$, i.e., $\xib$ is parallel. 

Conversely, given a vector $\xib_0\perp e_{n+1}$ tangent to $\R^n$ at  $\Gamma(t_0)$, we let 
$\txib_0=[g(t_0)]^{-1}\xib_0$ and  extend it to a parallel vector field $\txib$  along $\tG$. Assuming the no-twist condition,  $\xib:=g\txib$ is parallel. As before, it implies that $0=\dot\xib\equiv A\xib \; (\mod e_{n+1}).$ In particular, $A(t_0)\xib_0\equiv 0\; (\mod e_{n+1}),$ as claimed. 
\end{proof}

Next, recall from Section \ref{sec:reform} the hyperboloid model for hyperbolic $n$-space of curvature $-1/\ell^2$, 
$$\Hy=\{\x\in \R^{n,1}|\<\xb,\xb\>=-\ell^2, x_{n+1}>0\}.$$ 
Given a curve $\Gamma$ in $\R^n=\{x_{n+1}=0\}\subset\R^{n,1}$, a rolling of $\Hy$ along $\Gamma$ consists of a $t$-dependent family of rigid motions $\varphi(t):\R^{n,1}\to \R^{n,1}$ (orientation preserving isometries), so that $\varphi(t)(\Hy)$  
is positioned in the upper half space $\{x_{n+1}\geq 0\}$, tangent to $\R^n$ at 
$\Gamma(t)$. 
Such  $\varphi(t)$ can be written as 
$$\varphi(t)(\xb)=g(t)\xb+\Gamma(t)-\ell e_{n+1}, \quad g(0)=\II,$$
where  $g(t)\in\SO_{n,1}=\Iso^+(\Hy)$ is the ``rolling monodromy", describing the rotation of the moving  hyperbolic $n$-space at time $t$ with respect to its initial position at $t=0$. Furthermore, $g(t)$ is required to satisfy the same no-slip and no-twist conditions that were given in the case of rolling $\Sp$. 

\begin{theorem} \label{thm:roll} 
The monodromy $g(t)\in\SO_{n,1}$  of rolling $\Hy$ along a parametrized curve $\Gamma(t)$ in $\R^n$ satisfies
\be\label{eq:hroll}\dot g=-{1\over \ell}\left(\begin{matrix}
0_n&\v\\
\v^t&0
\end{matrix}\right)g, \quad g(0)=\II, \quad \v=\dot\Gamma.
\ee
Thus,  $g(t)$ coincides with the bicycling $\ell$-monodromy of $\Gamma$ (see Theorem \ref{thmf}). 

\end{theorem}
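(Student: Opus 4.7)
The proof plan is to run the argument of Proposition \ref{prop:rolls} in the Minkowski setting, with only the signs and the block form of the Lie algebra changing. Set $A(t) := \dot g(t) g(t)^{-1} \in \so_{n,1}$, and write
\[
A = \begin{pmatrix} B & \w \\ \w^t & 0 \end{pmatrix},\qquad B \in \so_n,\ \w \in \R^n,
\]
which is the general block form of an $\so_{n,1}$ element (cf.\ the paragraph preceding Proposition \ref{prop:action}). The claim is that the no-slip and no-twist conditions together force $B = 0$ and $\w = -\v/\ell$, which is exactly the assertion of \eqref{eq:hroll}.

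For no-slip, the contact identity $\varphi(t)(\tG(t)) = \Gamma(t)$ reads, using the formula for $\varphi$, as $g(t)\tG(t) = \ell e_{n+1}$. Differentiating in $t$ yields $g\dot\tG = -\ell A e_{n+1}$, so $g\dot\tG = \v$ becomes $A e_{n+1} = -\v/\ell$, and in the block form this is the statement $\w = -\v/\ell$. This is precisely step (1) of the spherical proof, with the sign of $\ell e_{n+1}$ reversed because the lowest point of $\Hy$ is $+\ell e_{n+1}$ rather than the south pole $-\ell e_{n+1}$ of $\Sp$.

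For no-twist, I would argue exactly as in step (2) of that proof: since $\Hy$ is a pseudo-sphere of ``radius'' $\ell$ in $\R^{n,1}$, the ambient normal to $T_{\tG}\Hy$ is radial, so $\txib$ parallel along $\tG$ in $\Hy$ means $\dot\txib \equiv 0 \pmod{\tG}$, and applying $g$ gives $g\dot\txib \equiv 0 \pmod{e_{n+1}}$. With $\xib := g\txib$, Leibniz yields $\dot\xib \equiv A\xib \pmod{e_{n+1}}$. Because $g$ sends $T_{\tG}\Hy$ onto the Minkowski-orthogonal complement of $e_{n+1}$, which is $T_{\Gamma}\R^n$, the vector $\xib$ is always horizontal; hence $\xib$ is parallel along $\Gamma$ iff $\dot\xib = 0$, iff $A\xib \in \R e_{n+1}$ for every horizontal $\xib$, and in the block form this forces $B = 0$. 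One then reads off exactly the matrix of equation \eqref{eq:hroll}. Comparing with equation \eqref{eq:mono} from Theorem \ref{thmf} finally identifies $g(t)$ with the bicycle monodromy, as both are the unique solution of the same linear ODE on $\SO^+_{n,1}$ with initial condition $\II$. The only real concern in carrying this out is the bookkeeping of the two sign changes (in the contact condition and in the $\so_{n,1}$ block form) relative to the spherical case, rather than any conceptual difficulty.
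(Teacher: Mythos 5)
Your argument is correct and is exactly the route the paper intends: the paper omits the proof, stating it is almost identical to that of Proposition \ref{prop:rolls}, and your adaptation tracks precisely the two sign changes (contact point $g\tG=\ell e_{n+1}$ and the symmetric off-diagonal block of $\so_{n,1}$) needed to land on equation \eqref{eq:hroll} and then identify $g(t)$ with the solution of \eqref{eq:mono}.
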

The proof  is almost identical to the above proof of Proposition \ref{prop:rolls} and is omitted. 

\begin{rmrk}  Embedding   $\R^n$ and $\Hy$ in $\R^{n,1}$  facilitates   intuition and  calculations but is  not essential, since the no-slip and no-twist conditions are intrinsic. These conditions  thus apply to the rolling of $\Hy$ along an arbitrary  Riemannian $n$-manifold $M$, defining a principal $\SO_{n,1}$-connection on $M$,  whose associated parallel transport can be interpreted as  either the monodromy of  rolling $\Hy$ along $M$, or  the monodromy  of $\ell$-bicycling  on  $M$.

\end{rmrk}

\section{Bicycle correspondence, the filament equation and integrable systems} \label{FilCor}

\subsection{Bicycle  correspondence}\label{ss:bc}

We start by recalling the definition of the bicycle correspondence.

%
%

\begin{definition}\label{def:bc}Let $\ell>0$. Two smoothly parameterized curves $\Gamma_1, \Gamma_2$ in $\R^n$ are in $\ell$-bicycle correspondence    if, for all $t$,
\begin{enumerate}[(i)]
\item  the connecting segment  $\Gamma_1(t)\Gamma_2(t)$ has a fixed length $\ell$, and
\item  the midpoint curve $(\Gamma_1(t)+\Gamma_2(t))/2$ is  tangent to  the connecting segment. 
\end{enumerate}
\end{definition}
 
See Figure \ref{fig:bci} of the Introduction.  Condition (ii) can be expressed by  the following formula
\be\label{bc1}(\Gamma_1(t)-\Gamma_2(t))\wedge(\dot \Gamma_1(t)+\dot \Gamma_2(t))=0
.\ee
Here is a useful reformulation. 

\begin{lemma}\label{lemma:reflection} Two parameterized curves $\Gamma_1, \Gamma_2$ in $\R^n$ are in bicycle correspondence (for some $\ell$) if and only if,  for all $t$, the vector $\dot\Gamma_2(t)$ is the reflection of $\dot\Gamma_1(t)$ about the connecting line segment $\Gamma_1(t) \Gamma_2(t)$, followed by parallel translation from $\Gamma_1(t)$ to $\Gamma_2(t)$, 
\be\dot\Gamma_2(t)=-\dot \Gamma_1(t)+2\left(\dot \Gamma_1(t)\cdot \r(t)\right)\r(t), \quad \mbox{where }\r(t)={\Gamma_1(t)-\Gamma_2(t)\over \|\Gamma_1(t)-\Gamma_2(t)\|}.
\ee
\end{lemma}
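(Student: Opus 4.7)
The plan is to translate each of the two defining conditions of bicycle correspondence into a scalar statement about $\dot\Gamma_1,\dot\Gamma_2$ and $\r$, and then solve the resulting linear system for $\dot\Gamma_2$ in terms of $\dot\Gamma_1$ and $\r$.

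First, I would observe that the rigidity condition $\|\Gamma_1(t)-\Gamma_2(t)\|=\ell$ is equivalent, after differentiation, to
\[
(\Gamma_1-\Gamma_2)\cdot(\dot\Gamma_1-\dot\Gamma_2)=0, \qquad \text{i.e.,}\qquad \r\cdot\dot\Gamma_1=\r\cdot\dot\Gamma_2.
\]
Next, the tangency condition \eqref{bc1} is equivalent to $\dot\Gamma_1+\dot\Gamma_2=\lambda\r$ for some scalar $\lambda=\lambda(t)$, since $\r$ spans the line $\Gamma_1(t)\Gamma_2(t)$. Taking the inner product of this with $\r$ and using the previous identity gives $\lambda=\r\cdot(\dot\Gamma_1+\dot\Gamma_2)=2\,\r\cdot\dot\Gamma_1$. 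Substituting back yields the claimed formula
\[
\dot\Gamma_2=-\dot\Gamma_1+2(\dot\Gamma_1\cdot\r)\r.
\]

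For the converse, I would start from the reflection formula and verify both (i) and (ii). Condition (ii) is immediate since $\dot\Gamma_1+\dot\Gamma_2=2(\dot\Gamma_1\cdot\r)\r$ is a scalar multiple of $\r$, hence of $\Gamma_1-\Gamma_2$. For (i), I would compute
\[
\tfrac{d}{dt}\|\Gamma_1-\Gamma_2\|^2=2(\Gamma_1-\Gamma_2)\cdot(\dot\Gamma_1-\dot\Gamma_2),
\]
plug in $\dot\Gamma_1-\dot\Gamma_2=2\dot\Gamma_1-2(\dot\Gamma_1\cdot\r)\r$, and use $(\Gamma_1-\Gamma_2)\cdot\r=\|\Gamma_1-\Gamma_2\|$ to see the two resulting terms cancel. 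Thus $\|\Gamma_1-\Gamma_2\|$ is constant, giving some $\ell$ for which the pair is in bicycle correspondence.

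There is no real obstacle here; the statement is essentially a two-line linear-algebra observation once conditions (i) and (ii) are recast as the scalar equations above. The only mildly subtle point is recognizing that, in the converse direction, constancy of $\|\Gamma_1-\Gamma_2\|$ is a \emph{consequence} of the reflection identity rather than an extra hypothesis, which is why the lemma reads ``for some $\ell$''.
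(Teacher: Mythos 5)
Your proof is correct and follows essentially the same route as the paper: condition (i), after differentiation, says the components of $\dot\Gamma_1,\dot\Gamma_2$ along $\r$ agree, while condition (ii) via \eqref{bc1} says their components orthogonal to $\r$ cancel, which together are exactly the reflection formula. You have merely written out both directions more explicitly than the paper's two-line argument.
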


\begin{figure}[h!]
\centerline{\includegraphics[width=.6\textwidth]{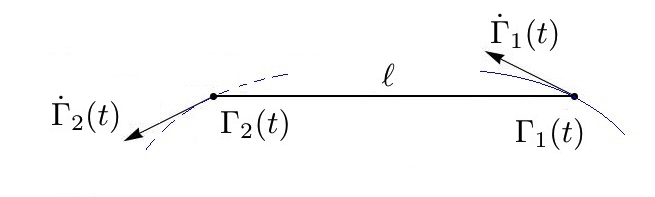}}
\caption{$\ell$-bicycle correspondence}\label{fig:gr}
\end{figure}

\begin{proof} Condition (i)  of Definition \ref{def:bc} is equivalent to the equality of the orthogonal projections of  $\dot \Gamma_1(t)$ and $\dot \Gamma_2(t)$ onto $\r(t)$. Condition (ii), by formula \eqref{bc1}, is then that the orthogonal components sum up to 0. 
\end{proof}

\begin{cor}\label{cor:arc}Bicycle correspondence is arc-length preserving. 
\end{cor}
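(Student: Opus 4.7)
The plan is to deduce the corollary directly from Lemma \ref{lemma:reflection}. That lemma identifies $\dot\Gamma_2(t)$ as the image of $\dot\Gamma_1(t)$ under the composition of a reflection about the line through $\r(t)$ (followed by a sign change, which is itself an orthogonal transformation) and a parallel translation. Since both operations are Euclidean isometries, they preserve norms, so $\|\dot\Gamma_2(t)\|=\|\dot\Gamma_1(t)\|$ for every $t$.

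More explicitly, I would compute
\[
\|\dot\Gamma_2(t)\|^2 = \bigl\|{-}\dot\Gamma_1(t)+2(\dot\Gamma_1(t)\cdot\r(t))\r(t)\bigr\|^2,
\]
expand using $\|\r(t)\|=1$, and check that the cross term cancels the $4(\dot\Gamma_1\cdot\r)^2$ term down to $\|\dot\Gamma_1(t)\|^2$. This is the elementary verification that a Householder reflection is an isometry, and it is the only computational content of the corollary.

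Integrating the pointwise equality of speeds over any parameter interval $[a,b]$ gives $\int_a^b\|\dot\Gamma_1\|\,\d t=\int_a^b\|\dot\Gamma_2\|\,\d t$, i.e., the arc lengths of the corresponding arcs of $\Gamma_1$ and $\Gamma_2$ coincide. In particular, if the curves are closed, their total perimeters agree, recovering the first of the infinitely many conserved quantities of the bicycle correspondence mentioned in the introduction.

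There is no real obstacle here; the only subtlety worth flagging is that the parametrizations of $\Gamma_1$ and $\Gamma_2$ inherited from the bicycle correspondence are automatically compatible (they share the same parameter $t$), so ``arc-length preserving'' should be understood as an equality of speeds at corresponding parameter values, not merely as an equality of total lengths. Lemma \ref{lemma:reflection} delivers precisely this stronger statement for free.
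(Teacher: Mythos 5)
Your proposal is correct and follows the paper's own argument: the paper likewise deduces the corollary from Lemma \ref{lemma:reflection} together with the observation that reflection and parallel translation are isometries, hence speeds (and so arc lengths) agree at corresponding parameter values. The explicit Householder-reflection expansion you include is just a verification of that isometry fact and adds nothing beyond the paper's one-line proof.
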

\begin{proof}This follows from Lemma \ref{lemma:reflection}  and the fact that reflection and parallel translation are isometries. 
\end{proof}

The main result of this section is that bicycle correspondence preserves the  bicycle monodromy (Theorem \ref{thm:bc}). 
This result is not new: in \cite{TT}, it is established for the discrete version of the bicycle correspondence, defined for polygons in $\R^n$, and in the smooth case, it follows by taking limit. Here we give a different proof whose  
idea is to conjugate the bicycle monodromies  along the corresponding curves  using ``Darboux Butterflies", which we now introduce. 

\begin{definition}\label{def:dar} A Darboux Butterfly in $\R^n$ is the result of ``folding" a parallelogram about one of its diagonals; more precisely, it is  an ordered quadruple $ABCD$ of  4 distinct points  in $\R^n$, such that $D$ is the reflection of the point $A-B+C$ about the line $AC$.

\end{definition}

\begin{figure}
\centerline{\includegraphics[width=\textwidth]{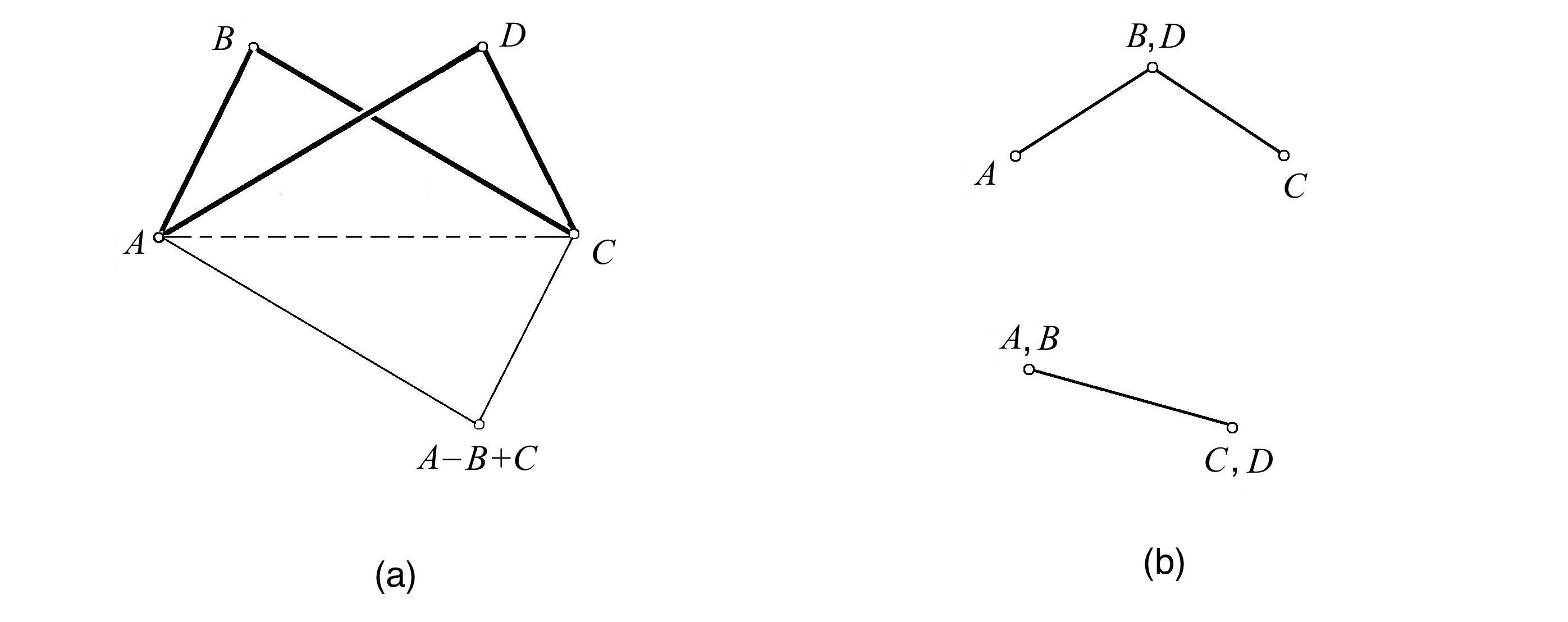}}
\caption{(a)  The definition of a Darboux butterfly;  (b) Degenerate butterflies}
\end{figure}

\begin{rmrk}\label{rmrk:deg}The  above definition applies also   to ``degenerate" butterflies $ABCD$, where one or two pairs of  points coincide, as long as $A\neq C$ or  $B\neq D$, so one can apply the definition, or the equivalent one:  $A$ is the reflection of $B-C+D$ about $BD$. 
\end{rmrk}

Here are some immediate consequences of  Definition \ref{def:dar}:
\begin{lemma}\label{lemma:immed}

\begin{enumerate}[(i)]
\item  A Darboux Butterfly is a planar quadrilateral.

\item   The butterfly property   is invariant under  cyclic permutation and order reversing of its vertices. Namely, if $ABCD$ is a Darboux butterfly, then so are $BCDA$ and  $DCBA$.

\item   Any triple of points  $ABC$ in $\R^n$ with $A\neq C$ can be completed uniquely to a Darboux Butterfly $ABCD$ (possibly degenerate; see Remark \ref{rmrk:deg}). 

\end{enumerate}

\end{lemma}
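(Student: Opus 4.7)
My plan is to derive one symmetric reformulation of the Darboux butterfly condition and then deduce all three parts from it. Let $\sigma_{AC}$ denote the reflection of $\R^n$ about the line $AC$. Since $\sigma_{AC}$ is an affine involution fixing $A$ and $C$, and affine maps preserve combinations whose coefficients sum to $1$,
\[
D=\sigma_{AC}(A+C-B)\iff A+C-D=\sigma_{AC}(B).
\]
Decomposing the right-hand equation into components along and perpendicular to $AC$ yields the pair of conditions
\begin{enumerate}[(a)]
\item $B-D$ is parallel to $A-C$ (so that the midpoint of $B$ and $A+C-D$ lies on the line $AC$), and
\item $(B+D)-(A+C)$ is perpendicular to $A-C$ (so that the segment from $B$ to $A+C-D$ meets $AC$ orthogonally).
\end{enumerate}
In the non-degenerate regime, (a) lets one replace $A-C$ with $B-D$ in (b), so the pair (a)$+$(b) is manifestly invariant under the swap $\{A,C\}\leftrightarrow\{B,D\}$ and under reversing the order within either pair.

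With this reformulation in hand, the three parts become nearly formal. For (i), condition (a) forces $D\in B+\R(A-C)$, so $D$ lies in the affine plane spanned by $A,B,C$. For (iii), given $A,B,C$ with $A\neq C$, the defining equation $D=\sigma_{AC}(A+C-B)$ already exhibits $D$ uniquely, establishing existence and uniqueness of the completion. For (ii), both the cyclic shift $ABCD\mapsto BCDA$ and the reversal $ABCD\mapsto DCBA$ act on the unordered pairs $\{A,C\}$ and $\{B,D\}$ only by swapping them or flipping the order inside, and conditions (a)$+$(b) are symmetric under precisely those operations, so the DB property is preserved.

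The main technical obstacle is the first step: carefully translating the single reflection equation into the symmetric pair (a)$+$(b) and checking that the reformulation remains meaningful in the degenerate cases sanctioned by Remark \ref{rmrk:deg}. In those cases one diagonal, say $AC$, may collapse to a point so that the line $AC$ is undefined; but then, by hypothesis, $B\neq D$, and applying the same Step~1 argument with the roles of the two pairs exchanged produces the equivalent condition involving $\sigma_{BD}$. The whole point of the symmetric form (a)$+$(b) is to make this role-swap automatic, so that the symmetries required in (ii) and the unique completion in (iii) go through uniformly in all permitted configurations.
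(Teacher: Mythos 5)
Your argument is correct; note that the paper itself supplies no proof of Lemma \ref{lemma:immed} at all — it is introduced as a list of ``immediate consequences'' of Definition \ref{def:dar} — so there is nothing to compare against line by line. What you add is the only content that genuinely needs an argument: the symmetric reformulation of $D=\sigma_{AC}(A+C-B)$ as the pair ``$B-D\parallel A-C$'' and ``$(B+D)-(A+C)\perp A-C$'' (equivalently, the midpoint of $B$ and $A+C-B$ reflected data lies on the diagonal and the connecting segment meets it orthogonally), and this is exactly what makes part (ii) transparent, since both the cyclic shift and the reversal merely interchange the unordered diagonal pairs $\{A,C\}$ and $\{B,D\}$. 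Your handling of the degenerate cases is also consistent with Remark \ref{rmrk:deg}: when $A=C$ the hypothesis forces $B\neq D$, and running the same computation with $\sigma_{BD}$ recovers the ``equivalent'' form of the definition quoted there, so the symmetry and the unique completion in (iii) (where $D$ is produced directly by the defining reflection, and $A\neq C$ guarantees the completed quadruple is admissible) go through uniformly. In short: a correct, self-contained verification of a statement the paper leaves to the reader, organized around a reformulation the paper does not write down but surely has in mind.
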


Another property of Darboux Butterflies is the following infinitesimal version of the ``Butterfly Lemma" of \cite{TT}. To formulate it, we first define for a given  segment $UV$ in $\R^n$  the {\em glide reflection} $G_{UV}:\R^n\to\R^n$ as the composition of the reflection about  the line through $UV$, followed by parallel translation through the vector $V-U$. For example, in Figure \ref{fig:gr}, $\dot\Gamma_2(t)$ is the image of $\dot\Gamma_1(t)$ under  $G_{\Gamma_1(t)\Gamma_2(t)}$.  

\begin{lemma}\label{lemma:bf}
For any Darboux Butterfly $ABCD$, one has: 
$$G_{DA} \circ G_{CD} \circ G_{BC} \circ G_{AB} = Id.$$
\end{lemma}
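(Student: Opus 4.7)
The strategy is to first reduce to the planar case via Lemma~\ref{lemma:immed}(i), and then to exploit a reflection symmetry intrinsic to any Darboux butterfly to rewrite the fourfold composition as the square of an orientation-reversing planar isometry with a fixed point, which is necessarily an involution.

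First, by Lemma~\ref{lemma:immed}(i) the four points lie in an affine $2$-plane $V \subset \R^n$. For $X,Y \in V$ the axis $XY$ lies in $V$ and the translation vector $Y-X$ is parallel to $V$, so $G_{XY}$ preserves $V$ (restricting there to an ordinary planar glide reflection) and acts as $-\mathrm{Id}$ on displacements orthogonal to $V$. The fourfold composition therefore acts as $(-\mathrm{Id})^4 = \mathrm{Id}$ in the normal directions, and it suffices to prove the identity inside $V$.

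Now let $M = (A+C)/2$ and let $\sigma$ denote the reflection of $V$ about the perpendicular bisector of $AC$ through $M$. I would first check that the defining condition ``$D$ is the reflection of $A+C-B$ about line $AC$'' is equivalent to $D = \sigma(B)$: indeed, $A+C-B$ is the point-reflection of $B$ through $M\in AC$, and composing the point-reflection through $M$ with reflection across $AC$ equals reflection across the perpendicular to $AC$ at $M$. Thus $\sigma$ is an involution swapping $A\leftrightarrow C$ and $B\leftrightarrow D$. Since $G_{XY}$ depends only on its ordered endpoints and $\sigma$ is an affine isometry of $V$, conjugation yields
\[
\sigma\, G_{AB}\, \sigma \;=\; G_{CD}, \qquad \sigma\, G_{BC}\, \sigma \;=\; G_{DA}.
\]

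Using these identities together with $\sigma^2 = \mathrm{Id}$, one rewrites
\[
G_{DA}\, G_{CD}\, G_{BC}\, G_{AB}
\;=\; (\sigma\, G_{BC}\, \sigma)(\sigma\, G_{AB}\, \sigma)\, G_{BC}\, G_{AB}
\;=\; (\sigma H)^2,
\]
where $H := G_{BC}\, G_{AB}$. The map $\sigma H$ is a product of three orientation-reversing planar isometries, hence is itself orientation-reversing, and it fixes $A$ since $H(A) = G_{BC}(G_{AB}(A)) = G_{BC}(B) = C$ and $\sigma(C) = A$. But an orientation-reversing isometry of $\R^2$ with a fixed point is a reflection about a line through that point, in particular an involution; hence $(\sigma H)^2 = \mathrm{Id}$, and combined with the reduction this proves the lemma.

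The one step requiring a moment's thought is noticing that the somewhat asymmetric condition defining a Darboux butterfly secretly encodes the reflection symmetry $D = \sigma(B)$; everything else is purely formal once this is in hand. A more computational alternative would be to use complex coordinates on $V$ and write $G_{XY}(z) = Y + \alpha_{XY}(\bar z - \bar X)$ with $\alpha_{XY}=(Y-X)/\overline{(Y-X)}$, then verify that the composition has the form $z \mapsto A + \lambda(z-A)$ with rotation factor $\lambda = R/\bar R$ for $R=(A-D)(C-B)/[(D-C)(B-A)]$, which the Darboux condition forces to be positive real so that $\lambda = 1$. The symmetry argument, however, is cleaner and more conceptual.
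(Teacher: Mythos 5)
Your proof is correct, and it reaches the identity by a route that is related to but genuinely distinct from the paper's. Both arguments handle the directions orthogonal to the butterfly's plane in the same way (each glide reflection acts there by $-\mathrm{Id}$, and $(-\mathrm{Id})^4=\mathrm{Id}$), and both ultimately use the chain $A\mapsto B\mapsto C\mapsto D\mapsto A$ to pin down a fixed point. The difference is in the planar part: the paper computes the linear part of the composition directly -- the reflections about two successive edges compose to a rotation by twice the angle between them, and the next pair gives the opposite rotation, so the linear part is trivial, the composition is a translation, and the fixed point $A$ forces it to be the identity. You instead make the butterfly's mirror symmetry explicit: the defining condition is equivalent to $D=\sigma(B)$ for $\sigma$ the reflection across the perpendicular bisector of $AC$ in the plane $V$, and the conjugation identity $\sigma G_{XY}\sigma=G_{\sigma(X)\sigma(Y)}$ converts the fourfold product into $(\sigma H)^2$ with $H=G_{BC}G_{AB}$; since $\sigma H$ is orientation-reversing on $V$ and fixes $A$, it is a reflection, hence an involution. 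The paper's phrase ``rotation by the same angle in opposite direction'' is precisely where this symmetry enters implicitly, so your argument can be seen as isolating that fact and trading the angle bookkeeping for a conjugation/involution argument; what it buys is a cleaner conceptual reason for the cancellation, at the cost of having to justify $D=\sigma(B)$ and the conjugation rule (both of which you do correctly). One trivial remark on your closing aside: for the complex-coordinate variant you only need $R$ real (not positive) to get $\lambda=R/\bar R=1$, though in fact $R>0$ does hold.
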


\begin{proof} The linear part of the isometry in question  is the composition of four reflections. 
Decompose $\R^n$ into the direct sum of the plane of the butterfly, translated to the origin, and its orthogonal complement. In the orthogonal complement each reflection acts by $-1$, hence the composition of the four reflections acts trivially. 

In the plane of the butterfly, the product of the  reflections about two successive edges is a rotation by twice the angle between the edges; the product of reflections about the next pair of successive edges is then a rotation by the same angle in opposite direction. 

It follows that the linear part of the  isometry in question is trivial. Thus it is a parallel translation. But $A$ is a fixed point, hence it is the identity.
\end{proof}

The next statement is a version of the  ``Bianchi permutability" \cite{RS}, proved in \cite{TT} for a polygonal version of the bicycle correspondence. 

\begin{prop}\label{prop:bp}
Let  $\ell_1, \ell_2 >0$ with $\ell_1\neq \ell_2$ and let $A(t),B(t),C(t)$ be  three parameterized curves in $\R^n$ such that  $(A, B)$ and  $(B, C)$ are in $\ell_1$- and $\ell_2$-bicycle correspondences, respectively. Complete $A(t)B(t)C(t)$ to a Darboux Butterfly $A(t)B(t)C(t)D(t)$ (the non-degeneracy assumption $\ell_1\neq \ell_2$ assures that $A(t)\neq C(t)$ so Lemma \ref{lemma:immed}(iii) applies). Then $(A,D)$
 and $(C,D)$ are in $\ell_2$- and $\ell_1$-bicycle correspondence, respectively.
\end{prop}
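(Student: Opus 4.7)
The plan is to verify both conditions of Definition \ref{def:bc} for the two putative correspondences $(A,D)$ (length $\ell_2$) and $(C,D)$ (length $\ell_1$): constancy of the connecting segment and the midpoint-velocity condition, the latter rewritten via Lemma \ref{lemma:reflection} as a reflection identity on the velocity vectors. For the lengths, view the butterfly $ABCD$ as obtained from the parallelogram $ABCE$, with $E := A - B + C$, by reflecting $E$ across the line $AC$. This reflection fixes $A$ and $C$ pointwise, hence preserves distances from them, giving $|A - D| = |A - E| = |B - C| = \ell_2$ and $|C - D| = |C - E| = |A - B| = \ell_1$. Both are constant in $t$ by the bicycle hypotheses on $(A,B)$ and $(B,C)$.

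For the velocity conditions I would use the butterfly identity. Following the paper's convention, I also write $G_{XY}$ for the linear action on tangent vectors, i.e.\ reflection in the direction of $Y - X$; note that $G_{XY} = G_{YX}$ and $G_{XY}^2 = \mathrm{Id}$ as linear maps. Lemma \ref{lemma:reflection} translates the two input bicycle conditions to $\dot B = G_{AB}(\dot A)$ and $\dot C = G_{BC}(\dot B)$. Linearizing the identity of Lemma \ref{lemma:bf} and rearranging,
\[
G_{CD}(\dot C) \;=\; G_{CD}\, G_{BC}\, G_{AB}(\dot A) \;=\; G_{AD}(\dot A).
\]
Call this common vector $\vec{w}$. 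By Lemma \ref{lemma:reflection}, once I know $\dot D = \vec{w}$, the two readings $\dot D = G_{CD}(\dot C)$ and $\dot D = G_{AD}(\dot A)$ give the midpoint-velocity conditions for $(C,D)$ in $\ell_1$-bicycle and $(A,D)$ in $\ell_2$-bicycle correspondence, respectively, completing the proof.

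The remaining task, $\dot D = \vec{w}$, is what I expect to be the main obstacle. My approach is a direct computation: differentiate the explicit parametrization
\[
D = B + 2\langle M - B,\, \mathbf{u}\rangle\, \mathbf{u}, \qquad M = \tfrac{1}{2}(A + C), \quad \mathbf{u} = \tfrac{C - A}{|C - A|},
\]
in $t$ and substitute the reflection formulas for $\dot B$ and $\dot C$. Working in an orthonormal frame with $\mathbf{e}_1$ along $\mathbf{u}$ and $\mathbf{e}_2$ completing the plane of the butterfly, the components of $\dot D - \vec{w}$ transverse to this plane vanish automatically, since $\dot A + \dot B \parallel A - B$ forces $\dot A$ and $\dot B$ to be negatives of each other in the perpendicular directions, and similarly for $\dot C$. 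In the butterfly plane the matching reduces to a single scalar identity whose simplification, after expanding the reflection formulas, comes down to
\[
(\dot A - \dot B) \cdot (A - B) + (\dot B - \dot C) \cdot (B - C) = 0,
\]
each summand of which vanishes individually by the two length-constancy conditions $|A - B| = \ell_1$ and $|B - C| = \ell_2$. This cancellation is the technical heart of the argument; a more conceptual substitute would be to note that both $D(t)$ and the solution of the $\ell_1$-bicycle ODE starting at $D(0)$ satisfy the same first-order equation and agree at $t = 0$, then invoke ODE uniqueness, but verifying the ODE for $D(t)$ still reduces to essentially the same identity.
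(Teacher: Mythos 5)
Your outline reproduces the paper's proof up to its last step: the constancy of $\|A-D\|=\ell_2$ and $\|C-D\|=\ell_1$ (reflection across $AC$ fixes $A$ and $C$), the linearization of Lemma \ref{lemma:bf} giving $G_{AD}\dot A=G_{CD}\dot C=:\vec w$, and the reduction, via Lemma \ref{lemma:reflection}, of the whole proposition to the single identity $\dot D=\vec w$; your remark that the components of $\dot D-\vec w$ transverse to the butterfly plane cancel automatically is also correct (and it usefully makes explicit a point the paper passes over quickly). The gap is the in-plane part of $\dot D=\vec w$, which you yourself call the technical heart: it is not proved, only predicted. And the prediction is not convincing as stated. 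In the butterfly plane $\dot D-\vec w$ has two components, and you give no reason why the matching should collapse to ``a single scalar identity''; moreover, once the substitutions $\dot B=G_{AB}\dot A$ and $\dot C=G_{BC}\dot B$ have been made, the quantity $(\dot A-\dot B)\cdot(A-B)+(\dot B-\dot C)\cdot(B-C)$ is identically zero (each reflection preserves the projection onto its own axis), so saying that the expansion ``comes down to'' this identity carries no information: the entire content of the step is that all the other terms produced by differentiating $D=B+2\langle M-B,\mathbf u\rangle\,\mathbf u$ (including $\dot{\mathbf u}$ and the derivative of the scalar coefficient) cancel, and that cancellation is exactly what is left unverified. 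The collinear configurations, where ``the plane of the butterfly'' is not defined and your frame argument does not apply, are also not addressed.

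The fix is cheap, and it is precisely the paper's argument, using only facts you have already established. Since $\|A-D\|\equiv\ell_2$ and $\|C-D\|\equiv\ell_1$, differentiation gives $(\dot D-\dot A)\cdot(D-A)=0$ and $(\dot D-\dot C)\cdot(D-C)=0$; on the other hand $\vec w=G_{AD}\dot A$ has the same projection onto $D-A$ as $\dot A$, and $\vec w=G_{CD}\dot C$ has the same projection onto $D-C$ as $\dot C$. Hence $\dot D-\vec w$ is orthogonal to both $D-A$ and $D-C$, which span the butterfly plane whenever the butterfly is non-collinear; combined with your transverse cancellation this gives $\dot D=\vec w$, and the collinear case follows by continuity, as in the paper. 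Replacing your unexecuted expansion by this observation turns your outline into a complete proof, essentially identical to the paper's.
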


\begin{figure}[h!]
\centerline{\includegraphics[width=.5\textwidth]{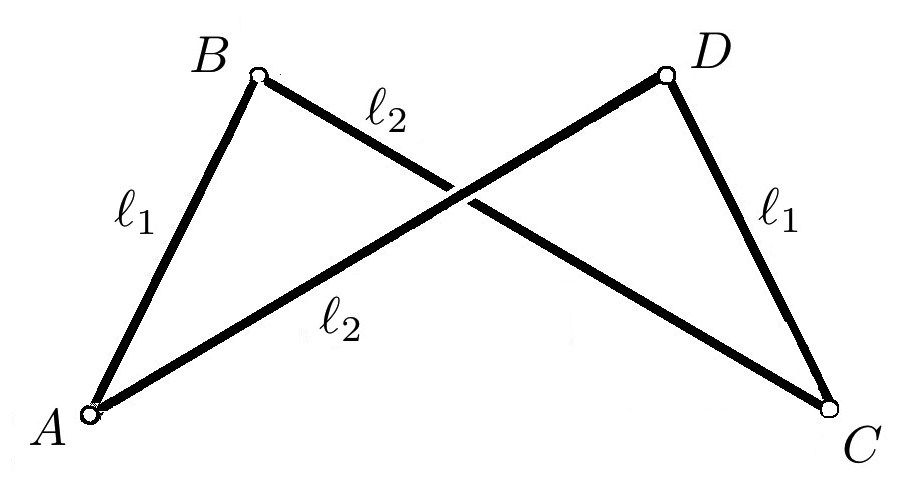}}
\caption{Bianchi permutability}
\end{figure}

\begin{proof}
%
By Lemma \ref{lemma:reflection}, $\dot A=G_{BA}\dot B, \dot C=G_{BC}\dot B$, and we need to show that $\dot D=G_{AD}\dot A= G_{CD}\dot C$.

Now $\|A-D\|=\ell_2$ and $ \|C-D\|=\ell_1$ imply the ``non-stretching condition": the  orthogonal projections of $\dot D$ onto $AD$ and $CD$  coincide with the orthogonal projections of $\dot A$ and $\dot C$ onto $AD$ and $CD$, respectively. If  the butterfly is non-collinear then $AD,CD$ are linearly independent, hence $\dot D$ is determined uniquely by the non-stretching condition. 
On the other hand, using Lemma \ref{lemma:bf}, we have
$$
G_{AD}\dot A = G_{AD} G_{BA}\dot B = G_{CD} G_{BC}\dot B = G_{CD}\dot C,
$$
 and this vector  clearly satisfies the ``non-stretching condition".  Hence $\dot D=G_{AD}\dot A= G_{CD}\dot C.$
For a collinear butterfly, the result follows by continuity from the non-colinear case.
\end{proof}

The next result shows that the  flows of the bicycle equation along curves  in bicycle correspondence are conjugated.

\begin{lemma} 
Let $\Gamma_1, \Gamma_2$ be two parameterized curves in $\R^n$ in $2\ell$-bicycle correspondence. For each $\lambda\neq \ell$, let $\Phi^\lambda(t):S^{n-1}\to S^{n-1}$ be the map $\r_1\mapsto \r_2$ defined by completing $\Gamma_1(t)+2\lambda\r_1, \Gamma_1(t), \Gamma_2(t)$ to a Darboux butterfly  
$$\Gamma_1(t)+2\lambda\r_1, \Gamma_1(t), \Gamma_2(t), \Gamma_2(t)+2\lambda\r_2
$$ 
(see Figure \ref{fig:bcl}). Then $\Phi^\lambda(t)$ is a M\"obius transformation (possibly orientation reversing), conjugating the $\lambda$-bicycle flows along $\Gamma_1, \Gamma_2.$ That is, if $\r_1(t)$ satisfies $\lambda\dot\r_1=- \v_1+(\v_1\cdot \r_1)\r_1$ then  $\r_2(t):=\Phi^\lambda(t)\r_1(t)$ satisfies $\lambda\dot\r_2=- \v_2+( \v_2\cdot \r_2)\r_2$, where $\v_1=\dot\G_1, \v_2=\dot\G_2.$
\end{lemma}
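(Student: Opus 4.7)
The plan is to combine the Bianchi permutability (Proposition~\ref{prop:bp}) for the conjugation part with a symmetry reduction to dimension two for the M\"obius part.

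For the conjugation statement, I would proceed as follows. Let $\r_1(t)$ solve the $\lambda$-bicycle equation along $\Gamma_1$ and set $\Gamma_1^+(t):=\Gamma_1(t)+2\lambda\r_1(t)$. A one-line computation using the bicycle equation gives
\[
\tfrac{d}{dt}\!\left(\tfrac{\Gamma_1+\Gamma_1^+}{2}\right)=\v_1+\lambda\dot\r_1=(\v_1\cdot\r_1)\r_1,
\]
which is parallel to $\Gamma_1^+-\Gamma_1=2\lambda\r_1$, so $\Gamma_1$ and $\Gamma_1^+$ are in $2\lambda$-bicycle correspondence. Since $\Gamma_1,\Gamma_2$ are in $2\ell$-correspondence and $\ell\neq\lambda$, Proposition~\ref{prop:bp} (with $\ell_1=2\lambda$, $\ell_2=2\ell$) lets me complete $\Gamma_1^+,\Gamma_1,\Gamma_2$ to a Darboux butterfly $\Gamma_1^+,\Gamma_1,\Gamma_2,\Gamma_2^+$ and conclude that $\Gamma_2,\Gamma_2^+$ are in $2\lambda$-correspondence. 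By construction this is the very butterfly defining $\Phi^\lambda$, so $\r_2(t):=(\Gamma_2^+(t)-\Gamma_2(t))/(2\lambda)$ is a unit vector and equals $\Phi^\lambda(t)\r_1(t)$; reversing the previous one-liner then gives $\lambda\dot\r_2=-\v_2+(\v_2\cdot\r_2)\r_2$, as required.

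For the M\"obius claim, fix $t$ and set $\w:=(\Gamma_2-\Gamma_1)/(2\ell)$. Unwinding Definition~\ref{def:dar} in the spirit of Lemma~\ref{lemma:reflection}, I would write the butterfly map explicitly as
\[
\r_2=-\r_1+2(\r_1\cdot\hat u)\hat u,\qquad \hat u:=\frac{\lambda\r_1-\ell\w}{\|\lambda\r_1-\ell\w\|}.
\]
Two features are immediate from this formula: $\r_2\in\Span(\r_1,\w)$, and the whole construction depends only on $(\r_1,\w,\lambda,\ell)$, so $\Phi^\lambda$ commutes with the $\SO(n-1)$-subgroup of rotations of $\R^n$ fixing $\w$. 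Together these say that $\Phi^\lambda$ preserves every $2$-plane through the $\w$-axis and acts the same way in each such plane. Writing $\r_1=\cos\theta_1\,\w+\sin\theta_1\,\nn$ with $\nn\perp\w$ a unit vector, a direct computation yields the clean identity
\[
\tan(\theta_2/2)=\frac{\lambda+\ell}{\lambda-\ell}\,\tan(\theta_1/2),
\]
a fractional linear transformation in the projective coordinate on $S^1$, fixing $\pm\w$. Combined with the $\SO(n-1)$-symmetry about $\w$, this realizes $\Phi^\lambda$ as a hyperbolic translation of the Klein--Beltrami ball $B^n$ along the diameter $\pm\w$ of signed length $\log\bigl|(\lambda+\ell)/(\lambda-\ell)\bigr|$, composed with the reflection in that diameter when $(\lambda+\ell)/(\lambda-\ell)<0$. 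Its boundary action on $S^{n-1}$ is the desired (possibly orientation-reversing) M\"obius transformation.

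The main obstacle, and the only step that is not an unwinding of definitions, is identifying the geometric nature of $\Phi^\lambda$ at a fixed $t$: one must notice the rotational symmetry about the $\w$-axis and then carry out the $2$-plane computation that produces the clean ratio $(\lambda+\ell)/(\lambda-\ell)$. Once the two-dimensional case is in hand, both the M\"obius property and the explicit identification with a hyperbolic translation of $B^n$ follow at once.
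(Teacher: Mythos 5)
Your proof is correct. The conjugation half is essentially the paper's own argument: you check that $\Gamma_1$ and $\Gamma_1+2\lambda\r_1$ are in $2\lambda$-bicycle correspondence and invoke Bianchi permutability (Proposition \ref{prop:bp}); your one-line verification, and its reversal to get the $\lambda$-bicycle equation for $\r_2$, are details the paper leaves implicit. The M\"obius half is where you genuinely diverge. The paper proves that part in the text only for $n=2$, by expanding the chord condition $|2\ell+2\lambda w-2\lambda z|^2=(2\ell)^2$ and discarding the parallelogram root $w=z$ to obtain $w=-(\ell\bar z-\lambda)/(\lambda\bar z-\ell)$, citing Theorem 1 of \cite{TT} for general $n$. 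You instead unwind Definition \ref{def:dar} into the closed formula $\r_2=-\r_1+2(\r_1\cdot\hat u)\hat u$ with $\hat u\propto\lambda\r_1-\ell\w$ (this is indeed the correct reduction of the butterfly reflection: the base points drop out and only the direction of the diagonal survives), then use $\SO(n-1)$-equivariance about the $\w$-axis to reduce to the plane $\Span(\r_1,\w)$, where the multiplier $(\lambda+\ell)/(\lambda-\ell)$ in the coordinate $\tan(\theta/2)$ is exactly equivalent to the paper's $n=2$ formula above. What your route buys is a self-contained proof valid in all dimensions (no appeal to \cite{TT}), together with extra structure: the fixed points $\pm\w$, the explicit multiplier, and the identification of $\Phi^\lambda$ as the boundary action of a hyperbolic translation along the axis $\pm\w$, composed with the reflection in that axis when $(\lambda+\ell)/(\lambda-\ell)<0$. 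What the paper's route buys is brevity and the explicit anti-M\"obius expression in a complex coordinate. If you write this up in full, do record that $\lambda\neq\ell$ is what guarantees both $\lambda\r_1\neq\ell\w$ (so $\hat u$ is defined) and $\Gamma_1+2\lambda\r_1\neq\Gamma_2$ (so the butterfly completion is unique and coincides with the one used in Proposition \ref{prop:bp}).
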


\begin{figure}[h!]
\centerline{\includegraphics[width=.4\textwidth]{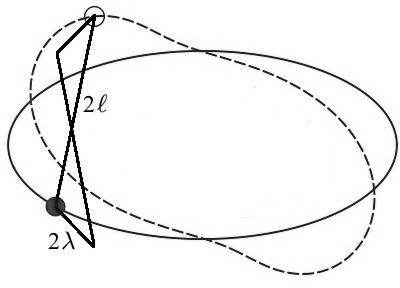}}
\caption{Two closed curves in $2\ell$-bicycle correspondence, with the Darboux butterfly conjugating their $\lambda$-bicycle flows}\label{fig:bcl}
\end{figure}

\begin{proof} If $\r_1(t)$ solves the $\lambda$-bicycle equation along $\Gamma_1$  then $\Gamma_1+2\lambda\r_1$ is in $2\lambda$-bicycle correspondence with $\Gamma_1$. By Bianchi permutability (Proposition \ref{prop:bp}), $\Gamma_2+2\lambda\r_2$ is in $2\lambda$-bicycle correspondence with $\Gamma_2$, hence $\r_2(t)$ is a solution to the $\lambda$-bicycle equation along $\Gamma_2$.

The  proof that $\Phi^\lambda(t)$ is a M\"obius transformation was given in the proof of Theorem 1 of \cite{TT}. Here we present an alternative proof for $n=2$, i.e., $\Phi^\lambda(t):S^1\to S^1$, where $S^1\subset\C$. We denote the image of $z$ by $w$, where $ | z |=|w|=1 ,$  see Figure~\ref{fig:mobbutt}. Expanding $|   2\ell + 2\lambda w- 2\lambda z| ^2 = (2\ell) ^2 $, we obtain
\begin{equation} 
	\Re\bigg(w(\lambda\bar z-\ell)+(\ell\bar z-\lambda)\bigg)=0.
	\label{eq:dist}
\end{equation}  
Geometrically, it is clear that given any $z\not=\pm1$ on the unit circle, there are precisely two solutions $w$ to  \eqref{eq:dist}, and that  one of them is $ w= z $ (corresponding to the parallelogram, rather than the butterfly). The other  (algebraically)  obvious  solution  is given simply by 
\[
	w(\lambda\bar z-\ell)+(\ell\bar z-\lambda)=0,    
\]  
or
 \[
	w= -\frac{\ell\bar z-\lambda}{\lambda\bar z-\ell}.
\]  
The last formula shows that  $ z\mapsto w $ is a reflection  $z\mapsto \bar z$, followed by a  M\"obius transformation, the projectivizaton of 
$\left(
\begin{array}{rr}\ell&-\lambda\\ -\lambda&\ell\end{array}
\right)\in\GL_2(\R)$,  as claimed. 
\begin{figure}[h!]
\centerline{\includegraphics[width=.6\textwidth]{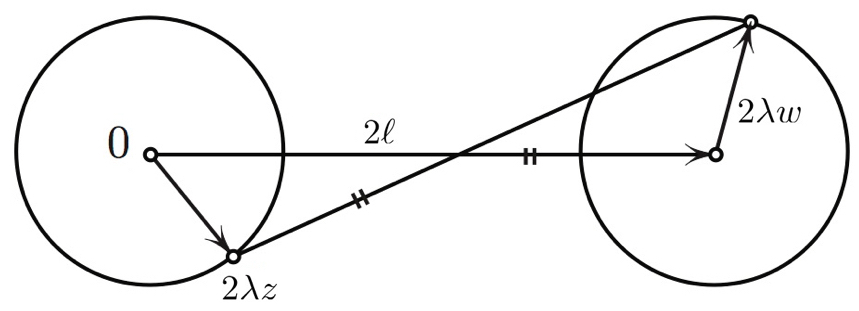}}
\caption{Proving that $\Phi^\lambda$ is a M\"obius transformation}\label{fig:mobbutt}
\end{figure}
\end{proof}

\begin{theorem}\label{thm:bc}If $\Gamma_1, \Gamma_2$ are two closed curves in $\R^n$ in $2\ell$-bicycle correspondence then, for all $\lambda>0$, their $\lambda$-bicycle monodromies are conjugate elements of $\SO^+_{n,1}$. 
\end{theorem}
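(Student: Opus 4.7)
The plan is to deduce Theorem \ref{thm:bc} as a direct corollary of the preceding lemma, using only the closedness of the two curves. The lemma supplies, for every parameter $t$ and every $\lambda > 0$, a M\"obius map $\Phi^\lambda(t) : S^{n-1} \to S^{n-1}$ which conjugates the instantaneous $\lambda$-bicycle flows along $\Gamma_1$ and $\Gamma_2$; explicitly, whenever $\r_1(t)$ solves the $\lambda$-bicycle equation along $\Gamma_1$, then $\r_2(t) := \Phi^\lambda(t)\r_1(t)$ solves it along $\Gamma_2$. All we need to do is evaluate this conjugation identity at two parameter values, one period apart.

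More precisely, let $T > 0$ be a common period of $\Gamma_1$ and $\Gamma_2$ (by Corollary~\ref{cor:arc}, the bicycle correspondence is arc-length preserving, and since the correspondence determines $\Gamma_2$ from $\Gamma_1$ by a first-order ODE, periodicity transfers between the two curves). The key observation is that the Darboux butterfly construction defining $\Phi^\lambda(t)$ in the preceding lemma depends on $t$ only through the two base points $\Gamma_1(t)$ and $\Gamma_2(t)$, together with the fixed length parameter $\lambda$. Hence
\begin{equation*}
\Phi^\lambda(t_0 + T) \;=\; \Phi^\lambda(t_0),
\end{equation*}
since $\Gamma_i(t_0+T) = \Gamma_i(t_0)$ for $i=1,2$. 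Denoting the $\lambda$-bicycle monodromies over one period by $M_1^\lambda$ and $M_2^\lambda$, the identity $\r_2(t) = \Phi^\lambda(t)\r_1(t)$, compared at $t_0$ and at $t_0+T$, then yields
\begin{equation*}
M_2^\lambda \circ \Phi^\lambda(t_0) \;=\; \Phi^\lambda(t_0+T) \circ M_1^\lambda \;=\; \Phi^\lambda(t_0) \circ M_1^\lambda,
\end{equation*}
so $M_2^\lambda = \Phi^\lambda(t_0)\, M_1^\lambda\, [\Phi^\lambda(t_0)]^{-1}$, which is the desired conjugacy.

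The only subtle point — and the step that deserves a sentence of care — is that the lemma allows $\Phi^\lambda(t_0)$ to be orientation-reversing, so a priori the conjugating element lies in $\mathrm{O}_{n,1}$ rather than $\SO^+_{n,1}$. However, conjugation by any element of $\mathrm{O}_{n,1}$ preserves $\SO^+_{n,1}$ as a normal subgroup, so $M_2^\lambda$ lies in $\SO^+_{n,1}$ as required; moreover, for hyperbolic, parabolic, and loxodromic isometries the $\mathrm{O}_{n,1}$- and $\SO^+_{n,1}$-conjugacy classes coincide (one may absorb an orientation-reversing factor into an involution that commutes with the relevant one-parameter subgroup), so the statement in the theorem is unambiguous. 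I expect no serious obstacle: essentially all the geometric content — the existence, M\"obius nature, and conjugating property of $\Phi^\lambda$ — has already been established via Bianchi permutability and the Darboux butterfly lemma, and closedness of the curves does the rest in a single line.
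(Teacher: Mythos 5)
Your argument follows the paper's own route: the theorem is deduced from the unnumbered lemma preceding it, and your periodicity computation --- that $\Phi^\lambda(t)$ depends on $t$ only through the pair $(\Gamma_1(t),\Gamma_2(t))$, hence is periodic, so that comparing the conjugation identity at $t_0$ and $t_0+T$ gives $M_2^\lambda\circ\Phi^\lambda(t_0)=\Phi^\lambda(t_0)\circ M_1^\lambda$ --- is exactly the content the paper compresses into ``this follows from the last lemma''. The one genuine gap is your opening claim that the lemma supplies $\Phi^\lambda(t)$ ``for every $\lambda>0$'': the lemma is stated only for $\lambda\neq\ell$, and necessarily so. At $\lambda=\ell$ the Darboux butterfly completion of the triple $\Gamma_1(t)+2\ell\,\r_1,\ \Gamma_1(t),\ \Gamma_2(t)$ degenerates when $\r_1$ is the unit vector from $\Gamma_1(t)$ toward $\Gamma_2(t)$ (the first and third vertices then coincide, so Lemma \ref{lemma:immed}(iii) does not apply, and Proposition \ref{prop:bp} likewise requires $\ell_1\neq\ell_2$); concretely, in the $n=2$ formula of the lemma's proof the conjugating matrix $\left(\begin{smallmatrix}\ell&-\lambda\\-\lambda&\ell\end{smallmatrix}\right)$ becomes singular as $\lambda\to\ell$, so $\Phi^\ell(t)$ is not a M\"obius transformation. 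Since the theorem asserts conjugacy for \emph{all} $\lambda>0$, the case $\lambda=\ell$ needs a separate step; the paper closes it in one line by a continuity argument in $\lambda$ (note that the conjugators themselves blow up in this limit, so it is the monodromies, not the conjugating maps, whose continuity one invokes). Adding that sentence would make your proof match the paper's completely.

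Your aside on orientation is more careful than the paper, which simply allows $\Phi^\lambda$ to be orientation reversing and does not address upgrading to conjugacy inside $\SO^+_{n,1}$; but the specific claim you lean on --- that the $\mathrm{O}_{n,1}$- and $\SO^+_{n,1}$-conjugacy classes coincide for hyperbolic, parabolic and loxodromic elements --- is false in general: already in $\PSLt$ the two parabolic classes are interchanged by an orientation-reversing conjugation, and in $\PSLtc$ an orientation-reversing conjugation complex-conjugates the multiplier of a loxodromic element, which generically changes its class. So you should either state the conclusion as conjugacy by a possibly orientation-reversing M\"obius transformation (which is what both your argument and the paper's actually deliver, and suffices for the conjugation-invariant integrals used later), or supply a genuine argument that an orientation-preserving conjugator can be chosen; do not rest it on that coincidence-of-classes remark.
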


\begin{proof}
For $\lambda\neq \ell$ this follows from the last lemma. By continuity, it then follows also for $\lambda=\ell$. 
\end{proof}

This theorem provides integrals of the bicycle correspondence: a conjugacy-invariant function on the M\"obius group, considered as a function of $\lambda$, is such an integral. Individual integrals can be obtained by expanding such a function in a series in $\lambda$. We call such integrals of the bicycle correspondence the monodromy integrals.

For a discussion of symplectic properties and complete integrability of the bicycle correspondence, see \cite{T2}.

\subsection{The bicycle equation and the filament equation} \label{bikefil}

In this subsection we describe a relation between the bicycle equation \bode\ in $\R^3$ and the {\em  filament  equation} (also called  the {\em localized induction equation}, among several other names). The later is an evolution equation  on arc length parameterized curves $\Gamma(t)$ in $\R^3$, 
$$
 \Gamma' = \dot \Gamma \times \ddot \Gamma,
$$ 
where prime $'$ denotes time derivative and dot stands for  derivative with respect to arclength $t$ along $\Gamma$ (this unconventional choice is forced by the prior role of $t$ in this paper). In other words,  the point $\Gamma(t)$ moves in the  binormal direction  with velocity equal to the curvature $\kappa(t)$. 
%
%
This equation provides a simplified model of the the motion of a vortex line in ideal fluid.  Here we are concerned  with closed curves. 

This  infinite-dimensional system is completely integrable in the following sense (see \cite{LP,La}). It is a Hamiltonian system with respect to the 
so-called Marsden-Weinstein symplectic structure on the space of arc length parameterized curves, the Hamiltonian function being the perimeter of the curve. We do not use this symplectic structure in the present paper, so we simply refer to \cite{MW} and \cite{AK}, p.~326 and p.~332, for its definition and main properties. 

The filament equation  has a hierarchy of Poisson commuting integrals $F_1,F_2,\dots$ that starts with
\begin{equation} \label{integrals}
\int 1\ \d t,\ \int \tau\ \d t,\ \int \kappa^2\ \d t,\ \int \kappa^2\tau\ \d t, \int \left(\dot\kappa^2+\kappa^2 \tau^2 -\frac{1}{4} \kappa^4\right)\d t, \dots,
\end{equation}
where, as before, $\tau$ is the torsion and $\kappa$ is the curvature of $\Gamma$. 
One also has a hierarchy of  vector fields $\x_0, \x_1, \x_2,\dots$ along $\Gamma$, that starts with
\be \label{fields}
-\v,\  \kappa \b, \frac{\kappa^2}{2} \v + \dot\kappa \nn + \kappa \tau \b, \ \kappa^2\tau \v +(2\dot\kappa\tau + \kappa\dot\tau)\nn+\left(\kappa\tau^2-\ddot\kappa-\frac{\kappa^3}{2}\right)\b, \dots,
\ee
where, as before, $\v,\nn,\b$ is the Frenet frame along $\Gamma$. For each $i\geq 1$, $\x_i$ defines a Hamiltonian vector field on the space of arc length parameterized curves in $\R^3$, whose 
Hamiltonian with respect to the Marsden-Weinstein structure is $F_i$. 

The vector fields $\x_i$ satisfy the   relations 
\begin{equation} \label{recur}
 \dot \x_i=\v\times \x_{i+1}, \quad i=0,1,2,\ldots.
\end{equation}
Following \cite{La}, \cite{FM}, consider the generating function
$
\x:=\sum_{j\ge 0} \eps^j \x_j,
$
where $\eps$ is a formal parameter. Then the  relations (\ref{recur}) can be compactly encoded in the  equation
\begin{equation} \label{genf}
\eps \dot \x=\v\times \x, \quad \v=\dot\Gamma.
\end{equation}
We impose an additional normalization condition $\x\cdot\x=1$;   the vector fields $ \x_i $ are then uniquely defined by equation \eqref{genf} and $\x_0=-\v$ (see \cite{La}). 
\begin{rmrk}
The series $\x=\sum_{j\ge 0} \eps^j \x_j$ is a formal periodic solution of the differential equation (\ref{genf}) on the sphere. We do not claim that it converges and represents a genuine periodic solution  for any $\eps \neq 0$. 
\end{rmrk}

Now let us compare equation \eqref{genf} with the bicycle equation \eqref{eq:bode3d} in $\R^3$:
\be\label{eq:bode3d1} 
\ell\dot\r=(\v\times \r)\times\r, \quad \r\cdot \r =1,\quad \v=\dot\Gamma.
\ee
Each of the right hand sides of the last two displayed equations defines a  time-dependent vector field  on $S^2$, determined by $\v(t)=\dot\Gamma(t)$. For equation 
\eqref{genf}, it is the velocity  field of rotations about the axis $\R\v$ with angular velocity $\|\v\|$. 
\begin{prop}\label{prop:vf}
The vector field on  the right hand side of  equation \eqref{genf}  is obtained from that on the  right-hand side of equation  \eqref{eq:bode3d1} by an anti-clockwise rotation by 90 degrees.

\end{prop}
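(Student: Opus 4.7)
The plan is to compare the two right-hand sides pointwise on $S^2$. Fix a unit vector $\r \in S^2$ (and think of $\v$ as a given vector in $\R^3$). Both vectors $\v \times \r$ and $(\v \times \r) \times \r$ are perpendicular to $\r$, hence lie in the tangent plane $T_\r S^2 = \r^\perp$. So the claim reduces to exhibiting an anti-clockwise quarter-turn of $T_\r S^2$ that sends the bicycle value $(\v \times \r) \times \r$ to the filament value $\v \times \r$.

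The natural candidate is the almost-complex structure $J_\r \colon T_\r S^2 \to T_\r S^2$, $\w \mapsto \r \times \w$. For any $\w \in \r^\perp$ the vector $\r \times \w$ again lies in $\r^\perp$ and has the same norm as $\w$ (since $\|\r\|=1$), so $J_\r$ is an orthogonal operator on the $2$-plane $T_\r S^2$. I would pin down the orientation by checking the model case $\r = \e_3$: then $J_{\e_3}(\e_1) = \e_3 \times \e_1 = \e_2$, which is the anti-clockwise $90^\circ$ rotation of $\e_1$ in the plane $\mathrm{span}(\e_1,\e_2)$ as viewed from outside the sphere (i.e., from the $+\e_3$ side). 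By $\SO_3$-equivariance of the cross product, the same holds at every $\r \in S^2$ relative to the orientation on $T_\r S^2$ induced by the outward normal $\r$.

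What remains is the identity $J_\r\bigl[(\v \times \r) \times \r\bigr] = \v \times \r$, which is a one-line consequence of the BAC-CAB triple product expansion: setting $\w := \v \times \r \in \r^\perp$,
\[
\r \times (\w \times \r) = \w\,(\r \cdot \r) - \r\,(\r \cdot \w) = \w,
\]
where the two simplifications use $\|\r\|=1$ and $\w \perp \r$. There is no real obstacle in this proof; it is essentially a repackaging of the triple product identity together with the well-known description of the complex structure on $S^2$. The only care needed is to fix the orientation convention on the tangent spaces once and for all at the outset, which is why I would anchor the argument at the test point $\r = \e_3$ before invoking equivariance.
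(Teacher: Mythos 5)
Your proof is correct, and it is essentially the verification the paper leaves to the reader as "straightforward from the equations": both fields are tangent to $S^2$, the quarter-turn at $\r$ (for the outward-normal orientation) is $\w\mapsto\r\times\w$, and the triple-product identity $\r\times\bigl((\v\times\r)\times\r\bigr)=\v\times\r$ converts the bicycle field into the filament field. Your care in fixing the orientation convention at a test point is exactly the right precaution, and nothing further is needed.
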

The proof  is straightforward  from the equations. 

\begin{figure}[h!]
\centerline{\includegraphics[width=.6\textwidth]{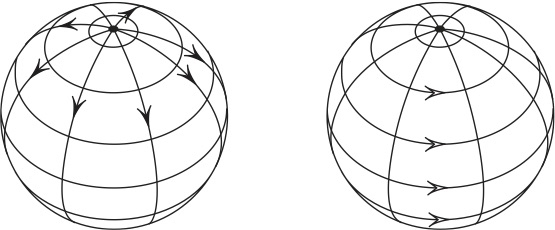}}
\caption{The vector field of the filament equation \eqref{genf} (right)  is obtained from that of the bicycle equation   \eqref{eq:bode3d1}   (left) by a $90^0$ anticlockwise rotation; the fixed points (``north" and ``south poles")  of both vector fields  are  in the direction of $\pm\v$.}\label{fig:spheres}
\end{figure}

Consequently, if we use a stereographic projection to put a complex coordinate on $S^2$, the resulting Riccati equations (in an inertial frame) differ by multiplication of their right-hand-sides  by $i$:
\begin{align*}
\ell\dot z&={1\over 2}\left(-q+2v_1z+\bar q z^2\right), &\mbox{(Bicycle)}\\
\eps\dot w&={i\over 2}\left(-q+2v_1w+\bar q w^2\right), &\mbox{(Filament)}
\end{align*}
where 
$$ \dot \Gamma=\v=(v_1, v_2, v_3), \quad q=v_2+iv_3.
$$
In other words, the filament equation \eqref{genf} is the equation of a bicycle with  ``imaginary length" $\ell=-i\eps$. 

 We also have a filament analog  of Proposition \ref{comp3DR}: 
we project the curve $\x(t)$ stereographically  from $-\v$ (the south pole in Figure \ref{fig:spheres}) on  the $\nn,\b$ plane, equipped with  complex coordinate $W=(X_2+iX_3)/(1+X_1),$ where $\x=X_1\v+X_2\nn+X_3\b,$ and express  (\ref{genf}) as a differential equation on ${W}$.

\begin{prop} Equation \eqref{genf} is equivalent to the Riccati equation
\begin{equation} \label{RicW}
\dot {W} =i\left({1\over \eps}-\tau\right)W-{\kappa\over 2}(1+W^2).
\end{equation}
\end{prop}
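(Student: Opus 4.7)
The plan is to reduce this to the already-established moving-frame Riccati equation \eqref{eq:ric3} for the bicycle equation. By Proposition \ref{prop:vf} and the discussion following it, the vector field of the filament equation \eqref{genf} on $S^2$ is obtained from that of the bicycle equation \eqref{eq:bode3d1} by a $90^\circ$ anticlockwise rotation about the axis $\R\v$, with $\ell$ replaced by $\eps$. In any complex coordinate on $S^2 \simeq \CP^1$ coming from stereographic projection from a fixed point of this rotation (a pole lying on $\R\v$), that rotation acts as multiplication by $i$. Since \eqref{eq:ric3} is written in the stereographic coordinate $Z = (R_2 + iR_3)/(1+R_1)$ projected from $-\EE_1 = -\v$, which is exactly the projection used for $W = (X_2+iX_3)/(1+X_1)$, the filament Riccati equation should be obtained from \eqref{eq:ric3} by the formal substitution $\ell = -i\eps$ (i.e.\ $1/\ell = i/\eps$). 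This gives
\[
\dot W = \Bigl(\tfrac{i}{\eps} - i\tau\Bigr) W - \tfrac{\kappa}{2}(1+W^2) = i\Bigl(\tfrac{1}{\eps}-\tau\Bigr) W - \tfrac{\kappa}{2}(1+W^2),
\]
which is \eqref{RicW}.

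For a self-contained verification one can proceed by direct calculation. Writing $\x = X_1\v + X_2\nn + X_3\b$ and applying the Frenet--Serret formulas, the left-hand side of \eqref{genf} becomes
\[
\eps\bigl((\dot X_1 - \kappa X_2)\v + (\kappa X_1 + \dot X_2 - \tau X_3)\nn + (\tau X_2 + \dot X_3)\b\bigr),
\]
while the right-hand side $\v\times\x = X_2\b - X_3\nn$. Matching components yields
\[
\dot X_1 = \kappa X_2,\qquad \dot X_2 = -\kappa X_1 + \bigl(\tau - \tfrac{1}{\eps}\bigr)X_3,\qquad \dot X_3 = \bigl(\tfrac{1}{\eps}-\tau\bigr) X_2.
\]
Observe that $\dot X_2 + i\dot X_3 = -\kappa X_1 + i(1/\eps - \tau)(X_2 + iX_3)$.

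Now differentiate $W = (X_2+iX_3)/(1+X_1)$ by the quotient rule, using $\dot X_1 = \kappa X_2$. The normalization $\x\cdot\x = 1$ yields the identities $X_1/(1+X_1) = (1-|W|^2)/2$ and $X_2(X_2+iX_3)/(1+X_1)^2 = \tfrac{1}{2}(W+\bar W)W = \tfrac{1}{2}(W^2 + |W|^2)$. Substituting these gives
\[
\dot W = -\tfrac{\kappa}{2}(1-|W|^2) + i\bigl(\tfrac{1}{\eps}-\tau\bigr)W - \tfrac{\kappa}{2}(W^2 + |W|^2) = i\bigl(\tfrac{1}{\eps}-\tau\bigr)W - \tfrac{\kappa}{2}(1+W^2),
\]
as claimed. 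The only mild subtlety is the cancellation of the $|W|^2$ terms, which relies on the constraint $\x\cdot\x=1$; this is consistent with the fact that equation \eqref{genf} preserves $\|\x\|$, so $W$ genuinely lives on the Riemann sphere.
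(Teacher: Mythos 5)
Your direct calculation is correct and complete, and it is exactly the ``direct calculation that we do not reproduce here'' behind the paper's statement: expanding $\eps\dot\x=\v\times\x$ in the Frenet frame gives $\dot X_1=\kappa X_2$, $\dot X_2=-\kappa X_1+(\tau-\tfrac{1}{\eps})X_3$, $\dot X_3=(\tfrac{1}{\eps}-\tau)X_2$, and your quotient-rule computation for $W=(X_2+iX_3)/(1+X_1)$, together with the identities $X_1/(1+X_1)=(1-|W|^2)/2$ (which uses $\|\x\|=1$) and $X_2(X_2+iX_3)/(1+X_1)^2=\tfrac12(W^2+|W|^2)$ (which does not), yields \eqref{RicW} after the $|W|^2$ cancellation, exactly as claimed.

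One caution about your first paragraph: on its own it would not constitute a proof. The ``replace $\ell$ by $-i\eps$'' statement for the \emph{moving-frame} Riccati equation is precisely Corollary \ref{same}, which in the paper is deduced by comparing \eqref{RicW} with \eqref{eq:ric3} \emph{after} this Proposition is established, so invoking it here is close to circular. Moreover, Proposition \ref{prop:vf} concerns a pointwise $90^{\circ}$ rotation of tangent vectors, which translates into ``multiply the right-hand side by $i$'' only for the inertial-frame Riccati \eqref{eq:cric}; applied naively to \eqref{eq:ric3} it would also multiply the $-i\tau Z$ and $-\tfrac{\kappa}{2}(1+Z^2)$ terms by $i$, which is wrong, since those terms come partly from the rotation of the Frenet frame rather than from the vector field itself. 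To make that route rigorous one would have to check that the derivation of \eqref{eq:ric3} in Proposition \ref{comp3DR} remains valid for complex $\ell$ and then substitute $\ell=-i\eps$. As written, treat the first paragraph as motivation; your self-contained verification is the actual proof and it stands.
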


\n The proof is  a  direct calculation that we do not reproduce here.

\begin{rmrk}
Equation (\ref{genf}) is a particular case of the equation 
$\dot \x = \Omega (t) \x$, where $\Omega$ is a time-dependent skew-symmetric $3\times 3$ matrix,
studied in \cite{Le}. Its monodromy is the orthogonal transformation that relates the initial and terminal unit tangent vectors of the spherical curve $\b(t)$ whose geodesic curvature, in our case, equals
$$
\frac{\tau(t)+\frac{1}{\eps}}{\kappa(t)}.
$$
\end{rmrk}

Comparing the filament Riccati equation (\ref{RicW}) to the bicycle Ricatti equation \eqref{eq:ric3}
\be\label{eq:ric7}
\dot {Z} =\left({1\over \ell}-i\tau\right)Z-{\kappa\over 2}(1+Z^2),
\ee
we see that they are almost the same.

%


%

\begin{cor} \label{same}
The filament Riccati equation (\ref{RicW}) is obtained from the bicycle Ricatti equation  \eqref{eq:ric7} by replacing the variable $Z$ with $W$ and  $\ell$ with   $-i\eps$.
\end{cor}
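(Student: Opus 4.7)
The plan is essentially a verification by direct substitution, since both equations have already been derived explicitly in the excerpt: equation \eqref{RicW} comes from projecting the filament generating equation \eqref{genf} stereographically from $-\v$, and equation \eqref{eq:ric7} comes (via Proposition \ref{comp3DR}) from projecting the bicycle equation \eqref{eq:bode3d} stereographically from $-\EE_1$. Both are written in the same Frenet-adapted coordinates, so there is no additional geometric identification required.

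First I would write down the two equations side by side and simply substitute $\ell \mapsto -i\eps$, $Z \mapsto W$ into \eqref{eq:ric7}. The nonlinear term $-\tfrac{\kappa}{2}(1+Z^2)$ and the torsion term $-i\tau Z$ do not involve $\ell$ and so are unchanged. The only term that changes is $Z/\ell$, which becomes $W/(-i\eps) = iW/\eps$. Combining with the unchanged $-i\tau W$ gives the coefficient $i(1/\eps - \tau)W$ appearing in \eqref{RicW}, matching it term by term.

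There is no real obstacle here: the substantive content of the corollary is the fact that the two Riccati equations were derived from the same geometric setup (the Frenet frame on an arclength-parametrized space curve, stereographic projection from the analogous south pole), so that matching coefficients amounts to inspection. If one wished to emphasize the conceptual reason, one could instead observe that, by Proposition \ref{prop:vf}, the spherical vector field of \eqref{genf} is obtained from that of \eqref{eq:bode3d1} by rotation through $\pi/2$, which corresponds under stereographic projection to multiplication of the complex right-hand side by $i$; this is exactly the effect of the substitution $\ell \mapsto -i\eps$, providing an independent check.
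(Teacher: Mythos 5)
Your verification is correct and matches the paper's treatment: the corollary is stated there as an immediate consequence of comparing the two displayed Riccati equations, which is exactly your term-by-term substitution $\ell\mapsto -i\eps$, $Z\mapsto W$ using $1/(-i\eps)=i/\eps$. The added remark invoking Proposition \ref{prop:vf} (the $90^\circ$ rotation corresponding to multiplication by $i$) is consistent with how the paper motivates the ``imaginary length'' interpretation, so nothing is missing.
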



 We now come to the main point  of this section. Let us describe first the idea before proceeding to the technical details.  Let us fix a closed smooth  curve $\Gamma$ in $\R^3$. 
 The last corollary suggests a relation between the filament  integrals $F_i$   of equation \eqref{integrals} and the bicycle equation \eqref{eq:bode3d1}. It is natural to seek this relation by looking at the conjugacy class of the   bicycle monodromy $M_\ell\in\PSLtc$.  The later has (generically) two fixed points in $S^2$, one stable and one unstable,  and the  derivative at one of them determines the conjugacy class of   $M_\ell$ (the derivatives are the reciprocal of each other and each   is the  square of the corresponding eigenvalue of a matrix in  $\SL_2(\C)$ representing $M_\ell$). We denote by $\lambda(\ell)$, for $\ell$ small enough,  the derivative of $M_\ell$ at the unstable fixed point. We  will show that $\ell\ln\lambda(\ell)$ has a Taylor series at $\ell=0$, i.e., is infinitely differentiable  (we do not claim analyticity). The Taylor coefficients of $\ell\ln\lambda(\ell)$  at $0$ are the   {\em monodromy integrals} of $\Gamma$. They share with $F_i$ the property of being invariant under bicycle correspondence. By linearizing the Riccati bicycle equation  \eqref{eq:ric7} around the unstable periodic solution, we are able to express the monodromy integrals, like the filament invariants $F_i$, as integrals along $\Gamma$ of certain differential polynomials  in $\kappa, \tau$. We can then check that the first few monodromy  integrals coincide with the filament  invariants $F_i$, up to index shift and  multiplicative constants.  This suggests  a  conjectured relation between    the filament integrals and the  monodromy invariants.

The following proposition is the main technical tool for implementing the above plan.  

\begin{prop} \label{intint}
Consider a closed smooth curve $\Gamma$ in $\R^3$. 
Then there exists $\ell_0>0$ such that for all $\ell\in(0,\ell_0)$ the associated Ricatti equation \eqref{eq:ric7}  has a unique  unstable periodic solution  $Z(t,\ell)$, tending uniformly, with all its derivatives,  to the zero function, as $\ell\to 0$. Furthermore, extended to $\ell=0$ via $Z(t,0)=0$, $Z(t,\ell)$  is infinitely differentiable in   $\R\times [0,\ell_0)$ \end{prop}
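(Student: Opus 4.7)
The obstacle is the singular perturbation structure of \eqref{eq:ric7} as $\ell\to 0$: the coefficient $1/\ell$ of the linear term blows up. My plan is to regularize by a simple rescaling and then apply the implicit function theorem, before bootstrapping to joint smoothness.

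First I would substitute $Z = \ell W$. After rearrangement, equation \eqref{eq:ric7} becomes
$$\ell\dot W - W + \frac{\kappa}{2} + i\tau\ell W + \frac{\kappa\ell^2}{2}W^2 = 0,$$
which now extends smoothly to $\ell = 0$, where it reduces to the algebraic ``slow-manifold'' relation $W = \kappa/2$. Define
$$G\colon C^1(\R/T\Z;\C)\times[0,\ell_0) \to C^0(\R/T\Z;\C)$$
by sending $(W,\ell)$ to the left-hand side above. Then $G$ is jointly $C^\infty$ (polynomial in $W$, $\dot W$, and $\ell$), $G(\kappa/2,0)=0$, and the partial derivative $D_W G(\kappa/2,0) = -I$ is an isomorphism. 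The implicit function theorem then produces $\ell_0 > 0$ and a smooth family $\ell \mapsto W(\cdot,\ell) \in C^1$ solving $G=0$ near $(\kappa/2,0)$; setting $Z(t,\ell) := \ell W(t,\ell)$ yields a periodic solution with $Z(\cdot,0)\equiv 0$ and $Z = \ell\kappa/2 + O(\ell^2)$.

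To upgrade from $C^1$ to full joint $C^\infty$ smoothness on $\R\times[0,\ell_0)$, I would bootstrap using the equation itself. For $\ell > 0$ the relation $G = 0$ rearranges to $\dot W = (W-\kappa/2)/\ell - i\tau W - (\kappa\ell/2)W^2$; since $W-\kappa/2 = O(\ell)$ in $C^1$, the right-hand side is as regular in $t$ as $W$ itself, and iterating gives $W\in C^\infty$ in $t$ for each $\ell > 0$. Differentiating $G=0$ in $\ell$ and using the invertibility of $D_W G$ in a neighborhood of $(\kappa/2,0)$ yields $\partial_\ell W = -(D_W G)^{-1}\partial_\ell G \in C^k$ for every $k$, and iterating this procedure gives joint $C^\infty$ smoothness up to $\ell=0$. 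Checking the first few Taylor coefficients in $\ell$ reproduces the formal expansion $Z = \ell\kappa/2 + \ell^2(\dot\kappa + i\tau\kappa)/2 + O(\ell^3)$ one obtains by directly matching powers of $\ell$ in \eqref{eq:ric7}.

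Finally, $Z(t,\ell)$ is the unique \emph{unstable} periodic orbit: the Floquet multiplier of the linearization of \eqref{eq:ric7} along $Z(\cdot,\ell)$ has modulus $\exp(T/\ell - \mathrm{Re}\int_0^T \kappa Z\,dt) \approx e^{T/\ell}\gg 1$ for small $\ell$, so it is strongly repelling, and the time-$T$ monodromy $M_\ell$, being a M\"obius transformation of $\CP^1$ (Theorem \ref{thm:foote3d}), has exactly one repelling fixed point once it is hyperbolic. The main technical difficulty I anticipate lies in the bootstrap: as the regularity order $k$ is increased, the IFT ball might shrink with $k$, so some care is needed to keep $\ell_0$ uniform --- either by re-running IFT separately at each $C^k$ and invoking uniqueness to match the solutions, or by a direct contraction-mapping argument with $\ell$-dependent norms.
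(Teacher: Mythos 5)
Your rescaling $Z=\ell W$ correctly identifies the slow-manifold relation $W=\kappa/2$ at $\ell=0$, and your instability/uniqueness argument via the Floquet multiplier and the M\"obius monodromy is fine (it matches the paper's observation that a M\"obius map has at most one repelling fixed point). But the central step --- the implicit function theorem at $(\kappa/2,0)$ --- does not go through as stated, and it is precisely the hard part of the proposition that it hides. With your choice of spaces, $G\colon C^1(\R/T\Z;\C)\times[0,\ell_0)\to C^0(\R/T\Z;\C)$, the derivative $D_WG(\kappa/2,0)$ is the map $h\mapsto -h$ viewed from $C^1$ into $C^0$; this is injective with dense image but \emph{not} surjective, hence not a Banach-space isomorphism, so the IFT cannot be invoked. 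This is the standard singular-perturbation obstruction: there is no pair of fixed function spaces in which $\ell\frac{d}{dt}-1$ is both defined and has an inverse behaving well uniformly down to $\ell=0$. Concretely, for $\ell>0$ the operator $L_\ell h=\ell\dot h-h$ is invertible from periodic $C^1$ to $C^0$, but $\|L_\ell^{-1}\|_{C^0\to C^1}\sim 1/\ell$, and even as operators on $C^0$ the family $L_\ell^{-1}$ converges to $-\mathrm{Id}$ only strongly, not in operator norm (test against highly oscillatory data). So both the existence step and your bootstrap, which leans on ``the invertibility of $D_WG$ in a neighborhood of $(\kappa/2,0)$'', rest on a premise that fails exactly at $\ell=0$, and your closing remark about the IFT ball shrinking with $k$ is pointing at a symptom of this rather than at a curable technicality: smoothness of $Z(t,\ell)$ jointly in $(t,\ell)$ up to $\ell=0$ cannot be obtained by a soft functional-analytic argument in fixed norms; it requires using the smoothness of the coefficients $\kappa,\tau$ in $t$ quantitatively.

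For comparison, the paper's Appendix B avoids the IFT entirely. After reversing time (so the relevant periodic solution becomes attracting), it shows by a differential inequality that the period map sends the unit disk into itself and is a uniform contraction for small $\ell$, giving existence, uniqueness and stability, together with $|Z(t,\ell)|=O(\ell)$ from a trapping estimate. It then proves $\lim_{\ell\downarrow 0}\partial_t^nZ=0$ and the existence of $\lim_{\ell\downarrow 0}\partial_\ell^nZ$ by induction, each step using a Lyapunov-type trapping argument for the linear equation satisfied by the corresponding derivative, and finally passes to $\partial_\ell^n Z(t,0)$ by a difference-quotient argument. If you want to keep the flavor of your approach, you would need to replace the IFT by such explicit uniform-in-$\ell$ estimates (e.g., a contraction argument in $\ell$-dependent norms together with an inductive treatment of $\partial_\ell^n W$), which in effect reproduces the paper's proof.
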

%


Changed Cor. \ref{compI} to Thm. \ref{compI}, and expanded its statement, at the request of referee 2. 





We defer the  proof of this proposition to Appendix \ref{app:pf}. 

\begin{theorem} \label{compI}
Let $\Gamma$ be a closed smooth curve in $\R^3$ and denote by  $\lambda(\ell)\in\C$ the derivative of the $\ell$-bicycle monodromy at its unstable fixed point, for small enough $\ell$ (as per Proposition \ref{intint}). Then  $\ell \ln \lambda(\ell)$ extends to an  infinitely differentiable function in  $[0,\ell_0)$ for some $\ell_0>0$. The Taylor coefficients at $0$,  i.e., the  numbers 
$$I_n:={1\over n!}(\partial_\ell)^n\big|_{\ell=0}\left[\ell\ln \lambda(\ell)\right],\quad n\geq 0,$$
are invariants of the bicycle correspondence and can be determined  recursively as integrals along $\Gamma$ of  polynomials in $\kappa, \tau$ and their derivatives. \end{theorem}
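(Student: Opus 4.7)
The plan is to derive an explicit formula for $\lambda(\ell)$ by linearizing the Riccati equation \eqref{eq:ric7} at the unstable $L$-periodic solution $Z(t,\ell)$ furnished by Proposition \ref{intint}, and then to extract the smoothness of $\ell\ln\lambda(\ell)$ and the polynomiality of its Taylor coefficients directly from this formula. The invariance under bicycle correspondence will follow from Theorem \ref{thm:bc}.

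Setting $Z = Z(t,\ell)+\xi$ in \eqref{eq:ric7} and keeping only the terms linear in $\xi$ gives the variational equation
\[
\dot\xi = \bigl(\tfrac{1}{\ell}-i\tau(t)-\kappa(t)\,Z(t,\ell)\bigr)\xi,
\]
whose time-$L$ evolution is multiplication by
\[
\lambda(\ell)=\exp\!\Bigl(\tfrac{L}{\ell}-i\!\int_0^L\!\tau\,dt-\!\int_0^L\!\kappa(t)\,Z(t,\ell)\,dt\Bigr).
\]
Multiplying $\ln\lambda(\ell)$ by $\ell$ yields the key identity
\[
\ell\ln\lambda(\ell) = L-i\ell\!\int_0^L\!\tau\,dt - \ell\!\int_0^L\!\kappa(t)\,Z(t,\ell)\,dt.
\]
By Proposition \ref{intint} the function $Z$ is jointly $C^\infty$ on $\R\times[0,\ell_0)$ with $Z(t,0)\equiv 0$, so the right-hand side is $C^\infty$ on $[0,\ell_0)$ with value $L$ at $\ell=0$; this already establishes the smoothness claim and identifies $I_0 = L$.

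The higher Taylor coefficients are obtained by substituting the expansion $Z(t,\ell)=\sum_{k\geq 1}\ell^k Z_k(t)$ into the rescaled Riccati equation $\ell\dot Z = Z - i\ell\tau Z - (\ell\kappa/2)(1+Z^2)$ and matching powers of $\ell$. This gives $Z_1 = \kappa/2$ together with the recursion
\[
Z_{n+1}=\dot Z_n+i\tau Z_n+\tfrac{\kappa}{2}\!\!\sum_{\substack{i+j=n\\ i,j\geq 1}}\!\!Z_i Z_j,\qquad n\geq 1,
\]
so inductively each $Z_k$ is a polynomial in $\kappa,\tau$ and their $t$-derivatives. Feeding this expansion back into the formula for $\ell\ln\lambda(\ell)$ yields $I_1 = -i\int_0^L\tau\,dt - \tfrac{1}{2}\int_0^L\kappa^2\,dt$ and $I_n = -\int_0^L\kappa\,Z_{n-1}\,dt$ for $n\geq 2$, each an integral along $\Gamma$ of a differential polynomial in $\kappa,\tau$.

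For the invariance, Theorem \ref{thm:bc} asserts that two closed curves in bicycle correspondence have $\ell$-bicycle monodromies that are conjugate elements of $\PSLtc$; since the derivative of a fractional linear transformation at an isolated fixed point is a conjugacy invariant, $\lambda(\ell)$—and hence every Taylor coefficient $I_n$—depends only on the correspondence class of $\Gamma$. The principal analytic difficulty is the cancellation of the $L/\ell$ singularity of $\ln\lambda(\ell)$ upon multiplication by $\ell$: it rests entirely on the smooth extension-by-zero of $Z$ at $\ell=0$ supplied by Proposition \ref{intint}, and this is the step I expect to be the main obstacle—one which has been isolated for separate treatment in Appendix B.
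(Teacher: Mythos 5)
Your proposal is essentially the paper's own proof: linearize the Riccati equation \eqref{eq:ric7} at the unstable periodic solution $Z(t,\ell)$ to get $\ell\ln\lambda(\ell)=\int_0^L\bigl(1-i\ell\tau-\ell\kappa Z\bigr)\d t$, invoke Proposition \ref{intint} for the $C^\infty$ extension to $\ell=0$, compute the $Z_k$ from the rescaled equation $\ell\dot Z=Z-\ell f$ by matching powers of $\ell$, and get invariance from Theorem \ref{thm:bc} via conjugacy-invariance of the multiplier at the fixed point. The only flaw is a bookkeeping slip in extracting the coefficients: since $Z_0=0$, the coefficient of $\ell$ is just $-i\int_0^L\tau\,\d t$, so $I_1=-i\int_0^L\tau\,\d t$, while $-\tfrac12\int_0^L\kappa^2\,\d t$ is $I_2$ — consistent with your own general formula $I_n=-\int_0^L\kappa\,Z_{n-1}\,\d t$ for $n\ge 2$; this slip does not affect the structure or validity of the argument.
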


\begin{proof}We  linearize equation (\ref{eq:ric7}) at the unstable periodic solution $Z(t,\ell)$, writing this linearization in the form
$$
\frac{\dot U}{U} = \frac{1}{\ell} -i \tau  - \kappa  Z.
$$
Integrating both sides  over a  period and multiplying  the result by $\ell$ gives 
\begin{equation} \label{eq:lambda}
\ell\ln \lambda (\ell) = \int \left(1 -i \ell\tau  - \ell\kappa  Z\right) \d t. 
\end{equation} 
By Proposition \ref{intint}, this expression is $C^\infty$ in $[0,\ell_0)$ for some $\ell_0>0.$ To compute the derivatives of the last equation with respect to $\ell$ at $\ell=0$, we need to calculate the derivatives
$$Z_n:={1\over n!}\left.(\partial_\ell)^n\right|_{\ell=0}Z, \quad n\geq 0.$$
To find   $Z_n$, we first multiply  the bicycle Riccati equation   \eqref{eq:ric7} by $\ell$, obtaining 
\begin{equation} \label{Ricmod}
\ell\dot Z= Z-\ell f, \quad \mbox{where } f=i\tau Z+{\kappa\over 2}(1+Z^2),
\end{equation}
then  differentiate $n$ times at $\ell=0$ (an operation justified by Proposition \ref{intint}), obtaining, after some manipulation, 
the  recursion relation $$Z_n=\dot Z_{n-1}+{1\over (n-1)!}\left.\partial^{n-1}_\ell\right|_{\ell=0}f.$$

Equivalently, and perhaps easier for calculations, one expands $Z$ as a formal power series in $\ell$
$$
Z=Z_0 + \ell Z_1 + \ell^2 Z_2 + \ldots
$$
substitutes in (\ref{Ricmod}), and equates the terms having the same degree in $\ell$. 

In this way, one consecutively finds 
$$
Z_0 =0, \ Z_1 ={ \kappa\over 2},\ Z_2 ={ \dot\kappa\over 2} + i {\tau \kappa\over 2},\ 
Z_3 = \left({ \ddot\kappa\over 2} +{\kappa^3\over 8} - { \tau^2 \kappa\over 2}\right) + i \left({ \dot\tau \kappa\over 2} + \tau \dot\kappa\right),\ldots
$$
and so on, and then
\begin{align}\label{eq:Iexpand}
\begin{split}
I_0 &= \int 1\ \d t, \ I_1 = -i\int \tau\ \d t,\ I_2 = -\frac{1}{2} \int \kappa^2\ \d t, \ I_3 = -\frac{i}{2} \int \kappa^2\tau\ \d t,\\ 
I_4 &= -\int \left(\frac{\kappa\ddot\kappa}{2} - \frac{\kappa^2\tau^2}{2} + \frac{\kappa^4}{8}\right)\d t, \ldots
\end{split}
\end{align}
and so on. 
\end{proof}

We make two observations: 
\begin{enumerate}
\item The integrals $I_n$ are real for even $n$ and imaginary for odd $n$. 

\item Up to multiplicative constants and index shift, the integrals (\ref{eq:Iexpand}) coincide with the integrals  \eqref{integrals} of the filament equation.
\end{enumerate}

We will not attempt to justify these observations formally here and leave their validity as conjectures, to be studied in future work. 


\begin{rmrk} One can easily verify these observations  by explicit calculation for the first several cases. For example, we can see immediately from formulas \eqref{integrals} and \eqref{eq:Iexpand} that 
$$I_0=F_1, \ I_1=-iF_2, \ I_2=-{1\over 2}F_3, \ I_3=-{i\over 2}F_4.$$ As for $I_4$, we make use of the presence of total derivatives, 
$$I_4-{1\over 2}F_5=-{1\over 2}\int \left(\kappa\ddot\kappa+\dot\kappa^2\right)\d t=
-{1\over 2}\int {\d\left(\kappa\dot\kappa\right)\over \d t}\d t=0,$$
concluding that  $I_4={1\over 2}F_5.$ We have tested in a similar fashion observations 1 and 2 for at least ten additional terms.

A heuristic argument for observation 1 is as follows: Corollary \ref{same} suggests  a Laurent  expansion $\ell\ln\lambda(\ell)=\sum I_n\ell^n$ in the {\em complex} $\ell$-plane. For imaginary $\ell$, i.e., $\ell=-i\varepsilon$, the monodromy is an orthogonal transformation (since the linear system \eqref{genf} has antisymmetric coefficient matrix), hence $\ln\lambda$ is imaginary and $\ell\ln\lambda(\ell)=\sum (-i)^nI_n\varepsilon^k$ is real. Thus $I_n$ is real for even $n$, imaginary for odd $n$. 
\end{rmrk}\begin{rmrk} 
That the bicycle correspondence preserves the integrals (\ref{integrals}) of the filament equation is proved in \cite{T2}.
\end{rmrk}

\subsection{Wegner's curves, buckled rings, and solitons of the planar filament equation} \label{elastica}

In this section we show that the Zindler curves constructed by Wegner in \cite{We1}--\cite{We6} are buckled rings. The latter are also  solitons of the  planar filament equation (specified later in this section), see \cite{La}.

As we mentioned in the introduction, a (planar) Bernoulli elastica is an extremum of the total squared curvature (bending energy) among curves of  fixed length. That is, if $\Gamma(t)$ is an arc length parameterized curve with curvature $\kappa(t)$, one is looking for extrema of $\int_{\Gamma} \kappa^2(t)\, \d t$, subject to the constraint that the perimeter  is fixed. The extremal curves satisfy the Euler-Lagrange equation
$$
\ddot \kappa + \frac{1}{2} \kappa^3 + \lambda \kappa = 0,
$$   
where $\lambda$ is a Lagrange multiplier; see, e.g., \cite{Si}. 

A  buckled ring is an extremum of the  total squared curvature functional, subject to  two constraints:  both the perimeter  and the  area are being fixed.   
Buckled rings have been extensively studied, starting with L\'evy \cite{Lev}, Halphen \cite{Hal} and Greenhill \cite{Gr} in the 19th century; see \cite{AC,DVM} for recent works. 
The area constraint gives rise to a  second Lagrange multiplier, $\mu$, in the Euler-Lagrange equation of a buckled ring:
\begin{equation} \label{EL}
\ddot \kappa + \frac{1}{2} \kappa^3 + \lambda \kappa = \mu.
\end{equation}

Let us turn attention to Wegner's curves, which  come in two flavors, the linear (non-closed) and the circular (closed) ones. 
The linear  curves are the curves whose curvature is proportional to the distance to the $x$--axis. They are the graphs  $y=f(x)$, in  Cartesian coordinates, where $f$ satisfies the differential  equation 
\begin{equation} \label{main1}
\frac{1}{\sqrt{1+f_x^2}} = a f^2 +b
\end{equation}
with  parameters $a,b$. 
The circular  curves are the curves whose curvature is proportional to the distance to the origin, and are given by  the graphs $r = r(\psi)$, in  polar coordinates, satisfying the differential equation 
\begin{equation} \label{main3}
\frac{1}{\sqrt{r^2+r_\psi^2}} = ar^2+b + \frac{c}{r^2}
\end{equation} 
with parameters $a,b,c$.

\begin{theorem} \label{WegEl}
Wegner's curves are buckled rings:  equation (\ref{EL}) holds for the linear Wegner curves with 
$\lambda=2ab, \mu=0$, and for the circular Wegner curves with $\lambda=8ac-2b^2, \mu=8a$.
\end{theorem}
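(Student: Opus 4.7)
My plan is to verify the Euler--Lagrange equation (\ref{EL}) directly in each case by expressing $\kappa$ as a polynomial in a convenient coordinate ($f$ for the linear curves, $r^2$ for the circular ones), computing $\ddot\kappa$ with the help of the defining ODE, and checking a purely polynomial identity. A more elegant intermediate route would be to use the first integral of (\ref{EL}),
$$
E \;:=\; \tfrac{1}{2}\dot\kappa^2 + \tfrac{1}{8}\kappa^4 + \tfrac{\lambda}{2}\kappa^2 - \mu\kappa,
$$
obtained by multiplying (\ref{EL}) by $\dot\kappa$. Since $\dot E=\dot\kappa\bigl(\ddot\kappa+\tfrac{1}{2}\kappa^3+\lambda\kappa-\mu\bigr)$, verifying that $E$ is constant for Wegner's curves implies (\ref{EL}) wherever $\dot\kappa\neq 0$, with the remaining points handled by continuity.

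For the linear curves, set $u=1/\sqrt{1+f_x^2}=af^2+b$; since $u=\cos\theta$, $\dot f=\sin\theta$, $\dot\theta=\kappa$ in arclength, differentiating $u=af^2+b$ along $\Gamma$ gives $-\sin\theta\,\kappa=2af\sin\theta$, hence $\kappa=-2af$. Differentiating once more, $\ddot\kappa=-2a\ddot f=-2au\kappa=4a^2 f(af^2+b)$. Substituting $\kappa=-2af$ and $\lambda=2ab$, $\mu=0$ into (\ref{EL}), the cubic terms $4a^3f^3$ and $-4a^3f^3$ cancel, as do the linear terms in $f$, giving $0$. (Equivalently, a one-line computation shows $E=2a^2(1-b^2)$.)

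For the circular curves, work in polar coordinates with $u=1/\sqrt{r^2+r_\psi^2}=g(r)$, $g(r)=ar^2+b+c/r^2$, and let $\phi$ be the angle between the tangent and the radial direction, so that $\cos\phi=ur_\psi$, $\sin\phi=ur=h(r)$ with $h(r)=ar^3+br+c/r$. Differentiating $\sin\phi=h(r)$ in arclength gives $\dot\phi=h'(r)$, and since the tangent angle with the $x$-axis is $\theta=\psi+\phi$ one obtains the very clean formula
$$
\kappa \;=\; \dot\psi+\dot\phi \;=\; u+h'(r) \;=\; 4ar^2+2b.
$$
Differentiating $u^{-2}=r^2+r_\psi^2$ in $\psi$ and using $u_\psi=r_\psi g'(r)$ yields the key identity $r_{\psi\psi}=-r-g'(r)/u^3$, from which $\ddot r=r_{\psi\psi}u^2+r_\psi^2 u\,g'(r)=-ru^2-r^2u\,g'(r)$ after cancellation of $g'(r)/u$ terms. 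Then $\dot\kappa^2=64a^2r^2(1-r^2u^2)$ and $\ddot\kappa=8a(\dot r^2+r\ddot r)=8a-16ar^2u^2-8ar^3u\,g'(r)$, both polynomials in $r^2$ and $r^{-2}$ with explicit coefficients. Substituting these together with $\lambda=8ac-2b^2$, $\mu=8a$ into (\ref{EL}), the coefficients of $r^6$, $r^4$, $r^2$, $r^{-2}$, and the constant term all vanish separately.

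The main obstacle is bookkeeping in the circular case: the combined polynomial in (\ref{EL}) involves several non-obvious cancellations mixing the three parameters $a,b,c$, so one must collect powers of $r^2$ carefully. The crucial structural input that makes those cancellations occur is the identity $r_{\psi\psi}=-r-g'(r)/u^3$, which encodes in one line all the information needed from the defining ODE (\ref{main3}); once this is in hand, the rest is mechanical polynomial algebra. The linear case is analogous but much shorter, as it can be read off as the $c=0$ degeneration (after the obvious reinterpretation of coordinates and with $\mu=0$).
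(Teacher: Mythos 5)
Your proposal is correct and follows essentially the same route as the paper: in both cases one first deduces $\kappa=-2af$ (resp.\ $\kappa=4ar^2+2b$) from the defining ODE via the arclength Frenet relations, and then verifies the Euler--Lagrange equation (\ref{EL}) by direct computation with the stated $\lambda,\mu$. The only difference is bookkeeping in the circular case (you compute $\ddot r$ from the identity $r_{\psi\psi}=-r-g'(r)/u^3$, while the paper uses $\dot\alpha=\kappa-\dot\psi$ to get the compact form $\ddot\kappa=8a[1-\kappa(ar^4+br^2+c)]$), which does not change the substance of the argument.
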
  

\begin{proof}
Consider the linear case first. Let $x(t),y(t)$ be an arc length parameterization of this curve, $\theta(t)$ be its direction, and $\kappa(t)$ its curvature. Then
\begin{equation} \label{Fr1}
\dot x=\cos\theta,\ \dot y=\sin\theta,\ \dot\theta = \kappa.
\end{equation}
The left hand side of (\ref{main1}) is $\dot x$, and we rewrite this equation as
\begin{equation} \label{main2}
\dot x=ay^2+b.
\end{equation}

We claim that $\kappa=-2ay$. Indeed, differentiate the first equation of  (\ref{Fr1}) and equation (\ref{main2}),
$$
-\dot\theta \sin\theta = \ddot x = 2a y \dot y,
$$
and use the second and third equations of (\ref{Fr1}),  
$$
- \kappa \sin\theta = 2a y \sin\theta,
$$
implying the claim.

Now using $\kappa=-2ay$, equation \eqref {Fr1} and (\ref{main2}), we find that 
$$
\ddot \kappa + \frac{1}{2}  \kappa^3 = 4a^2 y (ay^2+b)-4a^3y^3 = 4a^2by=-2ab\kappa,
$$
as needed.

The argument in the circular case is similar. Let $r(t),\psi(t)$ be the polar coordinates of the curve in an arc length parameterization and  $\alpha=\theta-\psi$  the angle between the tangent to the curve and the radial direction. Then 
\be\label{eq:dr}
\dot r=\cos\alpha,\quad  r\dot\psi=\sin\alpha, \quad  \dot\theta=\kappa=\dot\psi+\dot\alpha.
\ee 
The left hand side  of \eqref{main3} is $\dot\psi$, hence  we can rewrite this equation 
as 
\be\label{eq:dpsi}\dot\psi=ar^2+b+\frac{c}{r^2}.
\ee
We claim that  $\kappa = 4ar^2+2b.$ Indeed, from  \eqref{eq:dr} and \eqref{eq:dpsi}, we get
$$\sin\alpha=r\dot\psi=ar^3+br+\frac{c}{r},$$ 
hence 
$$(\kappa-\dot\psi)\dot r=\dot\alpha \cos\alpha=(\sin\alpha)\dot{}=\dot r \left(3ar^2+b-\frac{c}{r^2}\right).$$
  Thus   
  $$\kappa=\dot\psi+3ar^2+b-\frac{c}{r^2}=
\left(ar^2+b+\frac{c}{r^2}\right)+3ar^2+b-\frac{c}{r^2}=4ar^2+2b,$$
 as claimed.

Now using $\kappa=4ar^2+2b,$ equation \eqref{eq:dr}, and \eqref{eq:dpsi}, we find
\begin{align*}
\ddot \kappa + \frac{1}{2}  \kappa^3 &=8a[1-\kappa(ar^4+br^2+c)]+\frac{1}{2}(4ar^2+2b)^3=\\
&=4 (2 a + b^3 - 4 a b c) + 8a(  b^2 - 4 a c) r^2=2 (b^2 - 4 a c)\kappa+8a,
\end{align*}
as needed. 
\end{proof}

It is known that buckled rings are solitons of the planar filament equation. Let us review this material. 

Let $\Gamma(t)$ be a smooth  planar arc length parameterized curve,   $(\v,\nn)$  its Frenet frame and $\kappa$ its curvature.  The planar filament equation is the evolution equation
$$
\Gamma' = \frac{\kappa^2}{2} \v + \dot\kappa \nn,
$$
obtained from the vector field $\x_2$ in the filament hierarchy \eqref{fields} by restricting to planar curves with $\tau=0$.
The planar filament equation has  infinitely many integrals of motion, namely,  the odd-numbered ones in the sequence (\ref{integrals}), restricting again to  $\tau=0$, see \cite{LP1}. 
The planar filament equation is equivalent to the modified Korteweg-de Vries equation, in the same way as the filament equation is equivalent to the non-linear Schr\"odinger equation \cite{LP1}.

By solitons of the planar filament equation we mean the curves that evolve under this flow by isometries and a parameter shift. The next proposition is not new (see, e.g., \cite{La}); we include its proof here for completeness. 

\begin{prop} \label{Sol}
The buckled rings are solitons of the planar filament equation.
\end{prop}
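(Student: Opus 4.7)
My plan is to show that, under the planar filament flow, the curvature function of an initial buckled ring evolves by a pure translation in the arclength parameter; this is exactly the statement that the curve moves rigidly (up to a shift of parameter).

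First I would derive the evolution equation for the curvature $\kappa(t,u)$ under the flow $\partial_u\Gamma = \tfrac12\kappa^2\v+\dot\kappa\,\nn$, where $t$ denotes arclength and $u$ denotes flow time. Since $\partial_t(\tfrac12\kappa^2)=\kappa\dot\kappa$, the coefficient $\tfrac12\kappa^2$ of $\v$ satisfies the arclength-preservation relation $\partial_t f=\kappa g$ with $g=\dot\kappa$, so the arclength parametrization is maintained along the flow. Differentiating $\v=\partial_t\Gamma$ in $u$ and using compatibility of mixed partials, a short computation with the Frenet equations gives $\partial_u\v=(\tfrac12\kappa^3+\ddot\kappa)\nn$ and consequently $\partial_u\nn=-(\tfrac12\kappa^3+\ddot\kappa)\v$. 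Then differentiating $\dot\v=\kappa\nn$ in $u$ in two ways and reading off the $\nn$-component yields the mKdV-type equation
\begin{equation*}
\partial_u\kappa \;=\; \dddot\kappa+\tfrac{3}{2}\kappa^2\dot\kappa.
\end{equation*}

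Next I would use the buckled ring equation \eqref{EL} to reduce the right-hand side. Differentiating $\ddot\kappa+\tfrac12\kappa^3+\lambda\kappa=\mu$ with respect to $t$ gives $\dddot\kappa+\tfrac32\kappa^2\dot\kappa+\lambda\dot\kappa=0$, so the curvature evolution collapses to
\begin{equation*}
\partial_u\kappa \;=\; -\lambda\,\dot\kappa .
\end{equation*}
This is a linear transport equation along the arclength variable, whose solution with initial data $\kappa(t,0)$ is $\kappa(t,u)=\kappa(t-\lambda u,0)$. Since a planar arclength-parametrized curve is determined up to a rigid motion by its curvature as a function of arclength, this implies that $\Gamma(\cdot,u)$ is obtained from $\Gamma(\cdot,0)$ by a rigid motion in $\R^2$ together with the arclength shift $t\mapsto t-\lambda u$, i.e. $\Gamma$ evolves as a soliton.

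The only genuinely computational step is the derivation of the curvature evolution $\partial_u\kappa=\dddot\kappa+\tfrac32\kappa^2\dot\kappa$; everything afterwards is a one-line use of the Euler--Lagrange equation \eqref{EL}. The main conceptual point I would emphasize is that the area-constraint Lagrange multiplier $\mu$ plays no role in the soliton property (only $\lambda$ appears, as the speed of translation), while $\mu$ shows up only through its effect on the shape of $\kappa$. I do not expect any real obstacle; the slight subtlety is checking that the arclength parametrization is genuinely preserved by the flow, which is what justifies treating $t$ as arclength uniformly in $u$ when writing the Frenet equations.
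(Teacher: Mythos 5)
Your proposal is correct and is essentially the paper's own argument: both proofs reduce the soliton property to the statement that along a buckled ring the curvature changes under the planar filament field by $-\lambda\dot\kappa$ (a pure parameter shift), obtained by computing that the flow preserves arclength and that the curvature evolves by $\dddot\kappa+\tfrac32\kappa^2\dot\kappa=\frac{\d}{\d t}\bigl(\ddot\kappa+\tfrac12\kappa^3\bigr)$, and then differentiating the Euler--Lagrange equation \eqref{EL}. The only difference is cosmetic: the paper gets the curvature variation from the general formula $\kappa'=\ddot\db\cdot\nn-2\kappa\,\dot\db\cdot\v$ specialized to $\db=\tfrac12\kappa^2\v+\dot\kappa\nn$, while you derive the same mKdV evolution directly from the Frenet frame and commuting mixed partials.
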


\begin{proof}



%
A planar curve evolves by isometries if and only if its curvature remains unchanged. Let $\Gamma(t)$ be an arc length parameterized planar curve and $\db(t)$ a vector field  along $\Gamma$,  defining its variation. Denote by $\kappa'$ the directional derivative of $\kappa$ with respect to $\db$. The change in $\kappa$ due to parameter shift is $\dot\kappa:={\d\kappa\over \d t}$. It follows that the soliton condition, requiring   $\Gamma$ to evolve under $\db$ by isometries and parameter shift, is equivalent to the condition that $\kappa$ satisfies the equation  $\kappa'+\lambda \dot\kappa=0$ for some real constant $\lambda$. 

Now a  straightforward calculation  shows that, for a  general $\Gamma$ and $\db$,  
\be\label{eq:dk}
\kappa' = \ddot \db \cdot \nn - 2\kappa\, \dot \db\cdot \v.
\ee
(Sketch: calculate  $\ddot\db=\ddot\Gamma'\equiv (2u'\kappa + \kappa')\nn\ (\mod \v)$, where $u=|\dot\Gamma|$, so that $\dot\Gamma=u\v.$ It follows that $\ddot\db\cdot \nn=2u'\kappa + \kappa'.$ Next, calculate
$\dot\db=\dot\Gamma'\equiv u'\v \ (\mod \nn)$,  hence $u'=\dot\db\cdot\v$, from which equation \eqref{eq:dk} follows.) 

In our case, $\db = \frac{\kappa^2}{2} \v + \dot\kappa \nn$, which implies, again by a straightforward    calculation, that 
$$ \ddot \db \cdot \nn=\frac{\d}{\d t} \left(\ddot \kappa +\frac{1}{2} \kappa^3\right), \  \dot \db\cdot \v=0.
$$
(The last equation  means that the flow defined by  $\db$ on the space of parametrized curves is arc length preserving.) 

It follows from the last two displayed equations  that  the soliton condition on  $\Gamma$ for the planar filament equation is 
$$
\kappa'+\lambda\dot\kappa=\frac{\d}{\d t} \left(\ddot \kappa + \frac{1}{2} \kappa^3 + \lambda \kappa \right) =0,
$$
or
$$
\ddot \kappa + \frac{1}{2} \kappa^3 + \lambda \kappa = \mu,
$$
that is, the Euler-Lagrange equation (\ref{EL}).
\end{proof}

We conclude that Wegner's curves are solitons of the planar filament equation.


\section{Case study: multiple circles }\label{sec:mc}
%
%
In this section we study some interesting and non-trivial curves in  bicycle correspondence
 with a circle. 
\subsection{Definition of the curves $\Gamma_{k,n}$}

Denote by $nS^1$ the $n$-fold circle, parameterized by $t\mapsto e^{it}\in\C,$ $0\leq t\leq 2\pi n$. 
Recall that the  monodromy of a closed parametrized planar curve is a conjugacy class of
an element of $\PSLt$; these elements are divided into hyperbolic, parabolic, elliptic and trivial, 
according to the number of  fixed points in $S^1\simeq \RP^1$ ($2,1,0, \infty$, respectively).

\begin{prop}\label{prop:bmono}   Let $\ell>0$. Then the  $\ell$-bicycle monodromy of $nS^1$, $n\geq 1$,  is 
\begin{itemize}
\item hyperbolic for $\ell<1$;

\item parabolic for $\ell=1$;

\item elliptic for $\ell>1$, except for the $n-1$ values  $$\ell_{k,n}=1/\sqrt{1-(k/n)^2},\quad  k=1,2,\ldots,n-1,$$   for  which the monodromy is trivial.

\end{itemize}
 \end{prop}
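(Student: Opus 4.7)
The plan is to reduce everything to an explicit one-parameter subgroup of $\SL_2(\R)$ by exploiting the rotational symmetry of the $n$-fold circle. With $\Gamma(t)=(\cos t,\sin t)$ and the substitution $\phi=\theta-t$ used in the circle example of Section~\ref{sectmono}, the bicycle equation becomes the autonomous equation $\dot\phi=-1-(\cos\phi)/\ell$, equivalent via $p=\tan(\phi/2)$ to the constant-coefficient Riccati equation displayed there. Since the frame shift $\phi\mapsto\theta-t$ is trivial on $S^1$ after $t=2\pi n$, the $\ell$-bicycle monodromy of $nS^1$ coincides with the time-$2\pi n$ flow of that Riccati equation on $\RP^1\simeq S^1$. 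By Theorem~\ref{thm:foote2d}, this flow is the projectivization of $\exp(2\pi n A)$, where
\[
A=\frac{1}{2\ell}\begin{pmatrix}0 & \ell-1\\ -(\ell+1) & 0\end{pmatrix}\in\slt
\]
is the coefficient matrix obtained by linearizing the Riccati equation.

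The next step is a short computation: $\tr A=0$ and $A^2=\mu^2 I$ with $\mu^2=(1-\ell^2)/(4\ell^2)$, so
\[
\exp(2\pi n A)=\cosh(2\pi n\mu)\,I+\mu^{-1}\sinh(2\pi n\mu)\,A,
\]
(interpreted through the Taylor series when $\mu=0$). Hence the classification in $\PSLt$ is determined by the sign of $\mu^2$. For $\ell<1$, $\mu$ is real and positive, so $|\tr\exp(2\pi n A)|=2\cosh(2\pi n\mu)>2$ and the element is hyperbolic. For $\ell=1$, the matrix $A$ is nilpotent and nonzero, so $\exp(2\pi n A)=I+2\pi n A$ is a nontrivial unipotent, hence parabolic. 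For $\ell>1$, writing $\mu=i\omega$ with $\omega=\sqrt{\ell^2-1}/(2\ell)>0$, one has $\exp(2\pi n A)=\cos(2\pi n\omega)\,I+\omega^{-1}\sin(2\pi n\omega)\,A$ with trace $2\cos(2\pi n\omega)$, which is elliptic whenever $|\cos(2\pi n\omega)|<1$.

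The final step is to pin down the degenerate sub-cases inside $\ell>1$. These occur precisely when $\cos(2\pi n\omega)=\pm1$, i.e.\ $2n\omega\in\Z$; at such points $\sin(2\pi n\omega)=0$ as well, so the closed form forces $\exp(2\pi n A)=\pm I$, which is the identity in $\PSLt$, i.e.\ trivial monodromy. Solving $2n\omega=k$ for a positive integer $k$ gives $\ell^2(n^2-k^2)=n^2$, equivalently $\ell=1/\sqrt{1-(k/n)^2}=\ell_{k,n}$, and the constraints $\ell>1$ and $\ell<\infty$ restrict $k$ to $\{1,2,\ldots,n-1\}$, giving exactly the claimed $n-1$ exceptional values. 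The only point requiring care, rather than routine calculation, is ensuring that at these critical $\ell$'s one really obtains the identity in $\PSLt$ and not merely a parabolic element; the simultaneous vanishing of $\sin$ and $(\cos\mp1)$ at the critical $\omega$'s makes this automatic from the closed-form formula, and is in my view the key conceptual point of the argument.
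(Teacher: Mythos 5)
Your argument is correct, and it reaches the classification by a genuinely different route than the paper. You pass to the rotating frame $\phi=\theta-t$, observe that after time $2\pi n$ the frame rotation is the identity, and then compute the monodromy exactly as the projectivization of $\exp(2\pi nA)$ for the explicit constant matrix $A\in\slt$ with $A^2=\tfrac{1-\ell^2}{4\ell^2}I$; the trichotomy then falls out of the trace criterion, and the exceptional values $\ell_{k,n}$ are exactly the solutions of $2n\omega\in\Z$, at which the closed form forces $\exp(2\pi nA)=\pm I$ (your observation that $\sin$ and $\cos\mp1$ vanish simultaneously correctly rules out a parabolic degeneration). The paper instead factors the monodromy of $nS^1$ as the $n$-th power of the monodromy of the simple circle, disposes of $\ell\le 1$ by noting that powers of hyperbolic (resp.\ parabolic) elements stay hyperbolic (resp.\ parabolic), and for $\ell>1$ identifies the elliptic rotation angle not by computing a matrix but via the Berry phase formula (Corollary \ref{Berry}): the angle is the solid angle $2\pi\bigl(1-\sqrt{1-1/\ell^2}\bigr)$ of the right cone over $S^1$ with generator $\ell$, and triviality of the $n$-th power means this angle is a multiple of $2\pi/n$ (equivalent to your condition $n\sqrt{1-1/\ell^2}=k$). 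What your computation buys is a self-contained, purely $\SLt$ argument yielding the exact monodromy matrix for every $\ell$, with no reliance on the Berry phase machinery; what the paper's argument buys is a geometric explanation of the exceptional lengths as those for which the cone's solid angle is commensurate with $2\pi/n$, reusing a result it has already established. Both are complete proofs of Proposition \ref{prop:bmono}.
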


\begin{proof}The $\ell$-monodromy of $nS^1$ is the $n$-th power of the $\ell$-monodromy of the (simple) circle $S^1$. The latter is easily found by direct calculation to be: hyperbolic for $0<\ell<1$, 
parabolic for $\ell=1$,
and elliptic for $\ell>1$. The $n$-th power of hyperbolic or parabolic element is hyperbolic or parabolic, respectively. 

An elliptic element is conjugate to a rotation by some angle $\alpha$, hence its $n$-th power is trivial if and only if $\alpha$ is a multiple of $2\pi/n$.  On the other hand, by Corollary \ref{Berry}, $\alpha$ is the solid angle at the vertex of a  right cone over $S^1$ with generator of length $\ell$. A simple calculation shows that this solid angle is $2\pi(1-\sqrt{1-1/\ell^2}).$ Thus the $\ell$-monodromy of $nS^1$ is trivial if and only if  $2\pi(1-\sqrt{1-1/\ell^2})=2\pi m/n$ for some $m\in\Z$, or $\ell=1/\sqrt{1-(k/n)^2},$ $  k=1,2,\ldots,n-1.$
 \end{proof}

\begin{definition} For each $n\geq 2$ and $1\leq k\leq n-1$, let
$\Gamma_{k,n}(t)$ be the  unique closed plane curve   in $2\ell_{k,n}$-bicycle correspondence with $nS^1$, such that  $\Gamma_{k,n}(0)=1+2\ell_{k,n}\in\C$, where $ \ell_{k,n}=1/\sqrt{1-(k/n)^2}$.  \end{definition}

See Figure \ref{fig:zind}. This class of curves is mentioned in Section 8.4 of \cite{We5}. 
%
%

\begin{figure}\centering\label{fig:Gam}
\includegraphics[width=\textwidth]{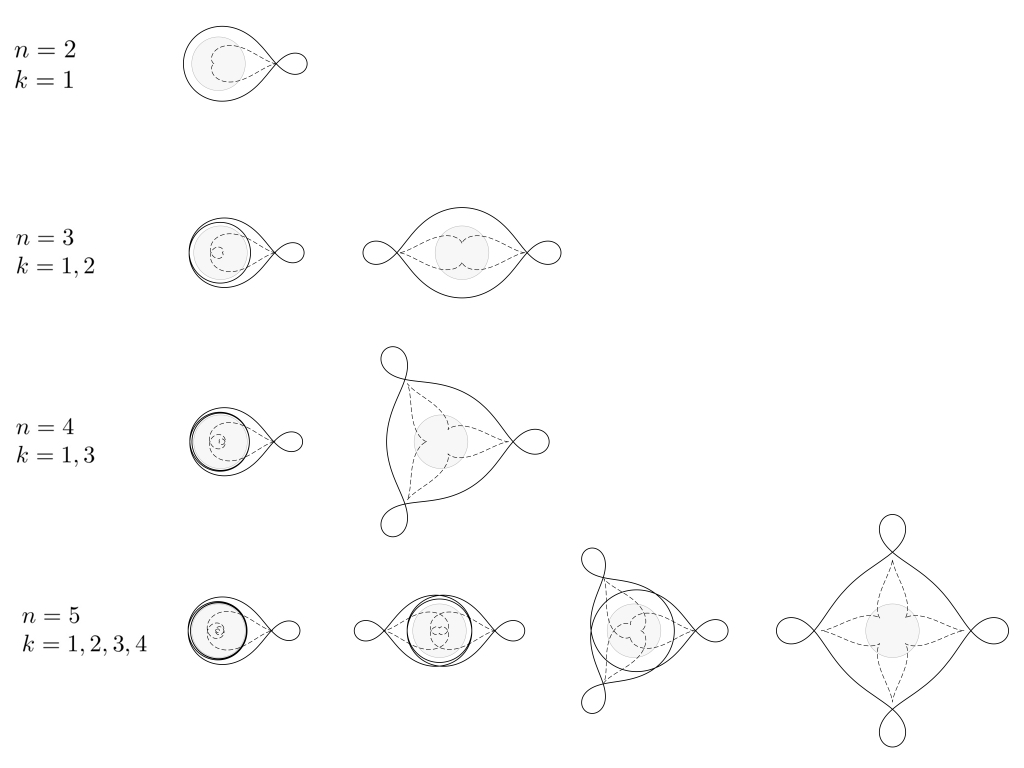}
\caption{ 
The curves $\Gamma_{k,n}$  (the solid curves) for $1\leq k<n\leq 5$  and  $(k,n)$ relatively prime. Each one is in bicycle correspondence with the unit circle (the boundary of the grey disk); their common back track is the dotted curve.
}\label{fig:zind}
\end{figure}

\begin{rmrk}\label{rmrk:sym}
%
%
Since the $\ell_{k,n}$-monodromy of $nS^1$ is trivial, we have in fact a whole circle worth of planar closed curves in $2\ell_{k,n}$-bicycle correspondence with $nS^1$. 
By the obvious  rotational symmetry of the bicycle equation for $nS^1$, they are all obtained from $\Gamma_{k,n}$ by rotation  about the origin and shift reparametrization.
\end{rmrk}

We next find an explicit arclength parametrization  of $\Gamma_{k,n}$  by  solving  the bicycle equation for $S^1$. 

\begin{prop}\label{prop:kn}
One has
\be\label{eq:gkn}
\Gamma_{k,n}(t)=e^{it}\left[1+2\ell e^{i\phi(t)}\right], 
\ee
where $\phi(t)$ is defined (as a continuous function) by
%
%
\be\label{eq:ab}
\tan{\phi\over 2}=-a\tan{bt\over 2}, \quad a={n\over k}+\sqrt{\left({n\over k}\right)^2-1}, \quad b={k\over n}, \quad \phi(0)=0.
\ee
\end{prop}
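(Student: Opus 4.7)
The plan is to write $\Gamma_{k,n}(t)=\Gamma(t)+2\ell\,\r(t)$ with $\Gamma(t)=e^{it}$ (parametrized on $[0,2\pi n]$ for the $n$-fold circle), $\ell=\ell_{k,n}$, and $\r(t)=e^{i\theta(t)}$ the bicycle direction, and then solve the $\ell$-bicycle equation for $\theta$. Writing $\r=e^{i\theta}$ in the form \eqref{eq:trigbike} with $\v=\dot\Gamma=(-\sin t,\cos t)$ yields $\ell\dot\theta=-\cos(\theta-t)$. The rotational symmetry of the circle suggests the substitution $\phi:=\theta-t$, which precisely corresponds to factoring $e^{it}$ out of $\Gamma_{k,n}$ as in \eqref{eq:gkn}; one obtains the autonomous Riccati-type ODE
\[
\dot\phi=-1-\tfrac{1}{\ell}\cos\phi.
\]
The initial condition $\Gamma_{k,n}(0)=1+2\ell$ forces $\r(0)=1$, hence $\theta(0)=0$ and $\phi(0)=0$.

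Next, apply the Weierstrass substitution $p=\tan(\phi/2)$, which reduces the equation to the separable form (this is precisely Example 1.2 reproduced earlier in the paper)
\[
\dot p=-\frac{1}{2\ell}\bigl[(\ell+1)+(\ell-1)p^{2}\bigr].
\]
For $\ell>1$ both coefficients are positive, so separation of variables gives
\[
\arctan\!\Bigl(\sqrt{\tfrac{\ell-1}{\ell+1}}\,p\Bigr)=-\tfrac{1}{2}\sqrt{1-1/\ell^{2}}\;t,
\]
using $p(0)=0$ to kill the constant of integration. Equivalently,
\[
\tan(\phi/2)=-\sqrt{\tfrac{\ell+1}{\ell-1}}\,\tan\!\Bigl(\tfrac{1}{2}\sqrt{1-1/\ell^{2}}\,t\Bigr).
\]

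The remaining step is a routine algebraic identification. Substituting $\ell=\ell_{k,n}=1/\sqrt{1-(k/n)^{2}}$ gives $1/\ell^{2}=1-(k/n)^{2}$, hence $\sqrt{1-1/\ell^{2}}=k/n=b$. For the amplitude, writing $\ell\pm 1=(n\pm\sqrt{n^{2}-k^{2}})/\sqrt{n^{2}-k^{2}}$ yields
\[
\sqrt{\tfrac{\ell+1}{\ell-1}}=\frac{n+\sqrt{n^{2}-k^{2}}}{k}=\tfrac{n}{k}+\sqrt{(n/k)^{2}-1}=a,
\]
which gives \eqref{eq:ab}. Finally, to confirm $\phi$ is well-defined and continuous through the singularities of $\tan(bt/2)$ one observes that $\dot\phi<0$ for $\ell>1$, so $\phi$ is strictly decreasing, passing through odd multiples of $\pi$ exactly when $\tan(bt/2)$ blows up. As $t$ runs from $0$ to $2\pi n$, $bt/2$ runs from $0$ to $k\pi$, so $\phi$ decreases by $2\pi k$; thus $e^{i\phi(2\pi n)}=1$ and $\Gamma_{k,n}(2\pi n)=\Gamma_{k,n}(0)$, confirming closedness.

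The computation itself presents no real obstacle; the one conceptual step worth flagging is the reduction $\phi=\theta-t$ that converts the nonautonomous bicycle equation on the circle into an autonomous ODE, after which everything is driven by the Weierstrass substitution already used in Example 1.2 of the paper.
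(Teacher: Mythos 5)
Your proposal is correct and follows essentially the same route as the paper: write $\Gamma_{k,n}=\Gamma+2\ell\r$, reduce to $\ell\dot\theta=-\cos(\theta-t)$, pass to $\phi=\theta-t$ and $p=\tan(\phi/2)$, and solve the resulting constant-coefficient Riccati equation, identifying $a$ and $b$ at $\ell=\ell_{k,n}$. Your explicit separation of variables and the closing remark on continuity of $\phi$ across the poles of $\tan(bt/2)$ (and the resulting closedness) are correct details the paper leaves implicit.
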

\begin{proof} Let $\Gamma$ be the unit circle in $\R^2=\C$, parameterized by $\Gamma(t)=e^{it}.$ 
Let $\Gamma_\ell$ be the (not necessarily closed) parameterized plane  curve in $2\ell$-bicycle correspondence~with $\Gamma$, satisfying $\Gamma_\ell(0)=1+2\ell$. By definition, $\Gamma_\ell(t)=\Gamma(t)+2\ell\r(t)$, where $\r(t)$ is the solution to equation \bode\ with $\r(0)=1\in\C.$ Taking $\Gamma(t)=e^{it}$, $\r=e^{i\theta}$  in equation \bode\  gives
$\ell\dot\theta=-\cos(\theta-t).$ Changing to $\phi:=\theta-t$ gives $\Gamma_\ell(t)=e^{it}\left[1+2\ell e^{i\phi(t)}\right],
$ where $\phi(t)$ satisfies
${\dot\phi}=-1-(\cos\phi)/\ell$, $ \phi(0)=0.$ Changing  again to  $p= \tan(\phi/ 2),$ gives 
$$ \dot p=-{1\over 2\ell}\left[p^2(\ell-1)+\ell+1\right], \quad p(0)=0,$$ a constant coefficient Riccati equation, whose solution, for $\ell>1$,  is  
\be\label{eq:kn} 
 p=
 -a\tan{bt\over 2}, \quad a=\sqrt{{ \ell+1\over\ell-1}}, \quad b= \sqrt{1-{1\over \ell^2}}, \quad \ell>1.\ee
For  $\ell=\ell_{k,n}=1/\sqrt{1-(k/n)^2}$, we obtain the stated formulas. 
\end{proof}
\begin{rmrk}There are expressions similar  to \eqref{eq:kn}  for $\Gamma_\ell$ with $0<\ell\leq1$ (which we do not really use).  For $0<\ell<1$,
$$
p= -a\tanh{bt\over 2},\;  a=\sqrt{{ 1+\ell\over1-\ell}},  \;  b= \sqrt{{1\over \ell^2}-1}.$$
 For $\ell=1$, we have $\dot p=-1$ so that $p(t)=-t$. 
 \end{rmrk}

If $(k,n)$ have a common divisor $d>1$, then $\Gamma_{k,n}$ is a $d$-fold cover of $\Gamma_{\bar k, \bar n}$, where $\bar k=k/d, \bar n=n/d$, so all properties of  $\Gamma_{k,n}$  can be easily deduced from those of $\Gamma_{\bar k, \bar n}$. We will thus restrict attention henceforth  to $\Gamma_{k,n}$ with relatively prime  $(k,n)$.

\begin{cor}\label{cor:sym} For all $n\geq 2$ and $1\leq k\leq n-1$,
\begin{enumerate}
\item $\Gamma_{k,n}$ and $nS^1$ have the same length $2\pi n$ and the same $\lambda$-bicycle monodromy for all  $\lambda>0$ (as in   Proposition \ref{prop:bmono}). 
\item $\Gamma_{k,n}$ admits a $D_k$-symmetry (the symmetries of a regular $k$-gon); that is, 
\begin{align}\label{eq:sym}
\begin{split}
&\Gamma_{k,n}(t+2\pi n/k)=e^{2\pi in/k}\Gamma_{k,n}(t), \\
& \Gamma_{k,n}(-t)=\overline{\Gamma}_{k,n}(t),
\end{split}
\end{align}
for all $t\in\R$.
\end{enumerate}
\end{cor}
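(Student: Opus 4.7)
The plan is to handle (1) and (2) separately, drawing (1) from the general theory of Section~\ref{ss:bc} and (2) from a direct analysis of the explicit parametrization \eqref{eq:gkn}.

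For part (1), arc-length is preserved by bicycle correspondence (Corollary~\ref{cor:arc}), and $nS^1$ has length $2\pi n$, so $\Gamma_{k,n}$ inherits this length. Since $\Gamma_{k,n}$ and $nS^1$ are closed curves in $2\ell_{k,n}$-bicycle correspondence by construction, Theorem~\ref{thm:bc} applies: their $\lambda$-bicycle monodromies lie in the same $\SO^+_{2,1}$-conjugacy class for every $\lambda>0$, which is precisely the dichotomy of Proposition~\ref{prop:bmono}.

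For part (2), I start from $\Gamma_{k,n}(t) = e^{it}[1 + 2\ell_{k,n} e^{i\phi(t)}]$ of Proposition~\ref{prop:kn}. The rotational identity reduces to $e^{i\phi(t+2\pi n/k)} = e^{i\phi(t)}$, since the prefactor $e^{it}$ already contributes the factor $e^{2\pi i n/k}$. The shift $t \mapsto t + 2\pi n/k$ increases $bt/2 = (k/n)t/2$ by exactly $\pi$, so by $\pi$-periodicity of $\tan$ the right side of \eqref{eq:ab} is unchanged; hence $\phi(t + 2\pi n/k) - \phi(t) \in 2\pi\mathbb{Z}$. The integer is pinned down by tracking $\phi$ continuously on $[0, 2\pi n/k]$: as $bt/2$ sweeps $[0,\pi]$ and $-a\tan(bt/2)$ passes through one full period of the tangent (including a simple pole at $bt/2 = \pi/2$), the continuous branch of $\phi/2$ started at $0$ descends monotonically to $-\pi$, so $\phi$ changes by $-2\pi$ and $e^{i\phi}$ returns to its starting value.

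The reflection identity is cleaner still: oddness of $\tan$ gives
\[
\tan\!\left(\tfrac{\phi(-t)}{2}\right) = -a\tan\!\left(\tfrac{-bt}{2}\right) = -\tan\!\left(\tfrac{\phi(t)}{2}\right) = \tan\!\left(-\tfrac{\phi(t)}{2}\right),
\]
so continuity and $\phi(0)=0$ force $\phi(-t) = -\phi(t)$, whence $\Gamma_{k,n}(-t) = e^{-it}[1 + 2\ell_{k,n} e^{-i\phi(t)}] = \overline{\Gamma_{k,n}(t)}$. The only mildly delicate step is the branch-tracking that pins down the $2\pi\mathbb{Z}$ ambiguity in the rotational identity; a more conceptual alternative sidesteps this by using Remark~\ref{rmrk:sym} together with the equivariance of the bicycle equation for $nS^1$ under rotations and conjugation, combined with the uniqueness of $\Gamma_{k,n}$ via its initial condition, but the direct computation above is short enough that no such detour is needed.
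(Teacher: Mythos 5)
Your proposal is correct and matches the paper's (very terse) proof: part (1) from Corollary \ref{cor:arc} and Theorem \ref{thm:bc}, part (2) by direct computation from the explicit formulas \eqref{eq:gkn} and \eqref{eq:ab}, which you simply spell out in detail. (Minor remark: since only $e^{i\phi}$ enters \eqref{eq:gkn}, the congruence $\phi(t+2\pi n/k)-\phi(t)\in 2\pi\Z$ already suffices, so your branch-tracking step pinning the integer down to $-2\pi$ is not needed, though it is correct.)
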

\begin{proof} The first statement  follows from Corollary \ref{cor:arc} and  Theorem \ref{thm:bc} and the second 
from equations \eqref{eq:gkn} and \eqref{eq:ab}.
\end{proof}

\subsection{$\Gamma_{k,n}$ as  Zindler curves}
Let $\Gamma(t)$ be an arc length parameterized closed immersed Zindler curve of length $L$. That is,  there is a number $\rho\in(0,1)$, called a  {\it rotation number} of $\Gamma$, such that the chord length $\|\Gamma(t+\rho L)-\Gamma(t)\|$, corresponding to a fixed length $\rho L$ of an arc,  is a positive constant and the velocity of the midpoint $(\Gamma(t+\rho L)+\Gamma(t))/2$  is  parallel to 
$\Gamma(t+\rho L)-\Gamma(t)$. 
Note that $\rho$ is a rotation number if and only if so is $1-\rho$, and that a circle  is Zindler for all rotation numbers $\rho\in(0,1)$. 

In this subsection we show that most  $\Gamma_{k,n}$  are Zindler and determine their associated  rotation numbers (Theorem \ref{thm:rot}). 
The proof, although elementary, is somewhat technical, so we give here the main idea.

%
%
By construction, each $\Gamma_{k,n}$ is in bicycle correspondence with $nS^1$, 
hence its $\lambda$-bicycle monodromy is hyperbolic for $0<\lambda<1$ and parabolic for  $\lambda= 1$. 
It follows that for all $\lambda\leq 1$ there is a planar  closed curve $\Gamma^\lambda_{k,n}$ in 
$2\lambda$-bicycle correspondence with $\Gamma_{k,n}$ (unique up to reflection about one of the symmetry axes of $\Gamma_{k,n}$). By Bianchi permutability and 
 circular symmetry, each  $\Gamma^\lambda_{k,n}$ is a rigid    
 rotation of $\Gamma_{k,n}$ about the origin by some $\lambda$-dependent angle $\chi$. By the $D_k$-symmetry of 
 $\Gamma_{k,n}$ (Corollary \ref{cor:sym}),  if  $\chi$ is  a multiple of $2\pi/k$ then 
$\Gamma^\lambda_{k,n}$ coincides with $\Gamma_{k,n}$ (up to  shift reparametrization), 
so that $\Gamma_{k,n}$ is Zindler.  

Thus the proof of the Zindler property of $\Gamma_{k,n}$ and the calculation of the associated rotation numbers reduces to the calculation of $\chi$ as a function of $\lambda\in(0,1]$. This is done in Lemma \ref{lemma:rot} and is the main ingredient in the proof of Theorem \ref {thm:rot}. (In fact, it is  more convenient to write $\lambda=\sin(\gamma/2)$ and calculate $\chi$ as a function of $\gamma\in(0,\pi]$).

\begin{theorem} \label{thm:rot} 
Let $(k,n)$ be a relatively prime pair  of positive integers. Then   $\Gamma_{k,n}$  is a  Zindler curve if and only if $1\leq k\leq n-2$, with    $n-k-1$ rotation numbers 
$\rho\in(0,1)$, given by the equation
\be\label{eq:rot3}
n \tan(k\pi\rho)=k \tan(n\pi\rho).
\ee
(including $\rho =1/2$ when both $n$ and $k$ are odd). 
\end{theorem}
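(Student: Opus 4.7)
My plan follows the strategy sketched in the text. For $\lambda \in (0, 1]$, the closed curves in $2\lambda$-bicycle correspondence with $nS^1$ are easily shown to be $D_\pm(t) = e^{i(t \pm \gamma)}$, $\gamma = 2\arcsin\lambda$, i.e., parameter shifts of $nS^1$ itself (a consequence of the rear track being a concentric circle of radius $\sqrt{1-\lambda^2}$ and the identity $1 + 2\lambda e^{i\arccos(-\lambda)} = e^{i\gamma}$). Applying Bianchi permutability (Proposition \ref{prop:bp}) to $(D_+, nS^1, \Gamma_{k,n})$ produces $\Gamma_{k,n}^\lambda$, the $2\lambda$-companion of $\Gamma_{k,n}$, as the fourth butterfly vertex; it is automatically in $2\ell_{k,n}$-correspondence with $D_+ = T_\gamma nS^1$. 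By Remark \ref{rmrk:sym} and $\SO(2)$-equivariance, every such closed $2\ell_{k,n}$-companion has the form $e^{i\alpha}\Gamma_{k,n}(\cdot + \gamma - \alpha)$, so $\Gamma_{k,n}^\lambda(t) = e^{i\alpha(\gamma)}\Gamma_{k,n}(t + \gamma - \alpha(\gamma))$ for some butterfly-determined $\alpha(\gamma)$. By the $D_k$-symmetry (Corollary \ref{cor:sym}), the Zindler condition $\Gamma_{k,n}^\lambda = T_{\rho L}\Gamma_{k,n}$ then reduces to the two congruences $\gamma \equiv 2\pi n\rho \pmod{2\pi}$ and $\alpha(\gamma) \in (2\pi n/k)\Z \pmod{2\pi}$.

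To make this explicit, I would cast both sides of the Zindler equality as Mobius transformations on $S^1 \ni U := e^{i\phi(t)}$ (with $\phi$ from Proposition \ref{prop:kn}) and equate them as $\SU(1,1)$-matrices. The intrinsic time-shift $U(t) \mapsto U(t+s)$ forms a one-parameter subgroup obtainable by integrating the Riccati equation $\dot p = -((\ell-1)p^2 + (\ell+1))/(2\ell)$; with $\tau = ks/(2n)$, its matrix is
\[
M_s = \begin{pmatrix}\cos\tau - i(n/k)\sin\tau & -i(\sqrt{n^2-k^2}/k)\sin\tau \\ i(\sqrt{n^2-k^2}/k)\sin\tau & \cos\tau + i(n/k)\sin\tau\end{pmatrix}.
\]
The butterfly formula $D = A + u^2\overline{C - B}$, applied pointwise to $(D_+, nS^1, \Gamma_{k,n})$, yields a second Mobius $N_\gamma$ with normalized matrix $(\ell^2 - \lambda^2)^{-1/2}\begin{pmatrix}\ell e^{-i\gamma/2} & -i\lambda \\ i\lambda & \ell e^{i\gamma/2}\end{pmatrix}$. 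Setting $N_\gamma = M_{\rho L}$ (so $\tau = k\pi\rho$, $\gamma = 2\pi n\rho \pmod{2\pi}$, $\lambda = \sin(n\pi\rho)$), matching off-diagonal entries, substituting $\ell = n/\sqrt{n^2 - k^2}$, and simplifying via $\sin^2 + \cos^2 = 1$ yields the claimed $n\tan(k\pi\rho) = k\tan(n\pi\rho)$.

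For the counting, rewrite the equation via product-to-sum as $(n-k)\sin((n+k)\pi\rho) = (n+k)\sin((n-k)\pi\rho)$, and let $h(\rho)$ denote the difference. Then $h'$ vanishes in $(0,1)$ exactly at $\rho = m/n$ ($1 \leq m \leq n-1$) and $\rho = m/k$ ($1 \leq m \leq k-1$), a total of $n + k - 2$ distinct critical points since $\gcd(k,n) = 1$. Between consecutive critical points $h$ is monotonic, so zeros in $(0,1)$ correspond bijectively to sign changes in the sequence of values. The explicit formulas
\[
h(m/n) = 2n(-1)^m\sin(\pi mk/n), \qquad h(m/k) = -2k(-1)^m\sin(\pi mn/k)
\]
reduce, via $\sin(\pi(q + r/N)) = (-1)^q\sin(\pi r/N)$, to sign counts of the residues $mk \bmod n$ and $mn \bmod k$; a combinatorial tally gives exactly $n - k - 1$ sign changes in $(0,1)$ for $1 \leq k \leq n - 2$, and none for $k = n - 1$. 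The degenerate $\rho = 1/2$, where both tangents simultaneously diverge in the original equation, is a solution precisely when $k$ and $n$ are both odd, corresponding to the parabolic case $\lambda = 1$.

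The principal obstacle will be the careful sign bookkeeping: the $\pm$ ambiguities in the Mobius matching (choice of $D_\pm$ and of $\lambda = \pm\sin(n\pi\rho)$), and the combinatorial sign-tally in the zero-counting step. The explicit butterfly computation of $N_\gamma$ from the parameterization of $\Gamma_{k,n}$ is the most formula-heavy single step.
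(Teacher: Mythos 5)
Your structural setup (companions of $nS^1$ are its parameter shifts, Bianchi permutability to produce $\Gamma^\lambda_{k,n}$, rotational symmetry plus the $D_k$-symmetry to reduce the Zindler condition to congruences on a rotation angle) is the same as the paper's, and replacing the paper's butterfly/sine-law computation of Lemma \ref{lemma:rot} by an $\mathrm{SU}(1,1)$ matrix matching is a workable variant: the ratio of the diagonal entries does reproduce \eqref{eq:rot3}, consistent with the paper's relation $\tan(b\tau/2)=b\tan(\gamma/2)$, $b=k/n$. But as written the plan has two genuine gaps. First, you only ever derive \eqref{eq:rot3} as a \emph{necessary} condition (and matching just the off-diagonal entries gives \eqref{eq:rot3} only after squaring, so even necessity needs the diagonal ratio and a sign discussion). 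The theorem also requires that there are \emph{exactly} $n-k-1$ rotation numbers and none when $k=n-1$; the paper gets this from an independent count of self-correspondences, namely the monotone increase of the rotation angle $\chi(\gamma)$ from $0$ to $\pi(n-k)/k$ as $\gamma$ runs over $(0,\pi]$ (Lemma \ref{lemma:rot}), which shows $\chi$ hits multiples of $2\pi/k$ exactly $[(n-k)/2]$ times, each producing a pair $\{\rho,1-\rho\}$; comparing this count with the root count of \eqref{eq:rot3} (Lemma \ref{lemma:rot1}) then yields the biconditional without ever proving directly that every root of \eqref{eq:rot3} is realized. Your plan contains no such independent count, so the full ``if and only if'' rests entirely on the deferred ``sign bookkeeping'' ($D_\pm$ versus $\gamma\in(0,\pi]$, $\lambda=\pm\sin(n\pi\rho)$, equality in $\mathrm{PSU}(1,1)$ only up to $\pm I$), which is precisely the part you have not carried out.

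Second, the counting step is asserted, not proved. Your reduction to $h(\rho)=(n-k)\sin((n+k)\pi\rho)-(n+k)\sin((n-k)\pi\rho)$ is correct (and the extraneous-root check at the common poles, giving $\rho=1/2$ exactly when $k,n$ are both odd, is right), as are the critical values $h(m/n)=2n(-1)^m\sin(\pi mk/n)$ and $h(m/k)=-2k(-1)^m\sin(\pi mn/k)$. But the statement ``a combinatorial tally gives exactly $n-k-1$ sign changes for $1\le k\le n-2$ and none for $k=n-1$'' is exactly the content of the paper's Lemma \ref{lemma:rot1}: the signs are $(-1)^{m+\lfloor km/n\rfloor}$ and $-(-1)^{m+\lfloor nm/k\rfloor}$ interleaved along the merged sequence $\{m/n\}\cup\{m/k\}$, and counting sign changes in that interleaving is not a routine verification (the paper's own argument, via the monotone function $f$ with $f(0)=0$, $f(1)=k/n$, $0<f'\le 1$ and the condition $f(\rho)\equiv\rho\ (\mathrm{mod}\ 1/n)$, is a cleaner way to get the same count). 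So the two hardest quantitative ingredients of the theorem — the exact number $n-k-1$ and the sufficiency of \eqref{eq:rot3} — are both left as assertions in your proposal and would need to be supplied, either by completing the sign analysis and the tally, or by importing the paper's two-count structure.
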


\begin{proof}
For each $\ell>0$, $\gamma\in(0,\pi]$  and $t\in\R,$ define   
\begin{itemize}
\item $\Gamma(t)=e^{it}$, 
\item  $\Gamma^\lambda(t)=e^{i(t+\gamma)}$, where $\lambda=\sin(\gamma/2)\in(0,1]$, 
\item $\Gamma_\ell(t)$ -- the (not necessarily closed)  
$2\ell$-bicycle transform of $\Gamma(t)$ with $\Gamma_\ell(0)=1+2\ell\in \C$,    
\item $\Gamma^\lambda_\ell(t)$ -- the completion of $\Gamma^\lambda(t)\Gamma(t)\Gamma_\ell(t)$  to the Darboux Butterfly\\
$\Gamma^\lambda(t)\Gamma(t)\Gamma_\ell(t)\Gamma^\lambda_\ell(t)$. 
\end{itemize}

\begin{figure} \centering
{\includegraphics[width=0.5\textwidth]{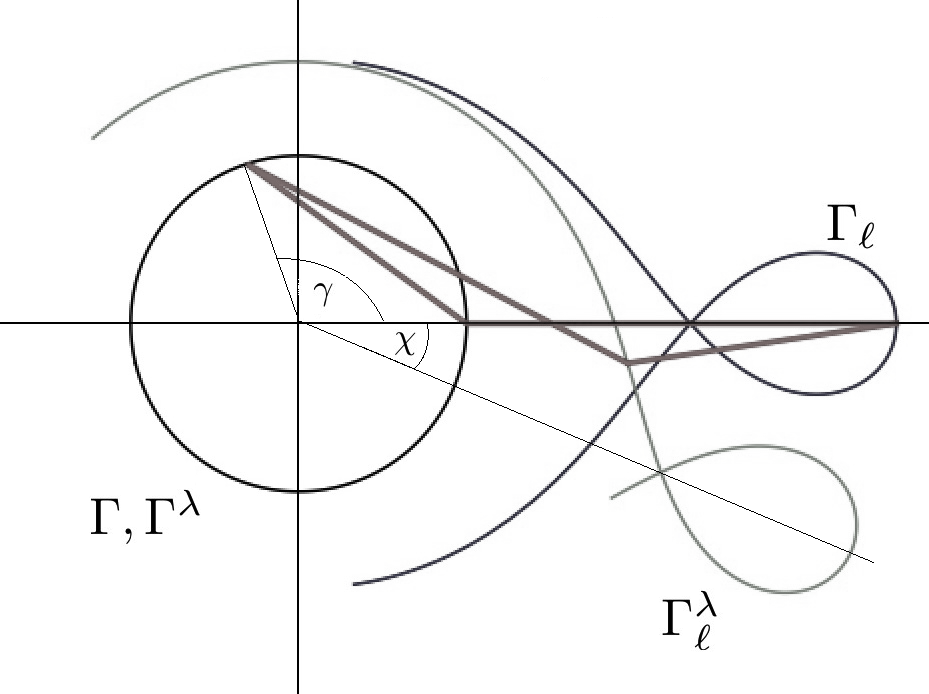}}
\caption{The notation  of the proof of Theorem \ref{thm:rot}}\label{fig:not}
\end{figure}

Note that $\Gamma^\lambda$ is in $2\lambda$-bicycle correspondence with $\Gamma$ and so, by Bianchi permutability  (Proposition \ref{prop:bp}),  $\Gamma^\lambda_\ell$ is in $2\lambda$-bicycle correspondence with $\Gamma_\ell$, as well as $2\ell$-bicycle correspondence with $\Gamma^\lambda.$
It follows (see Remark \ref{rmrk:sym}) that there exist $\chi,\tau\geq 0 $ such that
\be\label{eq:rot}\Gamma^\lambda_\ell(t)=e^{-i\chi}\Gamma_\ell(t+\tau)
\ee
for all $t$. See Figure \ref{fig:not}. 
%
%
\begin{lemma}\label{lemma:rot}
For $\ell>1$,  $\chi(\gamma)$ and $\tau(\gamma)$ in equation \eqref{eq:rot} satisfy 
$$\tan \left({b\tau\over 2}\right)=b\tan{\gamma\over 2},\quad \chi= \tau-\gamma,\quad b= \sqrt{1-{1\over\ell^2}}.$$
It follows that  $\chi$ is a monotonically increasing function of $\gamma$,  varying from $0$ to $\pi(1/ b-1)$, as $\gamma$ varies from $0$ to $\pi$. 
\end{lemma}

\begin{proof} By definition, we have 
$$\Gamma_\ell(t)=e^{it}(1+2\ell e^{i\phi(t)}),$$
where $\phi(t)$ is given by  equation \eqref{eq:kn}, and 
$$\Gamma_\ell^\lambda(t)=e^{it}(e^{i\gamma}+2\ell e^{i\psi(t)}),$$
for some function $\psi(t)$.  Substituting  the last two equations  in \eqref{eq:rot}, 
we  get
$$e^{i(\gamma+\chi)}+2\ell e^{i[\psi(t)+\chi]}=e^{i\tau}+2\ell e^{i[\phi(t+\tau)+\tau]}.$$
Both sides of the last equation are parameterized arcs of circles of radius $2\ell$, hence their centers and angular parameter must coincide, giving
$$\tau= \chi+\gamma, \quad\psi(t)= \phi(t+\tau)+\gamma. $$ 
Note that, strictly speaking, these equations hold only modulo $2\pi$, but by the continuous dependence of $\tau, \chi$, and $\psi$ on $\gamma$, and the initial conditions $\tau(0)=\chi(0)=\psi(0)=0$, they hold as stated.

\mn

This gives the stated second formula and  $\psi(0)= \phi(\tau)+\gamma$, hence
$$\tan{\psi(0)\over 2}=\tan\left({\phi(\tau)\over 2}+{\gamma\over 2}\right)={\tan{\phi(\tau)\over 2}+\tan{\gamma\over 2}\over 1-\tan{\phi(\tau)\over 2}\tan{\gamma\over 2}}.$$
Using equation  \eqref{eq:kn}, we get from the last equation
\be\label{eq:rot1}\tan{\psi(0)\over 2}={-a\tan{b\tau\over 2}+\tan{\gamma\over 2}\over 1+a\tan{b\tau\over 2}\tan{\gamma\over 2}}, \qquad  a=\sqrt{{ 1+\ell\over1-\ell}}.
\ee

Next, we look  at the Darboux Butterfly $ABCD$, where $A=\Gamma(0)=1$, 
$B=\Gamma^\lambda(0)=e^{i\gamma}$, $C=\Gamma^\lambda_\ell(0)=e^{i\gamma}+2\ell e^{i\psi(0)}$
and $D=\Gamma_\ell(0)=1+2\ell$.

\begin{figure}[h!]
\centering
{\includegraphics[width=0.6\textwidth]{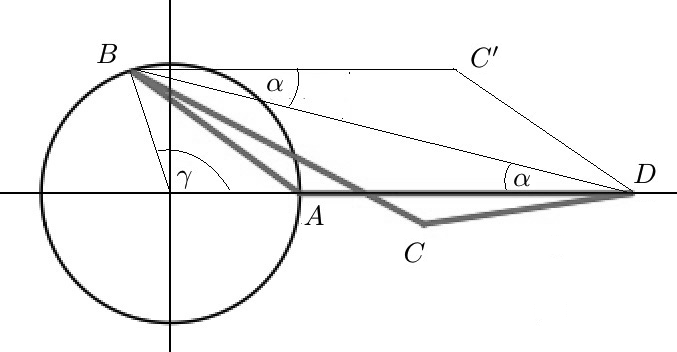}}
\caption{The Darboux Butterfly $ABCD$}
\end{figure}

 Let $C'$ be the  reflection of $C$ about $BD$. Then  $ABC'D$ is a parallelogram, in which 
$\alpha=\angle DBC'=\angle BDA=-\psi(0)/2$, 
$AD=2\ell$, $AB=2\lambda$  and $\angle ABD=\pi/ 2-\alpha-\gamma/2$. 
Applying  the sine law to $\triangle DAB$, we get
$${\sin(\pi/ 2-\alpha-\gamma/ 2)\over 2\ell}={\sin\alpha\over 2\lambda}.$$
Using $\lambda=\sin(\gamma/2)$ and solving the last equation for $\tan\alpha$, we get
$$-\tan{\psi(0)\over 2}=\tan\alpha={\tan(\gamma/2)\over \ell+(\ell+1)\tan^2(\gamma/ 2)}. $$
Substituting this into  the left hand side of \eqref{eq:rot1} and simplifying, we obtain the stated formula $\tan(b\tau/2)=b\tan(\gamma/2). $
It follows immediately from this formula that 
$$\chi'(\gamma)=\tau'(\gamma)-1={u^2\over \ell^2(1+ b^2u^2)}, 
$$
where $u=\tan^2(\gamma/2),$ hence  $\chi$ (as well as $\tau$) is monotonically increasing in $\gamma$, as stated.
\end{proof}

We continue with the proof of Theorem \ref{thm:rot}.
Setting   $\ell=\ell_{k,n}=1/\sqrt{1-(k/n)^2}$ in Lemma \ref{lemma:rot},  we have   $b=k/n$, so  that
\be \label{eq:tan}n\tan{k\over n}{\tau\over 2}=k\tan{\gamma\over 2}, 
\ee
and $\chi=\tau-\gamma$ increases monotonically from $0$ to 
$\pi(n-k)/k$,  as $\gamma$ varies from $0$ to $\pi$. 

Now the Zindler condition on $\Gamma_{k,n}$ is  that it is in the bicycle correspondence with itself (up to shift reparametrization). Due to the $D_k$-symmetry  (see Corollary \ref{cor:sym}), this means that $\chi$ in equation \eqref{eq:rot} should be  an integer  multiple of $2\pi/ k$.  As $\gamma$ varies from  $0$ to $\pi$, $\chi$ increases monotonically from $0$ to 
$\pi(n-k)/k$, so  there are exactly $[(n-k)/2]$ values  $\gamma\in(0,\pi]$ for which  $\chi$ is an integer  multiple of  $2\pi/ k$. For each  such $\gamma$,   $\Gamma_{k,n}$ is Zindler with chord length $2\lambda=2\sin(\gamma/2)$, giving rise to a pair of rotation numbers $\{\rho,1-\rho\}\subset(0,1)$, except if $\rho=1/2$, which occurs if and only if  $\gamma=\pi$ and  $n-k$ is even (i.e., $k,n$ are both   odd). It follows that $\Gamma_{k,n}$ is Zindler if and only if $1\leq k\leq n-2$, with a total of $n-k-1$ rotation numbers $\rho\in(0,1),$ as stated.

To determine the associated rotation numbers, let $\gamma$ be a   value for which 
\be\label{eq:chi}
\chi=2\pi m/k, \quad m=1,2,\ldots,[(n-k)/2].
\ee  
An associated rotation number $\rho\in(0,1)$ satisfies 
\be\label{eq:rho}
\Gamma^\lambda_{k,n}(t)=\Gamma_{k,n}(t+2\pi n\rho)
\ee 
for all $t$. Since $k,n$ are relatively prime, there exists an integer $\bar n$ such that $n\bar n\equiv 1$ $(\mod k)$. Let $\bar m:=m\bar n$, then 
\be\label{eq:mbar}
{2\pi m\over k}\equiv {2\pi \bar mn\over k}\quad (\mod 2\pi).
\ee 

Now we  calculate:
\begin{align}
\label{eq:sys}
\Gamma^\lambda_{k,n}(t)&=e^{-i{2\pi m\over k}}\Gamma_{k,n}(t+\tau) 
& &\mbox{by  \eqref{eq:rot},\eqref{eq:chi}}\nonumber \\
&=e^{-i{2\pi \bar m n\over k}}\Gamma_{k,n}(t+\tau) 
& &\mbox{by  \eqref{eq:mbar}} \nonumber \\
&=\Gamma_{k,n}(t+\tau- {2\pi \bar m n\over k})
& &\mbox{by \eqref{eq:sym}} \\
&=\Gamma_{k,n}(t+2\pi n\rho)
& &\mbox{by  \eqref{eq:rho}}\nonumber
\end{align}
for all $t$. The last equality  is equivalent to 
$\tau-2\pi \bar mn/k\equiv 2\pi n\rho  \; (\mod 2\pi n),$ implying 
\be \label{eq:tau} {k\over n}{\tau\over 2}\equiv\pi k\rho \quad (\mod \pi ).
\ee

Next, applying $2\ell_{k,n}$-bicycling correspondence to equation \eqref{eq:rho}, gives  
$\Gamma^\lambda(t)=\Gamma(t+2\pi n\rho)$, or 
$e^{i(t+\gamma)}=e^{i(t+2\pi n\rho)}.$ It follows that  $\gamma\equiv 2\pi n\rho \;(\mod 2\pi ),$ or 
\be\label{eq:gam1}
{\gamma\over 2}\equiv  \pi n\rho \quad (\mod \pi ).
\ee
Substituting  equations  \eqref{eq:tau} and \eqref{eq:gam1} in equation \eqref{eq:tan}, we see that $\rho$ satisfies equation \eqref{eq:rot3}.


We have shown so far that $\Gamma_{k,n}$ has $n-k-1$ rotation numbers and that they all satisfy equation 
\eqref{eq:rot3}. To complete the proof of Theorem \ref{thm:rot}   it is thus sufficient to show that equation 
\eqref{eq:rot3} has exactly $n-k-1$ solutions $\rho\in(0,1)$ (including $\rho=1/2$, when $k,n$ are both odd). 

\begin{lemma}\label{lemma:rot1}
%
For any pair of integers $k,n$ with $1\leq k<n$, equation \eqref{eq:rot3} 
$$n \tan(k\pi\rho)=k \tan(n\pi\rho)
$$
has $n-k-1$ solutions in $(0,1)$, including $\rho =1/2$ when both $n$ and $k$ are odd. 
\end{lemma}
\begin{proof}Clearly,  $\rho$ is   a solution if and only if 
$$\left({1\over n\pi}\right)\arctan\left({n\over k}\tan(k\pi\rho)\right)\equiv \rho\quad \left(\mod {1\over n}\right).$$
For each $j\in\Z$, let 
$I_j=(j/k-1/2k, j/k+1/2k)\subset\R$ (the open interval of length  $1/k$ centered at $j/k$) and define  $f_j: I_j\to\R$ by 
$$f_j(\rho):=\left({1\over n\pi}\right)\arctan\left({n\over k}\tan(k\pi\rho)\right)+{j\over n}, \quad \rho\in I_j, \quad j\in \Z.$$
One can  verify that $f_j$ extends smoothly to $\overline I_j$ and that the extensions at adjacent intervals coincide at the shared endpoints, combining to define a smooth, strictly increasing function $f:\R\to\R$, satisfying
\begin{enumerate}[(i)]\setlength\itemsep{-4pt}
\item  $f(0)=0$, 
\item  $f(1)=k/n,$ and
\item  $0<f'(\rho)\leq 1$ for all $\rho\in\R,$  with $f'(\rho)=1$ only at isolated points. (In fact, $f'(\rho)=1$ precisely  at $\rho\equiv 0 \; (\mod 1/k)$.)
\end{enumerate}
See   Figure \ref{fig:f}.
\begin{figure}[h!] 
\centering
{\includegraphics[width=0.7\textwidth]{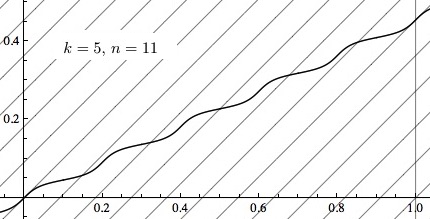}}
\caption{The function $f(\rho)$ of the proof of Lemma \ref{lemma:rot1}}\label{fig:f}
\end{figure}
By construction,  the solutions of equation 
\eqref{eq:rot3} are  given by 
$$f(\rho)\equiv \rho\quad\left(\mod {1\over n}\right).$$  It now follows easily from the above 3 properties of $f$ that this equation has exactly $n-k-1$ solutions in the interval $0<\rho<1$. 
\end{proof}
This concludes the proof of Theorem \ref{thm:rot}. \end{proof}
\begin{rmrk}  
\begin{enumerate}[(i)]
 \item  Here is a table of the (approximate) rotation numbers $\rho\in(0,1/2]$ of $\Gamma_{k,n}$, for relatively prime pairs $(k,n)$, with  $1\leq k\leq n-2$ and $n\leq 7$.

$$
\begin{array}{|l|l|l|}
\hline
k&n&\rho\\
\hline
1&3&0.5\\
1&4& 0.37  \\
1&5& 0.29,  0.5\\
2&5& 0.31\\
3&5&   0.5\\
1&6& 0.24, 0.41\\
\hline
\end{array}
\qquad
\begin{array}{|l|l|l|}
\hline
k&n&\rho\\
\hline
1&7& 0.21,0.35,0.5\\
2&7& 0.21,0.37\\
3&7& 0.23,0.5\\
4&7& 0.35\\
5&7& 0.5\\
\hline
\end{array}
$$

\item Here is also a plot of all rotation  numbers for $n=11$. 

\centerline{\includegraphics[width=0.6\textwidth]{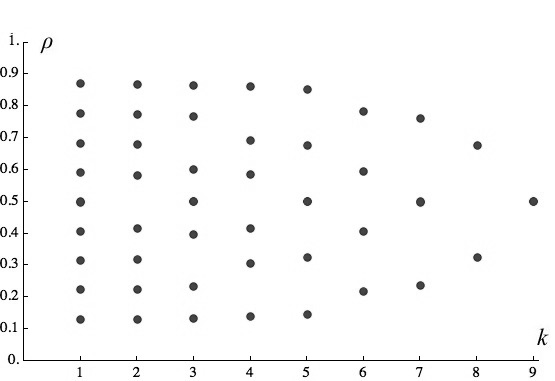}}

\item   For  small values of $(k,n)$, the numbers  $\tan^2(\pi\rho)$ are roots  of linear or quadratic polynomials, obtained from multiple angle trigonometric  identities.   Here are all such  cases with $\rho\neq 1/2$: 
$$\begin{array}{|l|l|l|}
\hline
k&n&\tan^2(\pi\rho)\\
\hline
1&4&5  \\
1&5& 5/3\\
2&5& 2
   \sqrt{21}-7\\
1&6& \left(21\pm 4
   \sqrt{21}\right)/3\\
1&7&  \left(7\pm 2
   \sqrt{7}\right)/3\\
\hline
\end{array}
$$

For higher values of $(k,n)$, the numbers $\tan^2(\pi\rho)$ are roots of higher degree polynomials.

\item For $k=1$, equation \eqref{eq:rot3} is 
$ n\tan(\pi \rho)=\tan(n\pi\rho)
$. This is  equation (14) of Theorem 7 of \cite{T1}, describing the rotation numbers $\rho$ for which the (simple) unit  circle can be infinitesimally deformed  in the class of planar Zindler curves with rotation number $\rho$. 

The same equation appeared in a study of billiards and of a flotation problem \cite{Gu1,Gu2}. See \cite{Cy} for number theoretic properties of its solutions. A discrete version of the equation is proposed in \cite{T1}; see \cite{Cs,CC} for its solutions.

\end{enumerate}
\end{rmrk}

\subsection{Spherical curves in bicycle correspondence with $\Gamma_{k,n}$}

As we have seen in the previous subsection, all curves in $2\lambda$-bicycle correspondence with $\Gamma_{k,n}$, for $\lambda\leq 1$,  are rotations of $\Gamma_{k,n}$ about the origin. For a generic value of $\lambda>1$ (i.e., for $\lambda\neq \ell_{k',n}$, $1\leq k'<n$), the $\lambda$-monodromy of $\Gamma_{k,n}$ is elliptic, and thus there are two space curves in $2\lambda$-bicycle correspondence with $\Gamma_{k,n}$,  related by reflection about the $xy$-plane. 

\begin{figure}[h!]
\centerline{\includegraphics[height=120pt]{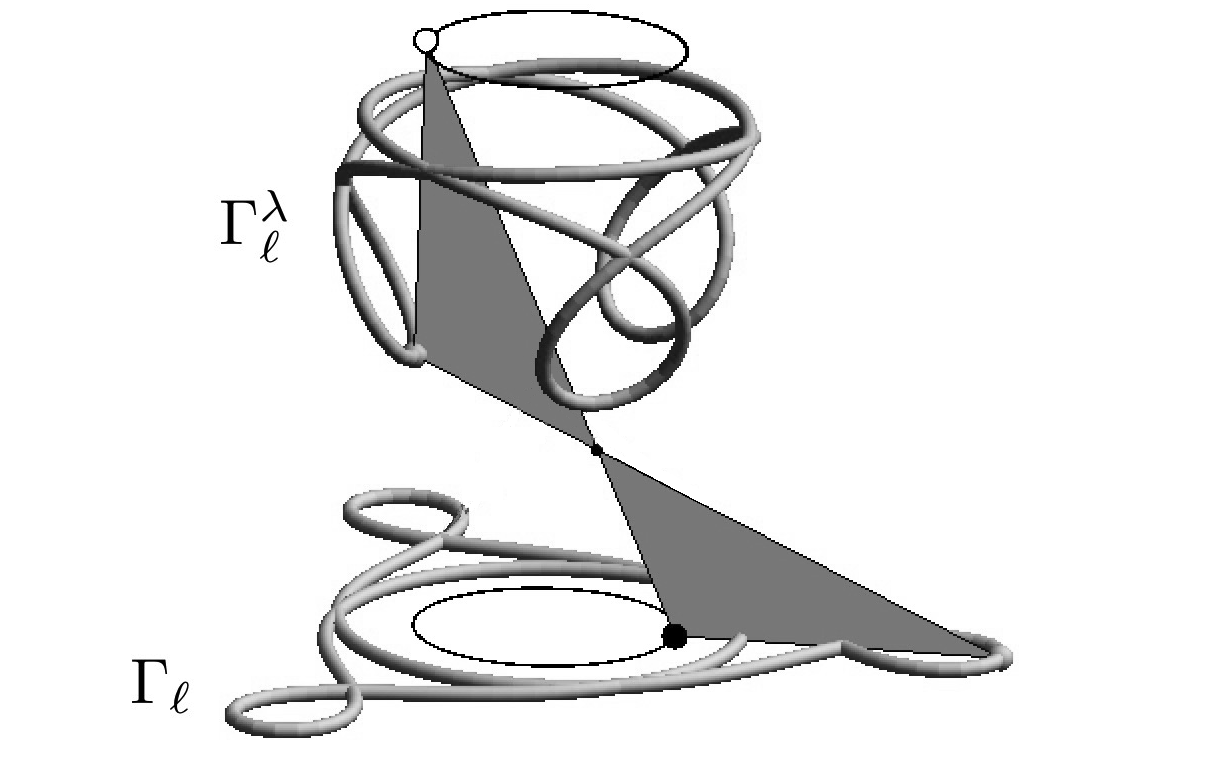}}
\caption{The spherical curve $\Gamma^\lambda_\ell$} \label{3DD}
\end{figure}

\begin{prop} \label{spherical}
Let  $\lambda> 1$ and let $\Gamma^\lambda_{k,n}$ be a curve in $\R^3$ in $2\lambda$-bicycle correspondence with $\Gamma_{k,n}$. Then $\Gamma^\lambda_{k,n}$ is either planar, contained in the $xy$ plane, in which case $\lambda=\ell_{k',n}$ for some $1\leq k'<n$, or it is a spherical curve, with the center of the sphere on the $z$ axis. 
\end{prop}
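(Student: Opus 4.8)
The plan is to control the closed trajectory $\Gamma^\lambda_{k,n}=\Gamma_{k,n}+2\lambda\,\r$ through the fixed points of the $\lambda$--bicycle monodromy $M_\lambda$ of $\Gamma_{k,n}$ on $S^{2}$. Writing out condition~(ii) of Definition~\ref{def:bc}, the unit vector $\r(t)=\bigl(\Gamma^\lambda_{k,n}(t)-\Gamma_{k,n}(t)\bigr)/2\lambda$ satisfies the $\lambda$--bicycle equation \bode\ along $\Gamma_{k,n}$, and $\Gamma^\lambda_{k,n}$ being closed is equivalent to $\r$ being periodic, i.e.\ to $\r(0)$ being a fixed point of $M_\lambda$. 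Since $\Gamma_{k,n}$ lies in the $xy$--plane, the right--hand side of \bode\ stays horizontal whenever $\r$ is horizontal; hence $\r(0)$ lies on the equator $S^{1}\subset S^{2}$ exactly when $\Gamma^\lambda_{k,n}$ is planar, contained in the $xy$--plane. In that planar case, $\r(0)$ is a fixed point on $S^{1}\simeq\RP^{1}$ of the two--dimensional $\lambda$--monodromy of $\Gamma_{k,n}$; by Corollary~\ref{cor:sym}(1) this is conjugate to the two--dimensional $\lambda$--monodromy of $nS^{1}$, which by Proposition~\ref{prop:bmono} is, for $\lambda>1$, a fixed--point--free elliptic element of $\PSLt$ unless $\lambda=\ell_{k',n}$ for some $k'$. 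Thus the planar case forces $\lambda=\ell_{k',n}$.

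It remains to prove that if $\r(0)$ is off the equator then $\Gamma^\lambda_{k,n}$ is spherical with center on the $z$--axis; here I would first reduce to generic $\lambda$, the exceptional values $\lambda=\ell_{k',n}$ (where the monodromy is trivial) being handled separately, e.g.\ by a limiting argument. For generic $\lambda>1$, Corollary~\ref{cor:sym}(1) and Proposition~\ref{prop:bmono} make $M_\lambda$ elliptic with exactly two fixed points, so there really are just two such curves. Apply Bianchi permutability (Proposition~\ref{prop:bp}) to the chain ``$nS^{1}$ and $\Gamma_{k,n}$ in $2\ell_{k,n}$--correspondence, $\Gamma_{k,n}$ and $\Gamma^\lambda_{k,n}$ in $2\lambda$--correspondence'' (legitimate since $\ell_{k,n}\neq\lambda$) and complete to a Darboux butterfly; this yields a closed curve $\Gamma'$ in $2\lambda$--bicycle correspondence with $nS^{1}$. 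The crux is then to identify $\Gamma'$. Because $nS^{1}$ is rotationally symmetric about the $z$--axis, equation \bode\ along it becomes, in a co--rotating frame, an autonomous flow by M\"obius transformations of $S^{2}$ -- a one--parameter subgroup of $\PSLc$, elliptic for generic $\lambda>1$ -- whose two equilibria are the points $\bigl(-1/\lambda,\,0,\,\pm\sqrt{1-1/\lambda^{2}}\bigr)$. Hence the only closed curves in $2\lambda$--correspondence with $nS^{1}$ are the two round circles $t\mapsto\bigl(-\cos t,\,-\sin t,\,\pm 2\sqrt{\lambda^{2}-1}\bigr)$, of radius one and centered on the $z$--axis; so $\Gamma'$ is one of these circles.

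To finish, write $\Gamma^\lambda_{k,n}=\Gamma'+2\ell_{k,n}\,\r''$ with $\r''$ a solution of \bode\ of length $\ell_{k,n}$ along $\Gamma'$, and let $P$ be an arbitrary point of the $z$--axis. Since $\Gamma'$ is a circle centered on the axis, $(\Gamma'-P)\cdot\dot\Gamma'\equiv0$, and feeding this into \bode\ gives, for $\beta_P:=(\Gamma'-P)\cdot\r''+\ell_{k,n}$, the two identities $\ell_{k,n}\,\dot\beta_P=(\dot\Gamma'\cdot\r'')\,\beta_P$ and $\tfrac{d}{dt}\,\|\Gamma^\lambda_{k,n}-P\|^{2}=4\,(\dot\Gamma'\cdot\r'')\,\beta_P$. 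So $\beta_P$ vanishes identically once it vanishes at a single instant, and then $\Gamma^\lambda_{k,n}$ lies on a sphere about $P$. The $z$--component of $\r''$ is not identically zero: otherwise $\Gamma^\lambda_{k,n}$ would lie in a horizontal plane $z=\pm 2\sqrt{\lambda^{2}-1}\neq0$, which -- inspecting the common rear track of $\Gamma_{k,n}$ and $\Gamma^\lambda_{k,n}$ -- would force $\Gamma_{k,n}$, being planar, to be a circle, contradicting its explicit parametrization in Proposition~\ref{prop:kn}. One can therefore slide $P$ along the $z$--axis so that $\beta_P(0)=0$, and the claim follows.

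I expect the main obstacle to be the identification of the Bianchi partner $\Gamma'$ as a round circle centered on the $z$--axis: this hinges on analyzing the bicycle dynamics of the multiply traversed circle $nS^{1}$ in a co--rotating frame and on computing its two periodic solutions, and it also requires the non--degeneracy ($z$--component of $\r''$ not identically zero) used to locate the center $P$. By comparison, the algebra with $\beta_P$ and the reduction of the planar case to Proposition~\ref{prop:bmono} should be routine.
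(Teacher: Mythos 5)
Your argument is essentially correct for generic $\lambda$ and shares the paper's skeleton -- use Bianchi permutability (Proposition \ref{prop:bp}) to place the given closed partner of $\Gamma_{k,n}$ in $2\ell_{k,n}$-bicycle correspondence with a closed partner $\Gamma'$ of $nS^1$, which for generic $\lambda>1$ must be one of the two horizontal unit circles at heights $\pm2\sqrt{\lambda^2-1}$ -- but your final mechanism differs from the paper's. The paper passes to a frame rotating with the circle, where \bode\ becomes an autonomous M\"obius flow on a sphere about the contact point, and deduces that any curve in $2\ell$-correspondence with a circle lies either in its plane or on a sphere centered on its axis; you instead get sphericity directly from the conserved quantity $\beta_P=(\Gamma'-P)\cdot\r''+\ell_{k,n}$, which satisfies a homogeneous linear ODE and controls $\tfrac{\d}{\d t}\|\Gamma^\lambda_{k,n}-P\|^2$. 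That computation is correct, avoids the rotating frame, and does not even use $\ell_{k,n}>1$. You are also more explicit than the paper on two points it leaves implicit: the identification of the Bianchi partner $\Gamma'$ with a round circle (via the two steady solutions $(-1/\lambda,0,\pm\sqrt{1-1/\lambda^2})$ of the circle's bicycle flow -- your formula for the circles is the correct one), and the fact that a closed planar partner forces $\lambda=\ell_{k',n}$ because the equatorial restriction of the monodromy is otherwise elliptic (Corollary \ref{cor:sym}, Proposition \ref{prop:bmono}). Two minor repairs: solve $\beta_P(t_0)=0$ at an instant $t_0$ where the $z$-component of $\r''$ is nonzero (not necessarily $t_0=0$), and note that, like the paper, you tacitly restrict to closed partners, which is what the surrounding text intends.

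The one step that would fail as proposed is the treatment of the exceptional values $\lambda=\ell_{k',n}$ by ``a limiting argument.'' At such $\lambda$ the $\lambda$-monodromy of $\Gamma_{k,n}$ is the identity (it is conjugate to that of $nS^1$ by Theorem \ref{thm:bc}, and an element of $\PSLtc$ fixing the equator pointwise is the identity), so \emph{every} initial direction yields a closed curve in $2\lambda$-correspondence with $\Gamma_{k,n}$: a two-sphere of closed partners, of which at most two arise as limits of the closed partners at nearby generic $\lambda$. A limit therefore cannot reach the rest; moreover, at these $\lambda$ the closed partners of $nS^1$ are no longer only the two round circles, so the identification of $\Gamma'$ breaks down, and for $\lambda=\ell_{k,n}$ the Bianchi completion itself degenerates. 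To be fair, the paper's own proof is silent on exactly this point -- it analyzes the curves produced by the butterfly construction from the two translated circles, i.e., in effect the generic case -- so your write-up matches the paper's proof in scope; but if you want the statement for all $\lambda>1$, the exceptional values need a genuine separate argument rather than a limit.
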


\begin{proof}  
We define for every $\ell>0$, $\lambda>1$ and $t\in\R$:
\begin{itemize}
\item $\Gamma(t)=(e^{it},0)\in\C\oplus\R=\R^3$ (lower thin circle in Figure \ref{3DD}), 
\item $\Gamma^\lambda(t)=(e^{-it},2\sqrt{\lambda^2-1})$ (upper thin circle),
\item $\Gamma_\ell(t)$ -- the (not necessarily closed) curve in the $xy$-plane in $2\ell$-bicycle correspondence with $\Gamma$, such that $\Gamma_\ell(0)=(1+2\ell,0,0)\in\R^3$  (lower thick planar curve),
\item $\Gamma^\lambda_\ell(t)$ --  the completion of $\Gamma^\lambda(t)\Gamma(t)\Gamma_\ell(t)$  to the Darboux Butterfly\\
$\Gamma^\lambda(t)\Gamma(t)\Gamma_\ell(t)\Gamma^\lambda_\ell(t)$ (upper thick space curve).  

\end{itemize} 

Note that $\Gamma$ is in $2\lambda$-bicycle correspondence with $\Gamma^\lambda$ and in $2\ell$-bicycle correspondence with $\Gamma_\ell$, and hence, by Bianchi permutability, $\Gamma^\lambda_\ell$ is in $2\lambda$-bicycle correspondence with $\Gamma_\ell$ and  in $2\ell$-bicycle correspondence with $\Gamma^\lambda$. Since 
$\Gamma^\lambda$ is related to $\Gamma$ by Euclidean translation along the $z$-axis and reparametrization, it is enough to  show that   any  curve (not necessarily closed) 
in $2\ell$-bicycle correspondence  with $\Gamma$ lies either in the $xy$ plane or on some sphere centered on the $z$-axis.

At this junction, we can explicitly solve the bicycle equation, as described in Section \ref{B3D}: this is an equation with constant coefficients.  We  present a more geometrical argument here. 

The desired property is invariant with respect to  rotations about the $z$-axis, so it can be shown  in  a  frame rotating about the $z$-axis with angular velocity 1. 

\begin{figure}[h!]
\centerline{\includegraphics[width=0.7\textwidth]{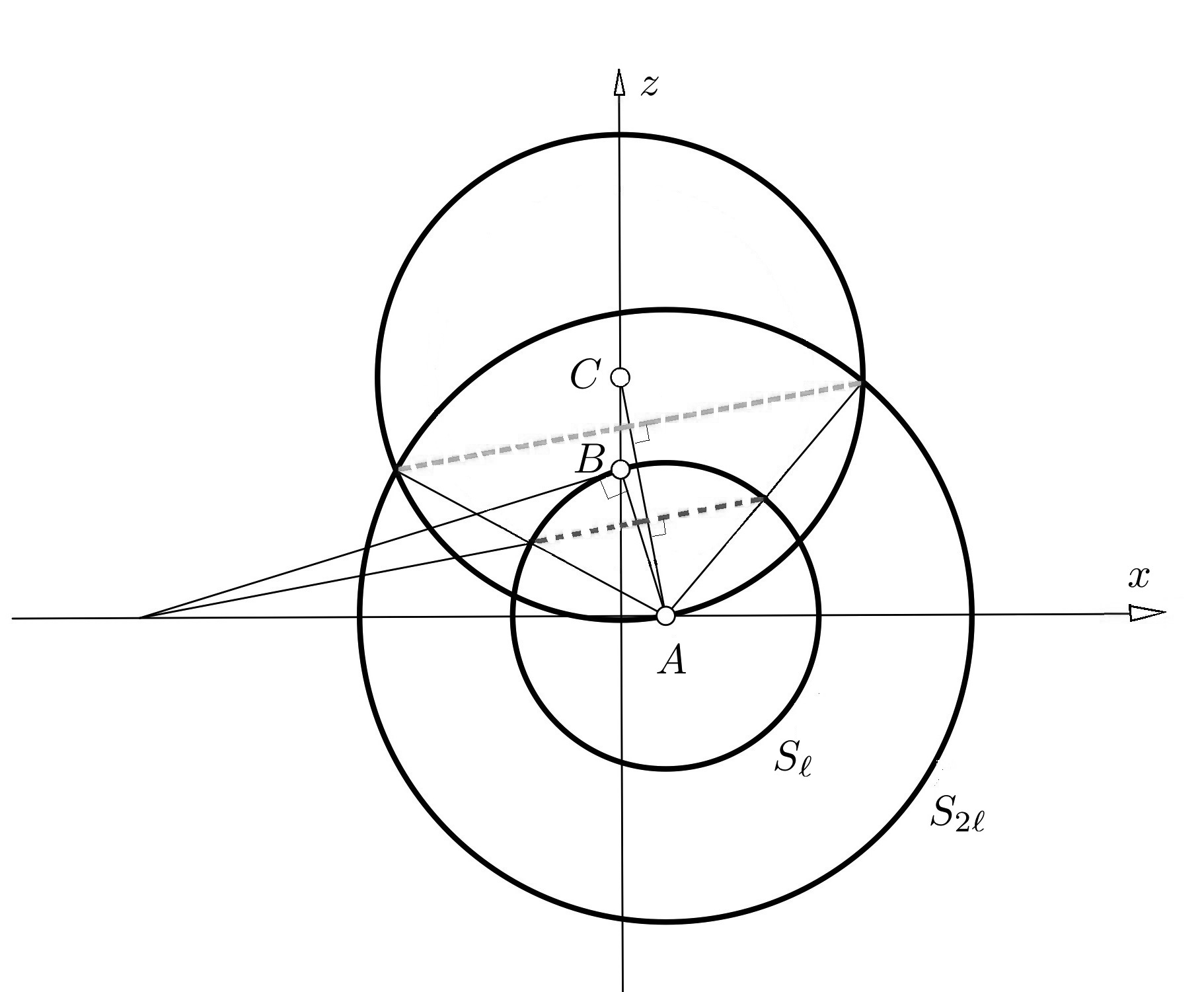}}
\caption{The proof of Proposition \ref{spherical}}\label{fig:3dz}
\end{figure}

In this frame, the front wheel $\Gamma(t)$ is stationary, say at $A=(1,0,0)$,  
so the rear wheel traces some curve on  
the 2-sphere $S_\ell$ of radius $\ell$ centered at $A$.
We claim that this curve is a circle  (shown as a dotted chord of  $S_\ell$ in Figure \ref{fig:3dz}), whose axis (the line through its center, perpendicular 
 to the plane of the circle) intersects the $z$ axis at some point $C$, or else is parallel to the $z$-axis, in which case  the curve is the equatorial circle (the intersection of $S_\ell$ with the $xy$-plane). 
 
Indeed, the bicycle equation \bode\ in a rotating frame (the Frenet-Serret frame)  is autonomous, i.e.,~defines a time-independent conformal vector field on $S_\ell$, whose flow is a 1-parameter elliptic subgroup of the M\"obius group of  $S_\ell$  (here we use the fact that $\ell>1$). Its trajectories are planar circles,  with two fixed points,  the  vertices of the  two right  circular cones over $\Gamma$ 
 with generator of length $\ell$; one of them, $B$, is shown in Figure \ref{fig:3dz}. 
 
 It follows that the  curve in $2\ell$-bicycle correspondence with $\Gamma$,  
 generated by this rear track (shown as a dotted chord of $S_{2\ell}$ in Figure \ref{fig:3dz}), is a circle on the sphere $S_{2\ell}$ of radius $2\ell$, centered at $A$, as well as a circle on the sphere centered at $C$ and passing through $A$.
  \end{proof}

\appendix


\section{Bicycle correspondence as a Darboux transformation}\label{sec:STP}

In this appendix  we relate the  bicycle correspondence for curves in $\R^3$  with    Darboux transformations of a certain spectral problem. 

We use  the STP  construction (after Sym, Tafel, and Pohlmeyer), associating  with each solution of the AKNS system a family of  curves in $\sutwo\cong\R^3$. We note that only Theorem \ref{thm:new} of this section is new;  the preliminary material, as well as related motivation and details, can be found  in \cite{RS}.  
The curves that we are dealing with in this subsection are not necessarily closed.

We begin with a description of   the {\it AKNS system}, following  \cite{AKNS}. Given a complex-valued function $q(t)$ of a real variable $t$,  we define  the  linear system %
\be\label{eq:AKNS}
\Phi_t = (Q + i \lambda A) \Phi, 
\ee
where  $\Phi= \Phi(t,\lambda)$ is a  complex $2\times 2$  matrix-valued  function of two real variables,    $\Phi_t=\partial\Phi/\partial t$ and 
$$
Q =
\left(
\begin{array}{cc}
 0& q \\
-\bar q& 0
\end{array}
\right),
\quad q=q(t), \quad
A=
\left(
\begin{array}{cc}
 \frac{1}{2}& 0 \\
0& -\frac{1}{2}
\end{array}
\right). 
 $$
The variable $\lambda$ is called the {\em spectral parameter}   and $Q$ is the {\em potential}. Observe that $Q+i\lambda A$ is ${\mathfrak{su}}_2$-valued, hence  if we assume, as we shall  do henceforth, that
$$\Phi(0, \lambda)\in\SU_2 \;\mbox{  \em for all }\lambda\in\R,$$
 then 
$\Phi(t,\lambda)\in\SU_2$ for all $(t,\lambda)$ as well. 

%
Given a solution $\Phi(t,\lambda)$ to  \eqref{eq:AKNS}, we define,  following \cite{Sym1}, the associated STP curves  $\Gamma(t,\lambda)$ in $\sutwo$ by   
\be\label{eq:gam}\Gamma = 
\Phi^* \Phi_\lambda.
\ee

In what follows, we use the (slightly modified) standard Killing form on $\sutwo$, $\|X\|^2=-2\mathrm{tr}(X^2).$
\begin{prop}\label{prop:STP}
For each  $\lambda$, the map $t\mapsto \Gamma(t, \lambda)$ defines an arclength parameterized curve in $\sutwo$, 
i.e., $\|\Gamma_t\|=1$, with the curvature and torsion functions given in terms of $q(t)$ by 
\be\label{eq:STP}\kappa=2|q|, \quad \tau=\Im\left(q_t/ q\right)-\lambda.\ee

Conversely, given a curve $C$ in $\sutwo$ with curvature $\kappa$ and torsion $\tau$, the AKNS system associated with  
$$q={\kappa\over 2}e^{i\int\tau}$$
has a family of STP curves $\Gamma(\cdot, \lambda)$ with curvature $\kappa$ and torsion $\tau-\lambda$, so that $\Gamma(\cdot, 0)$ is  congruent to $C$. 
In fact, by adjusting the initial condition $\Phi(0,\lambda)$ in equation \eqref{eq:AKNS}, one can have $\Gamma(\cdot, 0)$ actually  coincide with $C$. 
\end{prop}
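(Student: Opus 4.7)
The plan is to compute the Frenet data of $\Gamma = \Phi^*\Phi_\lambda$ by working in the body frame obtained by conjugation by $\Phi \in \SU_2$. Since $Q + i\lambda A \in \sutwo$, one has $\Phi^*_t = -\Phi^*(Q+i\lambda A)$, and combined with $\Phi_{\lambda t} = iA\Phi + (Q+i\lambda A)\Phi_\lambda$ the $\Phi_\lambda$-terms cancel to give $\Gamma_t = i\Phi^*A\Phi$. More generally, for any $\sutwo$-valued $X(t)$ the same cancellation produces the transport formula
\[
(\Phi^*X\Phi)_t = \Phi^*\bigl(X_t + [X, Q+i\lambda A]\bigr)\Phi.
\]
Applying this with $X = iA$ gives $\Gamma_{tt} = i\Phi^*[A,Q]\Phi$, and since conjugation by $\Phi$ is an isometry of $\sutwo$ under $\|Y\|^2 = -2\tr(Y^2)$, I read off $\|\Gamma_t\|^2 = \|iA\|^2 = 1$ and $\|\Gamma_{tt}\|^2 = \|i[A,Q]\|^2 = 4|q|^2$, so the curve is arclength parametrized with curvature $\kappa = 2|q|$.

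For the torsion I write the Frenet frame as $\v = \Phi^*(iA)\Phi$, $\nn = \Phi^*N\Phi$, $\b = \Phi^*B\Phi$, with $N = i[A,Q]/\kappa$ from the computation above. Fixing an orthonormal basis of $\sutwo$ in which the Lie bracket realizes the cross product (so $[e_1,e_2] = e_3$), the binormal element is $B = [iA, N]$, which via the commutator identity $[A,[A,Q]] = Q$ simplifies to $B = -Q/\kappa$. Plugging $\nn = \Phi^*N\Phi$ into the transport formula and using a second one-line identity $[[A,Q],Q] = -4|q|^2 A$ turns the Frenet equation $\nn_t = -\kappa\v + \tau\b$ into the $\sutwo$-valued identity $N_t + (\lambda/\kappa)Q = -(\tau/\kappa)Q$. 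Writing $q = |q|e^{i\phi}$ one finds $N = -\sin\phi\,e_2 + \cos\phi\,e_3$ and $Q = 2|q|(\cos\phi\,e_2 + \sin\phi\,e_3)$, which reduces the last identity to $\phi_t = \tau + \lambda$; since $\Im(q_t/q) = \phi_t$, this yields $\tau = \Im(q_t/q) - \lambda$.

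The converse is then a direct verification: with $q = (\kappa/2)e^{i\int \tau}$ one has $|q| = \kappa/2$ and $q_t/q = (\ln\kappa)_t + i\tau$, so the STP curves of the associated AKNS system have curvature $\kappa$ and torsion $\tau-\lambda$. At $\lambda = 0$ the fundamental theorem for space curves in $\R^3$ forces $\Gamma(\cdot,0)$ to be congruent to $C$, and adjusting the initial frame $\Phi(0,0) \in \SU_2$ promotes the congruence to equality. The main obstacle I anticipate is the sign and orientation bookkeeping in the identification $\sutwo \cong \R^3$, since the torsion is sensitive to the orientation of the basis and to the sign in the cross-product / Lie-bracket correspondence; once these conventions are fixed at the start, however, the whole argument reduces to the transport formula and the two commutator identities $[A,[A,Q]] = Q$ and $[[A,Q],Q] = -4|q|^2 A$.
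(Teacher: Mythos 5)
Your first part is correct, and for the torsion you take a genuinely different route from the paper: the paper differentiates once more, computing $\Gamma_{ttt}=\Phi^*\left([iA,Q_t]+[[iA,Q],Q+i\lambda A]\right)\Phi$, and reads off $\tau$ from the triple-product formula $\tau=\langle[\Gamma_t,\Gamma_{tt}],\Gamma_{ttt}\rangle/\kappa^2$, whereas you build the Frenet frame in the body picture ($N=i[A,Q]/\kappa$, $B=[iA,N]=-Q/\kappa$) and extract $\tau$ from the Frenet equation for the normal via your transport formula and the identities $[A,[A,Q]]=Q$, $[[A,Q],Q]=-4|q|^2A$. Both arguments rest on the same two facts ($\Gamma_t=\Phi^*(iA)\Phi$ and conjugation-invariance of the Killing norm) and on the same orientation convention identifying the bracket on $\sutwo$ with the cross product (the paper's triple-product formula needs it just as much as your choice of $B$), so the sign issue you flag is a shared convention, not a defect; your version has the small advantage of exhibiting the Frenet frame explicitly, at the cost of slightly more bookkeeping. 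The computation of curvature $\kappa$ and torsion $\tau-\lambda$ for $q=(\kappa/2)e^{i\int\tau}$, and the congruence of $\Gamma(\cdot,0)$ with $C$ at $\lambda=0$ via the fundamental theorem of space curves, then go exactly as in the paper.

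There is, however, a genuine gap in your treatment of the final clause. Adjusting only the single unitary $\Phi(0,0)$, i.e.\ right-multiplying the solution by a fixed $G_0\in\SU_2$, replaces $\Gamma(\cdot,0)$ by $G_0^*\,\Gamma(\cdot,0)\,G_0$, which is a rotation of the curve about the origin of $\sutwo$; it can never supply the translation component of the congruence with $C$ (indeed, with the normalization $\Phi(0,\lambda)=I$ one has $\Gamma(0,\lambda)=\Phi(0,\lambda)^*\Phi_\lambda(0,\lambda)=0$ for all $\lambda$, and no constant conjugation moves that base point). The proposition's phrase ``adjusting the initial condition $\Phi(0,\lambda)$'' refers to the whole $\lambda$-family: right-multiplying $\Phi(t,\lambda)$ by a $\lambda$-dependent $G(\lambda)\in\SU_2$ is again a solution, and its STP curve is $G^*\Gamma G+G^*G_\lambda$. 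The second term is the missing translation: taking, say, $G(\lambda)=G_0e^{\lambda X}$ with $X\in\sutwo$ gives $G^*G_\lambda=X$, so at $\lambda=0$ the curve becomes $G_0^*\,\Gamma(\cdot,0)\,G_0+X$, and choosing $G_0$ and $X$ to realize the direct isometry furnished by the fundamental theorem makes $\Gamma(\cdot,0)$ coincide with $C$. With that repair your argument is complete.
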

\begin{proof}
A simple calculation shows that $\Gamma=\Phi^*\Phi_\lambda$ implies $\Gamma_t = \Phi^* (i A) \Phi$.  Since the  Killing form is conjugation-invariant, $\| \Gamma_t\|^2=\|iA\|^2=1.$ 
Similarly, one finds that 
$\Gamma_{tt}= \Phi^* 
[iA,Q]\Phi$ and  
$$ \Gamma_{ttt}= \Phi^* 
\left([iA,Q_t]+[[iA,Q],Q+i\lambda A]\right)\Phi,$$
from which follows
$$\kappa=\|\Gamma_{tt}\|=\|[iA,Q]\|=2|q|$$ 
and 
$$\tau={\<[\Gamma_t,\Gamma_{tt}],\Gamma_{ttt}\>\over \kappa^2}=\Im\left( q_t/ q\right)-\lambda.
$$

Conversely, given a curve $C$ in $\sutwo$ with curvature and torsion functions 
$\kappa, \tau$, one can verify easily  that  $q=(\kappa/2)e^{i\int\tau}$ satisfies  equations \eqref{eq:STP} for $\lambda=0$, so that  the STP curve $\Gamma(\cdot, \lambda)$, associated with the AKNS system defined by $q$,  has curvature $\kappa$ and torsion $\tau- \lambda$. 

Finally, if we take a solution $\Phi(t, \lambda)$ to the AKNS system \eqref{eq:AKNS} and right-multiply it by $G(\lambda)\in\SU_2$, then we obtain another solution of \eqref{eq:AKNS}, whose STP curve is
 $G^*\Gamma G+ G^*G_\lambda.$ The first term is a rotation of $\Gamma$  and the second gives a translation, so that by choosing $G$ appropriately we can move $\Gamma(\cdot,0)$  onto $C$. 
\end{proof}
Next, we define the {\em Darboux transformations} of  the AKNS system \eqref{eq:AKNS}. To this end, we fix  a {\em non-real} complex  number $\mu$  and  a non-zero element  $v\in \C^2$ and use the data  $(\mu, v)$ to transform the  AKNS system \eqref{eq:AKNS} to a new system $\tilde\Phi_t = (\tilde Q + i \lambda A) \tilde\Phi, $ where $\tilde Q$ and $\tilde\Phi$ are given in terms of $Q, \Phi$ and  $(\mu, v)$,  as follows. 
Let  
$$\phio(t)=\left(\begin{matrix}\phi_1(t)\\ \phi_2(t)\end{matrix}\right)$$ be  the $\C^2$-valued function defined by 
$$\phio_t = (Q + i \mu A)\phio,   \quad  \phio(0) = v, $$
and the associated  projection operator
$$\pi =
{\phio \phio^*\over \|\phi\|^2}={1\over |\phi_1|^2+|\phi_2|^2}
\left(\begin{matrix}
\phi_1\bar\phi_1&\phi_1\bar\phi_2\\
\phi_2\bar\phi_1&\phi_2\bar\phi_2
\end{matrix}
\right). 
$$
Note that $\pi(t)$ is the orthogonal projection on the (complex) 1-dimensional subspace  of $\C^2$ spanned by $\phi(t)$,  hence it is unchanged if  $v$ is multiplied by a non-zero complex scalar.

Next, define the complex numbers
\begin{align*}
\alpha &= \sqrt{ \frac{\lambda -\bar \mu}{\lambda -\mu}}, \quad  \beta = \frac{\mu - \bar \mu}{\lambda - \bar \mu},
\end{align*}
and the  linear operators 
\be\label{eq:darb}
U=\alpha(I - \beta \pi),  \quad \tilde Q = Q + i(\mu-\bar \mu) [A, \pi], 
\ee
where    $[A,\pi]=A\pi-\pi A.$ 

\begin{lemma}    

\mn 

\begin{enumerate}

\item $U(t, \lambda)\in\SU_2$. 

\item $\tilde Q(t)\in\sutwo$ is the potential associated with the complex function $$\tilde q=q+i(\mu-\bar \mu){\bar\phi_1\phi_2\over\|\phi\|^2}.$$

\end{enumerate}

\end{lemma}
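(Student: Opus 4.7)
The plan is to verify both claims by direct matrix computation, exploiting three structural facts about the ingredients: $\pi$ is a self-adjoint idempotent ($\pi^*=\pi$, $\pi^2=\pi$); the spectral parameter $\lambda$ is taken to be real, so $\overline{\lambda-\mu}=\lambda-\bar\mu$; and $A$ is a real diagonal traceless matrix. Everything else is bookkeeping.

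For part (1), I will first show $U^*U=I$. Expanding and using $\pi^2=\pi=\pi^*$ gives
\[
U^*U = |\alpha|^2(I-\bar\beta\pi)(I-\beta\pi) = |\alpha|^2\Bigl(I+\bigl(|\beta|^2-2\,\Re\beta\bigr)\pi\Bigr).
\]
I then verify $|\alpha|^2=1$, which is immediate from $|\lambda-\bar\mu|=|\overline{\lambda-\mu}|=|\lambda-\mu|$ for real $\lambda$; and $|\beta|^2=2\,\Re\beta$, which is a short algebraic identity after writing $\mu-\bar\mu=2i\,\Im\mu$ and rationalizing $1/(\lambda-\bar\mu)$ (both quantities reduce to $4(\Im\mu)^2/|\lambda-\bar\mu|^2$). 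Hence $U^*U=I$. For $\det U=1$, I exploit that the rank-one projection $\pi$ acts as the identity on a one-dimensional subspace and annihilates its orthogonal complement, so $\det(I-\beta\pi)=1-\beta$. Combined with $\det(\alpha I)=\alpha^2$ (acting on $\mathbb{C}^2$), I get $\det U=\alpha^2(1-\beta)$, and the identity
\[
\alpha^2(1-\beta)=\frac{\lambda-\bar\mu}{\lambda-\mu}\cdot\frac{\lambda-\mu}{\lambda-\bar\mu}=1
\]
finishes part (1).

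For part (2), I compute $[A,\pi]$ explicitly. Since $A=\tfrac12\,\mathrm{diag}(1,-1)$ is diagonal, the commutator acts as $[A,\pi]_{ij}=(A_{ii}-A_{jj})\pi_{ij}$: the diagonal of $[A,\pi]$ vanishes, while $[A,\pi]_{12}=\pi_{12}=\phi_1\bar\phi_2/\|\phi\|^2$ and $[A,\pi]_{21}=-\pi_{21}$. Therefore $\tilde Q=Q+i(\mu-\bar\mu)[A,\pi]$ has exactly the off-diagonal structure of a potential matrix, and reading off the $(1,2)$-entry gives the stated formula for $\tilde q$ (up to the conjugation convention in the statement). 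For the $\mathfrak{su}_2$ property, since both $A$ and $\pi$ are Hermitian, one has $[A,\pi]^*=[\pi^*,A^*]=[\pi,A]=-[A,\pi]$, i.e.\ $[A,\pi]$ is anti-Hermitian; multiplying by the \emph{real} scalar $i(\mu-\bar\mu)=-2\,\Im\mu$ preserves anti-Hermiticity and traceless\-ness, so $i(\mu-\bar\mu)[A,\pi]\in\sutwo$. Together with $Q\in\sutwo$, this gives $\tilde Q\in\sutwo$ as required.

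There is no serious obstacle; the proof is pure linear algebra. The only points demanding care are the two scalar identities $|\alpha|^2=1$ and $|\beta|^2=2\,\Re\beta$ (both of which crucially use that $\lambda$ is real), and the observation that $i(\mu-\bar\mu)$ is a real scalar, which is what makes the $\mathfrak{su}_2$ structure survive in part (2).
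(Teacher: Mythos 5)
Your proof is correct and takes essentially the same route as the paper: both parts come down to direct linear algebra with the rank-one orthogonal projection $\pi$ — the paper conjugates $\pi$ to $\mathrm{diag}(1,0)$ so that $U$ becomes $\mathrm{diag}(\alpha(1-\beta),\alpha)$, while you check $U^*U=I$ and $\det U=1$ directly, but the underlying scalar identities ($|\alpha|=1$, $\alpha^2(1-\beta)=1$, equivalently $|\beta|^2=2\,\Re\beta$ for real $\lambda$) are the same, and part (2) is the identical computation of $[A,\pi]$. The conjugation mismatch you flag ($\phi_1\bar\phi_2$ from the $(1,2)$-entry versus $\bar\phi_1\phi_2$ in the stated $\tilde q$) is a typo-level issue in the statement; the paper's own proof rests on exactly the $[A,\pi]$ computation you give, so your handling of it is appropriate.
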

\begin{proof} 1. $\pi$ is an orthogonal  projection operator, hence is conjugate, by some element in $\SU_2$,  to $diag(1,0)$. It follows that $U$ is conjugate, by the same element,  to $diag(\alpha(1-\beta),\alpha)$, from which it follows that $U(t,\lambda)\in\SU_2$.

\mn 2.  
One calculates  that 
$$[A,\pi]={1\over \|\phi\|^2}\left(\begin{matrix}0&\phi_1\bar\phi_2 \\ -\bar\phi_1\phi_2 &0\end{matrix}\right),$$
 from which the stated formula follows easily. 
\end{proof}

The following theorem shows how the  Darboux transformation  $Q\mapsto \tilde Q$  is matched by a transformation $\Phi\mapsto \tilde\Phi$ of the solutions to the associated AKNS systems.   This is followed by a description of the effect of the transformation on the associated curves in $\R^3$. 

\begin{theorem}[\cite{RS}]$\Phi$ is a solution to the AKNS system  \eqref{eq:AKNS} if and only if     $\tilde \Phi:=U\Phi$ is a solution to the AKNS system 
\be\label{eq:AKNS2}
{\tilde \Phi}_t = (\tilde Q + i \lambda A) \tilde \Phi,
\ee
where $U,\tilde\Phi$ are given by equation \eqref{eq:darb} above. 
\end{theorem}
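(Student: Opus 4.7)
The plan is to verify that $\tilde\Phi := U\Phi$ satisfies $\tilde\Phi_t = (\tilde Q + i\lambda A)\tilde\Phi$ by direct computation from $\Phi_t = (Q + i\lambda A)\Phi$. Since $U(t,\lambda)\in \SU_2$ is invertible for every $(t,\lambda)$, the reverse implication is automatic: if $\tilde\Phi$ solves \eqref{eq:AKNS2}, then $\Phi = U^{-1}\tilde\Phi$ solves \eqref{eq:AKNS}. So only the forward direction needs attention.

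Differentiating $\tilde\Phi = U\Phi$ and eliminating $\Phi$, the equation $\tilde\Phi_t = (\tilde Q+i\lambda A)\tilde\Phi$ reduces, independently of $\Phi$, to the operator identity
$$ U_t \;=\; \tilde Q\, U - U\,Q + i\lambda[A,U]. $$
Since $\alpha,\beta$ depend only on $\lambda,\mu$, one has $U_t = -\alpha\beta\,\pi_t$ and $[A,U] = -\alpha\beta[A,\pi]$; substituting $\tilde Q - Q = i(\mu-\bar\mu)[A,\pi]$, dividing by $-\alpha\beta$, and using the numerical identity $(\mu-\bar\mu)/\beta = \lambda-\bar\mu$ (which kills the explicit $i\lambda$ contribution), the identity collapses to the $\lambda$-free condition
$$ \pi_t \;=\; [Q,\pi] + i\bar\mu[A,\pi] + i(\mu-\bar\mu)[A,\pi]\,\pi. $$

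To verify this, I would compute $\pi_t$ directly from $\phi_t = (Q+i\mu A)\phi$. Using $Q^* = -Q$ and $A^* = A$ gives $\phi_t^* = -\phi^*(Q + i\bar\mu A)$, and then a short calculation starting from $\pi = \phi\phi^*/\|\phi\|^2$ yields
$$ \pi_t \;=\; [Q,\pi] + i\mu A\pi - i\bar\mu\,\pi A - i(\mu-\bar\mu)\,\frac{\phi^* A\phi}{\|\phi\|^2}\,\pi. $$
Expanding the two commutators in the target identity and using $\pi^2 = \pi$ produces exactly the same formula, with one proviso: the last term appears as $\pi A\pi$ rather than $(\phi^*A\phi/\|\phi\|^2)\pi$. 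These coincide because $\pi$ projects orthogonally onto the complex line $\C\phi$, forcing $\pi X\pi$ to be a scalar multiple of $\pi$ for every $X$, with the scalar pinned down by taking the trace.

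I do not expect any conceptual obstacle; the argument is commutator bookkeeping. The two delicate points are the absorption of the spectral parameter through $(\mu-\bar\mu)/\beta = \lambda-\bar\mu$, which is what makes the computation succeed for \emph{all} $\lambda\neq \mu,\bar\mu$ simultaneously, and the rank-one identity $\pi A\pi = (\phi^*A\phi/\|\phi\|^2)\pi$ that matches the single remaining scalar term. Both are clean and dictate the precise form of $U$ and of the Darboux rule $\tilde Q = Q + i(\mu-\bar\mu)[A,\pi]$.
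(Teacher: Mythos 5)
Your proof is correct, and it is exactly the ``straightforward calculation'' that the paper invokes and omits (the result being quoted from \cite{RS}): reduce $\tilde\Phi_t=(\tilde Q+i\lambda A)\tilde\Phi$ to the operator identity $U_t=\tilde QU-UQ+i\lambda[A,U]$, absorb $\lambda$ via $(\mu-\bar\mu)/\beta=\lambda-\bar\mu$, and check the resulting $\lambda$-free equation for $\pi_t$ using $\phi_t=(Q+i\mu A)\phi$, $\pi^2=\pi$, and $\pi A\pi=\bigl(\phi^*A\phi/\|\phi\|^2\bigr)\pi$. All the steps check out, including the invertibility of $U$ that gives the ``only if'' direction, so nothing further is needed.
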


The proof is by a straighforward calculation using the formulas above and is omitted. 

Now we look at the effect of the Darboux transformation $\Phi\mapsto \tilde \Phi$ on the associated STP curves. 
\begin{prop}\label{prop:dif} Let  $\tilde \Gamma =\tilde \Phi^*\tilde\Phi_\lambda$. Then 
\be\label{eq:W}
\tilde \Gamma -\Gamma= {\mu-\bar\mu\over |\lambda -\mu|^2}\Phi^*( \pi    -\frac{1}{2}I       )\Phi,
\qquad \|\tilde \Gamma -\Gamma\|={|\mu-\bar\mu|\over \:|\lambda -\mu|^2}.
\ee
In particular, the distance $\|\tilde \Gamma(t,\lambda) -\Gamma(t,\lambda)\|$ is independent of  $t$. 
\end{prop}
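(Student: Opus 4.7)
The plan is to start by exploiting the fact that $U \in \SU_2$, so that $U^*U=I$. Then
\[
\tilde\Gamma = \tilde\Phi^*\tilde\Phi_\lambda = \Phi^*U^*(U\Phi)_\lambda = \Phi^*U^*U_\lambda\Phi + \Phi^*U^*U\Phi_\lambda = \Phi^*U^*U_\lambda\Phi + \Gamma,
\]
reducing everything to an analysis of $U^*U_\lambda$. The rest of the argument is a chain of clean algebraic simplifications.

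The key preliminary step is to rewrite $U$ in spectral form. Since $\pi^2=\pi$, one has $I = (I-\pi)+\pi$ and
\[
U = \alpha(I-\beta\pi) = \alpha(I-\pi) + \alpha(1-\beta)\pi.
\]
A short calculation shows $\alpha(1-\beta) = \alpha\cdot\tfrac{\lambda-\mu}{\lambda-\bar\mu} = 1/\alpha = \bar\alpha$, where the last equality uses $|\alpha|^2 = 1$ (itself an immediate consequence of $\lambda\in\R$, so that $|\lambda-\bar\mu| = |\lambda-\mu|$). Hence
\[
U = \alpha(I-\pi) + \bar\alpha\pi, \qquad U^* = \bar\alpha(I-\pi) + \alpha\pi,
\]
which also confirms $U\in\SU_2$ directly: it is unitary with determinant $\alpha\bar\alpha=1$.

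Since $\pi$ does not depend on $\lambda$, differentiating gives $U_\lambda = \alpha_\lambda(I-\pi) + \bar\alpha_\lambda\pi$, and multiplying out using the orthogonality relations $(I-\pi)\pi = \pi(I-\pi) = 0$ yields
\[
U^*U_\lambda = \bar\alpha\alpha_\lambda(I-\pi) + \alpha\bar\alpha_\lambda\pi.
\]
Differentiating $|\alpha|^2=1$ gives $\alpha\bar\alpha_\lambda = -\bar\alpha\alpha_\lambda$, so $U^*U_\lambda = \bar\alpha\alpha_\lambda(I-2\pi) = -2\bar\alpha\alpha_\lambda(\pi - \tfrac{1}{2}I)$. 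Now I would compute $\bar\alpha\alpha_\lambda$ explicitly by differentiating $\alpha^2 = (\lambda-\bar\mu)/(\lambda-\mu)$, which after using $\bar\alpha/\alpha = 1/\alpha^2 = (\lambda-\mu)/(\lambda-\bar\mu)$ and $(\lambda-\mu)(\lambda-\bar\mu) = |\lambda-\mu|^2$ collapses to
\[
\bar\alpha\alpha_\lambda = -\frac{\mu-\bar\mu}{2\,|\lambda-\mu|^2}.
\]
Substituting back produces the displayed formula $\tilde\Gamma-\Gamma = \tfrac{\mu-\bar\mu}{|\lambda-\mu|^2}\Phi^*(\pi-\tfrac{1}{2}I)\Phi$.

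For the norm identity, I would use that $\Phi\in\SU_2$ acts on $\sutwo$ by the adjoint action, which preserves the Killing form; thus $\|\tilde\Gamma-\Gamma\| = \bigl\|\tfrac{\mu-\bar\mu}{|\lambda-\mu|^2}(\pi-\tfrac{1}{2}I)\bigr\|$. The matrix $\pi-\tfrac{1}{2}I$ is Hermitian with eigenvalues $\pm\tfrac{1}{2}$, so $(\pi-\tfrac{1}{2}I)^2 = \tfrac{1}{4}I$ and $\operatorname{tr}((\pi-\tfrac{1}{2}I)^2) = \tfrac{1}{2}$; multiplied by the purely imaginary coefficient $(\mu-\bar\mu)/|\lambda-\mu|^2$ it lies in $\sutwo$ (this checks consistency of the formula). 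Then $\|X\|^2 = -2\operatorname{tr}(X^2) = -(\mu-\bar\mu)^2/|\lambda-\mu|^4$, and since $\mu-\bar\mu$ is purely imaginary we have $-(\mu-\bar\mu)^2 = |\mu-\bar\mu|^2$, giving $\|\tilde\Gamma-\Gamma\| = |\mu-\bar\mu|/|\lambda-\mu|^2$, which is manifestly $t$-independent.

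The only real obstacle is the observation $\alpha(1-\beta) = \bar\alpha$ that turns $U$ into its spectral decomposition $\alpha(I-\pi)+\bar\alpha\pi$; without it, one is stuck expanding $U^*U_\lambda$ as four messy scalar-times-$\pi$ or scalar-times-$\pi^2$ pieces. Once this simplification is in hand, the identities $|\alpha|^2=1$ and $\pi^2=\pi$ do all the remaining work, and the norm computation reduces to diagonalizing a rank-one projection.
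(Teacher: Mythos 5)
Your proof is correct, and it follows essentially the same route as the paper: the paper omits the "direct calculation" of the formula for $\tilde\Gamma-\Gamma$ (which you supply cleanly via the spectral form $U=\alpha(I-\pi)+\bar\alpha\pi$, valid since $\lambda\in\R$ gives $|\alpha|=1$ and $\pi$ is $\lambda$-independent), and its norm argument is the same as yours, namely that $\pi-\tfrac12 I$ is conjugate in $\SU_2$ to $\mathrm{diag}(\tfrac12,-\tfrac12)$, i.e.\ to $A$, so the conjugation-invariant Killing norm depends only on the scalar prefactor. Nothing is missing; your observation $\alpha(1-\beta)=\bar\alpha$ is exactly the simplification that makes the omitted computation transparent.
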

\begin{proof} The formula for $\tilde \Gamma -\Gamma$ is a direct calculation using the definitions above and is omitted. To calculate 
 $\|\tilde \Gamma -\Gamma\|, $ note that  $\pi$, being an orthogonal projection, is conjugate by an element in $\SU_2$ to $diag(1,0)$, thus $ i (\pi -\frac{1}{2}I) $ is  conjugate by the same element to $iA$, which is of  unit norm.  This implies the stated formula. 
\end{proof}

The last proposition states   that the pair of STP curves $\Gamma(\cdot, \lambda), \tilde \Gamma(\cdot, \lambda)$  satisfy one of the  conditions needed  to be in   bicycle correspondence. To get the other condition, one needs to restrict $\mu$ and $\lambda$. 

\begin{theorem}\label{thm:new} If  we choose a  purely imaginary
$\mu = i\varepsilon$ in the Darboux trasnformation  described above and $\lambda=0$, then the pair of  
$\sutwo$-valued curves $\Gamma$ and $\tilde \Gamma$ are in $2/|\varepsilon|$-bicycle correspondence.

Conversely, let  $\Gamma_1, \Gamma_2$ be  two parametrized curves in $\R^3$ in $2\ell$-bicycle correspondence. Then, there exists an AKNS system \eqref{eq:AKNS} with  initial conditions $\Phi(0,\lambda)\in\SU_2$ whose corresponding STP curve at $\lambda=0$  is $\Gamma_1$, and a   Darboux transform with $\mu=i/\ell$ mapping  $\Gamma_1$ to  $\Gamma_2$. 
\end{theorem}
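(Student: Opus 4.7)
The plan for the forward direction is to verify the two defining conditions of $2\ell$-bicycle correspondence with $\ell = 1/|\varepsilon|$: constant chord length and tangency of the midpoint. The length condition is free: specializing Proposition \ref{prop:dif} at $\mu = i\varepsilon$, $\lambda = 0$ gives $\|\tilde\Gamma - \Gamma\| = |\mu - \bar\mu|/|\lambda-\mu|^2 = 2/|\varepsilon|$. For the tangency condition, I would first compute explicitly at $\mu = i\varepsilon$, $\lambda = 0$ that $\alpha^2 = -1$ and $\beta = 2$, so $U = \pm i\,R$ where $R := I - 2\pi$ is the self-adjoint reflection about $\phi^\perp$. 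Then $\tilde\Gamma_t = \tilde\Phi^*(iA)\tilde\Phi = \Phi^* R\,(iA)\,R\,\Phi$, while $\Gamma_t = \Phi^*(iA)\Phi$, and I would combine these to get
\[
\tilde\Gamma_t + \Gamma_t \;=\; \Phi^*\bigl[R(iA)R + iA\bigr]\Phi.
\]

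The key algebraic step is to expand $R(iA)R + iA$ using $R = Q - P$ with $P = \pi$ and $Q = I - \pi$: the cross terms cancel and one obtains $R(iA)R + iA = 2i(PAP + QAQ)$. Since $P$ and $Q$ are orthogonal rank-one projections in a two-dimensional space, $PAP = p\,\pi$ and $QAQ = q(I-\pi)$ for scalars $p = \phi^*A\phi/\|\phi\|^2$ and $q$; because $\mathrm{tr}\,A = 0$, one has $p + q = 0$, hence $PAP + QAQ = p(2\pi - I) = -pR$. Therefore
\[
\tilde\Gamma_t + \Gamma_t \;=\; -2ip\,\Phi^* R\, \Phi,
\qquad
\tilde\Gamma - \Gamma \;=\; -\tfrac{i}{\varepsilon}\,\Phi^* R\,\Phi,
\]
so $\tilde\Gamma_t + \Gamma_t = 2\varepsilon p\,(\tilde\Gamma - \Gamma)$ with $p \in \R$. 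This is precisely the ``midpoint-tangent-to-segment" condition \eqref{bc1}, completing the forward direction.

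For the converse, suppose $\Gamma_1, \Gamma_2$ are in $2\ell$-bicycle correspondence. Applying the converse part of Proposition \ref{prop:STP} to $\Gamma_1$ produces an AKNS potential $q = (\kappa_1/2)\, e^{i\int \tau_1\,\d t}$ and a solution $\Phi(t,\lambda)$ with $\Phi(0,\lambda)\in\SU_2$ such that the associated STP curve at $\lambda = 0$ is $\Gamma_1$. Setting $\mu = i/\ell$, every choice of initial datum $v \in \C^2\setminus\{0\}$ produces via the Darboux transformation a curve $\tilde\Gamma$ which, by the forward direction just proved, is in $2\ell$-bicycle correspondence with $\Gamma_1$. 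It remains to match $\tilde\Gamma = \Gamma_2$.

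Here the dimension count is decisive: the projection $\pi(0) = vv^*/\|v\|^2$ only depends on $[v] \in \CP^1 \cong S^2$, and the formula $\tilde\Gamma(0) - \Gamma_1(0) = -i\ell\,\Phi(0)^*(I - 2\pi(0))\Phi(0)$ exhibits $[v] \mapsto \tilde\Gamma(0)-\Gamma_1(0)$ as the conjugation of the standard M\"obius/Hopf identification $\CP^1 \to S^2 \subset \mathfrak{su}_2$ by the fixed unitary $\Phi(0)$; hence it is a diffeomorphism onto the sphere of radius $2\ell$ centered at $\Gamma_1(0)$. Choose $[v]$ uniquely so that this image equals $\Gamma_2(0)$. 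Then $\tilde\Gamma$ and $\Gamma_2$ are both curves in $2\ell$-bicycle correspondence with $\Gamma_1$ sharing the same initial segment, and since the bicycle correspondence condition determines $\dot{\Gamma}_2$ from $\dot{\Gamma}_1$ and the connecting direction (Lemma \ref{lemma:reflection}), a standard ODE-uniqueness argument forces $\tilde\Gamma = \Gamma_2$ throughout.

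The main obstacle I anticipate is the algebraic identity $R(iA)R + iA = -2ipR$ in Part 1: it hinges on the essentially two-dimensional fact that a traceless operator conjugated by orthogonal rank-one projections produces scalar multiples of those projections with opposite coefficients. Everything else is either a direct application of Proposition \ref{prop:dif}/\ref{prop:STP} or a soft dimension-counting and uniqueness argument.
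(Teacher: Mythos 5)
Your proposal is correct and takes essentially the same route as the paper: the chord length comes from Proposition \ref{prop:dif} at $\mu=i\varepsilon$, $\lambda=0$, the tangency of the midpoint is checked by a direct computation, and the converse uses Proposition \ref{prop:STP} together with a Darboux transform at $\mu=i/\ell$, sweeping all chord directions at $t=0$ by varying $v$ (your Hopf-map phrasing is the paper's $\SU_2$-equivariance argument). The only differences are that where the paper dismisses the tangency check as ``a simple but lengthy computation,'' you actually carry it out—correctly—via $U(\cdot,0)=\pm i(I-2\pi)$ and the identity $R(iA)R+iA=2i\bigl(\pi A\pi+(I-\pi)A(I-\pi)\bigr)=-2ipR$ with $p=\phi^*A\phi/\|\phi\|^2\in\R$, and you make explicit the ODE-uniqueness step (via Lemma \ref{lemma:reflection}) that the paper leaves implicit in concluding $\tilde\Gamma=\Gamma_2$.
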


\begin{proof}Consider a Darboux transformation with $\mu=i\varepsilon$ and associated STP curves $\Gamma, \tilde\Gamma.$ Let $W=\tilde \Gamma -\Gamma$. By Proposition \ref{prop:dif}, $\|W\|=2/|\varepsilon|.$ What is left to show then is that $W$  and $ (\Gamma_t + \tilde \Gamma_t)/2$ are parallel for $\lambda=0$, that is,  $[ W, \Gamma_t + \frac{1}{2}W_t]=0$. The proof is
by a simple but  lengthy computation. We omit the details.

Conversely, given two curves $\Gamma_1, \Gamma_2$ in $\sutwo$ in $2\ell$-bicycling correspondence, let $\kappa_1$ and $\tau_1$ be the curvature and torsion functions of $\Gamma_1$. According to Proposition \ref{prop:STP},  the AKNS system associated with $q=(\kappa_1/2)e^{i\int\tau_1}$, with appropriate initial conditions, realizes $\Gamma_1$ as the associated STP curve at $\lambda=0$.  We now show that  an appropriate  Darboux transformation maps $\Gamma_1$ to $\Gamma_2$. 

From the first part of the theorem, we know that   Darboux transformations with  $\mu=i/\ell$ produce curves in $2\ell$-bicycle correspondence with $\Gamma_1$. We use the expression for $\tilde \Gamma-\Gamma$ in formula \eqref{eq:W} of Proposition \ref{prop:STP} to show that,  by varying $v\in\C^2\setminus\{0\}$, we obtain {\em all} curves in $2\ell$-bicycling correspondence with $\Gamma_1$ and, in particular, $\Gamma_2$. By this formula, the direction of $\tilde \Gamma-\Gamma$ at $t=0$ is the unit vector  
$$B(v):=i\left( \frac{v v^*}{\|v\|^2}    -\frac{1}{2}I       \right)\in\sutwo,$$
 rotated by conjugation with $\Phi$. Now the map $v\mapsto B(v)$ is clearly $\SU_2$-equivariant, $B(gv)=gB(v)g^{-1}$. Its image is therefore the whole unit sphere in $\sutwo$ (the orbit of $iA$ under $\SU_2$). 
 
 It follows that every  initial direction of $\tilde \Gamma-\Gamma$ at $t=0$ can be  obtained by choosing $v$ appropriately and, consequently we obtain all curves $\Gamma_2$ in $2\ell$-bicycle correspondence with $\Gamma_1$. \end{proof}

\section{Proof of Proposition \ref{intint} }\label{app:pf}

\newcommand{\DD}{\mathrm{D}}
We will prove the statement of Proposition \ref{intint} for a class of ODEs that includes the bicycle Riccati equation (\ref{eq:ric7}).  For each   $ C^ \infty $ function  $f: \C \times{\mathbb R} \to \C $, complex analytic in the first variable and  $T$-periodic in the second, consider the ODE
\begin{equation} 
	 \dot Z= - \frac{1}{\ell} Z+f(Z,t),
	\label{eq:gode}
\end{equation}
where $\dot Z= \partial_t Z$. (Note that equation \eqref{eq:ric7} converts to  this form upon the change of  variable $t\mapsto -t$, which interchanges stable and unstable fixed points.)  We will show the following. 

\begin{prop}
For every function $f:\C\times\R\to\R$ as above, there exists an  $\ell_0>0$ 
such that for every $0<\ell<\ell_0$\\ 
(1) there is a unique periodic solution $Z(t, \ell)$ to equation \eqref{eq:gode} with  $|Z(t, \ell)|<1$ for all $t$.\\
 (2)  $Z(t,\ell)$ 
is a stable periodic solution.\\
 (3) Extended to $\ell=0$ as $Z(t,0)\equiv 0,$  $Z(t,\ell)$ is infinitely differentiable in    $\R\times [0,\ell_0)$.\\
  In particular, 
$\lim_{\ell\downarrow 0}\partial^n_t Z(t,\ell)=0$  and $\partial^n_\ell Z(t,0)$ exists for all $t\in\R$ and integer $n\geq 0$. 
\end{prop}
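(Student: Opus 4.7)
I view \eqref{eq:gode} in its singular-perturbation form $\ell\dot Z = -Z + \ell f(Z,t)$, so that $Z\equiv 0$ is the ``slow manifold'' at $\ell=0$. The argument has three stages, of which the third is the main obstacle: (a) existence and uniqueness of a small $T$-periodic solution via a contraction mapping in the space of continuous $T$-periodic functions; (b) stability by direct computation of the Floquet multiplier, and smoothness on $\R\times(0,\ell_0)$ by standard smooth dependence on parameters; (c) extension of $Z(t,\ell)$ to a $C^\infty$ function on $\R\times[0,\ell_0)$ via a formal power series whose partial sums are shown to be asymptotic to the true solution in every $C^k$ norm.

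For (a), note that for any continuous $T$-periodic $g$, the unique bounded solution of $\dot W + W/\ell = g(t)$ is
$$
W(t)=\int_{-\infty}^t e^{-(t-s)/\ell}\,g(s)\,ds,
$$
automatically $T$-periodic and satisfying $\|W\|_\infty\leq \ell\|g\|_\infty$. Let $\mathcal{T}_\ell:Z\mapsto W$ with $g(t)=f(Z(t),t)$, and $B_\delta=\{Z\in C^0_{\text{per}}:\|Z\|_\infty\leq\delta\}$. With $M(\delta)=\sup_{|z|\leq\delta,\,t\in\R}|f(z,t)|$ and $L(\delta)$ the Lipschitz constant of $f(\cdot,t)$ on $|z|\leq\delta$, one gets $\|\mathcal{T}_\ell Z\|_\infty\leq \ell M(\delta)$ and $\|\mathcal{T}_\ell Z_1-\mathcal{T}_\ell Z_2\|_\infty\leq \ell L(\delta)\|Z_1-Z_2\|_\infty$. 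Choosing $\delta<1$ and $\ell_0\leq\min\{\delta/M(\delta),\,1/(2L(\delta))\}$, the operator $\mathcal{T}_\ell$ contracts $B_\delta$ into itself for every $\ell\in(0,\ell_0)$, with unique periodic fixed point $Z(\cdot,\ell)$; any periodic solution with $|Z|<1$ satisfies the same integral representation and, for $\ell$ small, is automatically trapped in $B_\delta$, so it coincides with this fixed point. For (b), linearizing \eqref{eq:gode} along $Z(\cdot,\ell)$ gives a scalar variational equation whose Floquet multiplier is $\exp(-T/\ell+O(1))$, of modulus $<1$ for small $\ell$, giving asymptotic stability; smoothness on $\R\times(0,\ell_0)$ follows from smooth $\ell$-dependence of $\mathcal{T}_\ell$ together with bootstrapping in $t$ via the ODE.

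The main difficulty is (c). I construct a formal series $\widehat Z(t,\ell)=\sum_{n\geq 1}Z_n(t)\ell^n$ with $Z_n\in C^\infty_{\text{per}}$, determined by substituting into $\ell\dot Z = -Z + \ell f(Z,t)$ and matching powers of $\ell$: $Z_1(t)=f(0,t)$ and, for $n\geq 2$,
$$
Z_n(t)=-\dot Z_{n-1}(t)+\bigl[f(\widehat Z,t)\bigr]_{n-1},
$$
where $[\,\cdot\,]_{n-1}$ is the coefficient of $\ell^{n-1}$ in the Taylor expansion of $f(\widehat Z(t,\ell),t)$ at $Z=0$, a differential polynomial in $Z_1,\dots,Z_{n-1}$; smoothness is inherited inductively. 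Fix $N\geq 1$, set $Z^{(N)}=\sum_{n=1}^N Z_n\ell^n$, and note that the residual $r_N(t,\ell) := \ell\dot Z^{(N)} + Z^{(N)} - \ell f(Z^{(N)},t)$ is $O(\ell^{N+1})$ with all $C^k_t$ bounds uniform in $\ell\in[0,\ell_0)$. Writing the true solution as $Z = Z^{(N)} + \ell^{N+1}E$ converts \eqref{eq:gode} into a fixed-point equation $\ell\dot E + E = G(t,\ell,E)$ with $G$ smooth and uniformly bounded; the contraction analysis of (a) applied here gives $\|E\|_\infty = O(1)$, hence $\|Z(\cdot,\ell)-Z^{(N)}(\cdot,\ell)\|_\infty = O(\ell^{N+1})$. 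For the $t$-derivatives I use the substitution $u = (t-s)/\ell$ to rewrite the integral representation as $E(t) = \int_0^\infty e^{-u}G(t-\ell u,\ell,E(t-\ell u))\,du$, so that $\partial_t$ acts on the smooth argument $t-\ell u$ without ever producing a $1/\ell$ factor; iterating yields $\|\partial_t^k E\|_\infty = O(1)$ uniformly in $\ell$, hence $\|\partial_t^k(Z - Z^{(N)})\|_\infty = O(\ell^{N+1})$ for every $k$. Since this asymptotic matching holds for all $N$ and $k$, a standard asymptotic-expansion argument (using that $Z$ is already $C^\infty$ on $\R\times(0,\ell_0)$ and that an asymptotic series in every $C^k$ norm uniquely determines a $C^\infty$ extension) shows that $Z(t,\ell)$ extends to a $C^\infty$ function on $\R\times[0,\ell_0)$ with $Z(t,0)=0$ and $\partial_\ell^n Z(t,0)=n!\,Z_n(t)$.
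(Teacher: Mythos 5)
Your steps (a) and (b) are sound, and your function-space contraction built on the integral representation $Z(t)=\int_{-\infty}^t e^{-(t-s)/\ell}f(Z(s),s)\,ds$ is a legitimate alternative to the paper's route (the paper instead shows the period map contracts the disk $|Z|\leq 1$, bounding its derivative by $\exp\int_0^T\bigl(-\tfrac1\ell+f_Z\bigr)\,dt$). The genuine gap is in (c), which is where the substance of the proposition lies. What you actually establish there is that $Z(\cdot,\ell)$ and all its $t$-derivatives agree with the partial sums $Z^{(N)}$ up to $O(\ell^{N+1})$, uniformly in $t$. But the concluding inference --- that being asymptotic to a power series in $\ell$ in every $C^k_t$ norm forces $Z$ to extend to a $C^\infty$ function of $(t,\ell)$ on $\R\times[0,\ell_0)$ --- is false as a general principle: the function $e^{-1/\ell}\sin\bigl(e^{2/\ell}\bigr)$ (constant in $t$, smooth for $\ell>0$) is asymptotic to the zero series to all orders in every $C^k_t$ norm, yet its $\ell$-derivative contains the term $-\tfrac{2}{\ell^2}e^{1/\ell}\cos\bigl(e^{2/\ell}\bigr)$ and blows up as $\ell\downarrow 0$, so it is not even $C^1$ up to $\ell=0$. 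Your asymptotics do yield $\lim_{\ell\downarrow 0}\partial_t^n Z=0$ and the first one-sided difference quotient at $\ell=0$, but they give no control whatsoever on $\partial_\ell Z(t,\ell),\ \partial_\ell^2 Z(t,\ell),\dots$ for small positive $\ell$, and without that the higher $\ell$-derivatives at $\ell=0$ cannot even be defined, let alone shown continuous. The ``standard asymptotic-expansion argument'' you invoke does not exist in the generality you need.

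To close the gap one must use the equation itself to control the $\ell$-derivatives, which is exactly what the paper does: writing $Z_n=\partial_\ell^n Z$, multiply the ODE by $\ell$ and differentiate $n$ times in $\ell$ to get $\ell\dot Z_n+n\dot Z_{n-1}=-Z_n+\partial_\ell^n(\ell f)$, then rerun the same confinement (Lyapunov) estimate used for $Z$ itself to show $Z_n$ stays within $O(\ell)$ of an explicit quantity with a limit, hence $\lim_{\ell\downarrow 0}Z_n(t,\ell)$ exists (an auxiliary induction handles $\dot Z_{n-1}$); finally the difference-quotient identity $\bigl(Z_n(t,\ell)-Z_n(t,0)\bigr)/\ell=\ell^{-1}\int_0^\ell Z_{n+1}(t,s)\,ds$ upgrades these limits to joint $C^\infty$ smoothness on $\R\times[0,\ell_0)$. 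Your fixed-point framework could in principle be adapted in the same spirit (differentiate the fixed-point equation in $\ell$ and estimate the resulting linear equations uniformly), but as written the proposal omits this essential part of the argument.
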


Note that the above existence result implies immediately the uniqueness statement in Proposition \ref{intint}, since a M\"obius transformation in $\PSLtc$ has at most one unstable  fixed point. 
We divide the  proof into the following steps.

\begin{enumerate}
\item There exists an $ \ell_0>0 $ (depending on $f$ alone) such that for all $ 0<\ell< \ell_0 $ the period map $\varphi:\C\to \C$ of equation \eqref{eq:gode} has  a unique fixed point in $\DD:=\{|Z|\leq 1\}.$ This fixed point is stable and the associated periodic solution $Z(t, \ell) $ is $ C^ \infty $ in $\R\times (0,\ell_0)$. 

\item  For all integers $ n \geq 0 $: 
\be
	 \lim_{\ell\downarrow 0} \partial_t^nZ(t,\ell) = 0.   
	\label{eq:lt}
\ee 

\item  For all integers $ n \geq 0 $, the limit 
\be
	 a_n(t):=\lim_{\ell\downarrow 0} \partial_\ell^nZ(t,\ell) 
	 \label{eq:ll}
 \ee
 exists. 
 
 \item  $Z(t,\ell)$, extended to $\ell=0$ by $Z(t,0)=0$, is $C^\infty$ in $\R\times [0,\ell_0)$ with  $\partial_\ell^nZ(t,0)=a_n(t).$

 \end{enumerate}

\mn {\bf Step 1.} Let $M=\max |f(Z,t)|$  over $|Z|\leq 1$ and $t\in\R$, and let $\ell_1>0$ be such that $1/\ell_1>M$. Then, for all $\ell<\ell_1,$ if 
$Z(t)$ is a solution to equation \eqref{eq:gode} 
and $|Z(t)|=1$ for some $t$, one has
\be \label{eq:zsq}
	\frac{1}{2} \frac{d}{dt} |Z|^2  = Z\cdot \dot Z \buildrel{ (\ref{eq:gode})}\over{=} -\frac{1}{\ell} | Z | ^2 + f\cdot Z\leq -\frac{1}{\ell}+{1\over \ell_1}<0. 
\ee 
It follows  that for $\ell<\ell_1$  the period map $ \varphi$  maps $\DD$  into itself, and thus has a fixed point. 

To prove uniqueness we show that, for $\ell$ small enough, $\varphi$ resricted to $\DD$ is a contraction. For this, it is enough to show that $|\varphi'(Z_0)|\leq \rho$ for some $\rho<1$ and all $Z_0\in\DD$.  This will show also that the fixed point is stable. 

 Choose an arbitrary solution $ Z(t) $ of equation \eqref{eq:gode} with $Z(0)\in \DD$ and consider a nontrivial solution $U(t)$ of the linearized equation
$  \dot U = - \frac{1}{\ell} U +  f _Z U $ around $Z(t) $; here $ f_Z=\partial_Z f(Z,t) $. The derivative of the period map $\varphi$ at $Z(0)$ is given by 
\[
	\varphi ^\prime (Z(0)) = \frac{U(T)}{U(0)} = \exp\int_{0}^{T} \left(- \frac{1}{\ell} + f_Z\right) \,dt.   
\]   
Now, given  $\eps>0$, there exists  $\ell_0<\ell_1$ such that  $\Re\left(- \frac{1}{\ell} + f_Z\right)<-\eps$  for all  $\ell\leq \ell_0$, hence $| \varphi ^\prime ( Z(0)) | \leq  e^{-T\eps}<1$, proving that $\varphi$ is a contraction in $\DD  $. 
Since $ \varphi $  is analytic, and the contraction is by a factor bounded away from $1$, we conclude that the fixed point is an analytic function of $ \ell $ for all $ 0<\ell< \ell_0$. 

\mn {\bf Step 2.} We prove   (\ref{eq:lt}) by induction on $n$. For $ n = 0 $, it suffices to show that for $\ell<\ell_1$, our periodic solution satisfies $ | Z(t,\ell) | \leq  \ell/\ell_1$. If $ | Z(t, \ell) |> \ell/\ell_1$ for some $t$, then (for this $t$), by  (\ref{eq:zsq}), 
\begin{equation} 
	\frac{1}{2} \frac{d}{dt} |Z|^2 = - \frac{1}{\ell} | Z | ^2 + f\cdot Z\leq  \left(- \frac{1}{\ell} | Z |   + {1\over \ell_1}\right) | Z | < 0.  
	\label{eq:dzsq}
\end{equation}  
It follows that   for all $\ell<\ell_1$ the periodic solution $Z(t, \ell)$ is confined to the disk 
$|Z|\leq\ell/\ell_1$ and thus $\lim_{\ell\downarrow 0}Z(t,\ell) = 0.$ This completes the step $n=0$ of the induction.    
 
Assume now that   (\ref{eq:lt})  holds up to  order $ n -1 $ for some $ n > 0$. 
 Let us denote $Y_k:=\partial_t^k Z $.  
 Differentiating   (\ref{eq:gode})   $n$ times by $t$, we obtain 
\[
	\dot Y_n=- \frac{1}{\ell} Y_n+ \partial_t ^n f. 
\]  
The last term is   of the form 
\[
	\partial_t ^n f = A  Y_n + B_n, 
\]
where $ A  =  f_Z(Z(t,\ell),t) $ and where $ B_n$   is a polynomial in the variables $ Z, Y_1, \ldots , Y_{n-1} $ (containing no $ Y_n$) with  coefficients of the form $ \partial_Z^i\partial_t^j f(Z,t) $ with $ i+j\leq n $. By the assumption on $f$, these coefficients are bounded. This  and    the inductive assumption imply that 
\begin{equation} 
	\lim_{\ell\downarrow 0 }A  =f_Z(0,t), \  \  \lim_{\ell\downarrow 0 }B_n = B_n^0, 
	\label{eq:AB}
\end{equation}   
where $ B_n^0=B_n^0(t) $ is a smooth function. 
 Summarizing, $ Y_n $ satisfies the ODE
 \begin{equation} 
	\dot Y_n= \left(-\frac{1}{\ell}+A\right) Y_n+ B_n 
	\label{eq:Zn}
\end{equation}   
with the coefficients satisfying   (\ref{eq:AB}). 
The same argument used in proving that $ \lim_{\ell\downarrow 0}Z=0 $ applies here, and it shows that $  \lim_{\ell\downarrow 0}Y_n=0 $. This completes the proof of   (\ref{eq:lt}). 

\mn {\bf Step 3.} We prove (\ref{eq:ll}) by induction on $n$. For  $ n=0 $, we already proved it in Step 2. For any fixed $ n> 0 $, assume that  (\ref{eq:ll}) holds for all  orders $ < n $. 
Let $ Z_k = \partial _\ell^k Z $.  
Multiplying both sides of    (\ref{eq:gode}) by $ \ell $ and differentiating $ n $ times with respect to $\ell$, we get 
\begin{equation} 
	\ell\dot Z_n+ n \dot Z_{n-1} = - Z_n +  \partial_\ell ^n ( \ell f ).
	\label{eq:Yn} 
\end{equation} 
Now  
\[
 	\partial_\ell^n (\ell f (Z, t)) = \ell \partial_\ell^nf+ n\partial_\ell^{n-1}f =  \ell A Z_n+\ell C_n+ n\partial_\ell^{n-1}f, 
\]  
where, similarly to Step 3, we have  $ A = f_Z$ and $ C_n $   is a polynomial  in $ Z, Z_1, \ldots Z_{n-1} $ with  coefficients of the form $ \partial_Z^j f(Z, t) $ with $ j\leq n $. By Step 3, 
$$  \lim_{\ell\downarrow 0}A \buildrel{def}\over{=}f_Z(0,t)\buildrel{def}\over{=} A^0, $$
 and  by the inductive assumption, 
\[
	  \lim_{\ell\downarrow 0}C_n \buildrel{def}\over{=} C_n^ 0 
\]  
exists. 
Substituting all this into   (\ref{eq:Yn})
yields, after some manipulation: 
\begin{equation} 
	\dot Z_n= -\frac{\alpha }{\ell} \left( Z_n- \beta  \right), 
	\label{eq:Yn1}
\end{equation} 
where 
\[
	\alpha = 1-\ell A, \  \  \beta =  \frac{ n\partial_\ell^{n-1}f -n \dot Z_{n-1}+\ell C_n}{\alpha }  . 
\]  
We have  $ \lim_{\ell\downarrow 0} \alpha = 1 $; to show that $ \lim_{\ell\downarrow 0} \beta$ exists, we   need to know that $\dot Z_{n-1} $ has a limit as $ \ell\downarrow 0 $. 
We showed that this limit  exists for $ \dot Z_0=\dot Z $; let us add the inductive assumption to the one already made  that     
$\lim_{\ell\downarrow 0} \dot Z_{n-1}$ exists; we will show in a moment that then $ \lim_{\ell\downarrow 0}\dot Z_n $ exists as well. 
With this assumption,  
\[
	     \lim_{\ell\downarrow 0} \beta = 
		n\lim_{\ell\downarrow 0}(\partial_\ell^{n-1}f - \dot Z_{n-1} ) 
\]   
exists. 
To complete the induction we must show that the limits of $ Z_n $ and of $ \dot Z_n $ exist. The existence of these limits follows from   (\ref{eq:Yn1}) and from the existence of the limits of $\alpha$ and $\beta$; we will in fact show that
 \[
	\lim_{\ell\downarrow 0} Z_n = \lim_{\ell\downarrow 0}\beta, 
\]    
and 
\begin{equation} 
	\lim_{\ell\downarrow 0}\dot Z_n = \lim_{\ell\downarrow 0} \dot \beta.
	\label{eq:ynd}
\end{equation}   
	
Indeed, 
\[
	\frac{1}{2} \frac{d}{dt} (Z_n- \beta )^2 \buildrel{  (\ref{eq:Yn1}) }\over{=} - \frac{\alpha }{\ell} (Z_n- \beta )^2 - \dot \beta \cdot  (Z_n- \beta ), 
\]  
which shows that $ | Z_n- \beta | $ is monotonically decreasing whenever   $  | Z_n- \beta | \geq 2 \max | \dot  \beta | \ell $. This shows that our periodic $ Z_n $ is confined to $ | Z_n -\beta| <     2 \max | \dot  \beta | \ell $ (by the argument used in Step 3), and thus converges to $\beta$ as $ \ell \downarrow 0 $, as claimed. 

Finally, differentiating   (\ref{eq:Yn1})  by $t$, one can apply a similar Lyapunov--type argument to prove the existence of the   limit in  (\ref{eq:ynd}).
This completes the induction step and thus the proof of   (\ref{eq:ll}).

\mn{\bf Step 4.} Consider the difference quotient
\[
	\frac{Z_n(t, \ell)- Z_n(t, 0)}{\ell}  = \frac{1}{\ell} \int_{0}^{\ell} Z_{n+1}(t,s)\,ds, 
\]  
where $ Z_n(t, 0) $ is the limit which exists by Step 3.
Since also $ \lim_{\ell\downarrow 0}Z_{n+1}(t, \ell) $ exists, so does the limit of the above integral, showing that 
\[
	\lim_{\ell\downarrow 0}\frac{Z_n(t, \ell)- Z_n(t, 0)}{\ell}= Z_{n+1}(t, 0), 
\]
and thus proving the claim.\qed


\begin{thebibliography}{99}

\bibitem{AC} G. Arreaga, R. Capovilla, C. Chryssomalakos, J. Guven. {\it Area-constrained planar elastica.} Physical Review E 65.3 (2002): 031801.

\bibitem{Ar} V. Arnold. {\em The geometry of spherical curves and quaternion algebra.} Russian Math. Surveys {\bf50} (1995), 1--68.

\bibitem{AK} V. Arnold, B. Khesin. {\it Topological methods in hydrodynamics.} Springer-Verlag, New York, 1998.

\bibitem{AKNS} M.~Ablowitz, D.~Kaup, A.~Newell, H.~Segur. {\em The inverse scattering transform--Fourier analysis for nonlinear problems}.
Stud. Appl. Math {\bf 53} (1974), 249--315.

\bibitem{Au} H. Auerbach. {\em Sur un probl\`eme de M. Ulam concernant l'\`equilibre des corps flottants}. Studia Math. {\bf 7} (1938), 121--142.

\bibitem{BMO1} J.~Bracho, L.~Montejano, D.~Oliveros. {\em A classification theorem for Zindler carrousels}. J. Dynam. Control Systems {\bf 7} (2001), 367--384.

\bibitem{BMO2} J.~Bracho, L~Montejano, D.~Oliveros. {\em Carousels, Zindler curves and the floating body problem}. Period. Math. Hungar. {\bf 49} (2004), 9--23.

\bibitem{BP} R.~Benedetti, R.~Petronio. {\em Lectures on hyperbolic geometry}. Springer Verlag, Berlin Heidelberg (1992).

\bibitem{Bour} E.~Bour. {\em Theorie de la deformation des surfaces}. J. Ecole. Imp. Poly. {\bf 19} (1862), 1--48.

\bibitem{B} R.~Bryant.  {\em Intuition for the Cartan connection and ``Rolling without slipping" in Cartan geometry}, URL (version: 2016-01-29): 
\n \verb#http://mathoverflow.net/q/229569#

\bibitem{BH} R.~Bryant, L.~Hsu.  {\em Rigidity of integral curves of rank 2 distributions}. Invent. Math. {\bf 114} (1993), 435--461.

\bibitem{C} A.~Calini. {\em Recent developments in integrable curve dynamics}. Geometric approaches to differential equations, 56--99, Austral. Math. Soc. Lect. Ser., 15, Cambridge Univ. Press, Cambridge, 2000.

\bibitem{CI} A.~Calini, T.~Ivey. {\em B\"acklund transformations and knots of constant torsion}. J. Knot Theory Ramifications {\bf 7} (1998), 719--746.

\bibitem{Cs} B. Csik\'os. {\em On the rigidity of regular bicycle $(n,k)$-gons.} Contrib. Discrete Math. {\bf 2} (2007), 93--106. 

\bibitem{CC} R. Connelly, B.  Csik\'os. {\em Classification of first-order flexible regular bicycle polygons.}
Studia Sci. Math. Hungar. {\bf 46} (2009),  37--46. 

\bibitem{Cy} V. Cyr. {\em A number theoretic question arising in the geometry of plane curves and in billiard dynamics.} Proc. Amer. Math. Soc. {\bf 140} (2012), 3035--3040.

\bibitem{DVM}  P. Djondjorov, V. Vassilev, I. Mladenov. {\it Analytic description of the equilibrium shapes of elastic rings under uniform hydrostatic pressure.} Int. Workshop on Complex Structures, Integrability and Vector Fields, 189--202, AIP Conf. Proc., 1340, Amer. Inst. Phys., Melville, NY, 2011.

\bibitem{Finn} D.~Finn. {\em  Can a bicycle create a unicycle track?}  College Math. J.,  {\bf 33} (2002),  283--292.

\bibitem{Fi} D. Finn. {\it Which way did you say that bicycle went?}  Math. Mag. {\bf 77} (2004), 357--367.

\bibitem{FT} L.~Fadeev, L.~ Takhtajan. {\em Hamiltonian methods in the theory of solitons}. Springer Science \& Business Media, 2007.

\bibitem{F} R.~Foote. {\em Geometry of the Prytz planimeter}. Reports Math. Physics {\bf 42}
(1998), 249--271.

\bibitem{FLT} R.~Foote,  M.~Levi,  S.~Tabachnikov. {\em Tractrices, bicycle tire tracks, hatchet
planimeters, and a 100-year-old conjecture}. Amer. Math. Monthly {\bf 120} (2013), 199--216.

\bibitem{FM} Y. Fukumoto, M. Miyajima. {\it The localized induction hierarchy and the Lund-Regge equation.} J. Phys. {\bf A 29} (1996), 8025--8034. 

\bibitem{Gr} A. G. Greenhill. {\it The elastic curve under uniform normal pressure}. Math. Annalen LII (1889), 465--500.

\bibitem{Gu1} E. Gutkin. {\em Billiard tables of constant width and dynamical characterization of the circle.} Penn. State Workshop Proc., Oct. 1993. 

\bibitem{Gu2} E. Gutkin. {\em Capillary floating and the billiard ball problem.} J. Math. Fluid Mech. {\bf 14} (2012), 362--382.

\bibitem{Gu3} E. Gutkin. {\em Addendum to: Capillary floating and the billiard ball problem}. 
J. Math. Fluid Mech. {\bf 15} (2013),  425--430. 

\bibitem{Hal} G. H. Halphen. {\it La corbe \'elastique plane sous pression uniforme, Chap. V in Trait\'e des fonctions elliptiques et de leurs applications. Deuxi\`eme partie: Applications a la m\'ecanique, a la physique, a la g\'eod\'esie, a la g\'eom\'etrie et au calcul int\'egral}. Gauthier-Villars et fils, Paris, 1888.

\bibitem{Ha} H. Hasimoto. {\it A soliton on a vortex filament.} J. Fluid Mech. {\bf 51} (1972),   477--485. 

\bibitem{Hi} F. W. Hill. {\it The hatchet planimeter.} Proc. Physical Soc.  London 13, no. 1 (1894), 229--234.

\bibitem{H} T.~Hoffmann. {\em Discrete Hashimoto surfaces and a doubly discrete smoke-ring flow.} Discrete differential geometry, 95--115, Oberwolfach Semin., 38, Birk\"auser, Basel, 2008.

\bibitem{HPZ} S.~Howe, M.~Pancia, V.~Zakharevich. {\em Isoperimetric inequalities for wave fronts and a generalization of Menzin�s conjecture for bicycle monodromy on surfaces of constant curvature}.  Adv. Geom. {\bf 11} (2011), 273--292.

\bibitem{I}E~ L.~Ince.  {\em Ordinary differential equations}. New York: Dover Publications (1978).

\bibitem{La} J.~Langer. {\em Recursion in curve geometry.} New York J. Math. {\bf 5} (1999), 25--51.

\bibitem{LP} J.~Langer, R.~Perline. {\em Poisson geometry of the filament equation.} J. Nonlinear Sci. {\bf 1} (1991), 71--93.

\bibitem{LP1} J. Langer, R. Perline. {\it The planar filament equation.} Mechanics day (Waterloo, ON, 1992), 171--180, Fields Inst. Commun., 7, Amer. Math. Soc., Providence, RI, 1996.

\bibitem{Le} M. Levi. {\it Composition of rotations and parallel transport.} Nonlinearity {\bf 9} (1996), 413--419. 

\bibitem{Le1} M.~Levi. {\em ``Bike tracks," quasi-magnetic forces, and the Schr\"odinger equation.} SIAM News {\bf 47}, June 2014.

\bibitem{Le2} M.~Levi. {\em Schr\"odinger's equation and ``bike tracks" -- a connection}, J. Geom. Phys. {\bf 115} (2017), 124--130.

\bibitem{LT} M. Levi and S.~Tabachnikov. {\em On bicycle tire tracks geometry, hatchet planimeter, Menzin's conjecture, and oscillation of unicycle tracks}.  Exp. Math. {\bf 18.2} (2009), 173--186.

\bibitem{Lev} M. M. L\'evy. {\it M\'emoire sur un nouveau cas int\'egrable du probl\'eme de l'\'elastique et l'une de ses applications}. Journal de Math. Pures et Appliquees s\'er. 3 X (1884), 5--42.

\bibitem{LTh} S.~Levy, W.~P.~Thurston. {\em Three-dimensional geometry and topology. Vol. 1.} Princeton Univ. Press, 1997.

\bibitem{Mac} D. Mackenzie {\em Following in Sherlock Holmes' bike tracks}.
What's Happening in the Mathematical Sciences, v. 10, 52--63. AMS, Providence, RI, 2015. 

\bibitem{MW} J.~Marsden, A.~Weinstein. {\em Coadjoint orbits, vortices, and Clebsch variables for incompressible fluids.} Phys. D {\bf 7} (1983), 305--323.

\bibitem{NK} G.~Neugebauer, D.~Kramer. {\em Einstein-Maxwell solitons}, J. of Phys. A {\bf 16} (1983) 1927--1936.

\bibitem{PSW} U. Pinkall, B. Springborn, S. Weissmann. {\em A new doubly discrete analogue of smoke ring flow and the real time simulation of fluid flow}. J. Phys. A {\bf 40} (2007), 12563--12576.

\bibitem{Pohl}K.~Pohlmeyer. {\em Integrable Hamiltonian systems and interactions through quadratic restraints}.  Comm. Math. Phys. {\bf 46} (1976), 207--221.

\bibitem{RS} C.~Rogers, W.~Schief. {\em B\"acklund and Darboux transformations}. Cambridge University Press, Cambridge, 2002.

\bibitem{Ru} A. Ruban. {\it Sur le probl\`eme du cylindre flottant.} C. R. (Doklady) Acad. Sci. URSS {\bf 25} (1939), 350--352.

\bibitem{SK} V. Salgaller, P.  Kostelianetz. {\it Sur le probl\`eme du cylindre flottant.} C. R. (Doklady) Acad. Sci. URSS {\bf 25} (1939), 353--355. 

\bibitem{Sc} {\it The Scottish Book. Mathematics from the Scottish Caf\'e.}   Ed. by R. D. Mauldin. Birkh\"auser, Boston, Mass., 1981.

\bibitem{Si} D. Singer. {\it Lectures on elastic curves and rods.} Curvature and variational modeling in physics and biophysics, 3--32, AIP Conf. Proc., 1002, Amer. Inst. Phys., Melville, NY, 2008. 

\bibitem{Sym} A.~Sym. {\em Soliton surfaces}. Lett. Nuovo Cimento {\bf 33} (1982), 394--400.

\bibitem{Sym1} A. Sym. {\it Soliton surfaces and their applications (soliton geometry from spectral problems).}  Geometric aspects of the Einstein equations and integrable systems (Scheveningen, 1984), 154--231, Lecture Notes in Phys., 239, Springer, Berlin, 1985.

\bibitem{Symes}  W. Symes. {The $QR$ algorithm and scattering for the finite nonperiodic
Toda lattice}. Physica D {\bf 4} (1982), 275--280.

\bibitem{T1} S.~Tabachnikov. {\em Tire track geometry: variations on a theme}. Israel J. Math. {\bf 151} (2006), 1--28.

\bibitem{TT} S. Tabachnikov, E. Tsukerman. {\em On the discrete bicycle transformation}.  Publ. Mat. Urug. {\bf 14} (2013), 201--219.

\bibitem{T2} S.~Tabachnikov. {\em On the bicycle transformation and the filament equation: Results and conjectures}, J. Geom. Phys., {\bf 115} (2017), 116--123.

\bibitem{V} J.~P.~Veiro, {\em Octonionic presentation for the Lie group $\mathrm{SL}_2(\mathbb{O})$}, 
J. Algebra Appl., {\bf 13} (2014), 1450017, 19pp. 

\bibitem{Y} R.~C.~Yates, {\em Curves and their properties}, J. W. Edwards,  Ann Arbor (1947).

\bibitem{We1} F. Wegner. {\it Floating bodies of equilibrium.} Stud. Appl. Math. {\bf 111} (2003),  167--183.

\bibitem{We2} F. Wegner. {\it Floating bodies of equilibrium I}. arXiv:physics/0203061.

\bibitem{We3} F. Wegner. {\it Floating bodies of equilibrium II}. arXiv:physics/0205059.

\bibitem{We4} F. Wegner. {\it Floating bodies of equilibrium. Explicit solution}. arXiv:physics/0603160.

\bibitem{We5} F. Wegner. {\it Floating bodies of equilibrium in 2D, the tire track problem and electrons in a parabolic magnetic field}. arXiv:physics/0701241.

\bibitem{We6} F. Wegner, {\it Three problems -- one solution}. \url{http://www.tphys.uni-heidelberg.de/~wegner/Fl2mvs/Movies.html#float}

\bibitem{Zi} K. Zindler. {\it \"Uber konvexe Gebilde II}. Monatsh. Math. Phys. {\bf 31} (1921), 25--57.


\end{thebibliography}
\end{document}